\newtheorem{thm}{Theorem}
\newtheorem{cor}[thm]{Corollary}
\newtheorem{lem}[thm]{Lemma}
\newtheorem{prop}[thm]{Proposition}
\newtheorem{caveat}[thm]{Caveat}
\newtheorem{propdef}[thm]{Proposition-Definition}
\theoremstyle{definition}
\newtheorem{defn}[thm]{Definition}
\newtheorem{example}[thm]{Example}
\newtheorem{ex}[thm]{Examples}
\theoremstyle{remark}
\newtheorem{rem}[thm]{Remark}
\numberwithin{equation}{section}
\newcommand{\Z}{\mathbb Z}
\newcommand{\C}{\mathbb C}
\newcommand{\R}{\mathbb R}
\newcommand{\N}{\mathbb N}
\newcommand{\Hp}{\mathbb H}
\newcommand{\Pro}{\mathbb P}
\newcommand{\sign}{\varepsilon}
\newcommand{\gr}{\mathrm {gr}}
\font \rus= wncyr10
\newcommand{\sha}{\, \hbox{\rus x} \,}
\newcommand{\Mod}{\mathfrak{M}}
\newcommand{\Q}{\mathbb Q}
\newcommand{\Li}{\mathrm{Li}}
\newcommand{\To}{\longrightarrow}
\newcommand{\G}{\mathbb{G}}
\newcommand{\E}{\mathcal{E}}
\newcommand{\e}{\mathbf{e}}
\newcommand{\Fo}{\mathcal{F}}
\newcommand{\Ss}{\mathcal{S}}
\newcommand{\Sym}{\mathfrak{S}}
\newcommand{\Aut}{\mathrm{Aut}}
\newcommand{\reg}{\mathrm{reg}}
\newcommand{\EE}{\mathtt{\,E}}
\newcommand{\Spec}{\mathrm{Spec}\,}
\newcommand{\Image}{\mathrm{Im}\,}
\newcommand{\sdots}{.\,.\,}
\newcommand{\En}{\E^{(n)}}
\newcommand{\Ao}{\mathcal{A}}
\newcommand{\Mo}{\mathcal{M}}
\newcommand{\dt}{\cdot}
\newcommand{\B}{\mathbb{B}}
\newcommand{\x}{\mathsf{x}}
\newcommand{\y}{\mathsf{y}}
\newcommand{\sing}{\mathrm{sing}}
\newcommand{\basepnt}{\varrho}
\def\0skip{\vskip 2pt}
\def\1skip{\vskip 4pt}
\def\2skip{\vskip 10pt}
\def\3skip{\vskip 0.15in}
\def\4skip{\vskip 0.20in}
\def\5skip{\vskip 0.25in}
\def\6skip{\vskip 0.30in}
\title[Multiple Elliptic Polylogarithms]{Multiple Elliptic Polylogarithms}
\author{Francis Brown and Andrey Levin}
\begin{document}

\begin{abstract}
We  study the de Rham fundamental group of  the configuration space $\E^{(n)}$ of $n+1$ marked  points on a complex  elliptic curve $\E$, and define
multiple elliptic polylogarithms. These are    multivalued functions on $\E^{(n)}$ with unipotent monodromy, and are constructed by a general averaging procedure.
 We show that all iterated integrals on $\E^{(n)}$, and in particular the periods of the unipotent fundamental group of the punctured curve $\E \backslash \{0\}$, can be expressed in terms of these functions.
\end{abstract} 

\maketitle

\section{Introduction}

\subsection{Motivation} Iterated integrals on the moduli space $\Mod_{0,n}$ of curves of genus $0$ with $n$ ordered marked points can be expressed in terms of  multiple
polylogarithms. These are   defined for  $n_1,\ldots, n_r\in \N$ by 
\begin{equation}\label{intromultpoly} 
\Li_{n_1,\ldots, n_r} (x_1,\ldots, x_r) = \sum_{0<k_1<\ldots< k_r} {x_1^{k_1}\ldots x_r^{k_r} \over k_1^{n_1}\ldots k_r^{n_r}} \quad  \hbox{where }  |x_i|<1\ ,
\end{equation}
 and   have many applications from arithmetic geometry to quantum field theory. 
 By specializing $(\ref{intromultpoly})$ at  $x_i=1$, one obtains 
 the multiple zeta values 
 \begin{equation} \label{intromzv}
\zeta(n_1,\ldots, n_r) =  \sum_{0<k_1<\ldots< k_r} {1 \over k_1^{n_1}\ldots k_r^{n_r}}\ , 
\end{equation}
which converges for $n_r\geq 2$. These  are of particular interest   since they are the periods of the  fundamental group of 
$\Pro^1\backslash \{0,1,\infty\}$, and generate  the periods of  all mixed Tate motives  over $\Z$.

The goal of this paper is to construct the elliptic analogues of the multiple polylogarithms and to set up the necessary algebraic and analytic background required  to study  multiple elliptic  zeta values.
The former are iterated integrals on  the configuration space $\E^{(n)}$ of $n+1$ marked points on a complex elliptic curve,  i.e., the 
 fiber of the map $\Mod_{1,n+1}\rightarrow \Mod_{1,1}$, where $\Mod_{1,m}$ denotes  the  moduli space of curves  of genus 1 
with $m$ marked points. They generalize the classical elliptic polylogarithms studied in \cite{Andrey}, and are the universal periods of  unipotent variations of mixed elliptic Hodge structures. 

In a sequel to this paper, we shall study the multiple elliptic zeta values, obtained by  specializing   multiple elliptic polylogarithms to the zero section of the universal elliptic curve. They define multivalued functions on $\Mod_{1,1}$ which  degenerate to ordinary multiple zeta values at the cusp. The existence of these functions  sheds light on the structural relations between ordinary multiple zeta values,  and in particular, the relation between double zetas and  period polynomials for cusp forms \cite{GHK}.

\subsection{The rational case}   Firstly we recall the definition of iterated integrals \cite{Ch1}. Let $M$ be a smooth real manifold, and let $\omega_1,\ldots,\omega_n$ denote 
smooth 1-forms on $M$. Let $\gamma:[0,1]\rightarrow M $ be a smooth path, and write  $\gamma^{*} \omega_i= f_i(t) dt$ for some smooth functions $f_i:[0,1]\rightarrow \R$, where $1\leq i\leq n$.  
The iterated integral of $\omega_1,\ldots, \omega_n$ is defined by  
\begin{equation}\label{introitintdef} 
\int_{\gamma} \omega_1 \ldots \omega_n = \int_{0\leq t_n \leq \ldots \leq t_1 \leq 1} f_1(t_1) \ldots f_n(t_n) \, dt_1 \ldots dt_n\ .\end{equation}
Now let    $M=\Pro^1\backslash \{0,1,\infty\}$, and let  $\omega_0= {dz\over z}$ and $\omega_1 = {dz \over 1-z}$. Let $0<z<1$, and denote  the straight  path 
 from $0$ to $z$ by $\gamma_z$. The initial point  $\gamma(0)$ does not in fact lie in $M$, but the following iterated integral still makes sense nonetheless,
and gives 
\begin{equation} \label{introIz}
\int_{\gamma_z} \underbrace{\omega_0 \ldots \omega_0}_{n_r-1} \omega_1  \ldots \underbrace{\omega_0 \ldots \omega_0}_{n_1-1} \omega_1=\Li_{n_1,\ldots, n_r}(1,\ldots, 1,z) \ .
\end{equation}
This is easily proved by a series expansion of the forms $\omega_1$. 
The periods of the fundamental torsor of paths of $M$ from $0$ to $1$ (with tangential basepoints $1$, $-1$), are obtained by taking the limit as $z\rightarrow 1$. In the case $n_r\geq 2$, this yields
 \begin{equation} \label{introitintmzv}
 \int_0^1 \underbrace{\omega_0 \ldots \omega_0}_{n_r-1} \omega_1  \ldots \underbrace{\omega_0 \ldots \omega_0}_{n_1-1} \omega_1=\zeta(n_1,\ldots, n_r)\ ,
 \end{equation}  
as first observed by Kontsevich. Similarly, one can define the regularized iterated integral from $0$ to $1$ of any word in the one-forms $\omega_0, \omega_1$, and in every case, it  is easy to show that it is  a linear combination of multiple zeta values. 
 
To verify that all (homotopy invariant) iterated integrals on $M$ are expressible in terms of multiple polylogarithms requires Chen's 
reduced bar construction.
    By Chen's general theory, the iterated integrals on 
  a manifold $M$  are
  described by the zeroth cohomology of the bar construction on the de Rham complex of  $M$. 
  To write this down explicitly for $M=\Pro^1\backslash \{0,1,\infty\}$, we can use the  rational model 
 $$\Q \oplus (\Q \,\omega_0 \oplus \Q \,\omega_1)  \hookrightarrow \Omega^{\dt}_{DR} (\mathbb{P}^1\backslash \{0,1,\infty \};\Q)$$
 which is a quasi-isomorphism of differential graded algebras. From this one 
   deduces that  $H^0(\B(\Omega^{\dt}_{DR}(\Pro^1\backslash\{0,1,\infty\};\Q))) \cong T^c(\Q\,\omega_0 \oplus \Q\,\omega_1)$, where $\B$ is the bar complex, and $T^c$ denotes the tensor coalgebra. This is   the $\Q$-vector space generated by  words in the forms $\omega_0, \omega_1$ (equipped with the shuffle product and the coproduct for which $\omega_0, \omega_1$ are primitive),  and leads to    integrals of the form  $(\ref{introitintmzv})$. The upshot is that the periods of the unipotent fundamental groupoid 
   $\pi^{un}_1(\Pro^1\backslash \{0,1,\infty\},0^+,z)$ are  multiple polylogarithms $(\ref{introIz})$, and the periods 
   of $\pi^{un}_1(\Pro^1\backslash \{0,1,\infty\},0^+,1^-)$ are multiple zetas $(\ref{intromzv})$.
   
   We need to generalize this  picture  to higher dimensions as follows. For $r\geq 1$, let 
  \begin{equation}\label{intromoddef} \Mod_{0,r+3} (\C)= \{ (t_1,\ldots, t_r) \in \C^r: t_i \neq 0,1 \ , t_i \neq t_j\}
  \end{equation}
  denote the moduli space of genus $0$ curves with $r+3$ marked points. One can likewise write down 
  a rational model for the de Rham complex in terms of the one-forms 
  $${dt_i -dt_j \over t_i -t_j}  \ , \  {dt_i \over 1- t_i} \ , \ {dt_i \over t_i } $$ 
  which satisfy certain quadratic relations due to Arnold. 
  Forgetting a marked point defines a  fibration $\Mod_{0,r+3}\rightarrow \Mod_{0,r+2}$,  and by   general properties of the bar construction of fibrations, one can likewise write down 
  all homotopy invariant iterated integrals on $\Mod_{0,r+3}$ \cite{Br} and show that they are expressible in terms of the functions
  \begin{equation}\label{introIfuncs}
  I_{n_1,\ldots, n_r} (t_1,\ldots, t_r) = \Li_{n_1,\ldots, n_r} \big( {t_1 \over t_2}, \ldots, {t_{r-1}\over t_r}, t_r\big) \ .
  \end{equation}

 The purpose of this paper is to generalize the above picture for $\Pro^1\backslash \{0,1,\infty\}$ to the case of a punctured complex elliptic curve $\E^{\times}$. In particular, we compute the periods of 
 $\pi^{un}_1(\E^{\times}, \basepnt,\xi)$, where $\rho,\xi \in \E^{\times}$  are finite basepoints (the case of tangential basepoints is similar and will be  postponed to a later paper).
 There are two parts:  first,  to write down the iterated integrals generalizing the left hand side of 
$(\ref{introIz})$  using Chen's general theory, and the second is to construct multiple elliptic  polylogarithm functions which correspond to the right-hand side of 
$(\ref{introIz})$.  In so doing, we are   forced  to consider the  higher-dimensional  configuration spaces $\En$ to construct the functions on $\E^{\times}$.

 \subsection{The elliptic case} Let $\E$ be an elliptic curve, viewed as the analytic manifold  $\C/   \Z  \tau \oplus \Z$, where $\tau \in \C$ satisfies $\Image (\tau)>0$.
 We first require a model for the de Rham complex on $\E^{\times}$. 
For this, we  construct a universal family of smooth one-forms
 \begin{equation} \label{introchoiceofmassey} \nu,\omega^{(0)}, \omega^{(1)},\omega^{(2)},\ldots \quad  \in \Ao^1(\E^{\times}) \ , 
 \end{equation}
  where $\omega^{(0)},\nu$ are closed and form a basis of $H^1(\E^{\times})$ which is compatible with its Hodge structure. 
 The forms $\omega^{(i)}$, $i\geq 1$ can be viewed as higher Massey products satisfying
  $$d\omega^{(i)} = \nu \wedge \omega^{(i-1)} \qquad \hbox{for } i \geq  1\ .$$
 Note that \emph{a priori}   $\E^{\times}$ has no natural $\Q$-structure on its de Rham complex.  However, the 
forms $(\ref{introchoiceofmassey})$ have good modularity and rationality properties as a function of the moduli $\tau$, and  there are  good  reasons
to take
$$X= \hbox{graded }  \Q \hbox{-algebra spanned by  the }  \nu, \omega^{(i)}, i\geq 0$$
as 
 a $\Q$-model for the $C^{\infty}$-de Rham complex on $\E^{\times}$, (indeed, $X\otimes_{\Q}\C \hookrightarrow \Ao^{\dt}(\E^{\times})$ is a quasi-isomorphism).
We  also define a higher-dimensional  model for the configuration space $\En$ of $n+1$ points on $\E$. It is an elliptic  version of Arnold's theorem describing the cohomology of the configuration space of $n$ points in $\Pro^1$.
 
 The next stage is to write down the  bar construction of $X$, 
 which defines a $\Q$-structure on the  iterated integrals on $\E^{\times}$. The bar construction has a filtration by the  length, and the associated graded is just the tensor coalgebra on $\omega^{(0)}$ and $\nu$:
 \begin{equation} \label{introbarX}
 \gr^{\ell} H^0 (\B(X)) \cong T^c(\Q\,\omega^{(0)} \oplus \Q\, \nu) \ . \end{equation} 
  The  Massey products $\omega^{(i)}$, for $i\geq 1$, give a canonical way to lift an element  of $(\ref{introbarX})$  to $H^0 (\B(X))$, and thus enable us to  write down explicitly all 
  the iterated integrals on $\E^{\times}$. These
 are indexed by any word in the two one-forms $\omega^{(0)}$ and $\nu$.
  The Hodge filtration on the space of iterated integrals is related to  the number of $\nu$'s.
  This completes the algebraic  description of the iterated integrals on $\E^{\times}$.

The main problem is then to write down explicit formulae for these  iterated integrals, and for this we write  the elliptic curve via its Jacobi uniformization
$$\E \cong \C^{\times} /q^{\Z}\ ,$$
where $q=\exp(2\pi i \tau)$.
In order to construct  multivalued functions on $\E^{\times}$, the basic idea is to average a multivalued function on $\C^{\times}$ with respect to  multiplication by $q$
 as  was done for the classical polylogarithms \cite{Bloch, Andrey}. 
 However, applying this idea naively to the multiple polylogarithms in one variable $(\ref{introIz})$ does not lead to elliptic functions. 
 Instead, the correct approach is to view the multiple polylogarithms in $r$ variables  $(\ref{intromultpoly})$, or rather their variants $(\ref{introIfuncs})$,  as multivalued functions on 
 $$\Mod_{0,r+3} \cong  \underbrace{\C^{\times}\times \ldots \times \C^{\times} }_r\backslash  \hbox{diagonals} $$
 and average with respect to the group $q^{\Z}\times \ldots \times q^{\Z}$ ($r$ factors).  Since polylogarithms have logarithmic singularities at infinity, the straightforward average  diverges, and so instead one must  take a weighted average with respect to some auxilliary parameters $u_i$ to dampen the singularities.  In short, one considers the functions
  \begin{equation}\label{introweightsum} 
  \sum_{m_1,\ldots, m_r\in \Z} u_1^{m_1}\ldots u_r^{m_r}  I_{n_1,\ldots, n_r}(q^{m_1}t_1,\ldots, q^{m_r} t_r)\end{equation}
  which converge uniformly under some conditions on the  $u_i$.  
   A considerable part of this paper is devoted to studying the 
 structure of the poles of $(\ref{introweightsum})$ in the $u_i$ variables, which are related to the geometry of 
 $\overline{\Mod}_{0,r+3}$ and the asymptotics of the polylogarithms $(\ref{introIfuncs})$ at infinity.  Finally,   writing $u_i=\exp(2i\pi \alpha_i)$ for $1\leq i\leq r$, 
  the multiple elliptic polylogarithms are defined to be the coefficients of (the finite part of) $(\ref{introweightsum})$ with respect to the $\alpha_i$. 
 The analysis involved in this  averaging procedure is quite general and should apply to a  class of functions of finite determination and moderate growth on certain toric varieties.

 The functions obtained in this way are multivalued functions on 
 $\En$.  By allowing some of the arguments $t_i$ of $(\ref{introweightsum})$ to degenerate to $1$, we obtain 
multivalued functions on   $\E^{\times}$. By computing the differential equations satisfied by these functions,   we see that they are iterated integrals in the forms
$(\ref{introchoiceofmassey})$
and, using the description of the bar construction of $X$, we deduce that all the iterated integrals on $\E^{\times}$ are of this form.
 
 An important caveat is that  the rational structure that we define on the de Rham fundamental group on $\E^{\times}$  is the correct $\Q$-structure from the point of view of averaging functions in genus zero, but   does \emph{not} coincide with the canonical $\Q$-structure in the special case
 when $\E$ is defined over $\Q$ (see \S5 of \cite{LR}).

 \subsection{Plan of the paper}
First,   $\S2$ consists of reminders on multiple polylogarithms and the moduli space of curves of genus $0$ which are used throughout the paper.
Thereafter, the exposition splits into  two parts - the first part, consisting of sections $3$,$4$, and $5$, concerns the de Rham complex of differential forms on $\En$. 
The second, consisting of  sections $6$, $7$ and $8$, concerns the procedure for averaging multiple polylogarithms. Since this is quite involved, we  give a separate  overview of the method in $\S6.1$.

In $\S3$ we use the Kronecker series  (Proposition-Definition \ref{Fproperties}) to define the fundamental one-forms  $(\ref{introchoiceofmassey})$.
 In $\S4$, we define some  differential graded algebras  $X_n$ and prove by a Leray spectral sequence argument that they are $\Q$-models for the 
 de Rham complex on $\En$.  In $\S5$ we use the models $X_n$ to study Chen's reduced bar construction on $\En$, and hence obtain an algebraic 
 description for the iterated integrals on $\En$. Some of the results of this section require some generalities on the bar construction of differential graded algebras which we decided to relegate to a separate paper \cite{BDR}.

In $\S6$, we study the general averaging procedure for functions on $\Mod_{0,n}(\C)$.  This requires constructing a certain partial compactification of $\Mod_{0,n}$ and analyzing the asymptotics of series in the neighbourhood of boundary divisors.   We apply this formalism to the classical multiple polylogarithms in \S7.   In $\S8$, which
is logically independent from the rest of the paper, we compute the asymptotics of the Debye polylogarithms at infinity in terms of a certain coproduct. 
The Debye multiple polylogarithms (definition \ref{defnclassicalDebye})  are essentially generating series  of multiple polylogarithms and are useful for simplifying many formulae.
In \S9, we treat the case of the classical elliptic polylogarithms (depth 1) and the double elliptic polylogarithms (depth 2) in detail.   
The two parts of the story recombine in \S10 where we prove that all iterated integrals on $\E^{\times}$, with respect to finite basepoints,  are obtained by averaging. 

\subsection{Related work} 
One of many motivations for this paper is the study  of mixed elliptic motives.  Since the Beilinson-Soul\'e conjectures are currently unavailable  in this case, 
our goal was to tease out the elementary consequences of such a  theory, and in particular, write down  the  underlying numbers and functions in the belief that they will find applications in other parts of mathematics. We learned  at a conference in Bristol in 2011 that there has been recent progress in 
 constructing categories of mixed elliptic motives \cite{Terasoma, Pat}, and universal elliptic motives \cite{HM} with similar goals. In this paper we completely neglected
 the Betti side,  which is dual to the bar construction, and its  relation to quantum groups and stable derivations. This  is treated in  \cite{CEE, En, Poll}. 
 Somewhat further afield, it may also be helpful to point out related work in the  
 profinite setting \cite{Nak}, and possible diophantine applications  \cite{Kim}.
\\

 \emph{Acknowledgements}.
%
%
The first author is partially supported by ERC grant 257368. The second author is partially supported by
`the National Research University Higher School of Economics' academic fund number 11-01-0133 for 2012-2013, and also 
AG Laboratory GU-HSE, RF government
grant, ag. 11 11.G34.31.0023. We thank the  Fondation des Sciences Math\'ematiques de Paris,  the  Institut Poncelet in Moscow, the MPIM, and Humboldt University for support 
and hospitality.

\section{Preliminaries: the rational case}

\subsection{Standard coordinates on $\Mod_{0,n+3}$} \label{sectStandardCoords}
 Let $\Mod_{0,n+3}$ denote the moduli space of curves of genus 0 with $n+3$ ordered marked points. By placing three of the marked points at $0,1,\infty$, it can be identified with an affine  hyperplane  complement: 
\begin{equation} \label{Mondef}
\Mod_{0,n+3}\cong  \{ (t_1,\ldots, t_n) \in \Pro^1\backslash\{0,1,\infty\}: t_i\neq t_j \}\ .
\end{equation}
We refer to the coordinates $t_i$ as simplicial coordinates. We will often write $t_{n+1}=1$. 
There is a smooth compactification  $\overline{\Mod}_{0,n+3}$, such that the complement $\overline{\Mod}_{0,n+3}\backslash \Mod_{0,n+3}$ is a normal crossing  divisor.
Its irreducible components are indexed by  partitions $S\cup T$ of the set of marked points, which we denote by $S|T$, or, $T|S$,  where $S\cap T=\emptyset$ and $|S|,|T|\geq 2$.
 The corresponding divisor is isomorphic to $\overline{\Mod}_{0,|S|+1}\times \overline{\Mod}_{0,|T|+1}$.

\subsection{Multiple polylogarithms} \label{sectMultiplePolyreminder} These are defined for $n_1,\ldots, n_r\in \N$  by 
\begin{equation}\label{Lindef} \Li_{n_1,\ldots, n_r}(x_1,\ldots, x_r) = \sum_{0<k_1<\ldots <k_r} {x_1^{k_1} \ldots x_r^{k_r} \over k_1^{n_1}\ldots k_r^{n_r}}
\end{equation}
which  converges absolutely for $|x_i|<1$, and therefore defines a  family of holomorphic functions in the neighbourhood of the origin.
We can also write these functions in simplicial coordinates, using standard notations:  
\begin{equation}\label{Idef} 
I_{n_1,\ldots, n_r} (t_1,\ldots, t_r) = \Li_{n_1,\ldots, n_r} \Big( {t_1 \over t_2} ,\ldots, {t_{{r-1}}\over t_{r}}, t_{r}\Big) \ .\end{equation}
We denote the quantities  $N=n_1+\ldots +n_r$ and $r$ by the weight  and the depth, respectively. There is an iterated  integral representation
\begin{equation}\label{itintrep}
I_{n_1,\ldots, n_r}(t_1,\ldots, t_r) =(-1)^r \int_{0\leq \sigma_1 \leq \ldots \sigma_r\leq 1} {d\sigma_1\over \sigma_1-\rho_1} \ldots  {d\sigma_{N} \over \sigma_N-\rho_N} \ ,
\end{equation}
where  $(\rho_1,\ldots, \rho_N) = (t_1^{-1}, 0^{n_1-1}, t_2^{-1},  0^{n_{2}-1},\ldots, t_{r}^{-1}, 0^{n_r-1})$, $N=n_1+\ldots +n_r$,  and $0^n$ denotes a string of $n$ $0$'s.
This is easily proved by expanding the integrand into a geometric series. 
One can deduce that they extend to multivalued functions on $\Mod_{0,n+3}(\C)  \subset (\C^{\times})^n$.
The following differential equation is easily verified from  $(\ref{Lindef})$.
\begin{eqnarray} \label{I11diff}
\qquad \qquad dI_{1,\ldots, 1}(t_1,\ldots, t_n)  =    \sum_{k=1}^n \big[dI_1\Big({ t_k \over t_{k+1}}\Big) - dI_1\Big({t_k \over t_{k-1}}\Big)\big] I_{1,\ldots, 1}(t_1,\ldots, \widehat{t_k},\ldots, t_n) 
\end{eqnarray}
where  by convention we take $dI_1(t_1/0) = 0$,  $t_{n+1} =1$ and $t_0=0$. The differential equations for the multiple polylogarithms $I_{n_1,\ldots, n_r}(t_1,\ldots, t_r)$ in the general case are easily computed and left to the reader, since they are not required in the sequel.

\begin{defn}  \label{defnclassicalDebye} The generating series of multiple Debye polylogarithms is:
$$\Lambda_r(t_1,\ldots, t_r;\beta_1,\ldots, \beta_r) = t_1^{-\beta_1}\ldots t_r^{-\beta_r} \!\!\sum_{m_1,\ldots, m_r\geq 1} I_{m_1,\ldots, m_r}(t_1,\ldots, t_r) \beta_1^{m_1-1}\ldots \beta_r^{m_r-1}$$
\end{defn}
One easily verifies from   $d\Li_n(t) = t^{-1}\Li_{n-1}(t)$, valid for $n\geq 2$, that
$$d\Lambda_1(t;\beta) = {t^{-\beta} } d\Li_1(t)\ .$$
In general, they satisfy a differential equation  which is entirely analogous to   equation $(\ref{I11diff})$ for the multiple 1-logarithm. Namely,  $d\Lambda_r(t_1,\sdots, t_r;\beta_1,\sdots, \beta_r)=$
\begin{eqnarray} 
&=& \sum_{k=1}^{r} d\Lambda_1\Big({ t_k \over t_{k+1}},\beta_k\Big) \Lambda_{r-1}(t_1,\sdots, \widehat{t}_k, \sdots, t_r; \beta_1,\sdots, \beta_{k}+\beta_{k+1},\sdots, \beta_r) \nonumber \\
&-&  \sum_{k=2}^{r} d\Lambda_1\Big({t_k \over t_{k-1}},\beta_k\Big) \Lambda_{r-1}(t_1,\sdots, \widehat{t}_k, \sdots, t_r; \beta_1,\sdots, \beta_{k-1}+\beta_{k},\sdots, \beta_r) \label{Debeqn}
\end{eqnarray}
where $\beta_{r+1}=0$, so the  last term in the first line is $d\Lambda_1(t_r;\beta_r)\Lambda_{r-1}(t_1,.\,. \,, t_{r-1};\beta_1,.\,.\,, \beta_{r-1})$.

\subsection{Asymptotics of regular nilpotent connections}
Let $X$ be a smooth projective complex variety, let $D\subset X$ be a simple normal crossing divisor, and let $U=X\backslash D$.
Let $V\subset X$ be a  simply connected open set  and let 
$z_1,\ldots, z_n$ denote local coordinates on $V$ such that $V\cap D=\cup_{i=1}^k \{z_i=0\}$, for some $k\leq n$. 

\begin{defn} \label{propunipgrowth} Let $f$ be a  multivalued holomorphic function on $U$ (i.e. $f$ is holomorphic on a  covering of $U$).  We say that  $f$ has locally unipotent mondromy  (or is locally unipotent) on $V$ if it  admits a finite expansion:
\begin{equation} \label{fexpansion} 
f(z_1,\ldots, z_n) = \sum_{I=(i_1,\ldots,i_k)} \log^{i_1}(z_1)\ldots \log^{i_k}(z_k) f_I (z_{1},\ldots, z_n)\ , 
\end{equation}
where   $f_I(z_1,\ldots, z_n)$ is holomorphic on $V$.  We say that  a multivalued function $f$  on $X\backslash D$ is  unipotent  if it is everywhere locally unipotent. \end{defn}

The main class of  functions which are studied in this paper, and in particular the multiple polylogarithms, are unipotent.
The expansion $(\ref{fexpansion})$ can be  characterized by a property that $f$ has `moderate' growth near $D$ (in particular, no poles), and  
that for sufficiently large $N$,  $(\mathcal{M}_{i_1}-id)\ldots (\mathcal{M}_{i_N}-id) f=0$ for all  
$i_1,\ldots, i_N \in \{1,\ldots, k\}$,  where $\mathcal{M}_i$ denotes analytic continuation around a small loop encircling $z_i=0$.

\section{Differential forms on $\En$}

 \subsection{Basic notations.} 
Let $\e(z)$ denote the function $\e(z)=\exp(2\pi i z)$.
In accordance with  \cite{W},  Greek letters $\xi$ and $\eta$ denote coordinates  on $\Bbb C$, the letter $\alpha$ will denote a formal variable, and $\tau$ will
denote a point of the upper half-plane $\Bbb H= \{\tau\in \C: \Image(\tau)>0\}$.
Put $z=\e(\xi)$, $w=\e(\eta)$ and $q=\e(\tau)$. Then 
$z,w\in \Bbb C^*$, and  $0<|q|<1$.

\subsection{Uniformization} \label{sectUniformization}
We represent a complex elliptic curve  $\E=\C/(\tau \Z+\Z)$, where $\tau\in \Hp$,  and $q=\e(\tau)$,  via its Jacobi uniformization 
$$\E \overset{\sim}{\To}\C^* /q^{\Z}\ .$$
Let  $\xi$ (resp. $z=\e(\xi)$)
 denote the coordinate on $\E$ (resp. $\C^*$). The punctured curve $\E^{\times}=\E\backslash \{0\}$ is isomorphic to   $\C^*\backslash\{q^{\Z}\}/q^{\Z}$. For $n\geq 1$, let $\E^{(n)}$ denote the configuration space of $n+1$ distinct points on $\E$ modulo translation by $\E$. Thus 
 $$\E^{(n)} \cong \{(\xi_1,\ldots, \xi_n)\in  (\E\backslash \{0\})^n : \xi_i \neq \xi_j \hbox{ for } i\neq j\}\ ,$$
 and has an action of $\Sym_{n+1}$ which permutes the marked points. Setting  $t_i=\e(\xi_i)$ for $1\leq i \leq n$, and setting $t_{n+1}=1$,  gives an isomorphism 
  $$\E^{(n)}\cong  \{(t_1,\ldots, t_n)\in \C^*, t_i \notin q^{\Z} t_j \hbox{ for } 1\leq  i<j \leq n+1\}/q^{\Z^{n}}\ .$$
The set on the right-hand side is the largest open subset of $\Mod_{0,n+3}(\C)$   stable under translation by $q^{\Z^{n}}$. 
 The symmetry group $\Sym_{n+1}$ of $\E^{(n)}$   can thus be identified with  the subgroup of $\Aut(\Mod_{0,n+3}(\C))\cong\Sym_{n+3}$ which fixes the marked points $0$ and $\infty$.

In order to fix branches when considering multivalued functions, and  to  ensure convergence when averaging functions on $\Mod_{0,n+3}(\C)$, we must fix
 certain domains in $\E^{(n)}$.  Let $D$ be  the standard open fundamental domain for $\Z+\tau \Z$ (the parallelogram with corners
 $0,1,\tau, 1+\tau$),  and let 
\begin{equation}\label{Udef}
U=\{(\xi_1,\ldots, \xi_n)\in D^n:  \Image(\xi_n)<\ldots<\Image(\xi_1)\}\end{equation}

\subsection{Elliptic functions.} Let  $\theta(\xi,\tau)$ denote   ``two thirds of
the Jacobi triple formula'':
\begin{equation} \theta(\xi,\tau)=q^{1/12}(z^{1/2}-z^{-1/2})\prod_{j=1}^{\infty}(1-q^jz)
\prod_{j=1}^{\infty}(1-q^jz^{-1})=\frac{\theta_{11}(\xi,\tau)}{\eta(\tau)},
\end{equation}
where $\theta_{11}(\xi,\tau)$ is the standard odd elliptic theta function
and $\eta(\tau)$ is the Dedekind $\eta$-function $q^{1/24}
\prod_{j=1}^{\infty}(1-q^j)$. Recall from \cite{W}  that the Eisenstein summation  of a double
series  $(a_{m,n})_{m,n\in \Z}$ is defined by:
$${\sum_{m,n}} \!\! \phantom{|}_e \, \,  a_{m,n}  =\lim_{N\to\infty}\lim_{M\to\infty}  \sum_{n=-N}^N\sum_{m=-M}^M a_{m,n}$$
Define the Eisenstein functions $E_j(\xi,\tau)$ and the Eisenstein series
$e_j(\tau)$, for $j\geq 1$, by
$$E_j(\xi,\tau)={\sum_{m,n}} \!
\phantom{|}_e\frac1{(\xi+m+n\tau)^j},\quad e_j(
\tau)={\sum_{m,n}}^\prime\! \!\! \phantom{|}_e\frac1{(
m+n\tau)^j},$$ where $\prime$  means that we omit $(m,n)=(0,0)$ in the summation.

\begin{lem} \label{lemEj} It follows from the definitions that  for $j\geq 1$, 
$${\partial \over \partial \xi} E_j(\xi,\tau) = -jE_{j+1}(\xi,\tau)\ , \qquad   {\partial\over \partial \xi}
\log (\theta(\xi,\tau))= E_{1}(\xi,\tau), $$
and $E_1(\alpha,\tau)=1/\alpha-\sum_{k=0}^\infty
e_{k+1}(\tau)\alpha^k$.  The series $e_j(\tau)$
vanish for odd indices $j$. \end{lem}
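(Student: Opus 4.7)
The plan is to verify the four assertions in turn directly from the defining Eisenstein sums, the main care being needed in the handling of the conditionally convergent series (namely those for $E_1$ and $e_1$).

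For the identity $\partial_{\xi} E_j = -j E_{j+1}$: when $j \geq 2$ the double series defining $E_j$ converges absolutely and uniformly on compact subsets of $\E \setminus \{0\}$, so one may differentiate term by term. The case $j=1$ is the only subtle one: for fixed $n$ and $N$ the inner finite sum $\sum_{m=-M}^{M} (\xi+m+n\tau)^{-1}$ can be differentiated with respect to $\xi$ under the summation, the outer Eisenstein limit of the derivative (a doubly symmetric sum of squares) is now absolutely convergent, and a straightforward estimate on the tails lets one exchange the limits. This yields $\partial_\xi E_1 = -E_2$.

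For $\partial_\xi \log \theta(\xi,\tau) = E_1(\xi,\tau)$, take the logarithmic derivative of the Jacobi product; the $q^{1/12}$ factor drops out and, setting $z=\e(\xi)$ so that $\partial_\xi = 2\pi i z \partial_z$, one obtains
\[
\partial_\xi \log\bigl(z^{1/2}-z^{-1/2}\bigr) = \pi \cot(\pi \xi), \qquad \partial_\xi \log \prod_{j\geq 1}(1-q^j z)(1-q^j z^{-1}) = -2\pi i \sum_{j,k \geq 1} q^{jk}(z^k - z^{-k}).
\]
The classical identity $\pi \cot(\pi \xi) = \sum_{m \in \Z}\!{}_e\, (\xi+m)^{-1}$ lets us rewrite the first term as the $n=0$ contribution to $E_1$; expanding each geometric factor $q^{jk}z^{\pm k}$ as a Fourier series and resumming in Eisenstein order identifies the remaining terms with $\sum_{n\neq 0}\sum_m\!{}_e\, (\xi+m+n\tau)^{-1}$, giving $E_1$.

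For the Taylor expansion of $E_1(\alpha,\tau)$: separate the $(m,n)=(0,0)$ term to produce the pole $1/\alpha$, and for each remaining $(m,n)$ expand the geometric series
\[
\frac{1}{\alpha+m+n\tau} = \sum_{k \geq 0} \frac{(-1)^k \alpha^k}{(m+n\tau)^{k+1}} \,.
\]
Summing in Eisenstein order gives $\sum_{k\geq 0} (-1)^k e_{k+1}(\tau)\alpha^k$. Combining this with the vanishing $e_j(\tau)=0$ for odd $j$ (so that only odd $k$ contribute, where $(-1)^k = -1$) yields the stated sign. That vanishing is immediate for $j \geq 3$ by absolute convergence and the involution $(m,n)\mapsto(-m,-n)$; for $j=1$ one uses that the inner Eisenstein sum equals $\sum_{m}\!{}_e\,(m+n\tau)^{-1} = \pi \cot(\pi n \tau)$ for $n\neq 0$, which is odd in $n$, so pairing $n$ with $-n$ in the outer sum kills $e_1(\tau)$.

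The main obstacle is ensuring all manipulations respect the Eisenstein order of summation, since neither of the limits can be interchanged naively; at each step one must reduce to a symmetric partial sum in $m$ (absolutely or by the cotangent identity) before passing to the outer limit in $n$. Once that protocol is observed, each assertion reduces to a routine computation.
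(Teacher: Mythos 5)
The paper offers no proof of this lemma at all; the words ``It follows from the definitions'' are part of the statement, with the real source being Weil's book \cite{W}. So your proposal is not comparable to a paper argument but rather supplies the verification the authors take for granted. Your overall structure is right: treat $j\geq 2$ by absolute convergence, handle $j=1$ with the cotangent representation of the inner sum, take the logarithmic derivative of the Jacobi product and reorganize into Eisenstein order, expand the geometric series for the Taylor coefficients, and kill odd $e_j$ by the $(m,n)\mapsto(-m,-n)$ involution (absolutely convergent for $j\geq 3$, via oddness of the inner $\pi\cot(\pi n\tau)$ for $j=1$). All of this is correct and is the standard route.

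One imprecision you should fix. In the $j=1$ case of $\partial_\xi E_1 = -E_2$ you say the differentiated double series is ``now absolutely convergent''; that is false. The series defining $E_2$ is only conditionally convergent in the Eisenstein sense: $\sum_{m,n}|m+n\tau|^{-2}$ diverges. What is true, and what suffices, is that (i) for each fixed $n$ the inner sum over $m$ of $(\xi+m+n\tau)^{-2}$ is absolutely convergent (so it is the genuine derivative of $\pi\cot(\pi(\xi+n\tau))$), and (ii) the resulting sum over $n$ of $-\pi^2\csc^2(\pi(\xi+n\tau))$ converges uniformly on compacta of $\C\setminus(\Z+\tau\Z)$ because $\csc^2(\pi(\xi+n\tau))$ decays exponentially in $|n|$. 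Local uniform convergence of the term-wise differentiated outer series is what licenses exchanging $\partial_\xi$ with the outer Eisenstein limit, not absolute convergence of the double sum. With this correction the argument is complete.
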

The Weierstrass function $\wp$ is equal to $E_2-e_2$, and
$\wp'=-2E_3$. The coefficients of the Weierstrass  equation
${\wp'}^2=4 \wp^3-g_2\wp-g_3$ are given by  $g_2=60e_4, g_3=140e_6$.

\subsection{The Kronecker function.} See also \cite{Andrey,LR, Z} for further details.

\begin{propdef}  \label{Fproperties} The following three definitions are
equivalent:
 \begin{eqnarray}
i) \qquad  F(\xi,\eta,\tau) &= &\frac{\theta'(0)\theta(\xi+\eta)}{\theta(\xi)\theta(\eta)}\ , \nonumber \\
ii) \qquad F(\xi,\eta,\tau) &= &-2\pi i\left(\frac{z}{1-z}+\frac1{1-w}+\sum_{m,n>0}(z^mw^n-z^{-m}w^{-n})q^{mn}\right)\ , \nonumber \\
iii) \qquad F(\xi,\alpha,\tau)&=& \frac1\alpha\exp\left(-\sum_{j\geq 1} \frac{(-\alpha)^j}j(E_j(\xi,\tau)-e_j(\tau))\right) \nonumber \ .
\end{eqnarray}
\end{propdef}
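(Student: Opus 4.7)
\emph{Plan.} I take (i) as the definition of $F(\xi,\eta,\tau)$ and establish (i)$\Leftrightarrow$(iii) as an identity of formal Laurent series in $\alpha$, and (i)$\Leftrightarrow$(ii) as an identity of meromorphic functions on $\C^2$. Both arguments rely on the analytic properties of $\theta$ and the Eisenstein functions recorded in Lemma~\ref{lemEj}.

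\emph{Proof of (i)$\Leftrightarrow$(iii).} Set $G(\xi,\alpha,\tau):=\alpha F(\xi,\alpha,\tau)$. From $\theta(\alpha)=\theta'(0)\alpha+O(\alpha^3)$ one reads off $G(\xi,0,\tau)=1$, so $\log G$ is a well-defined formal power series in $\alpha$ with vanishing constant term. Differentiating (i) in $\alpha$ and using $\partial_\xi\log\theta=E_1$ from Lemma~\ref{lemEj} gives
\begin{equation*}
\partial_\alpha\log G=\tfrac{1}{\alpha}+E_1(\xi+\alpha,\tau)-E_1(\alpha,\tau).
\end{equation*}
Iterating $\partial_\xi E_j=-jE_{j+1}$ yields $\partial_\xi^kE_1=(-1)^kk!\,E_{k+1}$, whence the Taylor expansion $E_1(\xi+\alpha,\tau)=\sum_{k\geq 0}(-\alpha)^kE_{k+1}(\xi,\tau)$; combined with $\alpha^{-1}-E_1(\alpha,\tau)=\sum_{k\geq 0}e_{k+1}(\tau)\alpha^k$ and termwise integration in $\alpha$ this produces
\begin{equation*}
\log G(\xi,\alpha,\tau)=-\sum_{j\geq 1}\tfrac{(-\alpha)^j}{j}E_j(\xi,\tau)+\sum_{j\geq 1}\tfrac{\alpha^j}{j}e_j(\tau).
\end{equation*}
Since $e_j(\tau)=0$ for odd $j$, one may replace $\alpha^j$ by $(-\alpha)^j$ in the second sum at no cost, which collapses the two sums into the exponent of (iii).

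\emph{Proof of (i)$\Leftrightarrow$(ii).} My plan is to argue by matching poles, residues, and quasi-periodicity. The double series in (ii) converges absolutely in the annulus $|q|<|z|,|w|<1$ and defines a meromorphic function there. From (i), $F$ is meromorphic on $\C^2$ with simple poles exactly along the lattice translates $\xi\in\Z\tau+\Z$ and $\eta\in\Z\tau+\Z$; using $\theta(\xi)\sim\theta'(0)\xi$ one finds $F\sim 1/\xi\sim-2\pi i/(1-z)$ near $\xi=0$, matching the singularity of the leading term $-2\pi i z/(1-z)$ in (ii), and symmetrically near $\eta=0$. Both sides obey the same quasi-periodicity under $\xi\mapsto\xi+1,\xi+\tau$ (and likewise in $\eta$): for (i) this follows from the transformation of $\theta$, and for (ii) from a reindexing of the double sum after $z\mapsto qz$. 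The ratio of the two sides is therefore an entire, doubly periodic function of $\xi$ (and of $\eta$), hence constant, and the residue comparison at $\xi=0$ fixes this constant to $1$.

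\emph{Main obstacle.} The delicate point lies in the analytic comparison (i)$\Leftrightarrow$(ii): the series in (ii) is only conditionally convergent in the global sense, and the asymmetry between $z/(1-z)$ and $1/(1-w)$ reflects the Eisenstein ordering (first in $n$, then in $m$) implicit in the definition. Verifying the correct quasi-periodicity under $\xi\mapsto\xi+\tau$ requires carefully tracking boundary contributions arising from the reindexing of the partial sums. The cleanest route is to perform the pole-and-residue comparison in the absolute-convergence annulus $|q|<|z|,|w|<1$ and then extend by quasi-periodicity; as an independent cross-check one may substitute the Jacobi triple product for $\theta$ directly into (i), expand each $\log(1-q^mz)$ as a geometric series, and recognize the resulting quadruple sum as the exponent of (ii).
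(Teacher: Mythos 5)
Your proof of (i)$\Leftrightarrow$(iii) is correct and is exactly the route the paper sketches: take the logarithmic derivative of $F$ in $\alpha$, use $\partial_\xi\log\theta=E_1$ to reduce it to $\tfrac{1}{\alpha}+E_1(\xi+\alpha)-E_1(\alpha)$, Taylor-expand $E_1$ via $\partial_\xi^kE_1=(-1)^kk!\,E_{k+1}$ and the Laurent expansion of $E_1$ at the origin from Lemma~\ref{lemEj}, integrate termwise, and absorb the sign via the vanishing of odd $e_j$. No gaps there.

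For (i)$\Leftrightarrow$(ii) the paper simply cites Weil \cite{W} and offers no argument, so your pole--residue--quasiperiodicity sketch is a genuinely different (and more self-contained) route than the paper takes. The outline is sound: both sides are meromorphic with simple poles precisely at $z\in q^{\Z}$ and $w\in q^{\Z}$, the normalizations agree via $F\sim 1/\xi$ as $z\to 1$, and once quasi-periodicity of (ii) under $z\mapsto qz$ is in hand, Liouville forces the ratio to be constant, fixed to $1$ by the residue at $\xi=0$. Your stated region of absolute convergence $|q|<|z|,|w|<1$ is indeed correct: writing $|z|=|q|^a$, $|w|=|q|^b$ with $a,b\in(0,1)$, the exponent $mn-am-bn=(m-b)(n-a)-ab$ grows like $mn$, so the double series of absolute values converges even though individual early terms may exceed $1$. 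You correctly flag the delicate point: establishing the quasi-periodicity $F(q z,w)=w^{-1}F(z,w)$ directly from the double series requires a careful reindexing of the Eisenstein-ordered partial sums, and this is precisely what the reference to \cite{W} outsources. One small infelicity: your closing cross-check speaks of recognizing a quadruple sum as ``the exponent of (ii),'' but (ii) is not an exponential; what you presumably intend is to compare the $\log$ of the triple-product form of (i) with the Fourier expansion of $\log F$ (or equivalently with the logarithmic derivative $E_1(\xi)+E_1(\eta)-E_1(\xi+\eta)$), which is indeed a standard way to rederive the identity.
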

The equivalence of  $(i)$ and $(ii)$  is proved in
 \cite{W}.  The equivalence of  $(i)$ and $(iii)$ 
follows  by computing the logarithmic derivative of $F$,  from the relationship between $E_1$ and
$\log (\theta)$ (lemma \ref{lemEj}), and the  Taylor 
expansion of $E_1$ at a  point $\alpha$.  The following properties of the Kronecker function $F$ will be important for the sequel.

\begin{prop} \label{prop3prop}  $F(\xi,\eta,\tau)$ has the following properties:

i) Quasi-periodicity with respect to  $\xi\mapsto \xi+1$ and $\xi \mapsto \xi +\tau$:
$$F(\xi+1,\eta,\tau)= F(\xi,\eta,\tau) \qquad F(\xi+\tau,\eta,\tau)=
w^{-1}F(\xi,\eta,\tau)$$

ii)  The mixed heat equation: $$2\pi i {\partial
F\over \partial \tau}={\partial^2 F \over \partial \xi\partial\eta}\ .$$

iii) The Fay identity:  
$$F(\xi_1,\eta_1,\tau)F(\xi_2,\eta_2,\tau)=
F(\xi_1,\eta_1+\eta_2,\tau)F(\xi_2-\xi_1,\eta_2,\tau)+
F(\xi_2,\eta_1+\eta_2,\tau)F(\xi_1-\xi_2,\eta_1,\tau)\ .$$
\end{prop}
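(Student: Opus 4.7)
The plan is to treat each property in turn, using whichever of the three representations of $F$ in Proposition-Definition \ref{Fproperties} is most convenient. Part (i) follows from the classical quasi-periodicity of $\theta$: a direct computation with the product in (i) gives $\theta(\xi+1,\tau)=-\theta(\xi,\tau)$ and $\theta(\xi+\tau,\tau)=-q^{-1/2}z^{-1}\theta(\xi,\tau)$. Substituting into $F=\theta'(0)\theta(\xi+\eta)/(\theta(\xi)\theta(\eta))$: under $\xi\mapsto\xi+1$ the two minus signs from numerator and denominator cancel; under $\xi\mapsto\xi+\tau$ the $-q^{-1/2}$ factors cancel between $\theta(\xi+\eta+\tau)$ and $\theta(\xi+\tau)$, and of the exponential factors $e^{-2\pi i(\xi+\eta)}$ and $e^{-2\pi i\xi}$ only the residual $e^{-2\pi i\eta}=w^{-1}$ survives.

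Part (ii) is cleanest from the Laurent expansion (ii). The two elementary summands $z/(1-z)$ and $1/(1-w)$ are independent of $\tau$ and of one of the two variables, hence annihilated by both sides of the heat equation. On the remaining double sum, $\partial_\xi\partial_\eta(z^mw^n-z^{-m}w^{-n})=(2\pi i)^2mn(z^mw^n-z^{-m}w^{-n})$ and $2\pi i\,\partial_\tau q^{mn}=(2\pi i)^2mn\,q^{mn}$, so both operators act termwise by the same scalar $(2\pi i)^2mn$; uniform convergence of the series on compacta avoiding $\xi,\eta\in\Z+\tau\Z$ justifies termwise differentiation.

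For Part (iii) I fix $\xi_2,\eta_1,\eta_2$ generic and study $G(\xi_1):=\mathrm{LHS}-\mathrm{RHS}$ as a meromorphic function of $\xi_1$. The theta representation (i) of Proposition-Definition \ref{Fproperties} shows that each factor $F(\cdot,\cdot,\tau)$ has simple poles with residue $1$ at the lattice zeros of $\theta$ in its first argument. At $\xi_1=0$ both sides carry a simple pole of residue $F(\xi_2,\eta_2)$; at $\xi_1=\xi_2$ the LHS is regular, while the RHS summands contribute residues $-F(\xi_2,\eta_1+\eta_2)$ from $F(\xi_2-\xi_1,\eta_2)=-1/(\xi_1-\xi_2)+O(1)$ and $+F(\xi_2,\eta_1+\eta_2)$ from $F(\xi_1-\xi_2,\eta_1)$, which cancel. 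Quasi-periodicity translates these poles across the lattice, so $G$ is entire in $\xi_1$. Using part (i) one verifies $G(\xi_1+1)=G(\xi_1)$ and $G(\xi_1+\tau)=w_1^{-1}G(\xi_1)$ (with $w_i:=e^{2\pi i\eta_i}$): in the first RHS summand the factors $(w_1w_2)^{-1}$ from $F(\xi_1,\eta_1+\eta_2)$ and $w_2$ from $F(\xi_2-\xi_1,\eta_2)$ combine to $w_1^{-1}$, while the second summand picks up $w_1^{-1}$ directly from $F(\xi_1-\xi_2,\eta_1)$. A Fourier expansion $G(\xi_1)=\sum_n a_n e^{2\pi in\xi_1}$ then gives $a_n(q^n-w_1^{-1})=0$, forcing $G\equiv 0$ whenever $\eta_1\notin\Z+\tau\Z$, and the full identity extends by analytic continuation in $\eta_1$. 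The main point of care is the bookkeeping of residues and quasi-periodicity factors in this last part; parts (i) and (ii) are essentially mechanical once the right representation of $F$ is chosen.
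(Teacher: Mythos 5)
Parts (i) and (ii) follow exactly the paper's own route: (i) from the theta-quotient representation, (ii) by termwise differentiation of the double Lambert series. The paper's proof of these is just as terse as yours and uses the same representations, so nothing to add there.

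For part (iii) you take a genuinely different route. The paper simply cites the Fay trisecant equation (together with the third, exponential representation of $F$) as an external fact and references \cite{Mum}, whereas you give a self-contained Liouville-type argument: show that $G(\xi_1) := \mathrm{LHS}-\mathrm{RHS}$ has vanishing residues at its only candidate poles ($\xi_1\equiv 0$ and $\xi_1\equiv\xi_2$ mod the lattice), verify that $G$ inherits the quasi-periodicity factor $w_1^{-1}$ under $\xi_1\mapsto\xi_1+\tau$, and conclude via the Fourier expansion $a_n(q^n-w_1^{-1})=0$ that $G\equiv 0$ off the codimension-one locus $\eta_1\in\Z+\tau\Z$, then extend by continuity. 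Your residue bookkeeping and the quasi-periodicity factors all check out (in particular $F(\xi-\tau,\eta)=e^{2\pi i\eta}F(\xi,\eta)$ is used correctly for the factor $F(\xi_2-\xi_1,\eta_2)$). The trade-off is standard: the paper's appeal to Fay's identity is shorter and places the result in its classical context, while your argument is elementary and makes the proposition logically independent of Mumford's treatment of theta functions. Either is acceptable; yours is more complete as a self-contained exposition.
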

\begin{proof}
The quasi-periodicity is immediate from the first definition of $F$.
The mixed heat equation follows from the second definition 
of $F$. The last statement is a consequence of the third representation of $F$,
and the Fay trisecant equation (see \cite{Mum}).
\end{proof}

The following formula is an  easy corollary of $iii)$:
\begin{equation}\label{Faydiff}
F(\xi,\alpha_1)
F^\prime_2(\xi,\alpha_2)-F^\prime_2(\xi,\alpha_1) F(\xi,\alpha_2)
=F(\xi,\alpha_1+\alpha_2)(E_2(\alpha_1)-E_2(\alpha_2))\ , \end{equation}
where $F^\prime_2$ denotes  the derivative of $F$ with respect to its
second argument.

\subsection{Massey products on  $\En$}  \label{sectMassey} We use the Eisenstein-Kronecker series $F$ to write down some explicit  one-forms on $\En$.
First consider a single elliptic curve $\E^\times$ with coordinate $\xi$ as above.  Write $\xi=s+r\tau$, where $r,s\in \R$ and $\tau$ is fixed, and 
 let $\omega=d\xi$ and   $\nu = 2 \pi i dr$. The classes $[\omega],[\nu]$ form a basis for $H^1(\E^\times;\C)$.

\begin{lem} \label{lemomega} The form  $\Omega(\xi;\alpha) = \e(\alpha r) F(\xi;\alpha)d\xi$
is invariant under elliptic transformations $\xi \mapsto \xi+\tau$ and $\xi \mapsto \xi+1$, and satisfies
$d\, \Omega(\xi;\alpha) =   \nu \alpha \wedge \Omega(\xi;\alpha) $.
\end{lem}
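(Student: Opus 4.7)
The plan has two independent parts, both essentially mechanical given the tools already on the table.

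For the invariance, I would separate the two elliptic transformations. Under $\xi \mapsto \xi+1$, the real/imaginary decomposition $\xi = s + r\tau$ gives $s \mapsto s+1$ and $r$ fixed, so both $\e(\alpha r)$ and $d\xi$ are unchanged; by Proposition~\ref{prop3prop}(i), $F(\xi+1,\alpha,\tau) = F(\xi,\alpha,\tau)$, and $\Omega$ is invariant. Under $\xi \mapsto \xi+\tau$, we have $r \mapsto r+1$ and $s$ fixed, so $\e(\alpha r) \mapsto \e(\alpha)\e(\alpha r)$, while by Proposition~\ref{prop3prop}(i), $F(\xi+\tau,\alpha,\tau) = w^{-1} F(\xi,\alpha,\tau) = \e(-\alpha) F(\xi,\alpha,\tau)$. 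The two scalar factors cancel, and $d\xi$ is again invariant, so $\Omega$ is invariant.

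For the differential equation, the key observation is that $F(\xi,\alpha,\tau)$ is holomorphic in $\xi$, so its exterior derivative is $F_\xi(\xi,\alpha,\tau)\, d\xi$, a $(1,0)$-form proportional to $d\xi$. On the other hand, $\e(\alpha r)$ depends only on the real coordinate $r$, and
\[
d\bigl(\e(\alpha r)\bigr) = 2\pi i\alpha\, \e(\alpha r)\, dr = \alpha\, \e(\alpha r)\, \nu.
\]
Writing
\[
d\Omega = \bigl(d\bigl(\e(\alpha r)\bigr) F(\xi,\alpha,\tau) + \e(\alpha r)\, F_\xi(\xi,\alpha,\tau)\, d\xi\bigr) \wedge d\xi,
\]
the second term vanishes because $d\xi \wedge d\xi = 0$. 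What remains is
\[
d\Omega = \alpha\, \nu \wedge \e(\alpha r)\, F(\xi,\alpha,\tau)\, d\xi = \alpha\, \nu \wedge \Omega,
\]
as claimed.

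There is no real obstacle; the only point that requires a moment's care is tracking the quasi-periodicity factor against the averaging factor $\e(\alpha r)$, which is precisely the role for which $\e(\alpha r)$ was inserted. The holomorphicity of $F$ in $\xi$ is what forces the $d\xi$-component of the differential to be absorbed by the wedge, so that only the $dr$-component of $d(\e(\alpha r))$ contributes.
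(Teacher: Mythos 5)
Your proof is correct and is precisely the "straightforward calculation using Proposition~\ref{prop3prop}(i)" that the paper's one-line proof alludes to: quasi-periodicity of $F$ cancels against the factor $\e(\alpha r)$ for invariance, and the differential follows because the $d\xi$-component of $d(\e(\alpha r)F)$ dies against $d\xi$, leaving only the $\nu$-component from $d(\e(\alpha r))$.
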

\begin{proof} Straightforward calculation using proposition \ref{prop3prop}i).
\end{proof}
We can view  $\Omega(\xi;\alpha)$ as a generating series  of   one-forms  on $\E^\times$.  Let $\xi_1,\ldots, \xi_{n}$ denote  the usual holomorphic coordinates on $\E^\times \times\ldots \times \E^{\times}$  and set $\nu_i =2 i \pi dr_i$.

\begin{defn} Let $\xi_0=0$ and  define  one forms  $\omega_{i,j}^{(k)} \in \mathcal{A}^{1}(\En)$ for all $0\leq i\leq j\leq n$ and $k\geq 0$ of type $(1,0)$ by the generating series:
\begin{equation} 
\Omega(\xi_i-\xi_j;\alpha) = \sum_{k\geq 0} \omega_{i,j}^{(k)} \alpha^{k-1}\ .
\end{equation}
\end{defn} 
We clearly have $\omega^{(k)}_{i,i}=0$ and $\omega_{i,j}^{(k)} +(-1)^k\omega_{j,i}^{(k)} =0$ for all $i,j,k$.
The leading terms $\omega^{(0)}_{i,j}$ are equal to $d\xi_i -d\xi_j $  and therefore satisfy the relations:
\begin{equation} \label{omega0rel}
\omega^{(0)}_{i,j}+\omega^{(0)}_{j,k}= \omega^{(0)}_{i,k} \quad \hbox{for all } i,j,k \ .
\end{equation}
The higher terms  $\omega^{(k)}_{i,j}$  can be viewed as Massey products via the equation: 
\begin{equation} \label{omegaijMassey}
 d \, \omega^{(k+1)}_{i,j} = (\nu_{i}-\nu_j)\wedge \omega^{(k)}_{i,j}\  \hbox{ for } k\geq 0 \ ,
 \end{equation}
which follows from lemma \ref{lemomega}. The Fay identity implies that  
\begin{eqnarray}\label{OmegaFay}
\qquad\qquad  \Omega(\xi_i-\xi_{\ell};\alpha) \wedge\Omega(\xi_{j}-\xi_{\ell} ;\beta)  & + &  \Omega(\xi_{j}-\xi_{i};\beta)\wedge \Omega(\xi_i-\xi_{\ell};\alpha+\beta) \\
 &+  &  \Omega(\xi_{j}-\xi_{\ell};\alpha+\beta) \wedge \Omega(\xi_i-\xi_j;\alpha)   \quad = \quad 0  \nonumber
 \end{eqnarray}
which gives rise to infinitely many quadratic relations between the $\omega^{(k)}_{i,j}$. Finally, the definition of  $F$ shows that the residues of these forms are given by
 \begin{equation} \label{OmegaRes}
  \mathrm{Res}_{\xi_i=\xi_j}  \, \omega^{(k)}_{i,j} =  2i \pi \,\delta_{1k}\ ,
 \end{equation}
where $\delta$ denotes the Kronecker delta. 
 Now consider the projection  $\En\rightarrow \E^{(n-1)}$  given by $(\xi_1,\ldots, \xi_n) \rightarrow (\xi_1,\ldots, \xi_{n-1})$. Its fibers  $\E_{F_n}$ are  isomorphic to the  punctured elliptic curve $\E^{\times}\backslash \{ \xi_1,\ldots, \xi_{n-1} \}$ with  coordinate    $\xi_n$. Let $\overline{\omega}^{(k)}_{i,j}$ (resp. $\overline{\nu}_n$) denote the  relative forms obtained by restricting  $\omega^{(k)}_{i,j}$ (resp. $\nu_n$)
to the fiber. Clearly $\overline{\omega}^{(0)}_{n,i} = d\xi_n $ for all $i$.
\begin{lem} \label{lemindeponfib}
The  1-forms $\{\overline{\nu}_n$, $d\xi_n$,  $\overline{\omega}^{(k)}_{n,i}$ for $k\geq 1$, all $i\}\subset  \mathcal{A}^1(\E_{F_n})$, and the 2-forms
\begin{equation}\label{indepdeg2onfib} \{\overline{\nu}_n\wedge d\xi_n\ , \  \overline{\nu}_n \wedge \overline{\omega}^{(k)}_{n,i} \hbox{ for } k\geq 1, \hbox{ all } i \} \subset \mathcal{A}^2(\E_{F_n})
\end{equation}
  are linearly independent over $\C$.
\end{lem}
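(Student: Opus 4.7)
The plan is to separate the forms by Hodge type and then by pole behavior, using the Massey recursion \eqref{omegaijMassey} to reduce everything to a residue calculation at the punctures.

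First, observe that $\overline{\nu}_n = \frac{2\pi i}{\tau - \bar\tau}(d\xi_n - d\bar\xi_n)$ has a nonzero $d\bar\xi_n$-component while $d\xi_n$ and every $\overline{\omega}^{(k)}_{n,i}$ are of pure type $(1,0)$ on the fiber (scalar multiples of $d\xi_n$). Comparing $d\bar\xi_n$-parts in any linear relation $a\overline{\nu}_n + b\,d\xi_n + \sum c_{k,i}\overline{\omega}^{(k)}_{n,i} = 0$ yields $a = 0$. Writing $\overline{\omega}^{(k)}_{n,i} = f^{(k)}_i\,d\xi_n$, the problem is reduced to the scalar identity $b + \sum c_{k,i} f^{(k)}_i = 0$ on $\E_{F_n}$.

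Next, at the puncture $\xi_n = \xi_j$ the only genuinely unbounded term is $f^{(1)}_j$. From Proposition-Definition \ref{Fproperties}(iii), $f^{(1)}_j = E_1(\xi_n - \xi_j) + 2\pi i(r_n - r_j)$ behaves like $1/(\xi_n - \xi_j) + O(1)$, while for $i \neq j$ the function $f^{(k)}_i$ is smooth there; for $k \geq 2$ the coefficient of $1/u$ in $f^{(k)}_j$ equals $(2\pi i(r_n - r_j))^{k-1}/(k-1)! = O(|u|^{k-1})$ since $r_n - r_j = \mathrm{Im}(u)/\mathrm{Im}(\tau)$. Multiplying the scalar identity by $u = \xi_n - \xi_j$ and letting $u \to 0$ along the real axis (so $r_n - r_j = 0$) kills every term except $c_{1,j}$, giving $c_{1,j} = 0$ for each $j \in \{0,\ldots,n-1\}$.

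For the remaining $c_{k,i}$ with $k \geq 2$ I use \eqref{omegaijMassey}: restricted to the fiber (where $\nu_i$ vanishes) it reads $d\overline{\omega}^{(k+1)}_{n,i} = \overline{\nu}_n \wedge \overline{\omega}^{(k)}_{n,i}$, and rewriting both sides as multiples of $d\bar\xi_n \wedge d\xi_n$ produces the scalar recursion $\partial_{\bar\xi_n} f^{(k+1)}_i = c\,f^{(k)}_i$, with $c = -2\pi i/(\tau - \bar\tau) \neq 0$ and the convention $f^{(0)}_i = 1$. Let $K \geq 2$ be the maximum $k$ with $c_{K,i} \neq 0$ for some $i$. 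Applying $\partial_{\bar\xi_n}^{K-1}$ annihilates every $f^{(k)}_i$ with $k < K - 1$ and yields (after dividing by $c^{K-1}$) the identity $\sum_i c_{K-1,i} + \sum_i c_{K,i}\,f^{(1)}_i = 0$; the residue argument of the preceding paragraph now forces $c_{K,j} = 0$ for each $j$, contradicting maximality. Induction in $K$ gives $c_{k,i} = 0$ for all $k \geq 2$, and the scalar identity reduces to $b = 0$.

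Finally, the 2-form statement follows directly: each listed 2-form is of the form $\overline{\nu}_n \wedge \eta$ with $\eta$ of type $(1,0)$, hence equals $\frac{-2\pi i}{\tau - \bar\tau}(\text{scalar})\,d\bar\xi_n \wedge d\xi_n$, and since $d\bar\xi_n \wedge d\xi_n$ is a nonvanishing top form on $\E_{F_n}$, 2-form linear independence is equivalent to the scalar identity already handled. The main technical subtlety to guard against is that for $k \geq 2$ the functions $f^{(k)}_j$ are bounded but not continuous at $\xi_n = \xi_j$ (the limit depends on the direction of approach), which is precisely why the iterated $\partial_{\bar\xi_n}$-argument is needed to reduce everything back to the genuinely meromorphic case $k = 1$.
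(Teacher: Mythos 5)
Your proof is correct and follows essentially the same strategy as the paper's: first separate $\overline{\nu}_n$ by Hodge type, then use the residue at a puncture to handle the $\omega^{(1)}_{n,i}$, and finally use the Massey relation $d\overline{\omega}^{(k+1)}_{n,i}=\overline{\nu}_n\wedge\overline{\omega}^{(k)}_{n,i}$ to reduce the degree by induction. The only cosmetic difference is that the paper phrases the inductive step intrinsically — apply $d$ and use that wedging by $\overline{\nu}_n$ is injective on $(1,0)$-forms — while you unwind the same step in local coordinates as the scalar recursion $\partial_{\bar\xi_n}f^{(k+1)}_i=c\,f^{(k)}_i$; these are the same computation.
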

\begin{proof}
Since the $\overline{\omega}$'s are of type $(1,0)$  and $\overline{\nu}_n$ is not,  it follows from  $(\ref{OmegaRes})$ 
that  the  forms $d\xi_n, \overline{\omega}^{(1)}_{n,0}, \ldots , \overline{\omega}^{(1)}_{n,n-1},  \overline{\nu}_n$
are linearly independent. Consider  a non-trivial relation
$$\sum_{0\leq i <n, \ k \leq w} c_{i,k}\, \overline{\omega}^{(k)}_{n,i} =0\ , \hbox{ where } c_{i,k}\in \C \ ,   $$ 
and  $w$ is minimal.   Differentiating gives $\overline{\nu}_n \wedge\big( \sum_{i,k} c_{i,k} \overline{\omega}^{(k-1)}_{n,i}\big)=0$, by  $(\ref{omegaijMassey})$.
Since $\overline{\nu}_n$  has a non-zero component of type $(0,1)$,  the wedge product by $\overline{\nu}_n$ on $(1,0)$-forms is injective, giving a smaller relation
 $ \sum_{i,k\leq w-1} c_{i,k+1}\,  \overline{\omega}^{(k)}_{n,i} =0$, which is  a contradiction.  The same argument proves that  $(\ref{indepdeg2onfib})$ are linearly independent.
 \end{proof}

\section{A rational model for the de Rham complex on $\E^{(n)}$}
We construct  a differential graded algebra $X_n$   over $\Q$ which is defined  by generators and quadratic relations, along with a quasi-isomorphism
$X_n\otimes_{\Q}\C \hookrightarrow \Ao_n,$  where $\Ao_n=\Ao^\bullet(\E^{(n)})$ is the $C^{\infty}$-de Rham complex on the configuration space of $n+1$ points on  $\E$.
We show that $X_n$ carries a  mixed Hodge structure and give a presentation for $H^{\bullet} (\En)$ which is an elliptic analogue of Arnold's theorem in the genus $0$ case.

 \subsection{Differential graded algebras and fibrations}   \label{sectDGA}
 Let  $k$ be a field of characteristic zero. Recall that a (positively-graded) DGA over $k$ is a  graded-commutative algebra  $A=\bigoplus_{n\geq 0} A^n$ with a differential $d:A\rightarrow A$ of degree $+1$
 which satisfies the Leibniz rule.  It is said to be connected if
 $A^0\cong k$. We shall consider algebras $A$ which are either finite-dimensional in each degree, or else carry a second grading (called the weight grading) for which they are finite-dimensional in every bidegree.

Let $A_T$ be such a DGA with differential $d_T$,  and let $A_B\subset A_T$ be a sub-DGA. Define
\begin{equation}\label{LerayFilt}  A_F =  A_T / A_B^{\geq 1} A_T\ ,
\end{equation}
which  inherits a differential $d_F$ from $d_T$. We call the triple $A_B,A_T,A_F$  a \emph{fibration} if 
  $A_T$ is a free $A_B$-module.  The indices $T,B,F$  stand for the total space, base, and fiber.  Now suppose that we are given a splitting $i_F:A_F \rightarrow A_T$
of $A^0_B$-modules. When $A_B,A_T,A_F$ is a fibration, the map $i_F$ defines  an isomorphism of $A_B$-modules:
\begin{equation}
A_T \cong A_B\otimes_{A^0_B} A_F = \textstyle{ \bigoplus_{i\geq 0}} A_B^i \otimes_{A^0_B} A_F\ , 
\end{equation} 
 which does not necessarily respect the differential or  algebra structure.

 \subsection{The model $X_n$} \label{sectXn}
We  consider the differential graded algebra  $X_n$ generated by  symbols corresponding to the  forms considered in \S\ref{sectMassey}.  By abuse of notation, we  use the same symbol to denote the 
generators in $X_n$  and their images in $ \Ao_n=\Ao^\bullet(\E^{(n)})$. This will be justified when we  show that $X_n\rightarrow \Ao_n$ is injective (corollary  \ref{corphiinj}).
\begin{defn} Let $X_n$ be the $\Q$-differential graded algebra generated by elements
\begin{eqnarray}
\omega^{(k)}_{i,j}   &&\hbox{ for }  k\geq0  \hbox{ and } 0\leq i\leq j\leq n \nonumber \\
\nu_i &&\hbox{ for } 1\leq i\leq n \nonumber
\end{eqnarray} 
in degree 1, modulo the graded-commutative ideal generated by the  relations  $(\ref{omega0rel})$ and  the coefficients of $(\ref{OmegaFay})$. The differential is given by $d\nu_i=0$,  $d\omega^{(0)}_{i,j}=0$, and 
 $(\ref{omegaijMassey})$ in all other cases. It is a simple calculation to check  that  the differential ideal generated by the Fay identity  $(\ref{OmegaFay})$  is equal to the (usual) ideal it generates. 
 \end{defn}
 There is an obvious map $X_{n-1}\rightarrow X_n$. Let $X_{n-1}^+$ be the ideal in $X_n$ generated by  the images of  elements of $X_{n-1}$ of positive degree,
and let  $X_{F_n}=X_n/X^+_{n-1}$. 
 Denote the images  of  $\omega^{(k)}_{i,j}$ and $\nu_{i}$  under the natural map $X_n \rightarrow X_{F_n}$ by 
 $\overline{\omega}^{(k)}_{i,j}$ and $\overline{\nu}_{i}$, respectively.
  
 \begin{lem} \label{lemXF} $X_{F_n}$ is isomorphic to the $\Q$-differential graded algebra generated by 
 $\overline{\omega}^{(k)}_{n,i}$ and $\overline{\nu}_n$
in degree 1,  subject to the relations: $\overline{\omega}^{(0)}_{n,i}=\overline{ \omega}^{(0)}_{n,j}$ for all $i,j$; \  $\overline{\nu}_n\wedge \overline{\nu}_n=0$; and
\begin{equation} \label{omegawedge} \overline{\omega}^{(k)}_{n,i} \wedge \overline{ \omega}^{(\ell)}_{n,j} =0\qquad \forall \quad i,j,k,\ell\ .\end{equation}
 The differential is  given by  $d\,\overline{\omega}^{(0)}_{n,i}=d\,\overline{\nu}_n=0$ and 
$d\,\overline{\omega}^{(k+1)}_{n,i}= \overline{\nu}_n \wedge \overline{\omega}^{(k)}_{n,i} \quad \hbox{ for }  k\geq 1.$
 \end{lem}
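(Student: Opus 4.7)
The plan is to present $X_{F_n}$ by tracking what happens to the generators and defining relations of $X_n$ when we quotient by $X_{n-1}^+$. Since $X_{n-1}^+$ is the two-sided ideal generated by the positive-degree generators $\nu_i$, $\omega^{(k)}_{i,j}$ for $0\le i,j\le n-1$, the quotient is generated as an algebra by the classes of the remaining generators of $X_n$, namely $\overline{\omega}^{(k)}_{n,i}$ (for $0\le i\le n-1$, $k\ge 0$) together with $\overline{\nu}_n$. This matches the generating set asserted in the statement. I would then introduce the candidate DGA $Y$ defined by the presentation in the lemma and construct the natural map $\pi : X_n \to Y$ sending $\omega^{(k)}_{i,j},\nu_i \mapsto 0$ for $i,j<n$ and each $n$-indexed generator to its overlined counterpart in $Y$, and verify that it factors through an isomorphism $X_{F_n}\cong Y$.

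The main work is to check that every defining relation of $X_n$ is killed by $\pi$. The relation \eqref{omega0rel} involving $n$, e.g.\ $\omega^{(0)}_{i,n} = \omega^{(0)}_{i,j}+\omega^{(0)}_{j,n}$ with $i,j<n$, becomes $\overline{\omega}^{(0)}_{i,n}=\overline{\omega}^{(0)}_{j,n}$ since $\omega^{(0)}_{i,j}\in X_{n-1}^+$; combined with the antisymmetry $\omega^{(0)}_{i,j}+\omega^{(0)}_{j,i}=0$, this yields $\overline{\omega}^{(0)}_{n,i}=\overline{\omega}^{(0)}_{n,j}$, the first relation of $Y$. For the Fay identity \eqref{OmegaFay}, in each of the three placements where one of $i,j,\ell$ equals $n$, two of the three wedge-summands contain a factor $\Omega(\xi_a-\xi_b;\cdot)$ with $a,b<n$, which lies in $X_{n-1}^+$ and vanishes in the quotient. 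The surviving summand, after using antisymmetry to bring both factors to the form $\overline{\omega}^{(k)}_{n,\ast}$ and expanding in the formal parameters $\alpha,\beta$, yields exactly the wedge-vanishing \eqref{omegawedge}. Graded commutativity in degree one gives $\overline{\nu}_n \wedge \overline{\nu}_n = 0$. For the differential, $d\omega^{(k+1)}_{n,i}=(\nu_n-\nu_i)\wedge \omega^{(k)}_{n,i}$ reduces to $\overline{\nu}_n\wedge \overline{\omega}^{(k)}_{n,i}$ since $\nu_i\mapsto 0$ for $i<n$, in agreement with the differential of $Y$; and the closed generators remain closed.

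Conversely, to get injectivity of the induced surjection $\bar\pi:X_{F_n}\twoheadrightarrow Y$, I would exhibit a map $\sigma:Y\to X_{F_n}$ sending the generators of $Y$ to the corresponding classes. The three relations of $Y$ hold in $X_{F_n}$ by exactly the computations above, so $\sigma$ is well-defined as a DGA map, and the compositions $\bar\pi\circ\sigma$ and $\sigma\circ\bar\pi$ act as the identity on the generating set, hence are the identity. The main obstacle is the systematic bookkeeping of the Fay identity across all specializations of $(i,j,\ell)$ with one coordinate equal to $n$, to confirm that in every case the residual relation is absorbed into the single family \eqref{omegawedge} and that no hidden relation appears from combining \eqref{OmegaFay} with antisymmetry or with the differential. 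Once this verification is in hand, the rest of the lemma is formal from the universal property of quotient DGAs.
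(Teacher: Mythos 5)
Your proof is correct and fills in the details of the paper's own brief proof, which simply asserts that the relations are obvious except for \eqref{omegawedge} and observes via the Fay identity that $\Omega(\xi_n-\xi_i;\alpha)\wedge\Omega(\xi_n-\xi_j;\beta)\equiv 0\bmod X_{n-1}^+$. The concern in your last paragraph is unfounded: quotienting a presented graded-commutative algebra by the ideal generated by a subspace of degree-one generators automatically yields the presentation on the remaining generators modulo the images of the original relations, so no hidden relation can appear, and the three placements of $n$ in \eqref{OmegaFay} visibly all reduce to the single family \eqref{omegawedge}.
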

 \begin{proof} All the relations are obvious except for  $(\ref{omegawedge})$. It follows from the Fay identity $(\ref{OmegaFay})$ that 
 $\Omega(\xi_{n}-\xi_{i},\alpha) \wedge \Omega(\xi_{n}-\xi_{j},\beta) \equiv 0 \mod X^+_{n-1}$. 
 \end{proof} 
\noindent 
In particular, $X_{F_n}$ is concentrated in degrees $0$, $1$, and $2$. 
Let   $i_{F_n}:X_{F_n}\rightarrow X_n$ denote the splitting of  the quotient map $X_n\rightarrow X_{F_n}$ defined by:
$$i_{F_n} (\overline{\nu}_n) = \nu_n \qquad \ , \qquad i_{F_n}(\overline{\omega}^{(k)}_{n,i}) = \omega^{(k)}_{n,i}- \omega^{(k)}_{0,i}\ .$$
 \subsection{Mixed Hodge structure on $X_n$}  \label{sectMHS} The complex of $C^{\infty}$ forms on $\En$ with logarithmic singularities carries a Hodge  and  weight filtration.
 The weight filtration on 1-foms is  concentrated in degrees 1 and 2. But it turns out that there is a refined weight filtration on $X_n$. To define it,  set $W_0 X^1_n=0$ and
 $$ W_\ell X^1_n =\langle \nu_i, \omega_{i,j}^{(k)}: k<\ell \rangle \qquad \hbox{ for all }  \quad \ell \geq 1\ ,$$
  and extend it  by multiplication to $X_n$. It is well-defined because the relations  implied by $(\ref{OmegaFay})$ are homogeneous for the weight.
  This  filtration is induced by the grading for which $\nu_i$ has weight $1$ and $\omega_{ij}^{(k)}$ has weight $k+1$.  
     The Hodge filtration is given by 
  $$F^0 X^1_n = X^1_n \quad \supset \quad F^1 X^1_n = \langle \omega_{i,j}^{(k)} \rangle \quad \supset  \quad F^2 X^1_n=0 $$
  and extends to $X_n$ in the same way. One easily verifies that this defines a mixed Hodge structure on $X_n$ such that $d:X_n\rightarrow X_n$ is homogeneous
  for the weight. Likewise $X_{F_n}$ inherits a mixed Hodge structure which is compatible with the map $i_{F_n}$.

\subsection{Quadratic Algebras} We give a sufficient criterion for an algebra defined by quadratic relations to be a fibration. We shall only apply this in the case of $X_n$.

\begin{defn} Let $V$ be a finite dimensional vector space over a field $k$. Let $R\subseteq \bigwedge^2 V$ be  a subspace (the space of relations).
The associated quadratic algebra is
$$Y^{\cdot} =  \textstyle{\bigwedge^{\cdot} }V / \langle R\rangle\ ,  $$
where $\langle R \rangle \subseteq  \bigwedge^{\cdot}V$ is the ideal generated by $R$.  We have $Y^0=k$, $Y^1=V$.
\end{defn}

Now suppose that $V_B\subseteq V$ is a subspace, and let $V_F=V/V_B$. Choose  a splitting
$$V=V_B \oplus V_F\ , $$
which induces a splitting $\bigwedge^2 V =  \bigwedge^2 V_B \oplus ( V_B \otimes_k V_F) \oplus \bigwedge^2 V_F$. 
Let $\pi_F: \bigwedge^2 V\rightarrow \bigwedge^2 V_F$ denote projection onto the last component.  Assume that the space of relations splits:
$$
R = R_B \oplus R_F \ , $$
where $R_B\subseteq 
 \bigwedge^2 V_B$, and $R_F\subseteq (V_B \otimes_k V_F) \oplus 
 \bigwedge^2 V_F$. Let $Y^{\cdot}_B= \bigwedge^{\cdot} V_B / \langle R_B \rangle$.
 
\begin{prop}  \label{propquadfib}Suppose that
\begin{equation} \label{RFisom}
\pi_F : R_F \To \textstyle{\bigwedge^2} V_F \hbox{  is an isomorphism.}
\end{equation}
In this case,  the relations $R_F$ define the graph of  a map 
$\alpha:  \textstyle{\bigwedge^2 }V_F \rightarrow V_B \otimes_k V_F$, where $\alpha= id-\pi^{-1}_F $.
Extend $\alpha$ to a map $V_F^{\otimes 2} \rightarrow  \textstyle{\bigwedge^2 }V_F \rightarrow V_B \otimes_k V_F$. Consider the  two  different ways of composing
$\alpha$ with itself, namely
$$   \alpha^{(1)}_3:   V_F^{\otimes 3}\overset{\alpha \otimes id_{V_F}}\To  V_B\otimes_k  V_F\otimes_k  V_F \overset{id_{V_B} \otimes \alpha }\To  V_B^{\otimes 2}\otimes_k  V_F\To \textstyle{\bigwedge^2} V_B \otimes_k V_F$$
and
$$   \alpha^{(2)}_3:   V_F^{\otimes 3} \overset{ id_{V_F} \otimes \alpha }\To  V_F\otimes_k  V_B\otimes_k  V_F \cong  V_B\otimes_k  V_F\otimes_k  V_F \overset{id_{V_B} \otimes \alpha }\To V_B^{\otimes 2}\otimes_k  V_F\To \textstyle{\bigwedge^2} V_B \otimes_k V_F$$
satisfy the associativity condition
\begin{equation} \label{alphassoc}
\alpha^{(1)}_3  - \alpha^{(2)}_3 \in R_B \otimes_k V_F\ .
\end{equation}
Then $Y_B\rightarrow Y$ is injective, and  a  fibration, with fibers  $Y_F$, where $Y^0_F =k$, and $Y^1_F\cong  V_F$, and $Y^k_F=0$ for $k\geq 2$. Thus there is an isomorphism of $Y_B$-modules: 
$$Y\cong Y_B \otimes_k Y_F\cong Y_B \oplus (Y_B \otimes_k V_F)\ .$$\end{prop}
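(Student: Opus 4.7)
The strategy is to exhibit $Y$ in the claimed normal form by a straightening / diamond-lemma argument. Choose bases of $V_B$ and $V_F$ and view monomials in $\textstyle{\bigwedge}^{\cdot} V$ as (anticommuting) words in these letters. Using graded anticommutativity to move $V_B$-letters to the left, and using the relations $R_F$ (interpreted as the graph of $\alpha$) to rewrite $f \wedge f' \equiv \alpha(f\wedge f') \in V_B \otimes V_F$ whenever two $V_F$-letters are adjacent, I define a rewriting rule whose normal forms are sums of terms $b_1\wedge\cdots\wedge b_k$ and $b_1\wedge\cdots\wedge b_k \wedge f$. Each application of the $R_F$-rule strictly decreases the number of $V_F$-letters, so the process terminates, giving a surjection
$$Y_B \oplus (Y_B \otimes_k V_F) \longrightarrow Y\ .$$

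For the crucial well-definedness (i.e.\ uniqueness of the normal form modulo $\langle R_B\rangle$), I verify confluence on critical overlaps. Overlaps among $R_B$-relations alone are resolved inside $Y_B$ by definition. The only nontrivial critical overlap is a word $f_1\wedge f_2 \wedge f_3 \in V_F^{\wedge 3}$, where one may reduce either the left pair or the right pair first. Reducing the left pair and then the remaining $V_F \wedge V_F$ produces exactly $\alpha^{(1)}_3(f_1\otimes f_2\otimes f_3)$ in $V_B^{\otimes 2}\otimes V_F$. Reducing the right pair first yields $f_1 \wedge (\sum b_j \otimes f_j')$; the sign from using anticommutativity to move $b_j$ past $f_1$ is precisely the Koszul sign built into the isomorphism $V_F\otimes V_B \cong V_B\otimes V_F$ in the definition of $\alpha^{(2)}_3$, so after a second reduction one arrives at $\alpha^{(2)}_3(f_1\otimes f_2\otimes f_3)$. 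Projecting to $\textstyle{\bigwedge}^2 V_B \otimes V_F$, the two outputs are equal modulo $R_B\otimes V_F$ by the hypothesis $(\ref{alphassoc})$, so the two reductions agree in $Y_B\otimes V_F$ and confluence holds.

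With termination and confluence established, the normal form gives a well-defined inverse to the surjection above, so
$$Y \;\cong\; Y_B \oplus (Y_B \otimes_k V_F)$$
as $Y_B$-modules. Injectivity of $Y_B\hookrightarrow Y$ is then immediate, since $Y_B$ is a direct summand. The quotient $Y_F = Y/Y_B^{\geq 1}Y$ kills every monomial containing a $V_B$-factor, so by the normal form only the constants and the single-$V_F$-letter monomials survive; thus $Y_F^0 \cong k$, $Y_F^1 \cong V_F$ and $Y_F^{\geq 2}=0$. Rewriting $Y_B \oplus (Y_B\otimes_k V_F)$ as $Y_B\otimes_k Y_F$ yields the claimed fibration structure.

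The main obstacle is the confluence verification: one must carefully track the signs coming from graded-commutativity in $\textstyle{\bigwedge}^{\cdot}V$ so that the two reduction outputs line up with $\alpha^{(1)}_3$ and $\alpha^{(2)}_3$ as defined, at which point the associativity condition $(\ref{alphassoc})$ is precisely what is needed. Everything else — termination, surjectivity of the normal form, identification of $Y_F$, and the $Y_B$-module splitting — is formal once confluence is in place.
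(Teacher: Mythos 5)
Your diamond-lemma argument is essentially the paper's proof in a different dress: the paper builds the retraction directly by iterating $\alpha$ to define maps $\alpha_n:\bigwedge^n V_F\to Y_B^{n-1}\otimes_k V_F$, whose well-definedness is exactly the associativity hypothesis $(\ref{alphassoc})$, and then assembles these into $\overline{\rho}:Y\to Y_B\oplus(Y_B\otimes_k V_F)$ with $\overline{\rho}\circ i=\mathrm{id}$ — which is precisely the normal-form map your rewriting system produces. Your confluence check on $f_1\wedge f_2\wedge f_3$ is the same computation as the paper's verification that $\alpha_3$ is independent of bracketing, so the two proofs share the same single non-formal input and reduction strategy, with your version making the term-order and critical-pair bookkeeping (and the attendant Koszul-sign tracking) explicit where the paper handles it implicitly in the definition of $\alpha_n$.
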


\begin{proof} There is an obvious natural map
$$i:Y_B \oplus (Y_B \otimes_k V_F) \To Y\ .$$
We construct an inverse to $i$ by defining by induction a sequence of  linear maps
$$\alpha_n:  \textstyle{\bigwedge^n} V_F \To  Y^{n-1}_B \otimes_k V_F \qquad \hbox{ for } n\geq 2\ ,$$
such that $i \circ \alpha_n(\xi) \equiv \xi \mod \langle R\rangle$.  For this, let $\alpha_2$ be the map $\alpha= id- \pi^{-1}_F$ defined above, and let $\alpha_n$ be the 
map obtained by composing $\alpha$ with itself $n-1$ times.  By the associativity property $(\ref{alphassoc})$, $\alpha_n$ is well-defined. It is clear from the definition that
$i\circ\alpha_2\equiv id \mod  R$, and from this we deduce that  $i \circ \alpha_n\equiv id \mod \langle R\rangle $ for all $n$ by induction. Now write
$$\textstyle{\bigwedge^n} V=\bigoplus_{i=0}^n \, \textstyle{\bigwedge^i} V_B \otimes_k  \textstyle{\bigwedge^{n-i}} V_F\ . $$
If we set $\alpha_0:k \rightarrow k$ and $\alpha_1: V_F \rightarrow V_F$ to be the identity maps, we deduce  a map
$$\rho=\bigoplus_{i=0}^{n} \pi^i_B\otimes \alpha_{n-i} : \textstyle{\bigwedge^n} V\To Y_B \otimes_k (Y_B \otimes_k V_F)\ ,$$
where $\pi^i_B: \bigwedge^i V_B \rightarrow Y_B$ is the natural map. Since $\alpha_n (\langle R\rangle)=0$ for all $n$, and since $R=R_B \oplus R_F$,  the  map  $\rho$ passes to the quotient to define a map 
$$\overline{\rho} : Y \To Y_B \oplus (Y_B \otimes_k V_F)$$
which satisfies $\overline{\rho} \circ i = id$ by definition and $i \circ \overline{\rho}$ is an isomorphism since $i \circ \alpha_n \equiv id \mod \langle R\rangle$.  Thus $i$ is   an isomorphism.
\end{proof}

\begin{rem} \label{infgradremark} 
In the previous discussion, we can also replace $V$ with a graded vector space which is of finite dimension in every degree, and $R$ by a graded subspace.
\end{rem}
 \subsection{Structure of $X_n$} We show that $X_{n-1}, X_n, X_{F_n}$ is a fibration of DGA's. 
  
\begin{lem} \label{lemnutens} There is an isomorphism of graded-commutative algebras 
$$X_n\cong \bigwedge(\Q\nu_1\oplus \ldots \oplus \Q \nu_n) \otimes_\Q Z_n\ ,$$
where $Z_n$ is the subalgebra of $X_n$ spanned by the elements $\omega^{(k)}_{i,j}$.   Likewise,
$$X_{F_n}\cong \bigwedge (\Q \overline{\nu}_n)\otimes_{\Q} Z_{F_n}\ ,$$
where $Z_{F_n}$ is the subalgebra of $X_{F_n}$ spanned by $\overline{\omega}^{(k)}_{n,i}$. Note that these isomorphisms do not respect the differential structures on $X_n$ and $X_{F_n}$.
\end{lem}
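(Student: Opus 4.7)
The plan is to exploit the observation that every defining relation of $X_n$ lies entirely in the subalgebra spanned by the $\omega$-generators, with no $\nu$'s appearing, so the $\nu_i$'s factor out cleanly as a tensor factor.

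Concretely, first I would decompose the degree-$1$ generating space as $V = V_\nu \oplus V_\omega$, where $V_\nu = \bigoplus_{i=1}^n \Q\nu_i$ and $V_\omega$ is the span of the $\omega^{(k)}_{i,j}$ for $0\leq i<j\leq n$ and $k\geq 0$. The free graded-commutative $\Q$-algebra on $V$ placed in degree $1$ is then $\bigwedge V \cong \bigwedge V_\nu \otimes_\Q \bigwedge V_\omega$. Next I would inspect the defining relations: the relation $(\ref{omega0rel})$ is linear in the $\omega^{(0)}$'s, hence lies in $V_\omega$; the Fay identity $(\ref{OmegaFay})$ is a quadratic identity in products $\Omega\wedge\Omega$, and expanding each factor via $\Omega(\xi_i-\xi_j;\alpha) = \sum_k \omega^{(k)}_{i,j}\alpha^{k-1}$ and collecting powers of $\alpha,\beta$ produces coefficient relations that are quadratic in the $\omega^{(k)}_{i,j}$'s, hence lie in $\bigwedge^2 V_\omega$. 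So the full set $R$ of defining relations is contained in $\bigwedge V_\omega$.

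The key step is the factorization $\langle R\rangle = \bigwedge V_\nu \otimes_\Q J$ inside $\bigwedge V_\nu \otimes_\Q \bigwedge V_\omega$, where $J\subset \bigwedge V_\omega$ is the ideal generated by $R$ in the $\omega$-factor alone. This is a routine verification for tensor products of graded-commutative algebras when the relations are supported on one factor: for any $r\in R$ and any $a\otimes b\in \bigwedge V_\nu\otimes\bigwedge V_\omega$, graded-commutativity gives $(a\otimes b)(1\otimes r) = a\otimes br$ and $(1\otimes r)(a\otimes b) = \pm\, a\otimes rb$, so every element of $\langle R\rangle$ reorganizes into $\bigwedge V_\nu \otimes J$. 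Quotienting yields
$$X_n \;=\; \bigwedge V \,/\, \langle R\rangle \;\cong\; \bigwedge V_\nu \otimes_\Q \bigl(\bigwedge V_\omega /J\bigr) \;=\; \bigwedge V_\nu \otimes_\Q Z_n,$$
which is the desired isomorphism. It is merely an isomorphism of graded-commutative algebras, since the identity $d\omega^{(k+1)}_{i,j} = (\nu_i-\nu_j)\wedge\omega^{(k)}_{i,j}$ visibly mixes the two tensor factors.

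For $X_{F_n}$ the argument is completely parallel: by Lemma \ref{lemXF}, the defining relations of $X_{F_n}$ involve only the $\overline{\omega}^{(k)}_{n,i}$ (the relation $\overline{\nu}_n\wedge\overline{\nu}_n=0$ being automatic in a graded-commutative algebra since $\overline{\nu}_n$ has odd degree), so the same tensor-product factorization yields $X_{F_n}\cong\bigwedge(\Q\overline{\nu}_n)\otimes_\Q Z_{F_n}$. The only genuine technical point anywhere is the ideal identity $\langle R\rangle = \bigwedge V_\nu\otimes_\Q J$; this is formal and not a real obstacle. The heart of the lemma is really just the observation that $\nu$'s never appear in any defining relation.
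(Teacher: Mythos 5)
Your proposal is correct and follows essentially the same route as the paper. The paper's proof is a one-liner phrased in terms of the Hodge filtration (``all defining relations have Hodge filtration $\geq 1$''), which is precisely the observation that the relations $(\ref{omega0rel})$ and the coefficients of $(\ref{OmegaFay})$ lie entirely in the $\omega$-part; you decode that observation into the explicit splitting $V = V_\nu \oplus V_\omega$ with $R \subset \bigwedge V_\omega$ and carry out the ideal factorization $\langle R\rangle = \bigwedge V_\nu \otimes J$, which is the detail the paper leaves implicit. One small but worthwhile point you handle correctly: the claim requires that the ideal generated by $R$ in $\bigwedge V$ is already a differential ideal (otherwise one would have to add differentiated Fay relations, which involve the $\nu_i$'s via $(\ref{omegaijMassey})$ and would destroy the factorization), and the paper indeed records this fact in the definition of $X_n$; your argument implicitly relies on it when taking $R$ to be just $(\ref{omega0rel})$ and $(\ref{OmegaFay})$. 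The handling of $X_{F_n}$, including the remark that $\overline\nu_n\wedge\overline\nu_n=0$ is automatic in characteristic zero, matches the paper's ``identical argument.''
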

\begin{proof}  All defining relations of $X_n$ have Hodge filtration $\geq 1$, so $X_n/F^1X_n$ is isomorphic to the  free graded-commutative algebra spanned by $\nu_1,\ldots, \nu_n$. An identical argument  gives the corresponding isomorphism  for $X_{F_n}$.
\end{proof}

\begin{lem} The map $X_{n-1}\rightarrow X_n$ is injective, and  $X_n$ is a free  $X_{n-1}$-module. 
\end{lem}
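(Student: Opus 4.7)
The plan is to reduce the statement to the analogous one for the subalgebras $Z_{n-1} \subset Z_n$, and then to apply Proposition \ref{propquadfib} to the quadratic algebra $Z_n$. By Lemma \ref{lemnutens}, $X_n \cong \bigwedge(\nu_1,\ldots,\nu_n) \otimes_\Q Z_n$ and $X_{n-1} \cong \bigwedge(\nu_1,\ldots,\nu_{n-1}) \otimes_\Q Z_{n-1}$; once one shows that $Z_n \cong Z_{n-1} \otimes_\Q Z_{F_n}$ as $Z_{n-1}$-modules, tensoring on the right with $\bigwedge(\nu_n)$ yields $X_n \cong X_{n-1} \otimes_\Q X_{F_n}$, which proves both injectivity of $X_{n-1} \to X_n$ and freeness of $X_n$ as an $X_{n-1}$-module. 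Working with $Z_n$ rather than $X_n$ is essential, because Proposition \ref{propquadfib} requires the fiber to be concentrated in degrees $\leq 1$, and $Z_{F_n}$ satisfies this while $X_{F_n}$ does not.

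To set up Proposition \ref{propquadfib} (in the graded version of Remark \ref{infgradremark}, justified by the weight grading of \S\ref{sectMHS}), let $V$ denote the degree-$1$ part of $Z_n$, namely the $\Q$-span of the symbols $\omega^{(k)}_{i,j}$ modulo the linear triangle relations $(\ref{omega0rel})$. Let $V_B \subset V$ be the degree-$1$ part of $Z_{n-1}$, and take $V_F \subset V$ to be the image of the splitting $i_{F_n}$ of \S\ref{sectXn} in degree $1$. A direct case analysis of the Fay identity $(\ref{OmegaFay})$ then produces the required splitting $R = R_B \oplus R_F$: when the indices $i, j, \ell$ all lie in $\{0, \ldots, n-1\}$ the relation lies in $\bigwedge^2 V_B$, while when one of them equals $n$ one of the three Fay summands is a wedge of two $V_F$-valued factors and the other two summands mix a $V_B$-factor with a $V_F$-factor, so the relation lies in $(V_B \otimes V_F) \oplus \bigwedge^2 V_F$.

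The hypothesis $(\ref{RFisom})$ that $\pi_F: R_F \to \bigwedge^2 V_F$ is an isomorphism is essentially the content of the vanishing $(\ref{omegawedge})$ proved in Lemma \ref{lemXF}: surjectivity follows because each basis wedge $\omega^{(a)}_{n,i} \wedge \omega^{(b)}_{n,j}$ in $\bigwedge^2 V_F$ appears as the $\bigwedge^2 V_F$-component of a Fay identity with $\ell = n$, the remaining terms living in $V_B \otimes V_F$; injectivity holds because no nonzero element of $V_B \otimes V_F$ is itself a Fay relation. The main obstacle is the associativity condition $(\ref{alphassoc})$: the composites $\alpha^{(1)}_3$ and $\alpha^{(2)}_3: V_F^{\otimes 3} \to \bigwedge^2 V_B \otimes V_F$ correspond to two different orders for reducing a triple wedge $\omega^{(a)}_{n,i} \wedge \omega^{(b)}_{n,j} \wedge \omega^{(c)}_{n,k}$ by iterated Fay identities, and their difference must lie in $R_B \otimes V_F$. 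The plan is to verify this coherence by expanding both composites using $(\ref{OmegaFay})$ and checking that, after the $V_F$-contributions cancel, the remaining element of $\bigwedge^2 V_B \otimes V_F$ is a consequence of the Fay relations among indices $\{0, \ldots, n-1\}$ — ultimately a reflection of the internal consistency of the Fay identity when applied to the four points $\xi_i, \xi_j, \xi_k, \xi_n$. Granting this coherence step, Proposition \ref{propquadfib} produces the desired $Z_{n-1}$-module isomorphism $Z_n \cong Z_{n-1} \otimes_\Q Z_{F_n}$, which completes the argument.
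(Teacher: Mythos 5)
Your overall strategy is exactly the paper's: reduce from $X_n$ to the subalgebra $Z_n$ via Lemma~\ref{lemnutens}, observe that $Z_n$ is quadratic, and invoke Proposition~\ref{propquadfib} in the graded form of Remark~\ref{infgradremark} after splitting the Fay relations into $R_B\oplus R_F$ and checking conditions $(\ref{RFisom})$ and $(\ref{alphassoc})$. Two points, one substantive and one minor.

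The substantive one is that the associativity condition $(\ref{alphassoc})$ is the real content of the lemma, and you do not prove it: you lay out a \emph{plan} to expand both composites and ``grant'' the coherence step. Without verifying it, the inductive reduction in Proposition~\ref{propquadfib} breaks down, because the maps $\alpha_n$ constructed there are no longer well defined. The paper's verification is short but genuinely nontrivial and worth internalising: applying the identity $(\ref{fiberomegaFay})$ four times one shows
$$\big[\big[(i,n;\alpha)\wedge(j,n;\beta)\big]\wedge(k,n;\gamma)\big] = (j,i;\beta)\wedge(k,i;\gamma)\wedge(i,n;\alpha+\beta+\gamma) + (k,j;\gamma)\wedge(i,j;\alpha)\wedge(j,n;\alpha+\beta+\gamma) + (i,k;\alpha)\wedge(j,k;\beta)\wedge(k,n;\alpha+\beta+\gamma),$$
and the right--hand side is visibly antisymmetric under permuting the roles of $(i,\alpha)$, $(j,\beta)$, $(k,\gamma)$, so the two bracketings $\alpha^{(1)}_3$, $\alpha^{(2)}_3$ coincide on the nose (hence differ by something in $R_B\otimes V_F$ trivially). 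That antisymmetry observation is what replaces your appeal to ``internal consistency of the Fay identity'' with an actual proof, and it is the step you should not leave at the level of a plan.

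The minor point concerns the choice of complement $V_F$. You take $V_F$ to be the image of $i_{F_n}$, i.e.\ the span of the shifted forms $\omega^{(k)}_{n,i}-\omega^{(k)}_{0,i}$. With this choice the Fay relation with $\ell=n$ does not land directly in $(V_B\otimes V_F)\oplus\bigwedge^2 V_F$: expanding $\omega^{(k)}_{n,i}=(\omega^{(k)}_{n,i}-\omega^{(k)}_{0,i})+\omega^{(k)}_{0,i}$ in the first Fay summand produces a nonzero $\bigwedge^2 V_B$ component, and one would then have to check separately that this residue is itself a consequence of the Fay relations among $\{0,\dots,n-1\}$. The paper sidesteps this by implicitly taking $V_F$ to be the span of the \emph{unshifted} generators $\omega^{(k)}_{n,i}$ (together with $\omega^{(0)}_{n,0}$), for which the three Fay summands with $\ell=n$ lie directly in $\bigwedge^2 V_F$, $V_B\otimes V_F$, $V_B\otimes V_F$ respectively, so the decomposition $R=R_B\oplus R_F$ is manifest. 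The choice of splitting is part of the data in Proposition~\ref{propquadfib}, so this is worth being precise about.
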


\begin{proof}
We must prove that  $X_{n-1}\hookrightarrow X_n$ and  $X_n\cong X_{n-1}\otimes_{\Q} X_{F_n}$ as $X_{n-1}$-modules. By  lemma $\ref{lemnutens}$ this is equivalent to showing that $Z_{n-1}\hookrightarrow Z_n$ and 
$Z_n \cong Z_{n-1}\otimes_{\Q} Z_{F_n}$ as $Z_{n-1}$-modules. Since $Z_{n}$ is quadratic, it is enough to 
verify the criteria of proposition $\ref{propquadfib}$.  The quadratic relations $R$ are defined by $(\ref{OmegaFay})$ and so $R_F$ is generated by 
\begin{equation} \label{fiberomegaFay}
  (i,n;\alpha) \wedge (j,n,\beta)   + (j,n;\alpha+\beta)\wedge (i,j;\alpha) +  (j,i;\beta) \wedge (i,n;\alpha+\beta)  =  0 
 \end{equation}
 where $i,j \leq n-1$ and $  (i,n;\alpha)$ denotes  $\Omega(\xi_i-\xi_{n};\alpha) $, etc. Since every term $\omega^{(k)}_{n,i} \wedge  \omega^{(\ell)}_{n,j}$ for $k, \ell\geq 1$ occurs exactly once in the  Taylor expansion of the first term of  $(\ref{fiberomegaFay})$, the condition $(\ref{RFisom})$ is verified. The cases where $k$ or $\ell=0$ are trivial to check.
To verify $(\ref{alphassoc})$, apply the  identity  $(\ref{fiberomegaFay})$ four times to get:
\begin{eqnarray}
{\big[\big[}  (i,n;\alpha) \wedge (j,n ;\beta)\big] \wedge  (k,n; \gamma)\big] &= &(j,i;\beta)\wedge (k,i;\gamma)\wedge (i,n;\alpha+\beta+\gamma)\nonumber \\
& +&   (k,j;\gamma)\wedge (i,j;\alpha)\wedge (j,n;\alpha+\beta+\gamma)\nonumber \\ 
& +&   (i,k;\alpha)\wedge (j,k,\beta)\wedge (k,n;\alpha+\beta+\gamma) \nonumber 
\end{eqnarray}
  Since the right-hand side is antisymmetric, the left hand side clearly does not depend on the bracketing, and the analogue of proposition $\ref{propquadfib}$ holds in the infinite graded case (remark \ref{infgradremark}), where the grading is given by the weight grading of \S\ref{sectMHS}.
\end{proof}

Let us write   $\Ao_{F_n} = \Ao_n / \Ao^+_{n-1}$ and let $\phi$ denote the natural map $X_n \rightarrow \Ao_n$.
 The choice of coordinate $\xi_n$ on the fiber of  $\En\rightarrow \E^{(n-1)}$  gives an isomorphism
 \begin{equation} \label{Atens}
 \Ao_{n-1} \otimes_{\Ao^0_{n-1}} \Ao_{F_n} \cong \Ao_n\ .
 \end{equation}
 
\begin{cor}  \label{corphiinj}The map  $\phi$ is injective.
\end{cor}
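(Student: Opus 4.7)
The plan is to argue by induction on $n$. The base case $n=1$ reduces, after collapsing the trivial base $\E^{(0)}$, to showing that $\phi : X_{F_1} \to \Ao(\E^{\times})$ is injective; this is a direct consequence of lemma \ref{lemindeponfib}, applied to the $\Q$-basis of $X_{F_1}$ read off from lemma \ref{lemXF} (the listed $1$-forms and $2$-forms span all of $X_{F_1}$ in positive degree).

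For the inductive step, I will exploit the compatible fibration structures on the two sides. The previous lemma gives $X_n \cong X_{n-1} \otimes_{\Q} X_{F_n}$ as a free $X_{n-1}$-module via the splitting $i_{F_n}$, and $(\ref{Atens})$ gives $\Ao_n \cong \Ao_{n-1} \otimes_{\Ao^0_{n-1}} \Ao_{F_n}$. Equipping $X_n$ with the filtration $F^{p}X_n = X_{n-1}^{\geq p}\cdot X_n$ and $\Ao_n$ with the Leray filtration associated to $\E^{(n)} \to \E^{(n-1)}$, the map $\phi$ becomes a filtered map, and by a standard filtration argument it is enough to show that the induced map on associated gradeds,
$$\gr\,\phi : X_{n-1}\otimes_{\Q} X_{F_n} \To \Ao_{n-1}\otimes_{\Ao^0_{n-1}} \Ao_{F_n}, \qquad a\otimes g \mapsto \phi_{n-1}(a)\otimes \phi_{F_n}(g),$$
is injective.

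Fix a $\Q$-basis $\{g_{\alpha}\}$ of $X_{F_n}$ provided by lemma \ref{lemXF}. The key observation is that the relative forms $\phi_{F_n}(g_{\alpha}) \in \Ao_{F_n}$ are $\Ao^0_{n-1}$-linearly independent: any putative relation $\sum_{\alpha} c_{\alpha}\,\phi_{F_n}(g_{\alpha}) = 0$ with $c_{\alpha}\in \Ao^0_{n-1}$, restricted to the fiber over an arbitrary $p\in \E^{(n-1)}$, yields a $\C$-linear relation among the fiber restrictions, which by lemma \ref{lemindeponfib} forces $c_{\alpha}(p)=0$ for every $p$, hence $c_\alpha=0$. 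Let $M \subseteq \Ao_{F_n}$ denote the $\Ao^0_{n-1}$-submodule generated by $\{\phi_{F_n}(g_{\alpha})\}$; by what was just shown it is free on this basis. Since $\Ao_{n-1}$ is the module of smooth sections of the exterior algebra of $T^{*}\E^{(n-1)}$ over the paracompact manifold $\E^{(n-1)}$, it is a finitely generated projective, and in particular flat, $\Ao^0_{n-1}$-module. Flatness converts the inclusion $M \hookrightarrow \Ao_{F_n}$ into an inclusion
$$\Ao_{n-1}\otimes_{\Ao^0_{n-1}} M \;\cong\; \bigoplus_{\alpha} \Ao_{n-1}\cdot \phi_{F_n}(g_{\alpha}) \;\hookrightarrow\; \Ao_{n-1}\otimes_{\Ao^0_{n-1}} \Ao_{F_n}.$$
Pre-composing with the map $X_{n-1}\otimes_{\Q} X_{F_n} \to \Ao_{n-1}\otimes_{\Ao^0_{n-1}} M$ induced by $\phi_{n-1}$ and the basis $\{g_\alpha\}$, which is injective because $\phi_{n-1}$ is injective by the inductive hypothesis and $M$ is free on $\{\phi_{F_n}(g_\alpha)\}$, we conclude that $\gr\,\phi$ is injective, completing the induction.

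The main delicate step is the passage from fiberwise $\C$-linear independence (directly supplied by lemma \ref{lemindeponfib}) to $\Ao_{n-1}$-linear independence on the total space: the bridge is the $\Ao^0_{n-1}$-linear independence obtained by restricting to each fiber, combined with the flatness of $\Ao_{n-1}$ over $\Ao^0_{n-1}$, and this is where I expect the argument to require the most care.
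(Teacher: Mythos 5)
Your proof is correct and follows essentially the same strategy as the paper's (very terse) argument: reduce to the fiber case via the module decompositions $X_n \cong X_{n-1}\otimes_\Q X_{F_n}$ and $\Ao_n \cong \Ao_{n-1}\otimes_{\Ao^0_{n-1}}\Ao_{F_n}$, then invoke lemma \ref{lemindeponfib}. You have fleshed out the technical details the paper leaves implicit — in particular the passage to the associated graded of the Leray-type filtration (needed because $\phi\circ i_{F_n}$ is only lower triangular with respect to the two splittings, not block-diagonal), the upgrade from fiberwise $\C$-linear independence to $\Ao^0_{n-1}$-linear independence, and the invocation of flatness/projectivity of $\Ao_{n-1}$ over $\Ao^0_{n-1}$ via the smooth Serre–Swan theorem — and your identification of where the delicacy lies is accurate.
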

\begin{proof}   By  lemma \ref{lemXF},  $X_{F_n}$ is concentrated in degrees at most  two, so  
it follows   from lemma \ref{lemindeponfib} that   $X_{F_n} \rightarrow \Ao_{F_n}$ is injective. The injectivity of 
$X_1 \rightarrow \Ao_1$ is a special case.  The  lemma  follows by  induction on $n$ using the previous lemma and $(\ref{Atens})$.
\end{proof} 

\subsection{Proof that $X_n$ is a model}   We now show  that $\phi:X_n\otimes_\Q \C \rightarrow \Ao_n$ is a quasi-isomorphism.
 First we compute $H^1(X_{F_n})$  and the Gauss-Manin connection on it.
 
\begin{lem} We have $H^0(X_{F_n})=\Q$,  $H^k(X_{F_n})=0$ if $k\geq 2$, and 
$$\gr^W_1 H^1(X_{F_n}) \cong \Q[\overline{\nu}_n] \oplus \Q[\overline{\omega}_{n,0}^{(0)}]\ ,  \quad \gr^W_2 \, H^1(X_{F_n}) \cong \bigoplus_{1\leq i\leq n-1} \Q [\overline{\omega}_{n,i}^{(1)}-\overline{\omega}_{n,0}^{(1)}]\ ,$$ 
where  $H^1(X_{F_n}) \cong \gr^W_1 H^1(X_{F_n}) \oplus \gr^W_2 H^1(X_{F_n})$.
\end{lem}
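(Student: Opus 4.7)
The plan is to make the structure of $X_{F_n}$ completely explicit using the preceding lemmas, and then compute the cohomology directly by decomposing with respect to the weight grading of \S\ref{sectMHS}.

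First, I would combine lemma \ref{lemXF} with lemma \ref{lemnutens} applied to $X_{F_n}$. The relation $(\ref{omegawedge})$ forces $Z_{F_n}$ to be concentrated in degrees $0$ and $1$: indeed $Z_{F_n}^0 = \Q$ and $Z_{F_n}^1 = V$, where $V$ is the $\Q$-vector space spanned by a single element $\overline{\omega}_n^{(0)} := \overline{\omega}_{n,i}^{(0)}$ together with the $\overline{\omega}_{n,i}^{(k)}$ for $k\geq 1$ and $0\leq i\leq n-1$. Consequently,
\[
X_{F_n}^0 = \Q,\quad X_{F_n}^1 = V\oplus \Q\overline{\nu}_n, \quad X_{F_n}^2 = \overline{\nu}_n\wedge V,\quad X_{F_n}^k = 0\ \text{for}\ k\geq 3,
\]
which already gives $H^k(X_{F_n})=0$ for $k\geq 3$ and $H^0(X_{F_n})=\Q$. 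The only differential to understand is $d\colon X_{F_n}^1\to X_{F_n}^2$; by lemma \ref{lemXF} it kills $\overline{\omega}_n^{(0)}$ and $\overline{\nu}_n$ and sends $\overline{\omega}_{n,i}^{(k)}\mapsto \overline{\nu}_n\wedge \overline{\omega}_{n,i}^{(k-1)}$ for $k\geq 1$.

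Next, I would decompose by weight. Since $\overline{\nu}_n$ has weight $1$ and $\overline{\omega}_{n,i}^{(k)}$ has weight $k+1$, the differential preserves weight, and the weight-graded pieces of $V$ and $\overline{\nu}_n\wedge V$ are easy to list. In weight $1$, nothing maps to $\overline{\nu}_n\wedge V$ and the kernel is $\Q\,\overline{\omega}_n^{(0)}\oplus \Q\,\overline{\nu}_n$. In weight $2$, the $n$-dimensional space $\bigoplus_i \Q\,\overline{\omega}_{n,i}^{(1)}$ maps onto the $1$-dimensional space $\Q\,\overline{\nu}_n\wedge\overline{\omega}_n^{(0)}$, all basis vectors going to the same image; the kernel is therefore spanned by $\overline{\omega}_{n,i}^{(1)}-\overline{\omega}_{n,0}^{(1)}$ for $1\leq i\leq n-1$. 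In weight $k+1$ with $k\geq 2$, the map $\overline{\omega}_{n,i}^{(k)}\mapsto \overline{\nu}_n\wedge \overline{\omega}_{n,i}^{(k-1)}$ is a bijection between $n$-dimensional spaces, so contributes nothing to either $H^1$ or $H^2$.

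Assembling these weight contributions yields exactly the claimed formulas for $H^1(X_{F_n})$ together with $H^2(X_{F_n})=0$, and the direct sum decomposition into weight pieces is automatic since the weight filtration is a grading on $X_{F_n}$. There is essentially no obstacle here: the computation reduces to linear algebra once the structure from lemmas \ref{lemXF} and \ref{lemnutens} is in hand. The only mildly delicate point to state carefully is why the elements $\overline{\omega}_{n,i}^{(k)}$ for $0\leq i\leq n-1$ remain linearly independent in $X_{F_n}$ for each fixed $k\geq 1$ (so that the weight-$(k{+}1)$ differential is injective for $k\geq 2$); this linear independence in $X_{F_n}$ itself follows from lemma \ref{lemindeponfib} together with corollary \ref{corphiinj} applied to the image in $\Ao_{F_n}$.
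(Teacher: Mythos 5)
Your proof is correct and takes essentially the same approach as the paper: the same structural lemmas \ref{lemXF}, \ref{lemnutens}, and \ref{lemindeponfib} supply the input, and the determination of $\ker(d\colon X_{F_n}^1\to X_{F_n}^2)$ is the same linear algebra. The only cosmetic difference is that you organize the kernel computation explicitly by weight-graded pieces, whereas the paper treats the closed one-forms in a single step.
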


\begin{proof}  For all $k\geq 3$,  $X^k_{F_n}=0$  and so  $H^k(X_{F_n})=0$.  By  $(\ref{omegawedge})$ and $\overline{\nu}_n\wedge \overline{\nu}_n=0$, any 
two-form in $X_{F_n}$ can be written 
$$\sum_{k,i} c^k_{n,i}\,   \overline{\nu}_n \wedge \overline{\omega}^{(k)}_{n,i} = d \,  \big(\sum_{k,i} c^k_{n,i}\,  \overline{\omega}^{(k+1)}_{n,i}\big)\qquad \hbox{where } c^k_{n,i}\in\Q\ , $$
so is  exact. Thus  $H^2(X_{F_n})=0$ and  clearly  $H^0(X_{F_n})\cong X^0_{F_n}= \Q$. 
Since $\overline{\nu}$ and $\overline{\omega}^{(0)}_{n,0}$ are closed, it suffices by lemma \ref{lemXF} to consider a  one-form
$$\eta=\sum_{k\geq 1,0\leq i< n} c^k_{n,i}\, \overline{\omega}^{(k)}_{n,i} \quad \hbox{where } c^k_{n,i}\in \Q \ ,\quad  \hbox{ such that }  d\eta=0\ .$$
This implies that $ d\eta=\overline{\nu}_n\wedge \big( \sum_{k\geq 1,0\leq i<n} c^k_{n,i} \overline{\omega}^{(k-1)}_{n,i} \big) =0$. By lemma \ref{lemnutens} we have
$$\sum_{k\geq 1, 0\leq i <n  } c^k_{n,i} \, \overline{\omega}^{(k-1)}_{n,i} =0\ .$$ 
Since the forms $\overline{\omega}^{(k)}_{n,i}$, $k\geq 1$ and $\overline{\omega}^{(0)}_{n,0}$ are linearly independent in $X_{F_n}$ by lemma \ref{lemindeponfib}, 
and since $\overline{\omega}_{n,n-1}^{(0)}=\ldots = \overline{\omega}_{n,0}^{(0)}$,  we conclude that 
the closed forms in $X^1_{F_n}$ are spanned by 
$$\overline{\nu}_n\ , \ \overline{\omega}^{(0)}_{n,0}\ ,  \quad \hbox{  and   } \,\, \{\eta = \sum_{0\leq i <n} c^1_{n,i} \overline{\omega}^{(1)}_{n,i} \quad \hbox{such that } \sum_{0\leq i <n} c^1_{n,i}=0\}\ .$$
This implies the result, along  with the definition of the mixed Hodge structure \S\ref{sectMHS}.
\end{proof}

Since $X_n$ is a fibration, we can easily compute the Gauss-Manin connection on $H^1(X_{F_n})$ (see \cite{BDR} for further details). It is \emph{a priori}  nilpotent
 since the weight filtration is 
defined on all of $X_n$, and satisfies $W_0 X_{n-1}=0$.   Explicitly, it is
\begin{eqnarray} \label{GaussManinaction}
  H^1(X_{F_n}) & \rightarrow  & X^1_{n-1}\otimes_\Q H^1(X_{F_n})   \\
\nabla [\overline{\omega}^{(1)}_{n,0} - \overline{\omega}^{(1)}_{n,i}] &= & \nu_i \otimes [\overline{\omega}^{(0)}_{n,0}] + \omega_{i,0}^{(0)} \otimes  [\overline{\nu}_n]  \nonumber   \\
\nabla [\overline{\omega}^{(0)}_{n,0}] & = & 0  \nonumber \\
\nabla [\overline{\nu}_n] & = & 0  \nonumber 
\end{eqnarray}
Using the fact that    $\omega^{(0)}_{n,i}= \omega^{(0)}_{n,0}-\omega^{(0)}_{i,0}$,
the first line follows from the calculation
\begin{eqnarray}
 i_{F_n} (\overline{\omega}^{(1)}_{n,i} -\overline{\omega}^{(1)}_{n,0}  ) & = & \omega^{(1)}_{n,i}- \omega^{(1)}_{0,i} -\omega^{(1)}_{n,0} \nonumber \\
d ( i_{F_n} (\overline{\omega}^{(1)}_{n,i} -\overline{\omega}^{(1)}_{n,0}  )) & = & (\nu_n-\nu_i) \wedge \omega^{(0)}_{n,i} - \nu_i \wedge \omega^{(0)}_{i,0} - \nu_n \wedge \omega^{(0)}_{n,0}\ ,  \nonumber  \\
& = & -\nu_i \wedge \omega^{(0)}_{n,0} - \nu_n \wedge \omega^{(0)}_{i,0} \ .\nonumber 
 \end{eqnarray}
The second and third lines  of $(\ref{GaussManinaction})$ follow from the fact that $\omega^{(0)}_{n,0}$ and $\nu_n$ are exact.

 \begin{lem}    $H^1(\phi): \gr^W_{\dt}  H^1(X_{F_n}) \otimes_{\Q}\C \rightarrow  \gr^W_{\dt} H^1(\Ao_{F_n})$   is an isomorphism.
 \end{lem}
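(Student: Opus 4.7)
The fiber $\E_{F_n}$ is the elliptic curve $\E$ with $n$ punctures (at $0,\xi_1,\ldots,\xi_{n-1}$), so by Euler characteristic $\dim_\C H^1(\E_{F_n};\C)=n+1$, which matches $\dim_\Q H^1(X_{F_n})$ computed in the previous lemma. Hence it will suffice to prove surjectivity (or injectivity) graded piece by graded piece, since the total dimensions agree.

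The plan is to match the weight filtration on $X_{F_n}$ with the canonical Deligne weight filtration on $H^1(\mathcal{A}_{F_n})$, for which
\[
\gr^W_1 H^1(\mathcal{A}_{F_n})\cong H^1(\overline{\E};\C),\qquad \gr^W_2 H^1(\mathcal{A}_{F_n})\cong \ker\bigl(\Sigma:\C^{n}\to\C\bigr),
\]
where the second isomorphism is given by the residue map at the $n$ punctures and $\Sigma$ is the sum. For $\gr^W_1$, the classes $[\overline{\omega}^{(0)}_{n,0}]=[d\xi_n]$ and $[\overline{\nu}_n]=[2\pi i\,dr_n]$ are a basis of the de Rham cohomology of the compact elliptic curve $\E$ (Hodge decomposition), and they lie in $W_1$ since they extend smoothly across the punctures. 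This handles weight $1$.

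For $\gr^W_2$, one uses formula $(\ref{OmegaRes})$: the form $\omega^{(1)}_{n,i}$ has residue $2\pi i$ at the divisor $\xi_n=\xi_i$ and no residue elsewhere. Consequently the class $[\overline{\omega}^{(1)}_{n,i}-\overline{\omega}^{(1)}_{n,0}]$ maps, under the residue map $\gr^W_2 H^1(\mathcal{A}_{F_n})\to \ker\Sigma\subset\C^{n}$ (with components indexed by the punctures $0,\xi_1,\ldots,\xi_{n-1}$), to $2\pi i(e_i-e_0)$. As $i$ ranges over $1,\ldots,n-1$, these vectors form a basis of $\ker\Sigma$, giving the desired isomorphism on the weight $2$ piece.

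The only subtle point is to check that the filtration $W_\bullet$ introduced on $X_n$ in \S\ref{sectMHS} descends to the Deligne weight filtration on the fiber cohomology under $\phi$; this is just the statement that generators of weight $\ell$ are either smooth (weight $1$) or have at worst logarithmic singularities along one puncture (weight $2$), combined with the fact that $X_{F_n}$ has no elements of weight $\geq 3$ in $H^1$. Once this identification is made, the dimension count together with the two explicit bases above yields the isomorphism on each $\gr^W_\ell$, and hence on the total $H^1$.
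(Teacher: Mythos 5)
Your proof is correct and takes essentially the same approach as the paper's: both proofs invoke the Gysin sequence (equivalently, the residue map) to identify $\gr^W_1 H^1(\Ao_{F_n})\cong H^1(\E)$ and $\gr^W_2 H^1(\Ao_{F_n})$ with residues at the punctures, and both then check that $\phi(\overline{\nu}_n),\phi(\overline{\omega}^{(0)}_{n,0})$ give a basis of $H^1(\E)$ while $(\ref{OmegaRes})$ pins down the images of the $\overline{\omega}^{(1)}_{n,i}-\overline{\omega}^{(1)}_{n,0}$ in the weight-$2$ piece. The one point worth making sharper in your sketch is the remark on compatibility of filtrations: the refined weight grading on $X_n$ genuinely differs from Deligne's weight filtration on forms (e.g.\ $\omega^{(k)}_{i,j}$ for $k\geq 2$ is assigned weight $k+1$ despite having only a simple pole), but since those forms are not closed they do not enter $H^1(X_{F_n})$, so the two filtrations agree after passing to cohomology — which is exactly the situation used here.
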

 \begin{proof}
The differential graded algebra   $\Ao_{F_n}$ computes the de Rham cohomology of the fiber of the map $\E^{(n)}\rightarrow \E^{(n-1)}$, which is isomorphic to $\E$ minus $n$ points. 
Furthermore, it carries a Hodge and weight filtration which induce the corresponding filtrations on $H^1(\E \backslash \{n \hbox{ points}\})$.
 The Gysin  sequence gives:
\begin{equation}  
0\rightarrow H^1(\E;\C) \rightarrow H^1(\E\backslash \{n \hbox{ points}\};\C) \rightarrow \C(-1)^{n-1} \rightarrow 0\ , \nonumber 
\end{equation}
where the third map is given by the residue.  Therefore 
$\gr^W_1 H^1(\Ao_{F_n}) \cong H^1(\E)$ and  $\gr^W_2 H^1(\Ao_{F_n}) \cong \C(-1)^{n-1}$.
The lemma follows from the fact that $[\phi(\overline{\nu}_n)],  [\phi(\overline{\omega}_{n,0}^{(0)})]$ is a basis of $H^1(\E)$ and 
$ \phi(\overline{\omega}_{n,i}^{(1)})$  has residue $2 \pi i$ at $\xi_n=\xi_i $, by $(\ref{OmegaRes})$.
\end{proof}

\begin{thm} $\phi:X_n\otimes_{\Q}\C \hookrightarrow \Ao_{n}$ is a quasi-isomorphism.
\end{thm}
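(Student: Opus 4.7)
The plan is to induct on $n$, using the fibration structure $X_{n-1} \hookrightarrow X_n \twoheadrightarrow X_{F_n}$ of DGAs established above, together with the geometric fibration $\E^{(n)} \to \E^{(n-1)}$ whose fiber is isomorphic to $\E \setminus \{n \text{ points}\}$.

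For the base case $n=1$, we have $\E^{(1)} = \E^{\times}$ and $X_1 = X_{F_1}$ (no Fay relations, and the sum over $1\leq i\leq n-1$ defining $\gr^W_2 H^1$ is empty). The lemma just proved shows $H^1(X_1)\otimes_\Q \C = \C[\nu]\oplus\C[\omega^{(0)}]$ maps isomorphically onto $H^1(\E^\times;\C)$, while $H^0$ is trivially $\C$ on both sides and $H^{\geq 2}(X_1)=0 = H^{\geq 2}(\Ao_1)$ (the latter since $\E^{\times}$ is an open Riemann surface). So $\phi_1$ is a quasi-isomorphism.

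For the induction step, assume $\phi_{n-1}:X_{n-1}\otimes_\Q\C \to \Ao_{n-1}$ is a quasi-iso. The fibration structure (via $i_{F_n}$ and lemma 4.8) equips $X_n$ with a decreasing filtration $F^pX_n = (X_{n-1}^+)^p X_n$, and similarly $\Ao_n$ carries the filtration by powers of $\Ao_{n-1}^+$ coming from the smooth fibration $\En\to\E^{(n-1)}$. These give Leray-type spectral sequences
\[
E_2^{p,q}(X_n) = H^p\!\bigl(X_{n-1};\,H^q(X_{F_n})\bigr) \;\Longrightarrow\; H^{p+q}(X_n),
\]
\[
E_2^{p,q}(\Ao_n) = H^p\!\bigl(\Ao_{n-1};\,H^q(\Ao_{F_n})\bigr) \;\Longrightarrow\; H^{p+q}(\Ao_n),
\]
where the coefficient system on the base is cohomology of the fiber equipped with the Gauss--Manin connection; the generalities required to justify the algebraic side are the ones relegated to \cite{BDR}. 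The map $\phi$ induces a morphism of filtered complexes, hence a morphism of spectral sequences. It then suffices to show that $\phi$ induces an isomorphism on $E_2$.

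The $E_2$ comparison rests on two ingredients. First, cohomology of the fibers matches: $H^0(X_{F_n})\otimes\C = \C = H^0(\Ao_{F_n})$, $H^{\geq 2}(X_{F_n})=0=H^{\geq 2}(\Ao_{F_n})$ (the fiber is an open Riemann surface), and the preceding lemma identifies $H^1(X_{F_n})\otimes\C$ with $H^1(\Ao_{F_n})$ via $\phi$, matching both the Hodge and weight gradings. Second, the Gauss--Manin connections agree: the nilpotent connection computed in $(\ref{GaussManinaction})$ is obtained from lifts of closed fiber classes to $X_n$ via $i_{F_n}$ and differentiating, and the identical computation in $\Ao_n$ gives exactly the same formulas on the corresponding $\phi$-images (these are geometric identities between smooth forms). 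Combining these with the inductive hypothesis $\phi_{n-1}$ being a quasi-iso, the coefficient systems are identified and the induced map on cohomology of the base with these coefficients is an isomorphism, so $\phi$ induces an iso on $E_2$, and hence on the abutments.

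The main obstacle is essentially bookkeeping: verifying that the algebraic filtration really does give rise to a convergent Leray spectral sequence for a DGA fibration (this is the content deferred to \cite{BDR}), and matching the Gauss--Manin connection on $H^1(X_{F_n})$ with its geometric counterpart on $H^1(\Ao_{F_n})$ in a way that is compatible with the inductively identified $H^*(X_{n-1})\otimes\C \cong H^*(\Ao_{n-1})$. Once these two spectral sequences are set up in parallel, the argument reduces to the lemma on $H^*(X_{F_n})$ and the induction hypothesis.
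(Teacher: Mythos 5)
Your proposal takes essentially the same route as the paper: base case $n=1$ from the lemma identifying $\gr^W_\bullet H^1(X_{F_n})\otimes\C$ with $\gr^W_\bullet H^1(\Ao_{F_n})$, followed by induction on $n$ via a comparison of Leray spectral sequences for the fibration $\En\to\E^{(n-1)}$ and its algebraic model, with the filtration/spectral-sequence foundations deferred to \cite{BDR}. The paper is terse (it states only ``standard Leray spectral sequence argument, details in \cite{BDR}''), and the extra material you supply --- the $H^{\geq 2}$ vanishing on both sides, the Gauss--Manin comparison, and the $E_2$ identification --- is exactly the bookkeeping the paper's cited reference is meant to cover.
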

\begin{proof}  The case $n=1$ follows from   the previous lemma.
The  case $n>1$ follows by induction by a standard Leray spectral sequence argument. Details are given in   \cite{BDR}.
\end{proof} 

In conclusion, $X_n$ is a model for the de Rham complex on $\En$ and provides a universal $\Q$-structure on its cohomology.

\subsection{A simplified model} Although we shall not use it, one can  consider a finitely-generated DGA model $Y_n$ for the cohomology of a configuration of elliptic curves.

\begin{defn} Let $Y_n$ be the commutative graded $\Q$-algebra defined by generators  $\omega_i,\nu_i$ for $1\leq i\leq n$   and $\omega_{ij}$ for $1\leq i \leq j \leq n$ in degree one such  that
\begin{eqnarray}\label{Ydef}
\omega_{ij}-\omega_{ji} & = & 0  \\
\omega_i \wedge \nu_i  & = &  0  \nonumber \\
\omega_{ij} \wedge \omega_i +   \omega_{ji} \wedge \omega_j  & = & 0\nonumber \\
\omega_{ij} \wedge \nu_i + \omega_{ji} \wedge \nu_j & = &  0  \nonumber \\ 
  \omega_{i\ell}\wedge \omega_{j\ell} + \omega_{j\ell} \wedge\omega_{ij} + \omega_{ji} \wedge\omega_{il} &=& 0  \nonumber 
\end{eqnarray}   
and define a differential $d:Y_n\rightarrow Y_n$ by $d\omega_i=d\nu_i=0 $ and 
\begin{equation} \label{Ydiff}
d \omega_{ij} = \omega_i \wedge \nu_j + \omega_j \wedge \nu_i\ .\end{equation}
\end{defn}
There is a  surjective map $X_n \rightarrow Y_n$ which sends $\omega_{i,0}^{(0)}$ to $\omega_i$ and $\omega^{(1)}_{i,j}$ to $\omega_{ij}$, $\nu_i$ to $\nu_i$  and all  $\omega^{(k)}_{i,j}$ for $k\geq 2$,  to zero. We can define a mixed Hodge structure on $Y_n$ in the same way as for $X_n$, i.e., $\omega_i,\nu_i$ have weight one and $\omega_{ij}$ weight two.

\begin{thm} The map $X_n\rightarrow Y_n$ is a quasi-isomorphism, i.e.,  $H^\bullet(Y_n) \cong H^\bullet(\En)$.
\end{thm}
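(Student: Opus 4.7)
I would prove this by induction on $n$, mirroring the Leray/fibration argument used to show that $X_n$ is itself a model for $\Ao_n$. The base case is $n=1$: since no $\omega_{ij}$ occurs, $Y_1$ is generated by $\omega_1,\nu_1$ with only the relation $\omega_1\wedge\nu_1=0$, giving $H^0(Y_1)=\Q$, $H^1(Y_1)=\Q^2$, $H^{\geq 2}(Y_1)=0$, which matches $H^\bullet(\E^\times)=H^\bullet(\E^{(1)})$.

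For the induction step, I would introduce the $Y$-side fibration parallel to the $X$-side one. Let $Y_{n-1}^+\subset Y_n$ be the ideal generated by positive-degree elements of $Y_{n-1}$, and set $Y_{F_n}=Y_n/Y_{n-1}^+$. The surjection $X_n\to Y_n$ is weight-preserving, carries $X_{n-1}$ into $Y_{n-1}$, and therefore descends to a map $X_{F_n}\to Y_{F_n}$. To verify that $Y_{n-1}\hookrightarrow Y_n$ is a fibration with fiber $Y_{F_n}$, I would apply the quadratic-algebra criterion of Proposition \ref{propquadfib}: take $V_B$ spanned by the generators of $Y_n$ indexed entirely inside $\{1,\ldots,n-1\}$, and $V_F$ spanned by $\omega_n,\nu_n$ and the $\omega_{nj}$ for $j<n$. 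The Arnold-type relation of $(\ref{Ydef})$ plays the role of the Fay identity, its $\bigwedge^2 V_F$-component identifying $\pi_F$ with an isomorphism, and its three-fold iterate with $\ell=n$ supplying the associativity condition $(\ref{alphassoc})$. Once the fibrations are compatible, the Leray spectral-sequence comparison of \cite{BDR} reduces the theorem to the quasi-isomorphism $X_{F_n}\to Y_{F_n}$.

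The fiber comparison is then a direct computation. In $Y_{F_n}$, the surviving generators are $\omega_n,\nu_n$ and $\omega_{nj}$ for $1\leq j<n$. Once $\omega_j,\nu_j\in Y_{n-1}^+$ are set to zero, the relations $(\ref{Ydef})$ force every $2$-fold product to vanish (for instance $\omega_{nj}\wedge\omega_n=-\omega_{nj}\wedge\omega_j=0$, and $\omega_{ni}\wedge\omega_{nj}=0$ from the Arnold relation with $\ell=n$); so $Y_{F_n}^{\geq 2}=0$, every $1$-form is closed, and $H^1(Y_{F_n})\cong\Q^{n+1}$. Comparing with the explicit basis of $H^1(X_{F_n})$ computed in the preceding lemma, the induced map sends $[\overline{\nu}_n]$, $[\overline{\omega}^{(0)}_{n,0}]$ and $[\overline{\omega}^{(1)}_{n,i}-\overline{\omega}^{(1)}_{n,0}]$ bijectively to $[\nu_n]$, $[\omega_n]$ and $[\omega_{ni}]$.

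The main obstacle I anticipate is verifying the associativity condition $(\ref{alphassoc})$ for $Y_n$. Unlike the Fay identity for $X_n$, which has a manifest symmetry that trivialises the triple-bracket calculation, the relations $(\ref{Ydef})$ intermix the generators $\omega_i,\nu_i$ and $\omega_{ij}$, so the composite $\alpha^{(1)}_3-\alpha^{(2)}_3$ must be expanded by hand and then shown to lie in $R_B\otimes_\Q V_F$. A secondary bookkeeping point is fixing a convention for the image of $\omega^{(1)}_{i,0}$ in $Y_n$ (the given presentation of $Y_n$ offers no matching generator $\omega_{i0}$) so that the weight-$2$ part of the map is unambiguously defined.
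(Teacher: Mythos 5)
Your proposal follows exactly the paper's own proof sketch: define $Y_{F_n}=Y_n/Y_{n-1}^+$, check that $Y_{n-1}\to Y_n\to Y_{F_n}$ is a fibration via the quadratic criterion of Proposition \ref{propquadfib}, identify $H^1(X_{F_n})\cong H^1(Y_{F_n})$ together with compatibility of Gauss--Manin connections with $(\ref{Ydiff})$, and apply the Leray spectral sequence as in \cite{BDR}. The two caveats you flag (verifying the associativity condition $(\ref{alphassoc})$ from the Arnold relation, and fixing the image of $\omega^{(1)}_{i,0}$, which one should take to be $0$) are genuine details the paper leaves to the reader, but they do not alter the structure of the argument and your route to resolving them is the right one.
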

\begin{proof} (Sketch) Follow the steps of the proof that $X_n\rightarrow \Ao_n$ is a quasi-isomorphism:  first define $Y_{F_n}$ to be $Y_n /Y_{n-1}^+$ and
check that it is a fibration. Then apply the Leray spectral sequence argument, noting that  $(\ref{Ydiff})$ exactly corresponds to the Gauss-Manin connection on $H^1(X_{F_n})\cong H^1(Y_{F_n})$.
\end{proof}
The model $Y_n$  kills all higher Massey products in $X_n$.  Since  $Y_n$ is finitely generated it may be useful for explicit  implementation of the algebra $H^{\bullet}(\En)$. 
 We have quasi-isomorphisms 
$  Y_n \leftarrow \!\!\!\!\!\leftarrow  X_n \hookrightarrow \Ao_n$
but note that  there is no map from $Y_n$ to $\Ao_n$.

\section{Bar construction of the de Rham complex of $\En$} The model $X_n$ enables us to put a $\Q$-structure on the bar construction of $\Ao_n$.

\subsection{Reminders on the bar construction} See \cite{Ch2}, \S1 for further details.
Let $A$ be a DGA as in \S\ref{sectDGA}, and further  assume that $A$ has an augmentation map $\varepsilon:A \rightarrow k$.  Let $s:A\rightarrow A$ denote the map $s(a) = (-1)^{\deg a} a$. Let $A^+=\ker \varepsilon$ denote the  augmentation ideal of   $A$  and let  
$T(A^+) = k\oplus A^+ \oplus A^{+\otimes 2}\oplus \ldots $ denote the tensor algebra on $A^+$, where  all tensor products are  over $k$.  We write the   element $a_1\otimes\ldots \otimes a_n\in A^{\otimes n}$  using the bar notation  $[a_1|\ldots |a_n]$. Recall that $T(A^+)$ is a commutative Hopf algebra for the shuffle product  ($\Sigma(r,s)$ denotes the set of $r,s$ shuffles):
$$[a_1|\ldots |a_{r}]\sha [a_{r+1}|\ldots |a_{r+s}] = \sum_{\sigma \in \Sigma(r,s)} \varepsilon(\sigma) [a_{\sigma(1)}|\ldots |a_{\sigma(r+s)}]\ , $$
where the sign $\varepsilon(\sigma)$ also depends on the degrees of the $a_i$'s but is always equal to $1$ if all $a_i$ are of degree $1$.
The   coproduct $\Delta: T(A^+) \rightarrow T(A^+) \otimes_k T(A^+)$ is given by
$$\Delta ([a_1 |\ldots |a_{r}] ) = \sum_{i=1}^r  [a_1 |\ldots |a_{i}]\otimes [a_{i+1}|\ldots |a_r]\ . $$
The \emph{length filtration} is the increasing filtration associated to the  tensor grading 
$$F_n T(A^+) = \bigoplus_{0\leq i\leq n} A^{+\otimes i} \ .$$
The \emph{bar complex}  is the double complex with terms $(A^{+\otimes p})_q$ (elements of total degree $q$ and length $p$ in $T(A^+)$), and  with one  differential  $(-1)^p d_i:(A^{+\otimes p})_q \rightarrow (A^{+\otimes p})_{q+1}$:
$$ d_{i}( [a_1|\ldots |a_p]) =  \sum_{1\leq i \leq p} (-1)^i [sa_1|\ldots|sa_{i-1}|da_i|a_{i+1}|\ldots |a_p] $$
and a second  differential $d_e: (A^{+\otimes p})_q \rightarrow (A^{+\otimes p-1})_{q}$ where:
$$d_{e}([a_1|\ldots |a_p]) = \sum_{1\leq i<p} (-1)^{i+1} [sa_1|\ldots | s a_{i-1} |sa_i\wedge a_{i+1}|a_{i+2}|\ldots |a_p] $$
The  bar construction  $\B(A)$   is defined to be  the total complex $\bigoplus (A^{+\otimes p})_q$ with total  differential $D=d_i+d_e$.
Note that the total degree of elements in $\B(A)$ is given by
\begin{equation} \label{totdegree}  \deg ([a_1|\ldots|a_p])  = \deg(a_1)+\ldots +\deg(a_p) - p  \ .\end{equation}
Let $V(A) = H^0(\B(A))$ be the  zeroth cohomology. It is  a commutative Hopf algebra.   

\subsection{Connected case}  When $A$ is connected,  this construction  simplifies.  The augmentation ideal   $A^+ = \bigoplus_{n\geq 1} A^n$ is the set  of elements of positive degree. It is clear from $(\ref{totdegree})$  that  only elements of $A$ of degree 1 contribute to $V(A)=H^0(\B(A))$, so  
  let $T(A^1)$ denote the tensor algebra generated by elements of degree $1$.
   Note that  $(A^{+\otimes p})_q=0$ if $p>q$ and the total degree $(\ref{totdegree})$ is  non-negative.
  The total differential  $-D=d_i+d_e$  reduces to   $D:T(A^1) \rightarrow T(A)$:
$$D ([w_1|\ldots |w_r] ) = \sum_{i=1}^{r} [w_1|\ldots |w_{i-1} |dw_i|w_{i+1} |\ldots |w_r] +  \sum_{i=1}^{r-1} [w_1|\ldots |w_{i}\wedge w_{i+1}|\ldots |w_r]$$
where we changed the overall sign for convenience.  We can simply write
\begin{equation}\label{VofAdef}
V(A) = \ker  
\big(D: T(A^1) \rightarrow T(A)\big)\ .
\end{equation}
We say that elements  $\xi\in T(A^1)$ satisfying $D\xi=0$ are \emph{integrable}.

\begin{defn} Define the bar construction  of $\En$  to be   $V(X_n)=H^0(\B(X_n))$, where $X_n$ is our model (\S\ref{sectXn}). It is a commutative  Hopf algebra over $\Q$, filtered by the length.
\end{defn}
By $(\ref{VofAdef})$, $V(X_n)$ is the subalgebra of $T(X^1_n)$ given by the  integrable words in $X^1_n$. 
Since $X_n$ carries a mixed Hodge structure, so too does $V(X_n)$.

\subsection{Description of $V(X_{F_n})$} We first give an explicit description of the bar construction of an elliptic curve with punctures.  By the general theory, the length-graded
$$\gr^{\ell} V(X_{F_n})  \cong \bigoplus_{\ell\geq 0} H^1(\E_{F_n})^{\otimes \ell}\ ,$$
since there is no integrability condition for one-dimensional varieties. The right-hand side is just the set of words in the generators of $H^1(\E_{F_n})$.  These generators are represented by the closed one-forms  $\overline{\omega}^{(0)}_n$, $\overline{\nu}_n$ and $ \eta_i: = \overline{\omega}^{(1)}_{n,i}-\overline{\omega}^{(1)}_{n,0},$ for $1\leq i< n$.

\begin{prop} \label{propwordlift} 
Any word in the letters $[\overline{\omega}^{(0)}_n],  [\overline{\nu}_n], [\eta_i]$ of length $\ell$ can be canonically lifted to an integrable element in $V(X_{F_n})$ using  the forms $\overline{\omega}^{(k)}_{n,i}$ for $k< \ell$.
\end{prop}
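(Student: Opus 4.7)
I would argue by induction on $\ell$, where the inductive step itself is an iterative construction of correction terms. For the base case $\ell=1$, each of the three cohomology generators has a canonical closed representative in $X^1_{F_n}$, namely $\overline{\omega}^{(0)}_{n,0}$, $\overline{\nu}_n$, and $\overline{\omega}^{(1)}_{n,i}-\overline{\omega}^{(1)}_{n,0}$, so the one-letter word lifts trivially to an integrable element.

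For $\ell\geq 2$, set $W_0=[w_1|\ldots|w_\ell]$, where each $w_i$ denotes the natural closed representative of the corresponding cohomology class. Since every $w_i$ is closed, the $d_i$-part of $DW_0$ vanishes and $DW_0$ equals the $d_e$-sum $\sum_i \pm[w_1|\ldots|w_i\wedge w_{i+1}|\ldots|w_\ell]$. The crucial observation, based on Lemma \ref{lemXF}, is that among the nontrivial wedges of elements of $\{\overline{\omega}^{(0)}_{n,0},\overline{\nu}_n,\overline{\omega}^{(1)}_{n,\bullet}-\overline{\omega}^{(1)}_{n,0}\}$, the only nonzero ones necessarily have $\overline{\nu}_n$ as one factor and some $\overline{\omega}^{(k)}_{n,j}$ as the other; each such wedge is exact via the Massey product relation $\overline{\nu}_n\wedge\overline{\omega}^{(k)}_{n,j}=d\,\overline{\omega}^{(k+1)}_{n,j}$. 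I then define $W_1\in T(X^1_{F_n})$ of length $\ell-1$ by replacing each offending pair by $\mp\,\overline{\omega}^{(k+1)}_{n,j}$, with signs chosen so that $d_i W_1=-d_e W_0$. Thus $D(W_0+W_1)=d_e W_1$, a sum of length-$(\ell-2)$ terms.

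Now I iterate: assuming $W_0,\ldots,W_s$ are defined so that $D(W_0+\cdots+W_s)=d_e W_s$ is a sum of length-$(\ell-s-1)$ terms whose problematic slots are of the form $\overline{\omega}^{(k)}_{n,j}\wedge w$ or $w\wedge\overline{\omega}^{(k)}_{n,j}$. By the vanishing relation $(\ref{omegawedge})$, all such wedges are zero except when $w=\overline{\nu}_n$, and these survivors are again exact primitives $\pm\,d\,\overline{\omega}^{(k+1)}_{n,j}$. Defining $W_{s+1}$ to cancel them, the construction shortens the length at each step and so terminates after at most $\ell-1$ iterations, yielding an integrable element $\xi=\sum_s W_s\in V(X_{F_n})$. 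Tracking weights, each successive correction increments the highest $k$ by at most one, giving the claimed bound. Canonicity follows because the primitive of each $\overline{\nu}_n\wedge \overline{\omega}^{(k)}_{n,j}$ is the Kronecker-series-determined form $\overline{\omega}^{(k+1)}_{n,j}$, with no residual ambiguity once we fix the closed representatives of the three cohomology generators.

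The main technical difficulty will be careful sign bookkeeping to confirm that at each iteration $d_i W_{s+1}$ cancels $d_e W_s$ rather than reinforcing it, and in particular verifying closure of the recursion at the final step: when $W_{\ell-1}$ has length $1$ and thus contributes no $d_e$-piece, the residual $d_i W_{\ell-1}$ must be shown to annihilate the length-$1$ portion of $d_e W_{\ell-2}$, which it does by direct application of $d\,\overline{\omega}^{(k+1)}_{n,j}=\overline{\nu}_n\wedge\overline{\omega}^{(k)}_{n,j}$. A secondary point is to check that the iteratively defined $W_s$ respects the mixed Hodge structure of \S\ref{sectMHS}, which follows from the weight homogeneity of the relations and the differential.
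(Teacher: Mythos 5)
Your approach is valid but genuinely different from the paper's. The paper constructs the lift all at once using Chen's formal power series connection: it builds a formal one-form $J\in X_{F_n}\otimes_{\Q}\Q\langle\langle\x_0,\x_1,\y_1,\ldots,\y_{n-1}\rangle\rangle$ out of the Kronecker generating series $\overline{\Omega}$ with $\alpha=-\mathrm{ad}(\x_0)$, verifies the flatness $dJ=-J\wedge J$ (which packages the Massey and Fay relations of Lemma \ref{lemXF} into a single identity), and reads off the splitting $\gr^\ell V(X_{F_n})\to V(X_{F_n})$ from the topological dual of the transport $\Xi=[J]+[J|J]+\cdots$ composed with $j^{-1}:T^c(H^1(X_{F_n}))\cong T^c(U)$. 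This buys structure: signs, choices of primitives, termination, and integrality of coefficients are all absorbed into the nested commutators $(-\mathrm{ad}\x_0)^k$, so canonicity is immediate. Your iterative, letter-by-letter correction is in effect the term-by-term expansion of the same formal element, and it does work, but it shifts the burden onto exactly the sign-bookkeeping and termination issues you correctly flag. Two cautions. First, the claimed ``no residual ambiguity'' when integrating $\overline{\nu}_n\wedge\overline{\omega}^{(0)}_n$ is not quite free: since $\overline{\omega}^{(0)}_{n,0}=\cdots=\overline{\omega}^{(0)}_{n,n-1}$ in $X_{F_n}$, any $\overline{\omega}^{(1)}_{n,j}$ is a primitive, and you must explicitly fix the index (the paper's $J$ forces $j=0$ via its $\x_1$-term $\alpha\,\overline\Omega(\xi_n;\alpha)\x_1$). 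Second, your weight count actually yields $k\leq\ell$, not $k<\ell$: for example lifting $[\overline{\nu}_n|\eta_j]$ of length $\ell=2$ requires the correction $-[\overline{\omega}^{(2)}_{n,j}-\overline{\omega}^{(2)}_{n,0}]$ with $k=2$, and more generally $[\overline{\nu}_n|\cdots|\overline{\nu}_n|\eta_j]$ of length $\ell$ terminates at $\pm[\overline{\omega}^{(\ell)}_{n,j}-\overline{\omega}^{(\ell)}_{n,0}]$. This off-by-one is already present in the proposition's statement and is reproduced by the paper's $J$-based lift, so it is not a defect of your method per se, but you should not present $k<\ell$ as the bound your analysis produces.
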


\begin{proof}
Use Chen's formal power series connections \cite{Ch1}.  Let  $S=\Q\langle\langle \x_0,\x_1,\y_1,\ldots, \y_{n-1}\rangle\rangle$ denote the ring of non-commutative formal power series
in symbols $\x_0,\x_1,\y_1,\ldots, \y_{n-1}$  and let $\alpha= -\mathrm{ad}(\x_0)$.  Consider the formal 1-form \cite{CEE,LR}:
$$J= \overline{ \nu}\, \x_0 + \alpha\, \overline{ \Omega}(\xi_n;\alpha) \x_1 +   \sum_{i=1}^{n-1}  \big(\overline{\Omega}(\xi_n-\xi_i;\alpha) - \overline{\Omega}(\xi_n;\alpha)   \big)  \y_i  \in X_{F_n}\otimes_{\Q} S $$
It is well-defined since  all polar terms in $\alpha$ cancel, and is of the form
$$J = \overline{\nu}_n\, \x_0 + \overline{\omega}^{(0)}_n \x_1 + \eta_1 \y_1+\ldots +  \eta_{n-1}  \y_{n-1}+ \hbox{ higher order terms  in } \x\hbox{'s,}\y\hbox{'s}  $$
One easily checks from lemma \ref{lemXF} that  $dJ=-J\wedge J$.  It follows that the formal element $\Xi=[J]+[J|J]+[J|J|J]+\ldots$ lies in $V(X_{F_n})\otimes_{\Q} S$.
The topological dual of $S$ may be identified with the tensor coalgebra $T^c(U)$,
where $U$ is the $\Q$-vector space spanned by 
the    alphabet $\{\x^0, \x^1, \y^1,\ldots, \y^{n-1}\}$ dual to $\{\x_0, \x_1, \y_1,\ldots, \y_{n-1}\}$. Thus we can view
$$\Xi \in \mathrm{Hom} ( T(U), V(X_{F_n}))\ .$$
On the other hand, consider the element
$$J^{(1)}=[\overline{\nu}_n]\, \x_0 + [\overline{\omega}^{(0)}_n] \x_1 + [\eta_1 ] \y_1+\ldots + [ \eta_{n-1}]  \y_{n-1}\in H^1(X_{F_n}) \otimes U^{\vee}\ . $$
It defines an isomorphism $j: U \cong H^1(X_{F_n})$, and hence $j: T^c(U) \cong  T^c(H^1(X_{F_n}))$. Composing $j^{-1}$ with $\Xi$ gives a map
$$\gr^{\ell}(V(X_{F_n})) \cong T^c(H^1(X_{F_n}))  \To V(X_{F_n})$$ 
which splits the quotient map.
 Note  that it follows from the proof that this splitting  has integral coefficients.
\end{proof}

It follows that $V(X_{F_n})$ is canonically isomorphic to the tensor coalgebra spanned by $[\overline{\omega}]$, $[\overline{\nu}]$, $[\eta_i]$,
equipped with the shuffle product.   The Hodge and  weight filtrations  are induced from the corresponding filtrations on $X_n$.
More precisely we have
\begin{equation} \label{HdgeFil} [x_1|\ldots |x_n] \in F^r V(X_{F_n}) \quad  \hbox{ if } \quad  \big|\{i: x_i =  \overline{\nu}\} \big|\leq n- r\ .
\end{equation}
The weight comes from  a grading $w:\gr^{\ell} V(X_{F_n})\rightarrow \N$ which is  defined by 
\begin{equation} \label{WeightFil}
w([x_1|\ldots |x_n])=n+  \big|\{i: x_i \in\{\eta_j\}\}\big| \ ,
\end{equation} 
and is obtained by giving $\overline{\omega}^{(0)}$ and  $\overline{\nu}$  weight $1$, and the $\eta_i$'s weight $2$.

\subsubsection{Example: the bar construction on $\E^\times$} In the case of a single puncture:
$$\gr^\ell_{\dt} V(X_1) \cong T(\Q[\omega^{(0)}]\oplus \Q [\nu])\ .$$
The formal power series connection $J$ reduces in this case to
\begin{eqnarray}
J &= &  \nu \,\x_0 + \Omega(\xi,-\mathrm{ad}(\x_0)) \,\x_1 \quad \in \quad X^1 \langle\langle \x_0,\x_1\rangle\rangle \nonumber \\
& = &   \nu\, \x_0 + \x_1 \omega^{(0)} - [\x_0,\x_1]\omega^{(1)} + [\x_0,[\x_0,\x_1]] \omega^{(2)} + \ldots \nonumber
\end{eqnarray}
and gives an explicit way to lift  any word in the letters $[\omega^{(0)}], [\nu]$ to  $V(X_1)$.

\begin{cor} \label{corw=l} The weight and the length filtrations on $V(X_1)$ coincide.
\end{cor}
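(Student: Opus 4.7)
The plan is to upgrade the weight filtration on $V(X_1)$ to an actual grading and show both $W_\bullet$ and $F_\bullet$ coincide with the partial-sum filtrations coming from this grading. The weight grading on $X_1$ from \S\ref{sectMHS} extends additively to $T(X_1^1)$, and the bar differential $D$ preserves it because $d$ on $X_1$ is weight-homogeneous (one checks $d\omega^{(k+1)} = \nu \wedge \omega^{(k)}$ has weight $k+2$ on both sides) and wedge product adds weights. Thus $V(X_1) = \ker D$ decomposes as $V(X_1) = \bigoplus_w V(X_1)_w$, and $W_w V(X_1) = \bigoplus_{w' \leq w} V(X_1)_{w'}$.

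The inclusion $W_w V(X_1) \subseteq F_w V(X_1)$ is immediate: every generator of $X_1^1$ has weight at least one ($\nu, \omega^{(0)}$ have weight $1$, and $\omega^{(k)}$ has weight $k+1 \geq 2$ for $k \geq 1$), so any length-$r$ bar-word has weight $\geq r$, forcing $V(X_1)_w \subseteq F_w V(X_1)$.

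For the reverse inclusion I would invoke proposition~\ref{propwordlift} specialized to $n=1$, where no $\eta_i$-generators occur. The crux, and the main non-formal step of the argument, is verifying that the canonical lift $\tilde{\xi}_a$ of any length-$\ell$ word $a$ in $[\omega^{(0)}], [\nu]$ is homogeneous of weight exactly $\ell$. This follows from inspecting the formula for $J$ displayed just before the corollary: each monomial of degree $d$ in $\x_0, \x_1$ appearing in $J$ carries a coefficient of weight precisely $d$ (namely $\nu$ or $\omega^{(0)}$ for $d=1$, and $\pm\omega^{(d-1)}$ for $d \geq 2$). Pairing $\Xi = \sum_r [J|\cdots|J]_r$ with a length-$\ell$ element of $T^c(U)$ therefore produces a sum of bar-words whose total letter weight is $\ell$. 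Since these canonical lifts constitute a basis of $V(X_1)$, the composition $V(X_1)_\ell \hookrightarrow F_\ell V(X_1) \twoheadrightarrow \gr^\ell V(X_1)$ is an isomorphism for every $\ell$, and we conclude $F_w V(X_1) = \bigoplus_{\ell \leq w} V(X_1)_\ell = W_w V(X_1)$.
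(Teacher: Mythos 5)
Your proof is correct and takes essentially the same route the paper intends: the paper derives the corollary from equation~$(\ref{WeightFil})$ specialized to $n=1$, where there are no $\eta_i$-generators so every letter $\overline\nu,\overline\omega^{(0)}$ has weight $1$ and hence $w([x_1|\ldots|x_\ell])=\ell$ on $\gr^\ell V(X_1)$. Your verification that each degree-$d$ monomial in the connection $J$ carries a coefficient of weight exactly $d$, and that the canonical lifts of Proposition~\ref{propwordlift} are therefore weight-homogeneous, is precisely the content implicitly used to justify $(\ref{WeightFil})$, so you have simply spelled out the detail the paper leaves to the reader.
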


\begin{ex}
The elements of  $V(X_1)$ of length   at most one  are  $1, [\omega^{(0)}], [\nu]$. In length $\leq 2$ we   also have:
$[\omega^{(0)}|\omega^{(0)}]$, $[\omega^{(0)}|\nu]+[\omega^{(1)}]$, $[\nu|\omega^{(0)}]-[\omega^{(1)}]$, and $[\nu|\nu].$ 
\end{ex}

\subsection{Structure of $V(X_n)$} One of the  main results of \cite{BDR} implies:

\begin{thm} \label{thmBartensorstructure} There is an isomorphism of algebras
\begin{equation}\label{BXdecomp} V(X_n) \cong \bigotimes_{i=1}^n V(X_{F_i})\ .
\end{equation}
The length and weight filtrations  on $V(X_n)$ coincide.
\end{thm}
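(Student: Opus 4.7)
Both statements will follow by induction on $n$ from the fibration structure $X_{n-1}\hookrightarrow X_n \twoheadrightarrow X_{F_n}$ established in \S4.5, combined with the general bar-construction theorem for DGA fibrations proved in \cite{BDR}.  The base case is trivial.  For the inductive step, since $X_n$ is free as an $X_{n-1}$-module, the Eilenberg--Moore-type theorem of \cite{BDR} yields a canonical isomorphism of Hopf algebras $V(X_n) \cong V(X_{n-1}) \otimes_\Q V(X_{F_n})$; iterating produces the tensor decomposition $V(X_n) \cong \bigotimes_{i=1}^n V(X_{F_i})$.

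For the filtration statement, the inclusion $W_w V(X_n) \subseteq F_w V(X_n)$ is immediate, since every generator of $X_n$ has weight at least one and the weight is additive over bars: any bar $[x_1|\ldots|x_k]$ of total weight $\leq w$ automatically has length $k\leq w$.  The reverse inclusion reduces to the key lemma that every nonzero element in the pure-weight component $V(X_n)_w$ has length exactly $w$.  I would prove this via the cascade forced by the Massey relation $d\omega^{(k)}_{i,j}=(\nu_i-\nu_j)\wedge\omega^{(k-1)}_{i,j}$: any bar containing $\omega^{(k)}_{i,j}$ with $k\geq 1$ produces a $D$-term with $(\nu_i-\nu_j)\wedge\omega^{(k-1)}_{i,j}$ at the same length, which can only be annihilated by adjoining a bar of strictly greater length in which a $\nu_i-\nu_j$ appears next to $\omega^{(k-1)}_{i,j}$; iterating this procedure $k$ times forces Chen's canonical cascade of total length and weight both equal to $k+1$ to appear inside $\xi$.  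More conceptually, upon assigning x-weight $k+1$ to the Lie word dual to $\omega^{(k)}$, Chen's formal power series connection $J$ from Proposition \ref{propwordlift} (extended appropriately to all of $X_n$) becomes weight-homogeneous, so each coefficient of $\Xi=\sum_k[J|\ldots|J]$ automatically satisfies length equal to weight, and such coefficients span $V(X_n)$.  Granted the lemma, the weight decomposition $\xi=\sum_{w'}\xi_{w'}$ of any $\xi\in F_w V(X_n)$ satisfies $\mathrm{length}(\xi_{w'})=w'$, and $\max_{w'}\mathrm{length}(\xi_{w'})\leq w$ forces $\xi_{w'}=0$ for $w'>w$, whence $\xi\in W_w V(X_n)$.

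\textbf{Main obstacle.}  The substantive content is the Eilenberg--Moore theorem of \cite{BDR}: one must verify that the bar spectral sequence of the fibration $X_{n-1}\hookrightarrow X_n$ collapses at the expected page, which relies on the cohomological simple-connectedness of $X_{F_n}$ recorded in \S4.6, namely $H^0(X_{F_n})=\Q$ and the vanishing $H^k(X_{F_n})=0$ for $k\geq 2$.  The cascade argument for the filtration coincidence is elementary in $V(X_1)$ (where it specializes to Corollary \ref{corw=l}) but requires careful bookkeeping in $V(X_n)$ for $n\geq 2$ due to the Fay relations $(\ref{OmegaFay})$ linking the $\omega^{(k)}_{i,j}$ at different $(i,j)$; the weight-homogeneity of $J$ provides the right organizing principle for controlling this cascade uniformly across all indices.
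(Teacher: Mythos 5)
Your proof has the right overall shape — induct along the fibration $X_{n-1}\hookrightarrow X_n\twoheadrightarrow X_{F_n}$ and invoke the main structural theorem of \cite{BDR} to obtain $V(X_n)\cong V(X_{n-1})\otimes_{\Q} V(X_{F_n})$ — which is indeed the paper's route. But you misidentify the substantive hypothesis powering the \cite{BDR} theorem. You cite the cohomological concentration $H^0(X_{F_n})=\Q$, $H^{k\geq 2}(X_{F_n})=0$; this is automatic because the fiber is a (model for a) curve, and by itself it does not yield the tensor decomposition, because $X_{n-1}$ is not simply connected and its ``monodromy'' acts nontrivially on $V(X_{F_n})$. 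The paper's own proof instead singles out the crucial ingredient as the \emph{nilpotence of the bar Gauss--Manin connection} $\nabla_{\B}\colon V(X_{F_n})\to X^1_{n-1}\otimes_{\Q} V(X_{F_n})$ \emph{with respect to the weight grading}, and the accompanying remark records the equivalent fact that the bar--de~Rham cohomology of $X_n$ is trivial. It is this unipotence condition (not fiber $1$-dimensionality) that lets the inductive lifting of $V(X_{F_n})$ into $V(X_n)$ terminate.

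The argument for the filtration coincidence also has gaps. Your reduction $W_w\subseteq F_w$ is fine, and the reduction of $F_w\subseteq W_w$ to a statement about pure-weight components is valid. But the ``more conceptual'' argument is not: you assert that the coefficients of $\Xi=\sum_k[J|\ldots|J]$ satisfy length $=$ weight and span the target, but if $J$ is the power series connection for $X_{F_n}$ from Proposition~\ref{propwordlift} this is false for $n\geq 2$ — the element $[\eta_i]=[\overline{\omega}^{(1)}_{n,i}-\overline{\omega}^{(1)}_{n,0}]$ has length~$1$ and weight~$2$ by~$(\ref{WeightFil})$, and indeed the filtration coincidence is asserted only for $V(X_n)$, not $V(X_{F_n})$. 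Passing from the fiber to $X_n$ is precisely where the bar Gauss--Manin lift inserts the higher-length correction terms (e.g.\ the lift of $[\eta_1]$ to $V(X_2)$ is $[i_{F_2}(\eta_1)]+[\nu_2|\omega^{(0)}_{1,0}]+[\nu_1|\omega^{(0)}_{2,0}]$, now of length and weight both equal to $2$), and that lift only exists and terminates because of the nilpotence of $\nabla_{\B}$ with respect to weight. A formal power series connection ``extended appropriately to all of $X_n$'' is not something the paper constructs, and taking its existence for granted essentially assumes what the theorem is proving. The cascade argument on $T(X_n^1)$ is plausible in spirit — it is a shadow of the same nilpotent-lift mechanism — but as you acknowledge it is not made precise, and the Fay relations are left unhandled. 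In short: correct skeleton, but the nilpotence of $\nabla_{\B}$ with respect to the weight grading, which is what both parts of the theorem hinge on, is missing from your account.
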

\begin{proof} The bar Gauss-Manin connection 
$\nabla_{\B}: V(X_{F_n}) \rightarrow X^1_{n-1}\otimes_{\Q} V(X_{F_n}),$
which is defined in \cite{BDR}, is nilpotent with respect to the weight grading. It implies  
the existence of a map $V(X_{F_n}) \rightarrow V(X_n)$ and an
 isomorphism
$V(X_n) \cong V(X_{n-1})\otimes_{\Q} V(X_{F_n})$
  of algebras, from which the statement follows by induction (see \cite{BDR} for the proofs).
\end{proof} 
Note that $(\ref{BXdecomp})$ does not respect the Hopf algebra, or differential structures on $X_n$.
It is however  a complete algebraic description of all iterated integrals on $\En$, and in particular, enables one
to write down a basis for them.

\begin{rem} 
Theorem $\ref{thmBartensorstructure}$ is proved in \cite{BDR} by first showing that the bar-de Rham cohomology 
 of $X_n$ is trivial. This   is equivalent to  the exactness of the  sequence:
$$0\To \Q \To X^0_n \otimes_{\Q} V(X_n) \To X^1_n\otimes_{\Q} V(X_n) \To \ldots \To X^n_n \otimes_{\Q} V(X_n)\To 0 \ . $$
One way to think of the refined weight grading on $X_n$ is as follows.  It defines a weight grading  on $T^c(X_n)$ with the property that the inclusion $V(X_{n})\hookrightarrow T^c(X_n)$ is compatible with the 
weight filtration on $V(X_n)$.
\end{rem}
$$ * \quad  \qquad * \qquad  \quad  *$$

\newpage
\section{Averaging  unipotent functions} \label{sectAverage}

\subsection{Introduction}
The main  idea for constructing multivalued functions on an elliptic curve is to use the Jacobi uniformization 
$$\E= \C^{\times}/q^{\Z}$$
and average a function on $\C^{\times}$ with respect to multiplication by $q$. Consider the example of the multivalued function $\Li_1(z) = -\log(1-z)$.
Let $q\in \C^{\times}$ such that $|q|<1$ and   $z\in \C^{\times}$ such that $1\notin q^{\R} z$.  The spiral $q^{\R}z$ can be lifted to a universal covering space of $\C\backslash \{0,1\}$, and the function $\Li_1(z)$ has a 
well-defined analytic continuation along it. Near the origin, the function $\Li_1(z)$ vanishes,  but at the point $z=\infty$ it has  a logarithmic singularity, so the naive average diverges. One way to ensure convergence is to  consider the generating series
$$E(z;u)=\sum_{m\in \Z} u^m \Li_1(q^mz)$$
where $u^{-1}$ is chosen small enough to dampen the logarithmic singularity at infinity, but not so small as to wreck the convergence at the origin. For $m\ll 0$, the asymptotic is
$u^m \log(q^m z)$,  which is bounded  if $u>1$. For $m\gg 0$ the terms are asymptotically $u^m q^m z$, which  is bounded  if $u<|q|^{-1}$. Thus for 
$1<u<|q|^{-1}$ the series $E(z;u)$ converges absolutely, and is almost periodic with respect to  multiplication by $q$. 

From this one can easily show that $E(z;u)$ has a simple pole at $u=1$. Now one must view $E(z;u)$ as a function of $\xi$ and $\alpha$, where
$u=\e(\alpha)$ and $z=\e(\xi)$. Thus the pole at $u=1$ contributes a pole at $\alpha=0$ which can be removed to obtain 
$$E^{\reg}(\xi;\alpha) = E(\xi;\alpha) - {1\over \alpha} \ . $$ This function now admits a  Taylor expansion at the point $\alpha=0$. 
The procedure for constructing multivalued functions on the elliptic curve $\E^{\times}$ is to take the 
 coefficients of $\alpha^i$, $i\geq 0$ in this Taylor expansion.
  \vspace{0.05in}

 The situation is more complicated in  the case of several complex variables. Suppose that we have a function  $f(t_1,t_2)$
 on 
 $\Mod_{0,5}(\C) =\{(t_1,t_2): t_1,t_2\neq 0,1, t_1 \neq t_2\}$, with singularities  along  the removed hyperplanes.  We wish to average the function
\begin{equation}\label{6introsum} \sum_{m_1,m_2\in \Z} u_1^{m_1} u_2^{m_2}f(q^{m_1} t_1, q^{m_2} t_2)
\end{equation}
 The first problem that we encounter is that the function $f(t_1,t_2)$ is simply not well-defined as $t_1,t_2\rightarrow \infty$ since its singularities $t_i=\infty$, $t_1=t_2$ do not cross normally at that point, and so the limit depends on the direction in which it is approached.  The standard solution is to blow-up the points $(0,0)$, $(\infty,\infty)$, as below: 
 \begin{figure}[h!]
 \begin{center}
    \leavevmode
    \epsfxsize=4.0cm \epsfbox{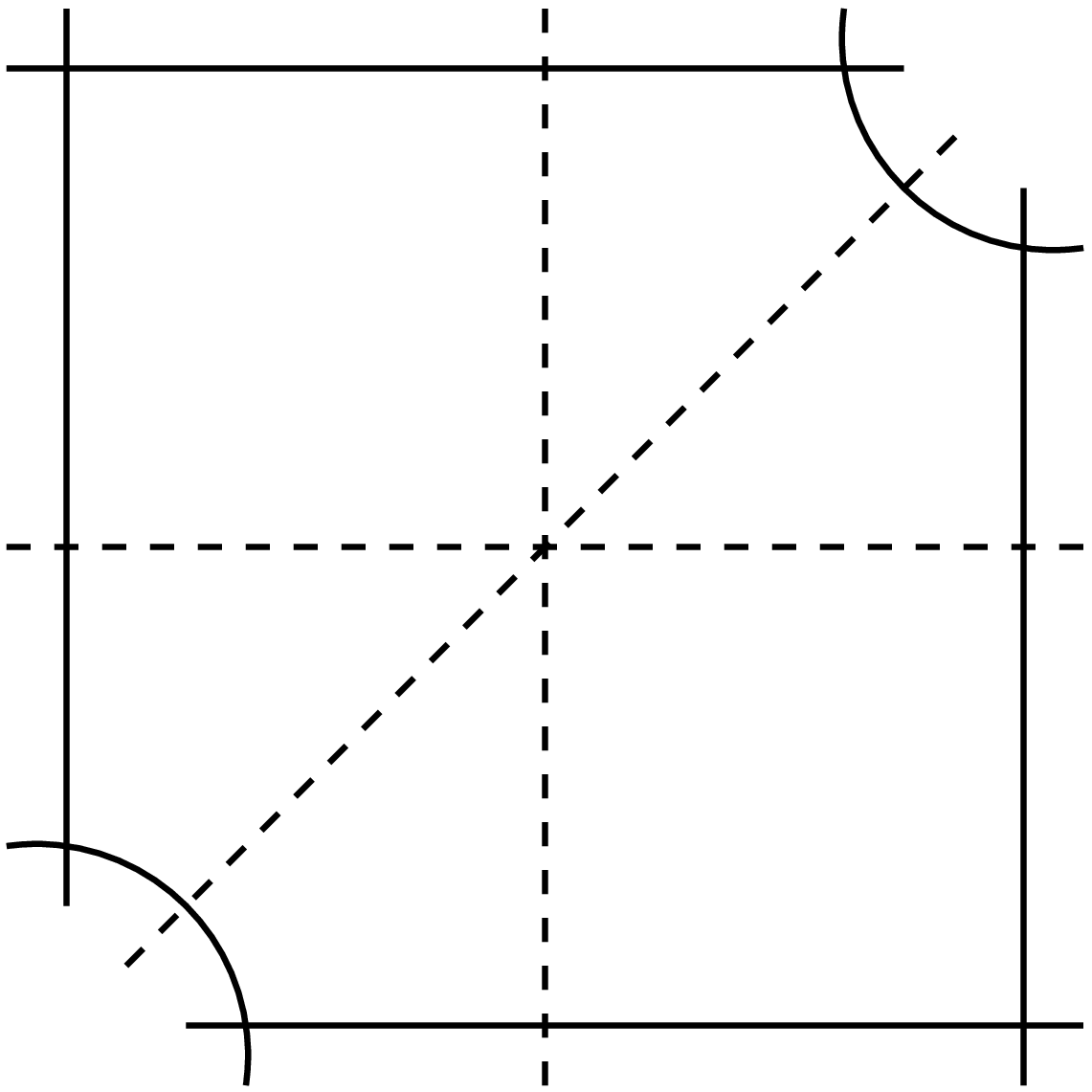}
  \put(-146,55){$t_2=1$}  \put(-146,105){$t_2=\infty$}
  \put(-70,-10){$t_1=1$}   \put(-20,-10){$t_1=\infty$}
     \end{center}
 \end{figure} 
 
 Note that  we do not need to blow up the point $t_1=t_2=1$, which is also a non-normal crossing point, because there are only finitely many lattice points $\{(q^\Z t_1, q^\Z t_2)\}$ in its neighbourhood.
 The domain of summation naturally decomposes into the six sectors pictured above, each of which is homeomorphic to a square $[0,1]\times [0,1]$.  
 By analysing the behaviour of $f(t_1,t_2)$ in the neighbourhood of each sector, one finds necessary and sufficient conditions on $u_1,u_2$ to ensure the absolute convergence of
 $(\ref{6introsum})$. It turns out that the poles in the $u_1,u_2$ plane are in one-to-one correspondence  with  the boundary divisors in the figure, and depend on the local asymptotic behaviour of $f$.
 One can then remove poles in the $\alpha_i$ plane (where $u_i= \e(\alpha_i)$) and compute Taylor expansions to extract multivalued functions on $\E^{(2)}$ with unipotent monodromy.
 \vspace{0.05 in}

The plan of the second, analytic, part of this paper  is as follows.
 \begin{enumerate}
 \item First we construct an explicit partial compactification of $\Mod_{0,n}$ which is adapted to this averaging procedure. 
 \item By studying the analytic properties of the multiple polylogarithms $(\ref{Idef})$ we  find necessary and sufficient conditions on the dual variables $u_i$ to ensure absolute convergence of the averaging function:
 $$ \sum_{m_1,\ldots, m_r\in \Z}  u^{m_1}_1 \ldots u^{m_r}_r I_{n_1,\ldots, n_r} (q^{m_1}t_1,\ldots, q^{m_r}t_r) 
$$
 \item  From its differential equation, we compute the pole structure in the $u_i$ coordinates. The multiple elliptic polylogarithms
 can be defined as  the coefficients in its regularized Taylor expansion at $\alpha_1=\ldots =\alpha_r=0$, where $u_i =\e(\alpha_i)$. Note that, since the singularities in the space of $\alpha_i$ parameters are not normal crossing, the regularization must be performed with some care. We do not address the  question  of explicit regularization  in the present paper.
  \end{enumerate}
 
The entire procedure from (1) to (3) will work  more generally for  any functions satisfying  some growth conditions on certain  toric varieties. 
Section \S\ref{sectAverage} covers the general steps $(1)$ and $(2)$.  The definition of the multiple elliptic polylogarithms is completed in \S\ref{sect8}, with examples given in \S\ref{sectexamples123}. In \S\ref{sect7}, which is independent from the rest of the paper,  we show how to  compute the asymptotics of the Debye polylogarithms explicitly at infinity 
using a certain coproduct. Finally, in \S\ref{sect10} we prove that all iterated integrals on a punctured elliptic curve
can be obtained by this averaging procedure.

\subsection{Preliminaries} \label{sectprepmap} Let $q=\e(\tau)$, where $\tau$ is in the upper-half plane, and let
$t_i=\e(\xi_i)$, for $i=1,\ldots, r$, where $\xi_1,\ldots, \xi_r$ are in the domain  $U$ defined by $(\ref{Udef})$,  in \S \ref{sectUniformization}. 
  
 Consider the following preparation map to a universal covering of $\Mod_{0,r+3}(\C)$: 
 \begin{eqnarray}
\sigma: \R^n  &\To& \widetilde{\Mod}_{0,r+3}(\C) \nonumber \\
(s_1,\ldots, s_r)  & \mapsto &  (q^{s_1}t_1,\ldots, q^{s_r}t_r)\ , \nonumber 
\end{eqnarray}
where we  view $\Mod_{0,r+3}(\C)\subset \C^r$ in   simplicial coordinates.
Suppose that  we have a multivalued function $f(t_1,\ldots, t_r)$  on $\Mod_{0,r+3}(\C)$,  with a fixed  branch 
in the neighbourhood of some $(t_1,\ldots, t_r)$ such that $t_it_j^{-1}\notin q^{\R}$ for $i\neq j$ and $t_i\notin q^{\R}$. 
Then $f(t_1,\ldots, t_r)$ admits a unique analytic continuation to the image of $\sigma(\R^n)$,
and in particular, the values $f(q^{n_1}t_1,\ldots, q^{n_r}t_r)$ are well-defined for all $(n_1,\ldots, n_r)\in \Z^r$.

We  apply this to the multiple polylogarithm functions
\begin{equation}  \label{Imassum}
I_{m_1,\ldots, m_n}(t_1,\ldots, t_n) = \sum_{a_1,\ldots, a_n\geq 1} {t_1^{a_1}\ldots t^{a_n}_n \over a_1^{m_1}(a_1+a_2)^{m_2}\ldots (a_1+\ldots +a_n)^{m_n} } \ ,
\end{equation} 
which give rise to multivalued unipotent functions on  $\Mod_{0,r+3}(\C)\subset \C^r$, and vanish along the divisors $t_i=0$. The power series expansion above defines
a  canonical branch in the neighbourhood of the origin. 

\subsection{Compactification of the hypercube} \label{sectCompactification}
Let  $S=\{1,\ldots, n\}$ and let  us write $\Pro^1_S$ for  $\mathrm{Hom}(S,\Pro^1) \cong (\Pro^1)^n$.
Let $\square_n\subset \Pro^1_S$ denote the real hypercube $[0,\infty]^n$. For any disjoint pair of subsets $I,J\subset S$, let
$$F_{I}^{J} = \bigcap_{i\in I} \{t_i=0\} \cap \bigcap_{j \in J} \{t_j=\infty\} \subseteq \Pro^1_S $$
denote the corresponding  coordinate linear subspace. The sets $F_{I}^{J}\cap \square_n$ give the standard stratification of the hypercube by  its faces.

 Working first in simplicial coordinates, consider the  divisor 
 $$X= \bigcup_{1\leq i< j\leq n} \{t_i- t_j = 0\}\cup\bigcup_{1\leq i\leq n} \{t_i=1\}\ ,$$
 which meets the set of faces $F^J_{I}$ non-normally.  Let us write $F_I = F^{\emptyset}_{I}$, $F^J=F_{\emptyset}^{J}$ and call such divisors
 of type $0$ or type $\infty$, respectively.  Consider the sets of faces:
\begin{equation} \nonumber
 \Fo^0=\{ F_{I}:  |I|>1\}\ ,  \quad  \Fo^{\infty}=\{ F^J : |J|>1\}\ ,   
\end{equation}  
of codimension $\geq 2$. 
Following the standard practice for blowing up linear subspaces, we  blow up the set of faces in $\Fo^0$ of smallest dimension, followed by the strict transforms of faces
$F_I$ where $|I|=n-1$, and so on, in increasing order of dimension, until the strict transforms of all elements in $\Fo^0$ have been blown up. 
Now repeat the same procedure with  $\Fo^{\infty}$, 
and  denote the corresponding space by $P_S$, with 
\begin{equation} \label{PSdef}
\pi: P_S \To \Pro^1_S \end{equation}
It does not depend on the chosen order of blowing-up.

Let us denote the strict transform of any face $F_I$ (resp. $F^J$) by $D_I$ (resp. $D^J$), for all $|I|, |J|\geq 2$. 
 Let $D_{i}$  (resp. $D^{j}$) denote the strict transform of the divisor $F_{i}^{\emptyset}$ (resp. $F^{j}_{\emptyset}$) which corresponds to a facet of the original hypercube, and   
 let us denote by $D=\bigcup_{|K|\geq 1} D_{K}\cup D^{K} $, the union of all the above.
 The strict transform of $\square_n$ is a certain polytope  $\mathcal{C}_n$, whose facets are in bijection with the irreducible components of $D$, which 
number $2\times (2^n-1)$.   Let $X'\subset P_S$ denote the strict transform of $X$.
\begin{prop} The divisor $D\cup X'\subset P_S$ is   locally   normal crossing near $D$. 
\end{prop}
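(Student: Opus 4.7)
The plan is to verify the normal crossing condition in explicit local charts at each point $p\in D$, using the coordinates produced by the sequential blow-up construction of $P_S$.

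First, I would reduce to a single ``type''. Observe that $D_I$ and $D^J$ can only have non-empty intersection when $I\cap J=\emptyset$, in which case the local picture at an intersection point factors as a direct product of the type-$0$ picture (in the variables $t_i$ for $i\in\bigcup I_l$, tending to zero) and the type-$\infty$ picture (in the variables $t_j$ for $j\in\bigcup J_m$, tending to infinity), with the remaining variables free. Diagonals $\{t_i=t_j\}$ and hyperplanes $\{t_i=1\}$ cannot mix the two groups, so it suffices to verify normal crossing separately in each type. By the involution $t_i\mapsto 1/t_i$ it is then enough to treat a point $p$ lying only on a chain of type-$0$ components.

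Second, by the general theory of wonderful models of subspace arrangements (De Concini--Procesi), the procedure of iteratively blowing up the $F_I$ in increasing order of dimension yields a divisor $D$ whose components through any point $p$ form a chain $I_1\subsetneq\cdots\subsetneq I_k$ under inclusion, with $D$ simple normal crossing. Concretely, picking representatives $i_l\in I_l\setminus I_{l-1}$ one obtains local coordinates $(u_1,\ldots,u_k;\, v_\alpha;\, t_\ell)$ near $p$ in which $D_{I_l}=\{u_l=0\}$, the $v_\alpha$ are directional coordinates on the successive exceptional projective spaces, and the $t_\ell$ for $\ell\notin I_k$ are unchanged from the original simplicial coordinates. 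Each $t_i$ with $i\in I_k$ is expressible as a specified monomial in the $u_l$'s times a unit times a regular function of the $v_\alpha$'s.

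Third, I would go through the components of $X'$ through $p$ by cases. The hyperplane $\{t_i=1\}$ meets $p$ only when $i\notin I_k$, since otherwise $t_i(p)=0$, and then it is cut out by the transverse coordinate equation $t_i-1=0$. A diagonal $\{t_i=t_j\}$ with both $i,j\notin I_k$ is similarly cut out by $t_i-t_j=0$ in the transverse coordinates. A diagonal with exactly one of $i,j$ in some $I_l$ cannot contain $p$, as $t_i(p)=0\neq t_j(p)$. Finally, a diagonal with $i,j\in I_l\setminus I_{l-1}$ has strict transform cut out, after dividing out the common monomial factor in the $u$-variables, by a linear expression in the directional coordinates $v_\alpha$ (typically of the form $v_\alpha-v_\beta=0$ or $v_\alpha-1=0$), which is transverse to every $D_{I_l}$.

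The main obstacle is the combinatorial check that when several such diagonal strict transforms pass through the same point of $D$, their local equations together with the $u_l$'s form part of a regular system of parameters. The delicate case is when several diagonals $\{t_i=t_j\}$ with indices in a common layer $I_l\setminus I_{l-1}$ meet at a point of $D_{I_l}$ whose directional coordinates $v_\alpha$ force multiple relations $v_\alpha=v_\beta$ simultaneously. This requires a careful combinatorial analysis, verifying that the further blow-ups of smaller $F_{I'}$'s (with $I'\subsetneq I_l$) separate exactly those collections of diagonals that would otherwise fail to meet in general position, so that at every point only an independent collection of diagonal equations survives. Once this combinatorial step is carried out, the normal crossing of $D\cup X'$ near $D$ follows from the transversality analysis of the preceding paragraph.
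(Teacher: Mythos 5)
Your plan is morally the same as the paper's: produce explicit coordinate charts on $P_S$ and check that the components of $D\cup X'$ are cut out by part of a regular system of parameters. The paper does this by introducing sector coordinates $s^\pi_k=t_{\pi(k)}/t_{\pi(k+1)}$ (attached to each dihedral ordering $\pi$ of $0<t_{\pi(1)}<\cdots<t_{\pi(n+1)}<\infty$) and showing in Lemma 41 that these are precisely the iterated blow-up coordinates on $P_S$; your wonderful-model coordinates $(u_l,v_\alpha,t_\ell)$ are the same thing in different notation. The extra mileage the paper gets from this choice is the identification of the charts $U_\pi$ with open affines of $\overline{\Mod}_{0,n+3}$, which makes the $D_I$, $D^J$ literally coordinate hyperplanes $s^\pi_k=0$.

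The gap is in the ``combinatorial step'' you postpone — and in fact that step cannot be carried out as you envisage it. The centres blown up in the construction of $P_S$ are only the faces $F_I$, $F^J$ of the hypercube, never anything involving the diagonals. Take $n=3$ and the chart with $s_1=t_1/t_2$, $s_2=t_2/t_3$, $s_3=t_3$ (so $t_1=s_1s_2s_3$, $t_2=s_2s_3$, $t_3=s_3$): at the point $(s_1,s_2,s_3)=(1,1,0)$, i.e.\ the direction $[1:1:1]$ on the exceptional $D_{\{1,2,3\}}$ over $(0,0,0)$, one has four distinct divisors through the point — $D_{\{1,2,3\}}=\{s_3=0\}$ together with the strict transforms $\{s_1=1\}$, $\{s_2=1\}$, $\{s_1s_2=1\}$ of $\{t_1=t_2\}$, $\{t_2=t_3\}$, $\{t_1=t_3\}$. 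Four smooth hypersurfaces through a point of a $3$-fold cannot form a normal crossing divisor. The further blow-ups of the $F_{I'}$ with $I'\subsetneq\{1,2,3\}$ land at the coordinate directions $[1:0:0]$, $[0:1:0]$, $[0:0:1]$ of the exceptional $\Pro^2$, nowhere near $[1:1:1]$, so they do not separate these three diagonals. The same failure occurs at $(s_1,s_2,s_3)=(0,1,1)$ (i.e.\ $(t_1,t_2,t_3)=(0,1,1)$), a point on $D_1$ lying even on the closure of the real polytope $\mathcal{C}_3$.

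Thus the statement as literally formulated is not recoverable by the argument you sketch. What the paper's computation (and yours, once completed away from these bad strata) actually establishes, and all that is used in the sequel, is the normal crossing structure of $D$ on $U_n=P_S\setminus X'$: in each chart $U_\pi$, which inverts $\prod_{k\in[i,j]}s^\pi_k-1$, the boundary is just the union of coordinate hyperplanes. Combined with the fact that the averaging lattice $(q^{m_1}t_1,\ldots,q^{m_n}t_n)$ stays uniformly bounded away from $X'$ when $t_it_j^{-1}\notin q^{\mathbb R}$ (Lemma 44), this suffices for the bounds of \S6.5. In short: do not attempt to finish the combinatorial check for $D\cup X'$ on all of $P_S$; restrict instead to the sector charts $U_\pi$, where the normal crossing claim becomes immediate.
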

The proof will be given by computing explicit normal coordinates in every local  neighbourhood of $D$, using a decomposition into  sectors.

\subsection{Sector decomposition}
Let us view $\Mod_{0,n}(\R)$ in simplicial coordinates as the complement of 
divisors of the form $t_i=t_j$  and $t_i=1$ in $(\R^\times)^n$. Then $\square_n\cap \Mod_{0,n+3}(\R)$ admits
a decomposition into 
$(n+1)!$ connected components:
$$\Delta_\pi=\{0< t_{\pi(1)}< \ldots < t_{\pi(n+1)} < \infty \}\ ,$$
where $\pi$ is a permutation of $(1, \ldots, n+1)$, the $t_i$ are simplicial coordinates on each component of $(\Pro^1)^n$, and 
  where  $t_{n+1}=1$.  The permutation $\pi$ should be viewed as a dihedral  ordering  of the $n+3$ marked points $0,1,\infty, t_1,\ldots, t_n$ on $\Pro^1(\R)$.
  To every such  $\pi$ we associate  local  `sector' coordinates  on $\Mod_{0,n+3}(\C)$ as follows:
\begin{equation}\label{sectorcoords}
s^{\pi}_1 ={ t_{\pi(1)} \over t_{\pi(2)}}\ , \ldots \ , s^{\pi}_{n} ={ t_{\pi(n)} \over t_{\pi(n+1)}}\ .\end{equation}
The coordinates $s_i^{\pi}$ give a homeomorphism from $\Delta_\pi$ to the unit cube $(0,1)^n$. When $\pi$ is the trivial permutation, the coordinates $s^{\pi}_i$ are the same coordinates $x_i$  used to define the multiple polylogarithms in \S\ref{sectMultiplePolyreminder}.  For each $\pi$, we define the open affine scheme
$$U_{\pi} = \Spec \Z[s^{\pi}_1,\ldots, s^{\pi}_n, \{(\prod_{i\leq k\leq j} s^{\pi}_k -1 )^{-1}\}_{1\leq i\leq j\leq n} ]\ .$$
Note that  the $U_{\pi}$  are all canonically isomorphic,  and $(0,1)^n\subset U_{\pi}(\R)$. 
\begin{lem} For every $\pi$,  $U_\pi$ defines an affine chart on $\overline{\Mod}_{0,n+3}$:
$$\Mod_{0,n+3} \subset U_{\pi} \subset \overline{\Mod}_{0,n+3}$$
\end{lem}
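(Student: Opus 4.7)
The proof breaks into the two inclusions, which require different arguments. For $\Mod_{0,n+3}\subset U_\pi$ (the easy one), I would note that on $\Mod_{0,n+3}$ every $t_j$ is a nonzero finite marked point distinct from $t_{n+1}=1$, so each $s_i^\pi=t_{\pi(i)}/t_{\pi(i+1)}$ is a regular function and each partial product $\prod_{i\leq k\leq j} s_k^\pi = t_{\pi(i)}/t_{\pi(j+1)}$ differs from $1$ (else $t_{\pi(i)}=t_{\pi(j+1)}$, contradicting the distinctness of the marked points). The universal property of the localization $U_\pi$ then gives a morphism $\Mod_{0,n+3}\to U_\pi$; conversely, on the principal open $\{s_k^\pi\neq 0 \text{ for all } k\}\subset U_\pi$ the $t_j$ are recovered as Laurent monomials in the $s_k^\pi$ using the normalization $t_{n+1}=1$, identifying $\Mod_{0,n+3}$ with that open.

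The substance is the inclusion $U_\pi\subset\overline{\Mod}_{0,n+3}$. I would construct an explicit map $\iota_\pi: U_\pi\to\overline{\Mod}_{0,n+3}$ sending a point $(s_1,\ldots,s_n)\in U_\pi$ to the stable $(n+3)$-pointed rational curve determined by the dihedral ordering $(0,\pi(1),\ldots,\pi(n+1),\infty)$: where all $s_i^\pi\neq 0$ the target is simply the smooth curve $(\Pro^1;0,1,\infty,t_1,\ldots,t_n)$ defined by the relations of the previous paragraph, while along a stratum where some $s_{i_j}^\pi$ vanish the dihedrally consecutive marked points collide and are canonically resolved by bubbling off $\Pro^1$-components in a way compatible with the cyclic order. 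The condition $\prod_{i\leq k\leq j} s_k^\pi\neq 1$ in the definition of $U_\pi$ is precisely what excludes collisions $t_{\pi(i)}=t_{\pi(j+1)}$ between non-consecutive marked points, which would correspond to boundary divisors $D_{S|T}$ of $\overline{\Mod}_{0,n+3}$ incompatible with the dihedral order $\pi$.

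The main obstacle is verifying that $\iota_\pi$ is an open immersion and identifying its image explicitly as the complement in $\overline{\Mod}_{0,n+3}$ of those $D_{S|T}$ for which neither $S$ nor $T$ is a cyclically consecutive arc in the order $(0,\pi(1),\ldots,\pi(n+1),\infty)$. I would handle this by working locally in the blow-up model $P_S\to(\Pro^1)^n$ of \S\ref{sectCompactification}: in a chart covering the strata adjacent to the sector $\Delta_\pi$, one shows by a direct tracking of the successive blow-ups that the $s_i^\pi$ become part of a system of smooth coordinates in which the compatible boundary divisors are normal crossing, while the excluded loci $\prod s_k^\pi=1$ correspond exactly to the strict transforms of the hyperplanes $t_{\pi(i)}=t_{\pi(j+1)}$ that remain outside this chart. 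This realizes $U_\pi$ as the standard `dihedral' (or `cubical') affine coordinate chart on $\overline{\Mod}_{0,n+3}$ attached to $\pi$.
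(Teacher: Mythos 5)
Your approach is genuinely different from the paper's. The paper proves this by observing that each $s^\pi_k$ and each $\prod_{i\le k\le j}s^\pi_k-1$ can be written as a cross-ratio $f_T$ of four of the $n+3$ marked points (and conversely every $f_T$ is a function of the $s^\pi_i$), so that $U_\pi$ is literally the open subscheme of $\overline{\Mod}_{0,n+3}$ where a prescribed collection of forgetful maps is nonvanishing; this is a short functional-coordinates argument, with no reference to stable curves or to the blow-up $P_S$ at this stage of the paper. Your route via the moduli interpretation (glue in a stable pointed rational curve determined by the dihedral order) and verification in the blow-up chart is a legitimate alternative, and arguably more geometric, but it is also more work: it requires you to prove that bubbling off in the cyclic order actually produces a morphism to the coarse or fine moduli space, and then to redo the blow-up tracking that the paper postpones to Lemma \ref{lemSpi2IJ}. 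The cross-ratio argument sidesteps both points.

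There is one concrete error in your description of the image. You claim $\iota_\pi(U_\pi)$ is the complement of those $D_{S|T}$ for which neither $S$ nor $T$ is a cyclically consecutive arc of $(0,\pi(1),\dots,\pi(n+1),\infty)$. That set is too large: since $S$ is a cyclic arc iff its complement $T$ is, your description would make $U_\pi$ contain \emph{all} dihedral boundary divisors of that cyclic order, of which there are $\binom{n+3}{2}-(n+3)$ up to complementation. In fact $U_\pi$ has exactly $n$ boundary divisors, namely $\{s^\pi_k=0\}$, and by Lemma \ref{lemSpi2IJ} these are precisely the partitions $\{0,t_{\pi(1)},\dots,t_{\pi(k)}\}\,|\,\{t_{\pi(k+1)},\dots,t_{\pi(n+1)},\infty\}$, i.e. arcs containing $0$ but not $\infty$ and compatible with the \emph{linear}, not merely cyclic, order determined by $\pi$. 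A dihedral divisor such as $\{\pi(1),\pi(2)\}\,|\,\{\text{rest}\}$ has $0$ and $\infty$ on the same side; along it $s^\pi_1\to 1$, so it is excluded from $U_\pi$ by the condition $s^\pi_1\neq 1$. This misidentification does not by itself break the rest of your argument (openness of an immersion is local and you could verify it without pinning down the global image), but as written the statement is false and should be corrected before the proof is used.
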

\begin{proof}
Consider the set of forgetful maps (or `cross-ratios') $f_T:\overline{\Mod}_{0,n+3}\rightarrow \overline{\Mod}_{0,4}\cong \Pro^1$, 
where $T$ is a subset of  any 4 of the $n+3$ marked points. Then $\Mod_{0,n+3}\subset \overline{\Mod}_{0,n+3}$ is the open subscheme where all $f_T$'s are non-zero.
It suffices to check that $U_{\pi}$ is isomorphic to the open subscheme of $\overline{\Mod}_{0,n+3}$ where some of the $f_T$'s are non-zero. For this, one can write
each $s^{\pi}_k$ and  $\prod_{i\leq k\leq j} s^{\pi}_k -1 $ as cross-ratios, and conversely every cross-ratio as a function of the $s^{\pi}_i$. We omit the details.
\end{proof}

\begin{defn} Let $U_n = \bigcup_{\pi} U_{\pi} \subset \overline{\Mod}_{0,n+3}$ be the scheme obtained by  gluing all charts $U_{\pi}$ together. Viewing  $\Sym_{n+1}$ as the stabilizer of $0,\infty$ in $\Aut(\overline{\Mod}_{0,n+3}) \cong \Sym_{n+3}$, we have
$$U_n= \bigcup_{\pi\in \Sym_{n+1}} \pi( U_{id})\ ,$$ where  $U_{id}$  corresponds to the trivial permutation.
\end{defn}

The smooth scheme $U_n\subset \overline{\Mod}_{0,n+3}$ is equipped with a set of  normal crossing divisors defined in each chart by the vanishing of  the $s^{\pi}_k$.

\begin{example} Let $n=2$.  The coordinate square $(0,\infty)\times (0,\infty)\subset \Mod_{0,5}(\R)$ is covered by six sectors $\Delta_{\pi}$  as shown below   after blowing up $F_{12}=(0,0)$, $F^{12}=(\infty,\infty)$.
\begin{figure}[h!]
 \begin{center}
    \leavevmode
    \epsfxsize=5.0cm \epsfbox{SectorM05.eps}
 \put(-160,130){$D^2$}
 \put(-170,68){$t_2=1$}
 \put(-15,-10){$D^1$}
\put(5,5){$D_2$}
 \put(-138,145){$D_1$}
 \put(-85,-10){$t_1=1$}
 \put(-162,30){$D_{12}$}
\put(5,107){$D^{12}$}
\put(-60,110){$\Delta_{\pi_0}$}
  \end{center}
 \end{figure}

\noindent
Consider the sector denoted $\Delta_{\pi_0}$, where $\pi_0=(1,t_1,t_2)$, which  corresponds to the dihedral ordering $0<1<t_1<t_2<\infty$ on the marked points of $\Mod_{0,5}(\R)$.
Its sector coordinates are $s^{\pi_0}_1= t_1^{-1}, s^{\pi_0}_2=t_1/t_2$. The divisor $s^{\pi_0}_2=0$  corresponds to the partition $\{0,1,t_1\}|\{t_2,\infty\}$ on $\overline{\Mod}_{0,5}$, and 
the divisor $s^{\pi_0}_1=0$ corresponds to $\{0,1\}|\{t_1,t_2,\infty\}$. 
\end{example}

In the general case, we have:
\begin{lem} \label{lemSpi2IJ} 
 The  divisor  $s^\pi_k=0$ corresponds on  $U_{\pi}\subset \overline{\Mod}_{0,n+3}$   to the partition  (\S\ref{sectStandardCoords}) 
$$\{0,t_{\pi(1)},\ldots, t_{\pi(k)}\}\big| \{t_{\pi(k+1)},\ldots, t_{\pi(n+1)},\infty \}\ .$$
Therefore the scheme $U_n$ is the complement in $\overline{\Mod}_{0,n+3}$ of the  set of divisors $A$ corresponding to partitions in which the marked points $0$ and $\infty$
lie in the same component. 

The simplicial  coordinates $t_1,\ldots, t_n$ give  a canonical map $U_n\rightarrow (\Pro^1)^n$, which identifies $U_n$ with the blow-up $P_S$ of $(\Pro^1)^n$ defined in \S\ref{sectCompactification}. Thus, we have
$$P_S\backslash X' \cong \overline{\Mod}_{0,n+3}\backslash A \cong U_n \ . $$
This identifies the following divisors on $P_S\backslash X'$, $\overline{\Mod}_{0,n+3}\backslash A$, and $U_n$, respectively:
\begin{eqnarray}
D_I \!\!&\leftrightarrow& \!\!  \{0, t_k: k\in I\} \big| \{\infty,1,  t_k: k\notin I\}  \, \leftrightarrow \,\, s^\pi_{|I|}=0 \hbox{ on every } U_{\pi} \hbox{ st } \pi(I)=I  \nonumber \\
D^J \!\!&\leftrightarrow&  \!\! \{0, 1, t_k: k \notin J\} \big| \{\infty, t_k: k\in J \} \, \leftrightarrow \, s^\pi_{n+1-|J|}=0 \hbox{ on every } U_{\pi} \hbox{ st } \pi(J)=J \nonumber 
\end{eqnarray}
We deduce that  $D_I\cap D_{I'}\neq \emptyset$ if and only if $I\subseteq I'$ or $I'\subseteq I$ (and likewise for $D^J,D^{J'}$) and $D_I\cap D^J \neq \emptyset$ if and only if $I\cap J =
\emptyset$.  
\end{lem}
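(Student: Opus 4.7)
The plan is to proceed through the successive claims, starting with a local computation in sector coordinates and ending with a combinatorial identification of intersections. For the first claim, that $s^\pi_k=0$ cuts out the divisor corresponding to the partition $\{0, t_{\pi(1)}, \ldots, t_{\pi(k)}\} \,|\, \{t_{\pi(k+1)}, \ldots, t_{\pi(n+1)}, \infty\}$, I would invert the sector relations $s^\pi_j = t_{\pi(j)}/t_{\pi(j+1)}$ to express each $t_{\pi(j)}$ as a monomial in the $s^\pi_i$, using $t_{\pi(j^*)}=1$ where $\pi(j^*)=n+1$. Setting $s^\pi_k \to 0$ with the remaining $s^\pi_i$ generic, and splitting on whether $j^*>k$ or $j^*\le k$, one finds respectively that either $t_{\pi(1)},\ldots, t_{\pi(k)}$ all tend to $0$, or that $t_{\pi(k+1)},\ldots, t_{\pi(n+1)}$ all tend to $\infty$. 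In either case the limit is a stable curve with two $\Pro^1$-components connected by a node, and the marked points split into precisely the claimed partition.

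For the second claim, that $U_n = \overline{\Mod}_{0,n+3}\backslash A$, I would exhibit every partition with $0$ and $\infty$ in distinct components as a vanishing divisor $s^\pi_k=0$ for a suitable $\pi$. Given such a partition, let $T_0$ (resp.\ $T_\infty$) denote the $t_i$-marked points on the side of $0$ (resp.\ $\infty$), and choose $\pi$ so that $\{t_{\pi(1)},\ldots, t_{\pi(k)}\}$ lists $T_0$, together with $1=t_{n+1}$ when $1$ lies on the side of $0$, where $k=|T_0|$ adjusted accordingly. Claim (i) then realizes the given partition as $s^\pi_k = 0$ in the chart $U_\pi$. The reverse containment is immediate from (i): for every $\pi$ and $k$, the divisor $s^\pi_k=0$ places $0$ and $\infty$ on opposite sides of the associated partition.

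To identify $U_n$ with the blow-up $P_S$ of \S\ref{sectCompactification}, note that the simplicial coordinates define a morphism $U_n \to (\Pro^1)^n$ which is an isomorphism away from the boundary. In each chart $U_\pi$, the expression of each $t_i$ as a monomial in the sector coordinates $s^\pi_j$ is precisely a composition of blow-up charts: the divisor $s^\pi_k=0$ dominates the linear face $F_{\pi\{1,\ldots,k\}}$ (when $j^*>k$) or $F^{\pi\{k+1,\ldots,n+1\}}$ (when $j^*\le k$), and the nesting $\{s^\pi_1=0\}\supset\{s^\pi_1=s^\pi_2=0\}\supset \ldots$ inside $U_\pi$ matches the tower of strict transforms built in $P_S$ by successive blow-ups of faces in $\Fo^0$ and $\Fo^\infty$ of increasing dimension. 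This produces a map $U_n \to P_S\backslash X'$ which is a proper birational morphism between smooth varieties of dimension $n$ with the same boundary divisors, hence an isomorphism. The dictionary $D_I \leftrightarrow \{0, t_k : k\in I\} | \{\infty, 1, t_k : k\notin I\} \leftrightarrow s^\pi_{|I|}=0$ whenever $\pi\{1,\ldots,|I|\}=I$ (and analogously for $D^J$) is a direct reading of the above. For the intersection pattern, $D_I\cap D_{I'}\ne\emptyset$ iff both divisors appear simultaneously in some chart $U_\pi$, i.e.\ iff $\pi\{1,\ldots,|I|\}=I$ and $\pi\{1,\ldots,|I'|\}=I'$ coexist, which forces $I\subseteq I'$ or $I'\subseteq I$; similarly for $D^J,D^{J'}$; while $D_I\cap D^J\ne\emptyset$ requires the $I$-points to lie among $\{t_{\pi(1)},\ldots,t_{\pi(|I|)}\}$ and the $J$-points to lie among $\{t_{\pi(n+2-|J|)},\ldots,t_{\pi(n+1)}\}$, forcing $I\cap J=\emptyset$.

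The hard part will be the identification in claim (iii): matching the iterated blow-up construction on $(\Pro^1)^n$, chart by chart, with the moduli-theoretic sector coordinates $s^\pi_k$. Tracking the prescribed order of blow-ups (smallest faces first inside each of $\Fo^0$ and $\Fo^\infty$), and how their strict transforms appear in each $U_\pi$, is where the real bookkeeping lies. Once this is in place, claims (iv) and (v) follow essentially formally from the combinatorial dictionary between sector indices and partitions.
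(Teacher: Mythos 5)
Your proposal is correct and follows the same line as the paper, which reduces the lemma to verifying that the sector coordinates $s^\pi_k$ are precisely the local coordinates arising from the iterated blow-up of $(\Pro^1)^n$ along the faces $F_I$ and $F^J$ in order of increasing dimension. You flesh out this verification in the natural way: inverting the relations $s^\pi_j = t_{\pi(j)}/t_{\pi(j+1)}$ to express each $t_{\pi(j)}$ as a monomial in the $s^\pi_i$, splitting on the position $j^*=\pi^{-1}(n+1)$ relative to $k$, and reading off which marked points collide at $0$ or $\infty$ — this is exactly the check the paper leaves to the reader, and your dichotomy cleanly reproduces the correspondence $D_I \leftrightarrow s^\pi_{|I|}=0$ (when $j^*>k$) and $D^J\leftrightarrow s^\pi_{n+1-|J|}=0$ (when $j^*\le k$). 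Two small remarks: the statement "proper birational morphism between smooth varieties $\ldots$ hence an isomorphism" is imprecise as stated (proper birational morphisms between smooth varieties can contract divisors); what really does the work, and what you already set up, is the chart-by-chart match between $U_\pi$ and the corresponding blow-up chart of $P_S$, which shows directly that $U_n$ and $P_S\backslash X'$ are glued from identical affine pieces. Also worth keeping in mind when doing the bookkeeping: the blow-ups are performed from smallest dimension upward, so $F_I$ with $|I|=n$ is blown up first, which corresponds to $s^\pi_n$, not $s^\pi_1$, being the deepest coordinate in the nested chain $\{s^\pi_1=\cdots=s^\pi_k=0\}$.
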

\begin{proof}  Straightforward. One must only verify that the sector coordinates $s^{\pi}_k$ are precisely the local coordinates that  one obtains when  one blows up  $(\Pro^1)^n$
along divisors $F_I$ and $F^J$ in order of increasing dimension.
\end{proof}

In conclusion, we have three descriptions of the space $U_n$: first,  as a certain blow-up of $(\Pro^1)^n$ along the boundary of the 
hypercube; second, as the gluing together of affine  schemes $U_{\pi}$; and third, as the complement in $\overline{\Mod}_{0,n+3}$ of a certain family of divisors.

\begin{cor} Let $D\subset U_n$ be an irreducible divisor defined locally  by the vanishing of  an $s^{\pi}_k$.  Then 
$D\cong U_{k-1}\times U_{n-k}$, where $U_{0}$ is a point.
It follows that  the polytope $\mathcal{C}_n$  (which was defined to be the strict transform $\pi^{-1} (\square_n)$ ) has the following product structure on its facets: $\mathcal{C}_n\cap D_I \cong \mathcal{C}_{|I|-1}\times \mathcal{C}_{n-|I|}$ and similarly for $\mathcal{C}_n\cap D^J$.

\end{cor}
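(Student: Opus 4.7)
My plan is to translate the statement into the $\overline{\Mod}_{0,n+3}$-picture via Lemma \ref{lemSpi2IJ}, apply the standard product decomposition of boundary divisors of $\overline{\Mod}_{0,n+3}$, and then carefully track which subdivisors get removed when intersecting with $U_n\subset \overline{\Mod}_{0,n+3}$.

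First I would handle the case $D=D_I$ with $|I|=k$. By Lemma \ref{lemSpi2IJ}, this divisor corresponds to the partition $S|T$ with $S=\{0\}\cup\{t_i:i\in I\}$ and $T=\{1,\infty\}\cup\{t_j:j\notin I\}$. As recalled in \S\ref{sectStandardCoords}, the divisor in $\overline{\Mod}_{0,n+3}$ attached to $S|T$ is canonically isomorphic to $\overline{\Mod}_{0,S\cup\{*\}}\times \overline{\Mod}_{0,T\cup\{*\}}$, where $*$ denotes the new node on each factor. Thus the first factor has $k+2$ marked points and the second has $n-k+3$. Under this identification, the further boundary divisors of $D_I$ come from partitions of $S\cup\{*\}$ or of $T\cup\{*\}$, and such a divisor lies in the removed set $A$ (i.e.\ has $0$ and $\infty$ in the same component of the total partition) precisely when, on the first factor, $0$ and $*$ end up together (since $\infty$ sits on the other side and is inherited via $*$), or, on the second factor, $*$ and $\infty$ end up together. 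Viewing $(0,*)$ as the two distinguished points on the first factor and $(*,\infty)$ as the two distinguished points on the second, these are exactly the boundary divisors removed in the definition of $U_{k-1}$ and $U_{n-k}$ respectively. Counting marked points ($k+2=(k-1)+3$ and $n-k+3=(n-k)+3$) confirms the identification, giving
\begin{equation*}
D_I\;\cong\; U_{k-1}\times U_{n-k}.
\end{equation*}
The case $D=D^J$ is symmetric: the roles of $0$ and $\infty$ are swapped, and the identical analysis yields $D^J\cong U_{|J|-1}\times U_{n-|J|}$.

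For the assertion about $\mathcal{C}_n$, since $\mathcal{C}_n=\pi^{-1}(\square_n)$ is the strict transform of the hypercube and the map $\pi:P_S\to(\Pro^1)^n$ factors through $U_n$, the facets of $\mathcal{C}_n$ are precisely the intersections of $\mathcal{C}_n$ with the irreducible components of $D$. In the sector coordinates $(\ref{sectorcoords})$ the cell $\Delta_\pi\subset\mathcal{C}_n$ is the open unit cube in $(s^\pi_1,\ldots,s^\pi_n)$, and the facet where $s^\pi_k=0$ is a product of two lower-dimensional hypercubes of dimensions $k-1$ and $n-k$, compatibly with the decomposition $U_\pi\to U_{k-1}\times U_{n-k}$ above. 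Gluing over all permutations $\pi$ stabilizing $I$ (respectively $J$) identifies $\mathcal{C}_n\cap D_I$ with $\mathcal{C}_{|I|-1}\times \mathcal{C}_{n-|I|}$ and similarly for $D^J$. The main (and essentially only) obstacle is the bookkeeping in step one, namely making sure that the partition describing $D_I$ is correctly split between the two factors and that the two "special" marked points on each side are identified consistently with those used in the definition of the smaller $U_{k-1}$; once this is done, the rest is formal.
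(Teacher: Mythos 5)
The paper states this corollary without proof, treating it as an immediate consequence of Lemma \ref{lemSpi2IJ}; your argument is a correct and careful fleshing-out of exactly that consequence. Your key bookkeeping step is right: viewing $D_I$ as the partition divisor $\overline{\Mod}_{0,S\cup\{*\}}\times\overline{\Mod}_{0,T\cup\{*\}}$ with $S=\{0\}\cup\{t_i:i\in I\}$ and $T=\{1,\infty\}\cup\{t_j:j\notin I\}$, a boundary divisor of $\overline{\Mod}_{0,n+3}$ lying in the removed set $A$ and meeting $D_I$ in a facet must, on the first factor, put $0$ together with the node $*$, or on the second, put $*$ together with $\infty$; removing these and only these is precisely the definitions of $U_{k-1}$ and $U_{n-k}$ (which by Lemma \ref{lemSpi2IJ} depend only on the chosen pair of distinguished points). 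One small point worth stating explicitly if you write this up: you implicitly use that removing $(A_1\times\text{all})\cup(\text{all}\times A_2)$ from a product yields the product of the complements, so the decomposition really is a direct product and not merely a union covering it. The passage to the polytope via sector coordinates and gluing over permutations with $\pi(\{1,\ldots,|I|\})=I$ is likewise sound, matching the product structure just established on the underlying schemes.
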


\subsection{Absolute convergence of multivalued  series}
Let
\begin{equation} \label{defnofSs}
\Ss=\{(q,t_1,\ldots, t_n):  0<|q|<|t_1|<\ldots <|t_n|<1,  \qquad  t_it_j^{-1}\notin q^\R \ , t_i \notin q^\R \}\ ,
\end{equation}
and let   $f$ be a unipotent function on $(\C^{\times})^n\backslash \{t_i=t_j\}$, i.e., a multivalued function on $\Mod_{0,n+3}(\C)\subset  \overline{\Mod}_{0,n+3}(\C)$ with everywhere local unipotent monodromy  around boundary divisors (definition \ref{propunipgrowth}). Consider a sum
\begin{equation}\label{Fsum} F(t_1,\ldots, t_n;q) = \sum_{m_1,\ldots, m_n\in \Z} u_1^{m_1}\ldots u_n^{m_n} f(q^{m_1}t_1,\ldots, q^{m_n}t_n)\ ,\end{equation}
By \S\ref{sectprepmap}, the values $f(q^{m_1}t_1,\ldots, q^{m_n}t_n)$ are
well-defined if we fix a branch of $f$ near some point $(t_1,\ldots, t_n)$, where 
$(q,t_1,\ldots, t_n) \in \Ss$.
We give sufficient conditions on the auxilliary variables $u_i$ to ensure the  absolute convergence of such a series, by bounding the terms of  $F$ in  different sectors.

\begin{defn} For $0< \varepsilon \ll1$,   let $U^{\varepsilon}_{\pi} \subset U_{\pi}(\C)$ denote the open set of points 
$$\{(s^{\pi}_1,\ldots, s^{\pi}_n)\in U_{\pi}(\C) : |s^{\pi}_i|<1 \hbox{ for all } i, \hbox{ and } |s_i^{\pi}| < \varepsilon \hbox{ for some } 1\leq i\leq n\}\ , $$
 and let $U^{\varepsilon}= \bigcup_{\pi\in \Sym_{n+1}} U^{\varepsilon}_{\pi}$.
\end{defn}
Let $K$ be a compact subset of $\Ss$  defined in $(\ref{defnofSs})$.
\begin{lem} \label{lemfinitenopoints} For $(q,t_1,\ldots, t_n)$ in $K$,  there are only finitely many  $\underline{m}=(m_1,\ldots, m_n)$ such that
$(q^{m_1}t_1,\ldots, q^{m_n}t_n)$ lies in the complement of $U_{\varepsilon}$, and 
 $f(q^{m_1}t_1,\ldots, q^{m_n}t_n)$ is uniformly bounded for such $\underline{m}$. 
\end{lem}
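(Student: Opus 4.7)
I plan to partition $\Z^n$ according to which sector $\Delta_\pi$ (over the $(n+1)!$ permutations $\pi\in\Sym_{n+1}$) contains the image point $\sigma(\underline m):=(q^{m_1}t_1,\ldots,q^{m_n}t_n)$, and then show separately in each sector that at most finitely many $\underline m$ can land in the complement of $U^\varepsilon$, with a bound uniform over $K$.

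Fix $\pi$ and let $j_0\in\{1,\ldots,n+1\}$ be determined by $\pi(j_0)=n+1$, which records the position of the ``anchor'' $t_{n+1}=1$ in the sector's ordering. The condition $\sigma(\underline m)\in\Delta_\pi\setminus U^\varepsilon$ reads $\varepsilon\leq|s^\pi_k|<1$ for all $k=1,\ldots,n$, where the $s^\pi_k$ are computed from $\sigma(\underline m)$ by \eqref{sectorcoords} with the convention $\sigma(\underline m)_{n+1}=1$. For $k\notin\{j_0-1,j_0\}$ one has
\[ |s^\pi_k| \;=\; |q|^{m_{\pi(k)}-m_{\pi(k+1)}}\,\bigl|t_{\pi(k)}/t_{\pi(k+1)}\bigr|, \]
a pure difference of $m$'s; but for the (at most two) indices $k\in\{j_0-1,j_0\}$ involving the anchor, $|s^\pi_k|$ depends on a single $m_{\pi(j_0\pm 1)}$ rather than a difference. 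Since $|q|<1$, the inequalities $\varepsilon\leq|s^\pi_k|<1$ bound every difference $m_{\pi(k)}-m_{\pi(k+1)}$ \emph{and} at least one individual $m_{\pi(j_0\pm1)}$ by an interval of length $O(\log(1/\varepsilon)/\log(1/|q|))$, with endpoints continuous in $(q,t_1,\ldots,t_n)\in K$. Telescoping from the anchor through the difference bounds pins every $m_i$ to a bounded integer interval; compactness of $K$, together with $|q|$ being bounded away from $1$ on $K$, then yields the claimed uniform finiteness after summing over the $(n+1)!$ sectors.

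For the second assertion, the lower bounds $|s^\pi_k|\geq\varepsilon$ keep $\sigma(\underline m)$ away from all coordinate divisors $s^\pi_k=0$, while the hypothesis $t_it_j^{-1},t_i\notin q^\R$ defining $\Ss$ keeps it away from every excluded divisor of the form $\prod_{i\leq k\leq j}s^\pi_k=1$ (which represent, after $q^\Z$-translation, the collisions $t_i=t_j$ or $t_i=1$). Thus as $(q,t_1,\ldots,t_n)$ ranges over $K$ and $\underline m$ over the uniformly bounded finite set, the points $\sigma(\underline m)$ sweep out a relatively compact subset of $\Mod_{0,n+3}(\C)$. Each branch of $f$ at $\sigma(\underline m)$ is unambiguously determined by analytic continuation along the preparation map $\sigma$ of \S\ref{sectprepmap}, and since $f$ is unipotent it is continuous on that compact set; the uniform bound on $f(\sigma(\underline m))$ follows.

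The main obstacle I anticipate is the degenerate case in which two magnitudes $|\sigma(\underline m)_i|$ coincide (possible, since the $\Ss$-hypothesis forbids $t_it_j^{-1}\in q^\R$, which is strictly stronger than $|t_i/t_j|\in|q|^\Z$). At such $\underline m$ the point lies on a sector boundary and some $|s^\pi_k|=1$ in every chart, so $\sigma(\underline m)$ is automatically excluded from $U^\varepsilon$; one simply counts it in any sector whose ordering is weakly compatible with the magnitudes, and the anchored/difference estimates still apply verbatim.
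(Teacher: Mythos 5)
Your argument is essentially correct, and it takes a more explicit, elementary route than the paper. The paper dispatches the finiteness claim abstractly: the complement of $U^{\varepsilon}$ inside the compactification $\overline{\Mod}_{0,n+3}$ is closed hence compact, and it is disjoint from the total transform of the coordinate divisors $\{t_i=0\}$, $\{t_i=\infty\}$; since the lattice translates $\sigma(\underline m)$ can only accumulate on that total transform as $|\underline{m}|\to\infty$, only finitely many land in the complement. You instead estimate directly in sector coordinates: for the magnitude-compatible permutation $\pi$ (and the anchor $m_{n+1}=0$), the constraints $\varepsilon\le|s^{\pi}_k|\le 1$ imprison every $p^{\pi}_k=m_{\pi(k)}-m_{\pi(k+1)}$ in an interval of length $O(\log(1/\varepsilon)/\log(1/|q|))$, and telescoping from the anchor pins each $m_i$. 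This buys you an explicit count at the cost of the chart bookkeeping. For the boundedness of $f$ both arguments are the same: finitely many $\underline m$, plus compactness of $K$ and the $\Ss$-conditions keeping the orbit away from the singularities of $f$, give a compact set on which the chosen branch of $f$ is continuous.

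One correction to your final paragraph: the ``degenerate case'' of coincident magnitudes never occurs, but for a different reason than the one you cite, and your proposed handling of it would not actually work. The constraint $0<|q|<|t_1|<\ldots<|t_n|<1$ in the definition of $\Ss$ forces every ratio $|t_i/t_j|$ (and every $|t_i|$) to lie strictly between consecutive integer powers of $|q|$, so $|q|^{m_i-m_j}|t_i/t_j|$ can never equal $1$ for integral $m_i-m_j$, and the magnitude ordering of $(q^{m_1}t_1,\ldots,q^{m_n}t_n,1)$ is always strict. Had a coincidence $|s^{\pi}_k|=1$ been possible, the point $\sigma(\underline{m})$ would lie outside $U^{\varepsilon}_{\pi}$ \emph{automatically}, and the lower bounds $|s^{\pi}_j|\ge\varepsilon$ on the remaining coordinates would \emph{not} follow --- so ``the anchored/difference estimates still apply verbatim'' is false; one of the $|s^{\pi}_j|$ could tend to $0$ along an infinite family of $\underline{m}$. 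The lemma is rescued only because $\Ss$ excludes this configuration outright, not because the estimates survive on the boundary.
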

\begin{proof}
The first part follows since the complement of $U^{\varepsilon}$  in $\overline{\Mod}_{0,n+3}$ is compact, and does not contain the total transform  of any divisors $t_i=0,\infty$. 
 The definition of $\Ss$ ensures that $q^{m_i}t_i\neq q^{m_j}t_j $ and $q^{m_i}t_i \neq 1$ for all $m_i, m_j$, $i\neq j$,  and since these are the possible singularities of  $f(q^{m_1}t_1,\ldots, q^{m_n}t_n)$, it  is uniformly bounded on  $K$. 
\end{proof}

All the remaining terms of  $(\ref{Fsum})$  lie in some sector   $U^{\varepsilon}_\pi$. 
  Let us fix one such  permutation $\pi$ and work in 
local sector coordinates $s^{\pi}_1,\ldots, s^{\pi}_n$.
Then $U^{\varepsilon}_{\pi}$ can be further decomposed into smaller pieces as follows. For any non-empty  $A\subseteq \{1,\ldots,n \}$, let 
$$N^{\pi}_A= \{(s^{\pi}_1,\ldots, s^{\pi}_n) : \,\, |s^{\pi}_i|< \varepsilon \hbox{ for  } i \in A, \quad 1> |s^{\pi}_i|\geq \varepsilon \hbox{ for } i \notin A\}\ .$$
We clearly have:
$$U_{\pi}^{\varepsilon} = \bigcup_{\emptyset\neq A\subseteq \{1,\ldots, n\}} N^{\pi}_A\ .$$
\vspace{-0.2in}
\begin{figure}[h!]
 \begin{center}
    \leavevmode
    \epsfxsize=4.0cm \epsfbox{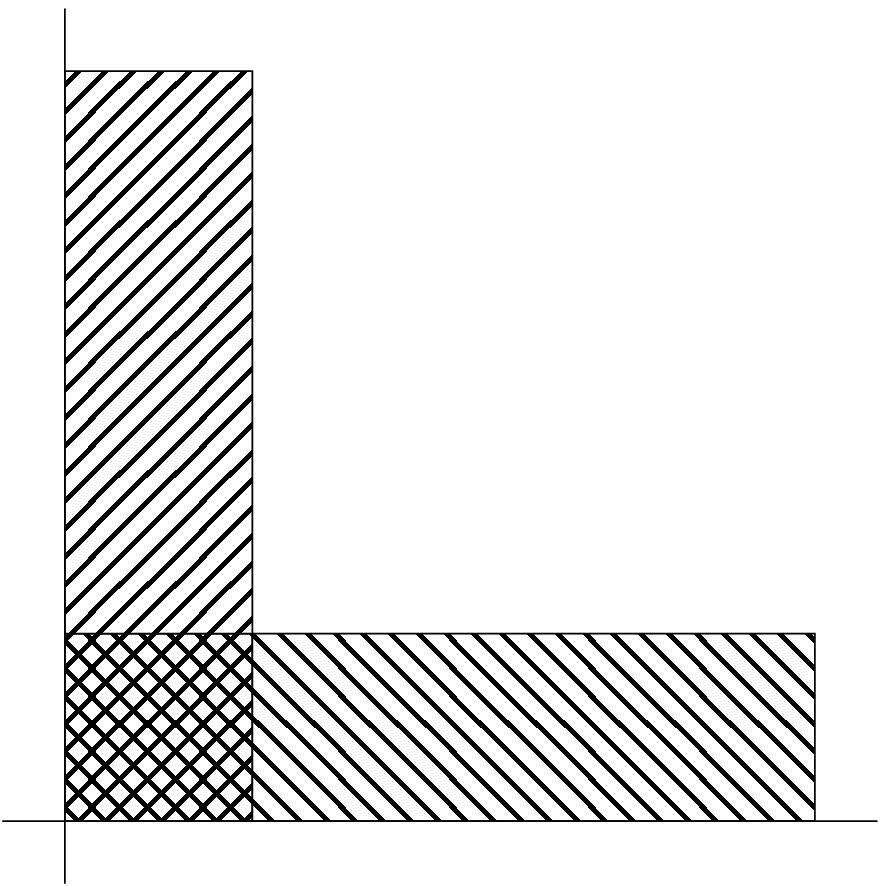}
\put(-50,18){$N^{\pi}_{2}$}
 \put(-100,18){$N^{\pi}_{12}$}
\put(-100,60){$N^{\pi}_1$}
\put(-117,100){$1$}
\put(-117,30){$\varepsilon$}
\put(-85,0){$\varepsilon$}
\put(-12,0){$1$}
  \end{center}
 \end{figure}
\vspace{-0.2in}

\begin{prop} \label{propfbound} There is a constant $C\in \R$ such that for all $(s^{\pi}_1,\ldots, s^{\pi}_n) \in N^{\pi}_A$,   
$$|f(s^{\pi}_1,\ldots, s^{\pi}_n) |\leq  C\, \big(\prod_{i\in A} \kappa_i(s^{\pi}_i) \big) \, f_A((s^{\pi}_j)_{j\notin A})\ ,$$
where  $f_A(s^{\pi}_j)_{j\notin A}$ is a unipotent function on $\bigcap_{i\in A} \{s^{\pi}_i=0\}$, and 
$$\kappa_i(s) = 
|s|^{M_i}\log^w |s| \ ,$$
where $f$ vanishes along $s^{\pi}_i=0$ to order $M_i\geq 0$, and $w$ is some integer $\geq 0$.
\end{prop}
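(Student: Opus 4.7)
The plan is to apply the local unipotent expansion of $f$ (definition \ref{propunipgrowth}) in the coordinates $s^\pi_i$, $i\in A$, along the boundary stratum $Z_A := \bigcap_{i\in A}\{s^\pi_i=0\}$ inside $U_\pi$. By construction of $N^\pi_A$, the coordinates $s^\pi_j$, $j\notin A$, remain in a relatively compact annular region, and by the preceding proposition the divisor $D\cup X'$ is normal-crossing near $D$; hence at any point of $Z_A$ in the closure of $N^\pi_A$, the function $f$ admits a finite expansion
\[
f(s^\pi_1,\ldots, s^\pi_n) = \sum_{I=(i_k)_{k\in A}} \Bigl(\prod_{k\in A}(\log s^\pi_k)^{i_k}\Bigr)\, g_I(s^\pi_1,\ldots, s^\pi_n),
\]
in which each $g_I$ is holomorphic in the $s^\pi_k$ for $k\in A$, and whose restriction to $Z_A$ is a unipotent function of the remaining variables $(s^\pi_j)_{j\notin A}$.

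Next I would use the vanishing hypothesis: the assumption that $f$ vanishes along $\{s^\pi_i=0\}$ to order $M_i\geq 0$ forces $g_I = \prod_{i\in A}(s^\pi_i)^{M_i}\,\tilde g_I$ with $\tilde g_I$ holomorphic in the $s^\pi_i$, $i\in A$. Set $h_I := \tilde g_I|_{Z_A}$, a unipotent function on $Z_A$. Since the closure of $N^\pi_A$ inside the smooth scheme $U_\pi$ is compact, $Z_A\cap\overline{N^\pi_A}$ is covered by finitely many local charts on which the above expansion is valid; over this finite cover, the family of functions $|h_I|$ is also finite, and one takes $f_A((s^\pi_j)_{j\notin A})$ to be a common unipotent majorant for them on $Z_A$. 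Holomorphic continuity in the $s^\pi_i$, $i\in A$, then gives $|\tilde g_I(s)|\leq C'\, f_A((s^\pi_j)_{j\notin A})$ uniformly on a compact neighbourhood of $Z_A$ inside $N^\pi_A$. Letting $w$ be the largest log-exponent $i_k$ appearing anywhere in this finite cover, we may bound $|\log s^\pi_k|^{i_k}\leq \log^w|s^\pi_k|$ up to bounded corrections from the imaginary part of $\log$, which are absorbed into the constant. Combining these estimates collects to the claimed bound with $\kappa_i(s) = |s|^{M_i}\log^w|s|$.

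The principal obstacle is the uniformity of the unipotent expansion along the entire stratum $Z_A\cap N^\pi_A$. This is handled by combining the compactness of $Z_A\cap\overline{N^\pi_A}$ in $U_\pi$ with the normal-crossing property of the singular locus of $f$ proved in the previous proposition; the finiteness of the exponent $w$ reflects the fact that the monodromy weight filtration of a unipotent function has bounded length, so only finitely many powers of $\log s^\pi_k$ occur in any local expansion. With uniformity secured, the remainder of the argument is a routine assembly of the holomorphic bounds on $\tilde g_I$ and the logarithmic bounds on the $(\log s^\pi_k)^{i_k}$.
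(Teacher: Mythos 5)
Your proof follows the same route the paper takes, which is simply to invoke the local unipotent expansion $(\ref{fexpansion})$ of Definition \ref{propunipgrowth} in the sector coordinates, factor out the guaranteed vanishing along $\{s^\pi_i=0\}$, and assemble the estimate; the paper compresses this to a single sentence, and your write-up supplies the routine expansion-and-compactness details.

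One small imprecision worth flagging: you assert that the closure of $N^\pi_A$ inside $U_\pi$ is compact. The closure $\overline{N^\pi_A}$ in $\mathbb{C}^n$ is indeed compact, but it is not contained in $U_\pi(\mathbb{C})$: points with $|s^\pi_j|=1$ for $j\notin A$ can lie on $X'$ (e.g.\ $s^\pi_j=1$), which is excluded from $U_\pi$. The correct statement is that $\overline{N^\pi_A}$ is compact in $P_S$ (or $\overline{\Mod}_{0,n+3}$), where $D\cup X'$ is locally normal crossing by the preceding proposition, so the unipotent expansion in Definition \ref{propunipgrowth} is valid in a neighbourhood of each point of $Z_A\cap\overline{N^\pi_A}$, with possibly additional logarithmic factors in the $s^\pi_j$, $j\notin A$, near $X'$; those extra factors are precisely what is absorbed into the unipotent majorant $f_A$. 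With this adjustment the compactness argument for the uniformity of the constant $C$ and the exponent $w$ goes through as you describe.
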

\begin{proof} This follows immediately from the local expansion of a unipotent  multivalued function  in the neighbourhood of a normal crossing divisor (definition 
$(\ref{fexpansion})$).
\end{proof}
Recall from  definition $(\ref{sectorcoords})$ that $s^{\pi}_i = t_{\pi(i)}   t^{-1}_{\pi(i+1)}.$
Thus the action of the summation index $(m_1,\ldots,m_n)\in\Z^n$ on the sector coordinate $s^{\pi}_i$ is given by
 $s^{\pi}_i\mapsto q^{p^{\pi}_i} s^{\pi}_i$,
 where
 \begin{equation}\label{varspi}
 p^{\pi}_i = m_{\pi(i)} - m_{\pi(i+1)} \ .\end{equation}
 By lemma \ref{lemSpi2IJ}, the  divisor $s_{k}^{\pi}=0$ corresponds   to a divisor $D_I$ or $D^J$. Let us define
 \begin{equation}\label{vsdef}
 v^{\pi}_k= \prod_{i\in I} u_i  \qquad \hbox{ or } \qquad v^{\pi}_k= \prod_{j\in J} u_j^{-1} \end{equation}
 accordingly, where $u_i$ are the parameters in $(\ref{Fsum})$.
One verifies  that 
$$(v_1^{\pi})^{p^{\pi}_1} \ldots (v_n^{\pi})^{p^{\pi}_n}  = u^{m_1}_1\ldots u_n^{m_n}\ .$$
Thus, for the terms of $(\ref{Fsum})$  which lie in the sector $U_{\pi}$, we can write
$$u^{m_1}_1\ldots u_n^{m_n} f(q^{m_1}t_1,\ldots, q^{m_n}t_n) = (v_1^{\pi})^{p^{\pi}_1} \ldots (v_n^{\pi})^{p^{\pi}_n}f(q^{p^{\pi}_1}s^{\pi}_1,\ldots, q^{p^{\pi}_n} s^{\pi}_n) $$
in  local coordinates. Dropping cluttersome $\pi$'s from the notation, we have: 
 
 \begin{cor} For all $(q,t_1,\ldots, t_n)\in K$, there exists  a constant $C$ such that
 $$\big| v_1^{p_1}\ldots v_n^{p_n} f(q^{p_1}s^{\pi}_1,\ldots, q^{p_n} s^{\pi}_n) \big| \leq C \prod_{i\in A} 
|v_i q^{M_i}|^{p_i}  |p_i|^w $$
for all $(p_1,\ldots, p_n)\in \Z^n$ such that $(q^{p_1}s^{\pi}_1,\ldots, q^{p_n} s^{\pi}_n)\in N^{\pi}_A$.
 \end{cor}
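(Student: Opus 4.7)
The proof will be a direct bookkeeping exercise: combine the pointwise estimate of Proposition \ref{propfbound} with the substitution $s^\pi_i \mapsto q^{p_i}s^\pi_i$, and then check that all factors indexed by $j \notin A$ can be absorbed into a single uniform constant over $K$. Throughout, drop the superscript $\pi$. Note that over the compact set $K \subset \Ss$, each sector coordinate $s_i = t_{\pi(i)}/t_{\pi(i+1)}$ ranges over a compact subset of $\C^\times$, so $|s_i|$ and $|\log|s_i||$ are uniformly bounded.

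Step 1 is to apply Proposition \ref{propfbound} at the point $(q^{p_1}s_1,\ldots,q^{p_n}s_n) \in N^\pi_A$ to get
\[
\bigl|f(q^{p_1}s_1,\ldots,q^{p_n}s_n)\bigr| \;\leq\; C\,\prod_{i\in A}\kappa_i(q^{p_i}s_i)\cdot \bigl|f_A\bigl((q^{p_j}s_j)_{j\notin A}\bigr)\bigr|,
\]
and then to expand each factor for $i \in A$ as $|q^{p_i}s_i|^{M_i} = |q|^{p_iM_i}|s_i|^{M_i}$ and $\log|q^{p_i}s_i| = p_i\log|q| + \log|s_i|$, which gives $\log^w|q^{p_i}s_i| \leq C_1(1+|p_i|)^w$ uniformly on $K$. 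Multiplying through by the identity $u_1^{m_1}\cdots u_n^{m_n} = v_1^{p_1}\cdots v_n^{p_n}$ already produces the desired $|v_iq^{M_i}|^{p_i}$ factors for $i \in A$.

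Step 2 is to control the indices $j \notin A$. The constraint $\varepsilon \leq |q^{p_j}s_j| < 1$ combined with $|s_j|$ bounded on $K$ restricts $p_j$ to a finite interval depending only on $K$ and $\varepsilon$; consequently $|v_j|^{p_j}$ is bounded by a constant. It remains to bound $f_A$ on the image of $K$ in the face $\bigcap_{i\in A}\{s_i=0\}$. This face intersects $U_n$ in a product of smaller $U_m$'s (by the corollary following lemma \ref{lemSpi2IJ}), and the region $\{|z_j|\in[\varepsilon,1):j\notin A\}$ stays uniformly away from the divisors $z_j = 0$, $z_j = \infty$, and $\prod_{i\leq k\leq j}z_k = 1$ (the last because $|z_j| < 1$ forces the product to have modulus strictly less than $1$). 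Being unipotent, $f_A$ extends continuously across these loci when they are absent and hence is bounded on this compact region.

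Step 3 is to combine everything: collect all the bounded factors (those indexed by $j\notin A$, together with $|s_i|^{M_i}$ for $i\in A$) into a single constant, and rename $(1+|p_i|)^w \leq 2^w\max(1,|p_i|)^w$ as $|p_i|^w$ up to a harmless modification of $C$. This yields the stated bound. The main obstacle — really the only non-bookkeeping point — is Step 2, establishing that $f_A$ is uniformly bounded on the relevant compact region inside its face; this rests essentially on identifying that region as one avoiding all boundary divisors of $U_n$ restricted to the face, via the explicit description of $U_n$ as the blowup $P_S \setminus X'$ in lemma \ref{lemSpi2IJ}.
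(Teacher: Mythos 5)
Your proof follows the same route as the paper's: apply Proposition \ref{propfbound}, expand the $\kappa_i$ factors to produce the $|v_iq^{M_i}|^{p_i}|p_i|^w$ terms for $i\in A$, and absorb everything else into the constant. You actually spell out the $|v_j|^{p_j}$ factors for $j\notin A$ more explicitly than the paper does (the paper's proof only mentions the $f_A$ and log bounds and implicitly absorbs the rest), which is a useful addition.

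One step is not quite right as stated, and it happens to be the only one you flag as non-trivial. In Step 2 you argue that the region where $f_A$ is evaluated stays away from the divisors $\prod_{i\le k\le j}z_k=1$ ``because $|z_j|<1$ forces the product to have modulus strictly less than $1$.'' This does not give a \emph{uniform} bound away from $1$: since each $|z_j|\in[\varepsilon,1)$, the product can have modulus arbitrarily close to $1$, and $f_A$, being unipotent, may have a logarithmic singularity there. Also the set $\{|z_j|\in[\varepsilon,1)\}$ is not compact, so ``extends continuously, hence bounded on this compact region'' needs adjustment. The correct reason, which the paper invokes by citing Lemma \ref{lemfinitenopoints}, is that the relevant points are not arbitrary elements of $\{|z_j|\in[\varepsilon,1)\}$ but are of the form $q^{p_j}s_j^\pi$ with the base point ranging over the compact $K\subset\Ss$; the defining conditions of $\Ss$ (namely $t_it_j^{-1}\notin q^\R$, $t_i\notin q^\R$) ensure these points avoid the singular locus of $f_A$, and compactness of $K$ upgrades that to a uniform distance, hence a uniform bound on $f_A$. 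With that replacement, the argument is complete and coincides with the paper's.
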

 \begin{proof} Consider the bound in  proposition \ref{propfbound}. The function $f_A$ on the right-hand side can be uniformly bounded by  a version of  lemma \ref{lemfinitenopoints},
 applied to $\bigcap_{i\in A} \{s^{\pi}_i=0\},$ which is  isomorphic to a product of $U_k$'s.
 Since $\log^w | q^{p_i} s^{\pi}_i| = (p_i \log |q| + \log |s^{\pi}_i|)^w \leq C_i |p_i|^w$ for some constant $C_i$, for all $i \in A$, we deduce the bound stated above.
 \end{proof}
 
\begin{thm} \label{thmconv} Suppose that the chosen branch of $f$ vanishes along all divisors $D_I$ of type $0$ with multiplicity $|I|$.
Then  $(\ref{Fsum})$ converges absolutely on compacta of the polydisc 
 $$1<u_1, \ldots,u_n<|q|^{-1}$$ 
\end{thm}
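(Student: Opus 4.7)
The strategy is to decompose the sum $(\ref{Fsum})$ using the sector structure of $U_n$ established earlier and reduce convergence to elementary geometric series in each local chart. By Lemma \ref{lemfinitenopoints}, after fixing a compact $K \subset \Ss$ only finitely many $\underline{m} \in \Z^n$ send $(q^{m_1}t_1,\ldots,q^{m_n}t_n)$ outside $\bigcup_{\pi}U^{\varepsilon}_\pi$, and those terms contribute a uniformly bounded remainder. It therefore suffices to bound, for each $\pi$, the subsum indexed by those $\underline{m}$ landing in $U^{\varepsilon}_\pi$. After passing to sector coordinates $s^\pi_k = t_{\pi(k)}/t_{\pi(k+1)}$ and the change of summation variable $\underline{m} \mapsto \underline{p}$ of $(\ref{varspi})$ (a bijection of $\Z^n$), the summand rewrites as $(v_1^\pi)^{p_1}\cdots(v_n^\pi)^{p_n}\, f(q^{p_1}s^\pi_1,\ldots, q^{p_n}s^\pi_n)$ with the $v_k^\pi$ of $(\ref{vsdef})$.

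Next, subdivide $U^{\varepsilon}_\pi = \bigcup_{A}N^\pi_A$ over non-empty $A \subseteq \{1,\ldots,n\}$. The Corollary to Proposition \ref{propfbound} bounds the summand on $N^\pi_A$ by $C\prod_{i \in A} |v^\pi_i q^{M_i}|^{p_i}|p_i|^w$. The sector inequalities pin each $p_i$ with $i \notin A$ to a finite window of size $O(1)$ uniformly on $K$, while each $p_i$ with $i \in A$ is restricted to a half-infinite tail $p_i \geq P_i$. So up to a uniformly bounded factor the subsum is dominated by a product over $i \in A$ of one-dimensional series
\[
\sum_{p_i \geq P_i}|v^\pi_i q^{M_i}|^{p_i}|p_i|^w,
\]
each of which converges if and only if $|v^\pi_i q^{M_i}| < 1$.

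The final step, where the hypothesis enters, is to verify this inequality in every sector. By Lemma \ref{lemSpi2IJ}, the divisor $\{s^\pi_k = 0\}$ is either a type $0$ divisor $D_I$ with $|I|=k$, or a type $\infty$ divisor $D^J$. In the first case the hypothesis gives $M_k = |I|$ and $v^\pi_k = \prod_{i \in I}u_i$, whence $|v^\pi_k q^{M_k}| = \prod_{i\in I}|u_i q| < 1$ because $u_i < |q|^{-1}$. In the second case one has $v^\pi_k = \prod_{j \in J}u_j^{-1}$, and the unipotence of $f$ alone gives $M_k \geq 0$, so $|v^\pi_k q^{M_k}| \leq \prod_{j\in J}|u_j|^{-1} < 1$ because $u_j > 1$. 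Summing over the finitely many $\pi$ and $A$ then delivers the stated absolute convergence, uniform on compacta.

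The main obstacle is bookkeeping: keeping straight which components of $\underline{p}$ are restricted to bounded windows versus summed over tails, and matching the factors $v^\pi_k q^{M_k}$ against the hypothesized vanishing orders in each of the two types of boundary divisors. Once the dictionary of Lemma \ref{lemSpi2IJ} is applied systematically, the argument reduces to the convergence of a finite number of standard geometric series.
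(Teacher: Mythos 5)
Your proof is correct and follows essentially the same route as the paper: decompose via Lemma~\ref{lemfinitenopoints} and the sectors $N^\pi_A$, invoke the corollary to Proposition~\ref{propfbound}, and then check $|v^\pi_k q^{M_k}|<1$ using the hypothesis $M_k=|I|$ for type-$0$ divisors and $M_k\geq 0$ for type-$\infty$ divisors. You spell out the bookkeeping (the unimodular change of variables $\underline{m}\mapsto\underline{p}$, the split into bounded windows for $i\notin A$ versus tails for $i\in A$) somewhat more explicitly than the paper's proof, which takes these as already settled by the preceding corollary, but the key inequalities and the way the hypothesis and unipotence enter are identical.
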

\begin{proof} Consider first a divisor $D_I$ of type $0$. Then, in the previous corollary, the divisor $D_I=\{s^{\pi}_k=0\}$  in some chart $U_{\pi}$  will  correspond in the right-hand side  to  terms  of the form
$|v^{\pi}_kq^{|I|}|^{p} p^w$, where $p$ is large and positive. The assumptions on $u_i$ imply that 
  $$|v^{\pi}_k q^{|I|}|^p p^w = \Big( \prod_{i\in I} |u_i q| \Big)^p p^w <1\ ,$$
  and therefore  $|v^{\pi}_kq^{|I|}|^{p} p^w$ tends to zero exponentially fast in $p$. Now consider a divisor $D^J$ of type $\infty$.  It corresponds to terms of the form
$|v^{\pi}_k q^{M_k}|^{p}p^w$, where $p$ is large and positive and $M_k,w\geq 0$.  But by the assumptions on $u_i$, we have
  $$|v^{\pi}_k | = \Big( \prod_{j\in J} u^{-1}_j \Big) <1\ ,$$
and so once again, $|v^{\pi}_k q^{M_k}|^{p}p^w$ tends to zero exponentially fast in $p$.
\end{proof}

\subsection{Structure of the  poles}\label{sectPolestructure}
We  first make some general remarks about the pole structure as follows from the proof of theorem \ref{thmconv}. In \S\ref{sectpoleElliptic} we shall refine this result in the case of the multiple elliptic polylogarithms by exploiting their differential equation.

\begin{cor} \label{cor40} Let $f$ satisfy the conditions of  theorem $\ref{thmconv}$. For every $I\neq \emptyset$, let 
$w_I$ denote the order of the logarithmic singularity of $f$ along $D^I$. Every codimension $h$  face  of the  polytope $\mathcal{C}_n$ corresponds to a  flag
$I_1\subsetneq I_2 \subsetneq \ldots \subsetneq I_h, $
where $I_1,\ldots, I_h\subseteq \{1,\ldots, n\}$, and is contained in  the intersection  $E=D^{I_1}\cap \ldots \cap D^{I_h}$.
Let $s_1,\ldots, s_h, s_{h+1},\ldots, s_n $ denote local normal coordinates in which $D^{I_k}$ is given by $s_k=0$ for $1\leq k \leq h$.
Then in the neighbourhood of $E$, the function $f$ has an expansion of the form
$$f = \sum_{i_1\leq w_{I_1},\ldots, i_h\leq  w_{I_h} }  f_{i_1,\ldots, i_h}(s_{h+1},\ldots, s_n) \log^{i_1}(s_1) \ldots \log^{i_h} (s_h)\ ,$$
where $(s_{h+1},\ldots, s_n)$ are coordinates on $E$. After averaging, each term in the sum which is  indexed by $i_1,\ldots, i_h$ contributes singularities  to $(\ref{Fsum})$ of the form
$$(\prod_{k\in I_1} u_k -1)^{-j_1} \ldots (\prod_{k\in I_h} u_k-1)^{-j_h}\ ,  $$
with $j_1\leq i_1+1, \ldots, j_h\leq i_h+1$. In particular, the term $f_{0,\ldots, 0}(s_{h+1},\ldots, s_n)$ which is constant in $s_1,\ldots, s_h$, contributes a simple pole of the form
$$(\prod_{k\in I_1} u_k -1)^{-1} \ldots (\prod_{k\in I_h} u_k-1)^{-1}\ . $$
\end{cor}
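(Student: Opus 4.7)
The strategy is to localize the sum $(\ref{Fsum})$ near the face $E$ using an adapted sector chart and then reduce the pole computation to a one-dimensional geometric series identity. Choose a permutation $\pi$ such that in the local chart $U_\pi$ the divisor $D^{I_k}$ is cut out by $s^\pi_{n+1-|I_k|}=0$; such a $\pi$ exists by lemma \ref{lemSpi2IJ} because the flag is nested, so we can arrange $\pi(\{n+2-|I_k|,\ldots,n+1\})=I_k$ consistently. Relabel these $h$ coordinates as $s_1,\ldots,s_h$ with transverse coordinates $s_{h+1},\ldots,s_n$ on $E$. Under the change of summation variables $(\ref{varspi})$, the monomial $u_1^{m_1}\cdots u_n^{m_n}$ becomes $v_1^{p_1}\cdots v_n^{p_n}$ with $v_k=\prod_{j\in I_k}u_j^{-1}$ for $k\leq h$ by $(\ref{vsdef})$, and approaching $E$ inside this sector means $p_1,\ldots,p_h\to +\infty$.

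Next, plug the local expansion
\[
f=\sum_{i_1\leq w_{I_1},\ldots,i_h\leq w_{I_h}} f_{i_1,\ldots,i_h}(s_{h+1},\ldots,s_n)\,\log^{i_1}(s_1)\cdots\log^{i_h}(s_h)
\]
into $(\ref{Fsum})$ after pulling back by $s_k\mapsto q^{p_k}s_k$. Since $\log^{i_k}(q^{p_k}s_k)=(p_k\log q+\log s_k)^{i_k}$ is a polynomial in $p_k$ of degree exactly $i_k$, the contribution of the term $f_{\mathbf i}$ to the sum, taken over a cone $p_1,\ldots,p_h\geq N$ and $(p_{h+1},\ldots,p_n)\in\Z^{n-h}$, factorises as a product of a transverse sum in $p_1,\ldots,p_h$ and a longitudinal sum which, by theorem \ref{thmconv} applied to $f_{\mathbf i}$ on $E\cong\prod U_{k_j}$, converges absolutely and defines a function holomorphic in $v_{h+1},\ldots,v_n$ near the relevant domain (hence regular in the $\prod_{j\in I_k}u_j$).

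The transverse sum reduces, after expanding $(p_k\log q+\log s_k)^{i_k}=\sum_{j_k\leq i_k}\binom{i_k}{j_k}(p_k\log q)^{j_k}(\log s_k)^{i_k-j_k}$, to a product of elementary sums
\[
\sum_{p\geq N} v^{p}p^{j} \;=\; \frac{v^{N}P_{j,N}(v)}{(1-v)^{j+1}},
\]
each rational in $v$ with pole of order at most $j+1$ at $v=1$. Substituting $v_k=\prod_{j\in I_k}u_j^{-1}$ and using $1-v_k=(\prod_{j\in I_k}u_j-1)/\prod_{j\in I_k}u_j$ converts each factor into $(\prod_{k\in I_\ell}u_k-1)^{-j_\ell}$ up to invertible factors, with the total order bounded by $j_1+1\leq i_1+1,\ldots,j_h+1\leq i_h+1$. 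The constant-in-$s_1,\ldots,s_h$ term $f_{0,\ldots,0}$ forces $j_k=0$ and therefore yields exactly the simple pole asserted in the last sentence.

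The principal obstacle is ensuring that no other piece of the sum contributes to these specific poles. Outside the chosen $\pi$-sector the variables $v_k$ are different linear combinations of the $u$'s, so the only way the divisor $\prod_{j\in I_k}u_j=1$ can appear as a pole is through sectors $\pi'$ whose associated $v^{\pi'}$ equals $v_k$, i.e.\ sectors adapted to $D^{I_k}$; these sectors give contributions of the same shape and any finitely many low-$p$ terms (where $(q^{p_1}s_1,\ldots)$ still lies outside $U^\varepsilon$, cf.\ lemma \ref{lemfinitenopoints}) are uniformly bounded and hence holomorphic in the $u_i$, so the only poles at $\prod_{k\in I_\ell}u_k=1$ arise from the geometric series above, which gives the claimed order bound.
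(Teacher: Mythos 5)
Your proposal is correct and follows essentially the same route as the paper: reduce to a one-dimensional computation in an adapted sector chart, then observe that averaging $\log^i z$ produces $\sum_{m\geq 0} u^m (m\log q + \log z)^i$, a polynomial in $\log z$ with coefficients having poles of order at most $i+1$ at $u=1$. The paper's proof is considerably more terse — it simply invokes "the method of proof of the previous theorem" and states the 1D geometric-series identity — whereas you spell out the adapted permutation $\pi$, the factorisation of the cone sum into transverse and longitudinal parts, and the handling of contributions from other sectors, all of which are correct elaborations of the same argument.
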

\begin{proof} Following the method of proof of the previous theorem, one sees that the statement reduces to a local computation in the one-dimensional situation. In this case, it is clear that the averaging procedure applied to  $\log^i z$, for $i\geq 0$,  gives
$$\sum_{m\geq0 } u^m \log^i q^m z=\sum_{m\geq0 } u^m (m \log q+\log z)^i\ , $$
which has  a pole at  $u=1$ of order at most $i+1$.
\end{proof}

\begin{rem} Corollary \ref{cor40}  gives an upper bound on the singularities which occur:   it can happen that summing over one sector gives rise to   spurious poles in the $u_i$'s,  which cancel on taking the total contribution over all sectors. 
 \end{rem} 
The upshot of the previous corollary is that if we know the differential equations satisfied by $f$, then we can deduce the pole structure of $F$ completely from these differential equations, up to constants of integration (see $\S\ref{sectexamples123}$ below). The corollary states that the constants of integration necessarily contribute simple poles, and these are in bijection with the $n!$ maximal flags $I_1\subsetneq I_2\subsetneq \ldots \subsetneq I_{n-1} \subsetneq I_n$.

\begin{example}
In the case $n=2$, we have  poles along $u_i=1$ coming from logarithmic singularites along $D^i$, for $i=1,2$,  and  along $u_1u_2=1$ coming from $D^{12}$.
The typical contribution from $D^1$, for example, is of the form
$$\sum_{i=0}^{w}  { R_i(\xi_2; u_2)  \over (u_1-1)^{i+1}}$$
where $R_i(\xi_2;u_2)$ is the result of averaging a function of $t_2$. The two maximal flags $\{1\}\subset \{1,2\}$ and $\{2\}\subset \{1,2\}$  which correspond to the corners $D^1\cap D^{12}$ and $D^2 \cap D^{12}$, give constant contributions of the form
$${ c_{1,12} \over (u_1-1)(u_1u_2-1) } \quad \hbox{ and } \quad {c_{2,12} \over (u_2-1)(u_1u_2-1)}$$
where $c_{1,12}$ (resp. $c_{2,12}$) is the regularized limit of $f$ at $D^1\cap D^{12}$ (resp. $D^2 \cap D^{12}$).
For an explicit computation of  such a pole structure, see  \S\ref{sectDepth2}.
\end{example}

\section{Elliptic multiple polylogarithms}  \label{sect8}
In this section, we apply the results of \S\ref{sectAverage} to  prove the following theorem. 
\begin{thm} \label{thmmainconv} The series obtained by averaging the classical multiple polylogarithm
\begin{equation}\label{Lsum}
E_{n_1,\ldots, n_r} (\xi_1,\ldots, \xi_r;u_1,\ldots, u_r)= \sum_{m_1,\ldots, m_r\in \Z}  u^{m_1}_1 \ldots u^{m_r}_r I_{n_1,\ldots, n_r} (q^{m_1}t_1,\ldots, q^{m_r}t_r) \nonumber
\end{equation} 
converges  for $1<u_1,\ldots, u_r<|q|^{-1}$, and $(q,t_1,\ldots, t_r)\in \Ss$.  It defines a (generating series) of functions on $\E^{(r)}$ with poles  given by  products of consecutive $u_i$'s only:
$$u_i=|q|^{-1} \hbox{ for }  1\leq i\leq r \ ,  \quad \hbox{ and }  \prod_{i\leq k\leq j} u_k=1 \hbox{ for all } 1\leq i\leq j\leq r\ .$$
\end{thm}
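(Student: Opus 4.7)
My plan is to apply the machinery of \S\ref{sectAverage} to $f=I_{n_1,\ldots,n_r}$, using Theorem~\ref{thmconv} for convergence and Corollary~\ref{cor40} for the a~priori pole structure, and then to exploit the specific dihedral structure of the multiple polylogarithm to eliminate spurious poles along non-consecutive divisors.

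\textbf{Convergence.}  The power series expansion (\ref{Imassum}) shows that $I_{n_1,\ldots,n_r}(t_1,\ldots,t_r)$ has leading monomial $t_1 t_2 \cdots t_r$, and consequently vanishes along each type-$0$ coordinate face $F_I$ to order at least $|I|$; after the blow-ups of \S\ref{sectCompactification}, the strict transform $D_I\subset P_S$ carries vanishing of order exactly $|I|$.  This is precisely the hypothesis of Theorem~\ref{thmconv}, which delivers absolute and locally uniform convergence of $E_{n_1,\ldots,n_r}$ on the open polydisc $\{1<u_i<|q|^{-1}\}$ for $(q,t_1,\ldots,t_r)\in\Ss$.

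\textbf{A priori pole structure.}  By Corollary~\ref{cor40} and its type-$0$ analogue, the meromorphic continuation of $E_{n_1,\ldots,n_r}$ can only acquire poles on hyperplanes of the form $\prod_{k\in I}u_k q^{|I|}=1$ (from the vanishing along $D_I$) or $\prod_{k\in J}u_k=1$ (from logarithmic behaviour along $D^J$).  Inserting the expansion (\ref{Imassum}) monomial by monomial and interchanging summations in the convergence domain factorises the type-$0$ contribution as a product of one-variable sums $\sum_m(u_iq^{a_i})^m$; the only survivors are the factors $(1-u_iq)^{-1}$, so the type-$0$ side contributes exactly the claimed poles $u_i=|q|^{-1}$ and no combined poles along higher-codimension $D_I$.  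It therefore remains to prove that on the type-$\infty$ side, only the \emph{consecutive} subsets $J=\{i,i+1,\ldots,j\}$ yield actual poles at $\prod_{i\le k\le j}u_k=1$.

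\textbf{Consecutive subsets only on the type-$\infty$ side (the main obstacle).}  The iterated-integral representation (\ref{itintrep}) and the differential equation (\ref{I11diff}), or more uniformly the Debye differential equation (\ref{Debeqn}), express $dI_{n_1,\ldots,n_r}$ as a combination of one-forms involving only \emph{adjacent} ratios $t_k/t_{k\pm1}$ together with the extremal forms $dt_r/(t_r-1)$, $dt_i/t_i$.  Consider a type-$\infty$ divisor $D^J$ approached by $t_k=\lambda s_k$ with $\lambda\to\infty$ for $k\in J$ and $s_k$ bounded.  Integrating the differential system, the only indices at which a logarithm of $\lambda$ can be produced are the boundary straddlings of $J$, i.e.\ positions $k$ where exactly one of $k,k+1$ lies in $J$.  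For a non-consecutive $J$, every interior gap index $k\in J^c$ lying between two elements of $J$ forces the adjacent ratio $t_k/t_{k+1}\to0$, at which the polylogarithm vanishes to positive order; a precise asymptotic analysis --- most cleanly carried out using the Debye coproduct of \S\ref{sect7} --- shows that the vanishing from each gap exactly kills the logarithms from the surrounding straddlings, so that $I_{n_1,\ldots,n_r}$ extends holomorphically across $D^J$ with log-order $w_J=0$ in the notation of Corollary~\ref{cor40}, producing no pole at $\prod_{k\in J}u_k=1$.  For consecutive $J=\{i,\ldots,j\}$ only the two outer straddlings survive, yielding a genuine simple pole at $\prod_{i\le k\le j}u_k=1$.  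The hard part is making this cancellation rigorous; I expect one must either compute the asymptotics symbolically through the Debye coproduct, or sum the contributions of corollary~\ref{cor40} over the full sector decomposition of \S\ref{sectCompactification} and exhibit the cancellation of spurious poles flagged in the remark following that corollary.
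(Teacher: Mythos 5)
Your convergence argument correctly reproduces the paper's: Lemma~\ref{lemR0vanishing} supplies the vanishing of $I_{n_1,\ldots,n_r}$ to order $|I|$ along each type-$0$ divisor $D_I$, and Theorem~\ref{thmconv} then yields convergence. This matches the paper exactly.

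The pole-structure part of your argument, however, departs from the paper's route and leaves a genuine gap. You aim to show that for non-consecutive $J$ the polylogarithm ``extends holomorphically across $D^J$ with log-order $w_J=0$,'' and to conclude from that alone that there is no pole. This inference is not sound on two counts. First, Corollary~\ref{cor40} makes clear that even the constant term $f_{0,\ldots,0}$ (log-order zero) produces a \emph{simple} pole along $\prod_{k\in J}u_k=1$ unless that constant vanishes; so establishing $w_J=0$ is not by itself enough. What you actually need is that the relevant branch of the polylogarithm vanishes on (a neighbourhood of) the corner of $D^J$ used in the averaging, which is a stronger statement than log-order zero. Second, the ``precise asymptotic analysis'' that the cancellation relies on is only gestured at; you explicitly flag this as the hard part, which is exactly where the proof needs to carry weight.

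The paper proves the pole structure differently (see the proposition in \S\ref{sectpoleElliptic}): it works with the Debye generating series $\EE_r$ and argues by induction on depth. From the Debye differential equation (Proposition~\ref{propDiffforEDebye}), whose right-hand side involves only adjacent indices, one sees that $d\EE_{r+1}$ inherits poles only along consecutive sums of $\alpha$'s and $\gamma$'s. Integrating, only the ``constant of integration'' (in the $\xi$-variables) could add spurious poles. The paper kills these constants by observing that for non-consecutive $I$, the divisor $D^I$ meets some $\{t_j=0\}$ with $j\notin I$, where $\Lambda$ vanishes; so $\EE_{r+1}$ has no divergence in a neighbourhood of $D^I\cap\{t_j=0\}$ and hence no pole along $\sum_{i\in I}\alpha_i=0$ or $\sum_{i\in I}\gamma_i=0$. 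This pins the constant to zero without any delicate analysis of how logarithms cancel across a gap. Your ``monomial by monomial'' factorisation of the type-$0$ contribution is also too loose as stated: the Taylor expansion~(\ref{Imassum}) only converges on a polydisc, so it only controls the part of the sum with $m_i\geq 0$; the paper circumvents this by splitting the sum, as in its depth-$1$ worked example. The moral is that the induction via the Debye differential equation, together with the vanishing at $D^I\cap\{t_j=0\}$, is both cleaner and less error-prone than the direct log-order analysis you propose; your outline is pointing in the right direction but does not yet constitute a proof.
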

In order to extract a convergent Taylor expansion in the variables $\alpha_i$, where $u_i=\e(\alpha_i)$, it suffices to know the exact asymptotic behaviour of 
$I_{n_1,\ldots, n_r} (t_1,\ldots, t_r)$ at infinity. This is carried out for the Debye polylogarithms in $\S7$.

\subsection{Analytic continuation  of  polylogarithms} 
\label{sectanalytic} The function $I_{n_1,\ldots, n_r}(t_1,\ldots, t_r)$ has a convergent Taylor expansion at the origin, and so defines the germ of a multivalued analytic function on $\Mod_{0,n+3}(\C)\subset \C^n$. As is well-known,  it is unipotent by $(\ref{I11diff})$,  has a canonical branch at the origin,  and vanishes along the divisors $t_i=0$.

It therefore extends by analytic continuation to a multivalued function on the blow-up $U_n\backslash X(\C)$, and can have at most logarithmic divergences  along the boundary components $D^J$ 
and $D_I$ of $X$.   It turns out that for some of these components $D$, there is no logarithmic divergence and we can speak of the continuation
$I_{m_1,\ldots, m_n}(t_1,\ldots, t_n)\big|_D$ 
of $I_{m_1,\ldots, m_n}(t_1,\ldots, t_n)$ to
$D$. This is not well-defined, since the function is multivalued. However, if  $D$ also meets the strict transform of a divisor $t_i=0$, for some $i$, there is a canonical branch which vanishes at  $\{t_i=0\} \cap D$, and defined in a neighbourhood of $\{t_i=0\}$.  
The following   lemma  is the key to the absolute convergence of $(\ref{Lsum})$.
\begin{lem} \label{lemR0vanishing}  Let $D_I$ be of type $0$. Then $I_{m_1,\ldots, m_n}(t_1,\ldots, t_n)\big|_{D_I}$ vanishes to order $|I|$.
\end{lem}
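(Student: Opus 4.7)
My plan is to reduce the statement to a local vanishing-order computation in sector coordinates, using the convergent series at the origin, and then globalize along $D_I$ by irreducibility.

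First, I would choose a permutation $\sigma \in \Sym_{n+1}$ with $\sigma(n+1)=n+1$ and $\{\sigma(1),\dots,\sigma(|I|)\}=I$; this is possible since $n+1 \notin I$. By Lemma \ref{lemSpi2IJ}, the divisor $D_I$ is cut out in the chart $U_\sigma$ by the single equation $s^\sigma_{|I|}=0$, and the relations $s^\sigma_k = t_{\sigma(k)}/t_{\sigma(k+1)}$ (with $t_{n+1}=1$) give the telescoping identity $t_{\sigma(k)} = s^\sigma_k s^\sigma_{k+1}\cdots s^\sigma_n$ for $1 \leq k \leq n$. Note that $D_I$, being the strict transform of the irreducible stratum $F_I$ under the iterated blow-up, is itself irreducible.

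Now I would use the convergent series $(\ref{Imassum})$ for $I_{m_1,\dots,m_n}$ on the unit polydisc $\{|t_i|<1\}$, where the function is defined by its canonical branch. Every monomial is divisible by $\prod_{i\in I} t_i$, so on the polydisc one has the factorization
\[
I_{m_1,\dots,m_n}(t_1,\dots,t_n) = \Big(\prod_{i\in I} t_i\Big)\, J(t_1,\dots,t_n),
\]
with $J$ holomorphic. In sector coordinates the first factor becomes
\[
\prod_{i\in I} t_i \; = \; \prod_{k=1}^{|I|} t_{\sigma(k)} \; = \; \prod_{j=1}^n (s^\sigma_j)^{\min(j,|I|)},
\]
which vanishes to order exactly $|I|$ along $D_I=\{s^\sigma_{|I|}=0\}$. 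Choosing a point of $D_I \cap U_\sigma$ lying over $F_I \cap \{|t_j|<1 : j \notin I\}$—for instance a limit point of the real sector $\Delta_\sigma$—the pullback of $J$ is holomorphic and thus bounded there, and the function vanishes to order at least $|I|$ at this point.

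Finally, to globalize along $D_I$, I would observe that the canonical branch has no monodromy around $D_I$ itself, since the defining series is holomorphic near the origin. Hence it extends single-valuedly and holomorphically to the Zariski-dense open subset of $D_I$ obtained by removing the intersections with the other boundary divisors $D_{I'}$, $D^{J'}$ and with the strict transforms of $\{t_i=1\}$ and $\{t_i=t_j\}$. The vanishing order of a holomorphic function along a smooth irreducible divisor is constant on such a locus, so it is $\geq |I|$ along all of $D_I$. I expect the main obstacle to be this last globalization step: one must verify that no ramification or other boundary divisor contains $D_I$, which reduces to identifying the branch locus of $I_{m_1,\dots,m_n}$ with the strict transforms of $\{t_i=1\}$, $\{t_i=t_j\}$ and the type-$\infty$ divisors $D^J$—none of which contains $D_I$.
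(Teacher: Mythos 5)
Your proof is correct and takes essentially the same approach as the paper: factor $\prod_{i\in I}t_i$ out of the convergent Taylor series $(\ref{Imassum})$ at the origin, so that the pullback to the blow-up acquires a zero of order $|I|$ along the exceptional divisor $D_I$. You have simply made explicit the sector-coordinate computation $\prod_{i\in I}t_i=\prod_j (s^\sigma_j)^{\min(j,|I|)}$ and the globalization along $D_I$, where the paper compresses the argument into two sentences.
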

\begin{proof}  The sum $(\ref{Imassum})$ 
 converges in the neighbourhood of the origin and locally defines a holomorphic function   which vanishes along the divisors $t_i=0$. 
It  therefore vanishes on any exceptional divisor $D_I$ lying above the origin  to order $|I|$. 
\end{proof}

Setting $f(t_1,\ldots,t_n) = I_{m_1,\ldots, m_n}(t_1,\ldots, t_n)$ proves the first part of theorem \ref{thmmainconv}.

\begin{lem} \label{lemlimitpolylog}
Let   $J\subset \{1,\ldots, n\}$ be non-empty, and $1\notin J$. Let $J^c = \{1,\ldots, n\} \backslash J $ and write $J^c= \{i_1,\ldots, i_k\}$ where $1=i_1<\ldots <i_k$. Then
$$I_{m_1,\ldots, m_n}(t_1,\ldots, t_n)\big|_{D^J}  = (-1)^{|J|} I_{m'_{1},\ldots, m'_k}(t_{i_1},\ldots,t_{i_k})\ ,$$
where $m_1'= m_{i_1}+\ldots + m_{i_2-1}$, $m_2'= m_{i_2}+\ldots + m_{i_3-1}$,\ldots,  $m'_k= m_{i_k}+\ldots + m_n$.
\end{lem}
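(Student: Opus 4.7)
The plan is to work with the iterated integral representation $(\ref{itintrep})$,
$$I_{m_1,\ldots,m_n}(t_1,\ldots,t_n) = (-1)^n \int_{0 \le \sigma_1 \le \ldots \le \sigma_N \le 1} \frac{d\sigma_1}{\sigma_1-\rho_1}\cdots \frac{d\sigma_N}{\sigma_N-\rho_N},$$
where $N=m_1+\ldots+m_n$ and the $\rho$-sequence is $(t_1^{-1},0^{m_1-1},t_2^{-1},0^{m_2-1},\ldots,t_n^{-1},0^{m_n-1})$. Sending $t_j\to\infty$ simultaneously for every $j\in J$ converts each corresponding entry $t_j^{-1}$ into an additional $0$. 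The statement will follow from showing that this limit can be taken under the integral sign and from repackaging the resulting $\rho$-sequence.

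In the second step I would read off the collapsed $\rho$-sequence explicitly. Enumerating $J^c=\{i_1<\ldots<i_k\}$, a combinatorial count shows that the zeros between the surviving entries $t_{i_a}^{-1}$ and $t_{i_{a+1}}^{-1}$ total $(m_{i_a}-1)+1+(m_{i_a+1}-1)+\ldots+1+(m_{i_{a+1}-1}-1)=m'_a-1$, which is precisely the $\rho$-sequence of $I_{m'_1,\ldots,m'_k}(t_{i_1},\ldots,t_{i_k})$. Combining the original prefactor $(-1)^n$ with the reciprocal $(-1)^{-k}$ needed to repackage the integral as $I_{m'_1,\ldots,m'_k}$ produces the overall sign $(-1)^{n-k}=(-1)^{|J|}$.

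The third and most delicate step is to justify interchanging the limit $t_j\to\infty$ with integration over the simplex. The difficulty is that each pole $\sigma_i=t_j^{-1}$ migrates toward the boundary point $\sigma_i=0$. The hypothesis $1\notin J$ is precisely what saves the innermost integration: since $\rho_1=t_1^{-1}$ stays bounded away from $0$, the inner antiderivative $\int_0^{\sigma_2}d\sigma_1/(\sigma_1-\rho_1)=\log(1-t_1\sigma_2)$ vanishes at $\sigma_2=0$, which is what absorbs the potentially divergent logarithms contributed by subsequent forms $d\sigma/\sigma$. A uniform bound in $t$ (obtained by factoring out this vanishing at $\sigma_2=0$) then allows an inductive application of dominated convergence along each successive integration.

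The main obstacle will be this interchange of limit and iterated integral, more specifically the need to choose a path of analytic continuation avoiding the moving poles and to keep the branch of the logarithm consistent with the canonical branch of $I_{m_1,\ldots,m_n}$ fixed near the origin. An alternative, perhaps cleaner, route is to exploit the explicit sector charts of Lemma $\ref{lemSpi2IJ}$ together with the unipotent expansion of Definition $\ref{propunipgrowth}$: in a chart in which $D^J$ is cut out by a single sector coordinate $s^\pi_k=0$, one shows by computing the leading behaviour of $dI_{m_1,\ldots,m_n}$ in $ds^\pi_k$ that the constant term of the expansion along $D^J$ satisfies the same differential system as $(-1)^{|J|}I_{m'_1,\ldots,m'_k}$, and matches it using the vanishing boundary condition inherited from Lemma $\ref{lemR0vanishing}$ when one further approaches $D^J\cap D_{\{i_1\}}$.
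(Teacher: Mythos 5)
Your proof is correct and follows the same route as the paper, which simply asserts that the lemma "follows immediately from the iterated integral representation $(\ref{itintrep})$." Your counting of the collapsed $\rho$-sequence to produce $m'_a-1$ zeros between surviving entries, the sign bookkeeping $(-1)^{n-k}=(-1)^{|J|}$, and your observation that $1\notin J$ keeps $\rho_1=t_1^{-1}$ away from $0$ so that the inner antiderivative $\log(1-t_1\sigma_2)$ vanishes at the origin and tames the subsequent $d\sigma/\sigma$ factors, are exactly the details the paper leaves to the reader.
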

\begin{proof} This follows immediately from the iterated integral representation $(\ref{itintrep})$.
\end{proof}

\begin{cor}
Every multiple polylogarithm $I_{m_1,\ldots, m_n}(t_1,\ldots, t_n)$ is the analytic continuation to some exceptional divisor of a multiple logarithm
$I_{1,\ldots, 1}(t_1,\ldots, t_{N}).$
\end{cor}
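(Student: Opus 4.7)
The approach is a direct application of lemma \ref{lemlimitpolylog}, with the one-valued $I_{1,\ldots,1}$ taken as the source and a carefully chosen divisor $D^J$ on which to restrict. The key observation is that lemma \ref{lemlimitpolylog} describes how restriction to $D^J$ acts on the exponents: it sums them in consecutive blocks dictated by the gaps of $J^c$. To invert this operation (recovering an arbitrary $I_{m_1,\ldots,m_n}$ from an $I_{1,\ldots,1}$), it therefore suffices to split each block of length $m_j$ back into $m_j$ ones, which is trivially possible.

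Concretely, I would set $N=m_1+\ldots+m_n$ and take the $N$-fold multiple logarithm $I_{1,\ldots,1}(t_1,\ldots,t_N)$ on $\Mod_{0,N+3}$. Define the increasing sequence $i_1=1$, $i_j=1+m_1+\ldots+m_{j-1}$ for $j=2,\ldots,n$, set $J^c=\{i_1,\ldots,i_n\}\subset\{1,\ldots,N\}$ and $J=\{1,\ldots,N\}\setminus J^c$. By construction $1=i_1\in J^c$, so the hypothesis $1\notin J$ of lemma \ref{lemlimitpolylog} is satisfied, and $|J|=N-n$. Applied to $I_{1,\ldots,1}(t_1,\ldots,t_N)$, each of the block sums $m'_j=m_{i_j}+\ldots+m_{i_{j+1}-1}$ (with the convention $i_{n+1}=N+1$) collapses to $i_{j+1}-i_j=m_j$ since every entry equals $1$. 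Hence
$$I_{1,\ldots,1}(t_1,\ldots,t_N)\big|_{D^J}=(-1)^{N-n}\,I_{m_1,\ldots,m_n}(t_{i_1},\ldots,t_{i_n}),$$
and relabelling $t_{i_j}\leftrightarrow t_j$ identifies the right-hand side with the desired multiple polylogarithm.

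There are only two minor points to address beyond quoting the lemma. First, the statement loses a sign $(-1)^{N-n}$, consistent with the overall sign $(-1)^r$ in the iterated-integral formula $(\ref{itintrep})$; this is harmless, but one should note explicitly that the identification of the corollary is only up to sign, or reinterpret ``is the analytic continuation'' as ``is $\pm$ the analytic continuation''. Second, one should check that $D^J$ is indeed a divisor of the partial compactification of \S\ref{sectCompactification}: when at least one $m_i\geq 2$ we have $|J|\geq 1$, and the description of $D^J$ in lemma \ref{lemSpi2IJ} places it among the irreducible components of $D\subset U_N$ (it is a true exceptional divisor of the blow-up whenever $|J|\geq 2$, and the strict transform of a coordinate facet $\{t_k=\infty\}$ when $|J|=1$). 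The remaining case $m_1=\ldots=m_n=1$ is tautological, as $I_{m_1,\ldots,m_n}=I_{1,\ldots,1}$ already. The argument involves no substantial obstacle: the only thing to be careful about is choosing $J^c$ so that the block-length conditions $i_{j+1}-i_j=m_j$ and $1\in J^c$ hold simultaneously, which is accomplished by the partial-sum formula above.
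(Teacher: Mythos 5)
Your proposal is correct and is essentially the paper's own argument: the corollary is presented there as an immediate consequence of Lemma \ref{lemlimitpolylog}, and your explicit construction of $J$ via the partial sums $i_j = 1 + m_1 + \cdots + m_{j-1}$ is exactly the intended unpacking. Your two side remarks (the sign $(-1)^{|J|}$ which the corollary's wording silently drops, and the fact that for $|J|=1$ the divisor $D^J$ is a strict transform of a coordinate facet rather than a genuine exceptional divisor) are valid points of precision about the paper's informal phrasing but do not affect the substance.
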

Thus we can restrict ourselves to considering  only multiple logarithms if we wish.
Another way to interpret lemma \ref{lemlimitpolylog} is to notice that the terms $I\big|_{D^J}$ are in one-to-one correspondence with the  terms in the so-called stuffle product formula.

\subsection{Elliptic Multiple Polylogarithms}
The functions obtained by  averaging  multiple polylogarithms  satisfy differential equations which  are easily deduced from   $(\ref{I11diff})$.

\begin{lem}  \label{EKaveraged} The function $F(\xi;u)$ is the averaged  weighted generating series for ${z\over z-1}$:
$$F (\xi;u) =- 2 \pi i \sum_{n\in \Z} {q^n z \over 1-q^nz} u^n \ .$$
\end{lem}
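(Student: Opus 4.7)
The plan is to verify the identity by direct computation, starting from the series representation \emph{(ii)} of $F$ given in Proposition-Definition \ref{Fproperties} (with $w=u$):
$$F(\xi,\alpha,\tau)=-2\pi i\left(\frac{z}{1-z}+\frac{1}{1-u}+\sum_{m,n>0}\bigl(z^mu^n-z^{-m}u^{-n}\bigr)q^{mn}\right),$$
and expanding the bilateral sum on the right-hand side of the lemma to recover exactly this expression term by term.

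First, I would split $\sum_{n\in\Z}u^n\frac{q^nz}{1-q^nz}$ into the three ranges $n=0$, $n\geq 1$, and $n\leq -1$. The $n=0$ term contributes $\frac{z}{1-z}$ directly. For $n\geq 1$, one has $|q^nz|<1$ in the convergence regime, so I would expand $\frac{q^nz}{1-q^nz}=\sum_{m\geq 1}q^{nm}z^m$ and interchange summations to obtain
$$\sum_{m\geq 1}z^m\sum_{n\geq 1}(uq^m)^n=\sum_{m\geq 1}\frac{z^m\,uq^m}{1-uq^m}=\sum_{m,n>0}z^mu^nq^{mn},$$
matching the first half of the Kronecker expansion.

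For $n\leq -1$, one has $|q^nz|>1$, and I would use the dual expansion $\frac{y}{1-y}=-\sum_{k\geq 0}y^{-k}$ to write $\frac{q^nz}{1-q^nz}=-\sum_{k\geq 0}q^{-nk}z^{-k}$. Interchanging summations, the $k=0$ slice gives $-\sum_{\ell\geq 1}u^{-\ell}=\frac{1}{1-u}$, reproducing the isolated pole term; the $k\geq 1$ slices yield
$$-\sum_{k\geq 1}z^{-k}\sum_{\ell\geq 1}(q^k/u)^\ell=-\sum_{m,n>0}z^{-m}u^{-n}q^{mn},$$
matching the remaining half of the Kronecker expansion.

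The one thing to check carefully is absolute convergence of the double sums — and hence the legitimacy of interchanging the two summations — in the annular region $1<|u|<|q|^{-1}$ with $(q,z)$ satisfying the hypotheses of \S\ref{sectprepmap}. This is not really an obstacle: each of the geometric series $\sum(uq^m)^n$, $\sum(q^k/u)^\ell$ converges exponentially, and the resulting double series are dominated by $\sum_{m\geq 1}|z|^m\cdot|uq|^n$ and $\sum_{k\geq 1}|z|^{-k}|u|^{-\ell}$, both summable in the stated regime. The lemma is then just the termwise identity between the two expansions.
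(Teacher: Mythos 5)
Your proof is correct and follows essentially the same route as the paper: split the bilateral sum at $n=0$, expand geometrically in $q^n z$ for $n>0$ and in $(q^n z)^{-1}$ for $n<0$, and recover representation \emph{(ii)} of $F$ term by term. The only cosmetic difference is that the paper first rewrites $\tfrac{y}{1-y}=\tfrac{-y^{-1}}{1-y^{-1}}-1$ for the $n<0$ range and treats the $-1$ separately, whereas you absorb that same contribution as the $k=0$ slice of the dual geometric expansion $\tfrac{y}{1-y}=-\sum_{k\geq 0}y^{-k}$.
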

\begin{proof} 
By decomposing the domain of summation into various parts we obtain:
\begin{eqnarray} 
\sum_{n\in \Z } {q^n z \over 1-q^n z} u^n  &=  &\sum_{n<0} {- q^{-n} z^{-1}\over 1-q^{-n} z^{-1}} u^n  -\sum_{n<0} u^n +  {z\over 1-z} +\sum_{n>0} {q^{n} z \over 1- q^n z} u^n    \nonumber \\
&= &\sum_{n>0} \sum_{m>0} q^{mn} (-z^{-m} u^{-n} + z^{m}u^{n}) +{z\over 1-z} +{ 1\over 1-u}\nonumber 
\end{eqnarray}
which is the definition of the  Eisenstein-Kronecker series $-(2i\pi)^{-1} F(\xi;u)$.\end{proof}
\begin{lem}  \label{EKaveragedqinv} The  averaged  weighted generating series for $d\Li_1(z)$ is:
$$\sum_{n\in \Z}  d\Li_1(zq^n) u^n  =   F(\xi;u) d\xi \ .$$
Likewise, the result of averaging $z^{-s} d\Li_1(z)$ is $\e(-\xi s) F(\xi;u) d\xi$.
\end{lem}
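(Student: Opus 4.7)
The plan is to derive both identities as direct corollaries of Lemma \ref{EKaveraged} by differentiating term-by-term. Since everything reduces to the series identity already established there, the real work has been done.

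For the first identity, I would begin with $\Li_1(w)=-\log(1-w)$, so $d\Li_1(w)=dw/(1-w)$. Viewing $zq^{n}=\e(\xi+n\tau)$ as a function of $\xi$ (with $q$ and $n$ fixed parameters), the chain rule gives $d(zq^{n})=2\pi i\, zq^{n}\, d\xi$, hence
\[
d\Li_{1}(zq^{n}) \; = \; \frac{2\pi i\, zq^{n}}{1-zq^{n}}\, d\xi .
\]
Summing against $u^{n}$ and factoring out the $2\pi i\, d\xi$, the identity reduces to the series
\[
\sum_{n\in\Z} \frac{q^{n}z}{1-q^{n}z}\, u^{n},
\]
which by Lemma \ref{EKaveraged} equals $(-2\pi i)^{-1}F(\xi;u)$. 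Multiplying by $2\pi i\, d\xi$ produces $F(\xi;u)\,d\xi$ (up to the overall sign fixed by the convention for $F$ in Proposition-Definition \ref{Fproperties}).

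For the second identity, the factor $z^{-s}=\e(-\xi s)$ depends on $\xi$ but not on the summation index $n$, so it can be pulled outside the sum. The remaining sum is exactly the one treated in the first part, so the result is $\e(-\xi s)\, F(\xi;u)\,d\xi$ immediately.

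The only real thing to check is that termwise differentiation is legitimate. This follows from the absolute convergence of $\sum_{n\in\Z} u^{n}\Li_{1}(zq^{n})$ on compacta of the strip $1<|u|<|q|^{-1}$ (a special case of Theorem \ref{thmmainconv}; here $\Li_{1}$ is unipotent, vanishes along $z=0$, and grows only logarithmically at $z=\infty$), together with local uniform convergence of the differentiated series in the same strip. Thus the main obstacle is not the computation itself but the analytic justification of exchanging $d$ and $\sum$, and this is precisely what the convergence criteria of \S\ref{sectAverage} were designed to supply.
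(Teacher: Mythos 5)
Your argument is essentially the paper's own: the paper's proof is the one-line sketch ``Follows from the previous lemma using the fact that $d\xi = \tfrac{1}{2i\pi}\tfrac{dz}{z}$,'' and you have simply spelled out the substitution $d\Li_1(zq^n)=\tfrac{q^nz}{1-q^nz}\cdot\tfrac{dz}{z}=\tfrac{q^nz}{1-q^nz}\cdot 2\pi i\,d\xi$ and invoked Lemma \ref{EKaveraged}. Factoring $z^{-s}=\e(-\xi s)$ out of the sum for the second identity is exactly right, and your added paragraph on termwise differentiation (via local uniform convergence on compacta of the annulus $1<|u|<|q|^{-1}$) is a reasonable justification of a step the paper takes for granted. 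So this is the same route, with the details filled in.

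One thing you should not leave as a hedge, though: carry the signs to the end. Lemma \ref{EKaveraged} gives $\sum_{n\in\Z}\tfrac{q^nz}{1-q^nz}u^n = -(2\pi i)^{-1}F(\xi;u)$, so after multiplying by $2\pi i\,d\xi$ your calculation actually yields $-F(\xi;u)\,d\xi$, not $+F(\xi;u)\,d\xi$. The residue check confirms this: as $n\to-\infty$, $d\Li_1(zq^n)\to -\tfrac{dz}{z}=-2\pi i\,d\xi$, so the $u\to 1$ (equivalently $\alpha\to 0$) singularity of the averaged series is $-\tfrac{d\xi}{\alpha}+O(1)$, whereas $F(\xi;\alpha)\,d\xi$ has polar part $+\tfrac{d\xi}{\alpha}$. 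So the identity, taken literally with $\Li_1(z)=-\log(1-z)$ and Proposition-Definition \ref{Fproperties}(ii), appears to be off by an overall sign. This is not a flaw in your method---you reproduce exactly what the paper does---but writing ``up to the overall sign fixed by the convention for $F$'' suggests the convention resolves it, when in fact the discrepancy is genuine under the paper's stated conventions; it is worth stating the sign you actually obtain.
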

\begin{proof}
Follows from the previous lemma using the fact that $d\xi = {1\over 2i\pi } {dz \over z}$.
\end{proof}

Let us define the (unregularized)  multiple elliptic polylogarithm to be:
$$E_{n_1,\ldots, n_r}(\xi_1,\ldots, \xi_r;\alpha_1,\ldots, \alpha_r) = \sum_{m_1,\ldots, m_r\in \Z}  u_1^{m_1}\ldots u_r^{m_r} I_{n_1,\ldots,n_r}(q^{m_1}t_1,\ldots, q^{m_r}t_r) \ .$$
where $u_i = \e(\alpha_i)$ for $1\leq i\leq r$.

\begin{thm} \label{thmDiffforE} The total   derivative  
$dE_{1,\ldots, 1}(\xi_1,\ldots, \xi_n;\alpha_1,\ldots, \alpha_n)$ equals\begin{eqnarray} 
&=& \sum_{k=1}^{n} dE_1(\xi_k-\xi_{k+1};\alpha_k) E_{1,\ldots, 1}(\xi_1,\ldots, \widehat{\xi}_k, \ldots, \xi_n; \alpha_1,\ldots, \alpha_{k}+\alpha_{k+1},\ldots, \alpha_n) \nonumber \\
&-&  \sum_{k=2}^{n} dE_1(\xi_k-\xi_{k-1};\alpha_k) E_{1,\ldots, 1}(\xi_1,\ldots, \widehat{\xi}_k, \ldots, \xi_n; \alpha_1,\ldots, \alpha_{k-1}+\alpha_{k},\ldots, \alpha_n) \nonumber 
\end{eqnarray}
where $\xi_{n+1}=0$, $\alpha_{n+1}=0$,  and $dE_1(\xi;\alpha) = F(\xi;\alpha)d\xi$.
\end{thm}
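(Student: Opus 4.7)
\medskip

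\noindent\textbf{Proof proposal.} The plan is to differentiate the defining series term by term and then reduce to the classical differential equation (\ref{I11diff}) followed by an index substitution and an application of Lemma \ref{EKaveragedqinv}. Absolute convergence of $E_{1,\ldots,1}$ and of the termwise differentiated series on the convergence polydisc of Theorem \ref{thmmainconv} justifies interchanging $d$ with the sum, so one begins with
\[
dE_{1,\ldots,1}(\xi_1,\sdots,\xi_n;\alpha_1,\sdots,\alpha_n)
 = \sum_{m\in\Z^n} u_1^{m_1}\cdots u_n^{m_n}\, dI_{1,\ldots,1}(q^{m_1}t_1,\sdots,q^{m_n}t_n).
\]
Apply (\ref{I11diff}) (with $t_{n+1}=1$ so that $m_{n+1}=0$, and $t_0=0$ killing the $k=1$ term of the second sum). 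This expresses the integrand as a finite sum indexed by $k$ of a one-form $dI_1(q^{m_k-m_{k\pm 1}}t_k/t_{k\pm 1})$ multiplied by the classical multiple polylogarithm $I_{1,\ldots,1}(\ldots,\widehat{q^{m_k}t_k},\ldots)$ of one fewer variable.

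The core step is a reindexing. Fix $k$ in the first sum and replace the summation variable $m_k$ by $p=m_k-m_{k+1}$ (for $k<n$; for $k=n$ use $p=m_n$, compatible with $m_{n+1}=0$). An elementary calculation gives
\[
u_1^{m_1}\cdots u_n^{m_n} \;=\; u_k^{p}\cdot u_1^{m_1}\cdots u_{k-1}^{m_{k-1}}\,(u_ku_{k+1})^{m_{k+1}}\,u_{k+2}^{m_{k+2}}\cdots u_n^{m_n},
\]
and since $I_{1,\ldots,1}(\ldots,\widehat{q^{m_k}t_k},\ldots)$ is independent of $m_k$, the $n$-fold sum factorises. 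The inner sum $\sum_{p\in\Z}u_k^{p}\,dI_1(q^{p}\,t_k/t_{k+1})$ is precisely of the form treated in Lemma \ref{EKaveragedqinv} with $\e(\zeta)=t_k/t_{k+1}$, so it equals $F(\xi_k-\xi_{k+1};\alpha_k)\,d(\xi_k-\xi_{k+1})=dE_1(\xi_k-\xi_{k+1};\alpha_k)$. The outer sum is visibly
$E_{1,\ldots,1}(\xi_1,\sdots,\widehat{\xi_k},\sdots,\xi_n;\alpha_1,\sdots,\alpha_k+\alpha_{k+1},\sdots,\alpha_n)$, because the weight on the $(k+1)$-st variable has been promoted from $u_{k+1}$ to $u_ku_{k+1}=\e(\alpha_k+\alpha_{k+1})$. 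The $k=n$ case is compatible with the stated convention $\alpha_{n+1}=0$, $\xi_{n+1}=0$.

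A mirror-image computation handles the second sum, using the substitution $p=m_k-m_{k-1}$ for $k=2,\ldots,n$; this time the weight $u_{k-1}$ is promoted to $u_{k-1}u_k=\e(\alpha_{k-1}+\alpha_k)$, producing the combination $\alpha_{k-1}+\alpha_k$ at position $k-1$, exactly matching the second line of the theorem. Combining the two sums yields the claimed identity.

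The main obstacle is not the calculation itself but the bookkeeping: one must check carefully that the reindexing $m_k\mapsto p$ at each $k$ respects the boundary conventions ($m_{n+1}=0$ for the first sum, exclusion of $k=1$ for the second sum via $dI_1(t_1/0)=0$), and that the resulting labelling of the shortened $E_{1,\ldots,1}$ puts the combined $\alpha$ in the correct position. Convergence also needs to be invoked in order to permute the $p$-summation past the outer summation, but this is routine on compacta of the polydisc $1<|u_i|<|q|^{-1}$ granted Theorem \ref{thmmainconv} (applied to both $E_{1,\ldots,1}$ and to its Kronecker-valued derivative, using Lemma \ref{EKaveraged}).
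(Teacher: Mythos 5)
Your proof is correct and follows essentially the same approach as the paper's: termwise differentiation (justified by uniform convergence), the classical differential equation~(\ref{I11diff}), the triangular change of summation variable $m_k\mapsto m_k-m_{k\pm 1}$ to factorise the $n$-fold sum, and identification of the inner sum with $dE_1$ via the Kronecker series average. The only difference is cosmetic: you make explicit the appeal to Lemma~\ref{EKaveragedqinv} (and the resulting promotion of the weight $u_{k\pm 1}$ to $u_ku_{k\pm 1}$), steps the paper leaves as a "key observation" for the reader.
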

\begin{proof}  Since it converges uniformly, we can differentiate term by term in the definition of $E_{1,\ldots, 1}$. 
 The differential equation then follows  from the corresponding differential equation $(\ref{I11diff})$ for $I_{1,\ldots, 1}(t_1,\ldots, t_n)$. The key observation is that a term such as
 $$\sum_{m_1,\ldots, m_n\in \Z} dI_1 \Big({q^{m_k} t_{k}\over q^{m_{k+1}} t_{k+1}}\Big) I_{1,\ldots, 1}(q^{m_1} t_1,\ldots, \widehat{q^{m_k} t_k},\ldots, q^{m_n}t_n)\, u_1^{m_1}\ldots u_n^{m_n}$$
 can be rewritten in the form   
 \begin{eqnarray}
\sum_{m_1,\ldots, m_n\in \Z}&&  dI_1 \Big(q^{m_k-m_{k+1}}{ t_{k}\over  t_{k+1}}\Big) \, u_k^{m_k-m_{k+1}}  \nonumber \\
  & \times & I_{1,\ldots, 1}(q^{m_1} t_1,\ldots, \widehat{q^{m_k} t_k},\ldots, q^{m_n}t_n)\, u_1^{m_1}\ldots(u_ku_{k+1})^{m_{k+1}}\ldots u_n^{m_n} \nonumber 
 \end{eqnarray}
 and the region of summation decomposes into a product after a triangular change of basis of the summation variables
 $(m_1,\ldots, m_n) \mapsto (m_1,\ldots, m_k-m_{k+1},\ldots, m_n)$.
 \end{proof}

\subsection{Elliptic Debye polylogarithms}

Recall the definition of the classical Debye polylogarithms (definition \ref{defnclassicalDebye}). Let us write
$\underline{\alpha}=(\alpha_1,\ldots, \alpha_r)$ and likewise for $\beta$.
\begin{defn}  \label{defnEllDebye} The generating series of elliptic Debye polylogarithms is:
$$\EE_r(\xi_1,\sdots, \xi_r; \underline{\alpha},\underline{\beta}) = \sum_{m_1,\sdots, m_r\in \Z}\e(m_1\alpha_1+\cdots+m_r\alpha_r) \Lambda_r(q_1^{m_1} t_1,\sdots,q_r^{m_r} t_r;\beta_1,\sdots,\beta_r)$$
The absolute convergence of the series is guaranteed by theorem \ref{thmconv}.
\end{defn}
One of the main reasons for considering such a generating series is because of a mysterious modularity property relating the parameters $\alpha$ and $\beta$ (see \cite{Andrey} when $r=1$).
\begin{prop} \label{propDiffforEDebye} Let $r\geq 2$. The differential 
$d\EE_r(\xi_1,\sdots, \xi_r;\underline{\alpha},\underline{\beta})$ is equal to \begin{eqnarray} 
&=& \sum_{k=1}^{n} d\EE_1(\xi_k-\xi_{k+1};\alpha_k,\beta_k)\EE_{r-1}(\xi_1,\sdots, \widehat{\xi}_k, \sdots, \xi_r; \alpha_1,\sdots, \alpha_{k}+\alpha_{k+1},\sdots, \alpha_n,  \beta_1,\sdots) \nonumber \\
&-&  \sum_{k=2}^{n} d\EE_1(\xi_k-\xi_{k-1};\alpha_k,\beta_k) \EE_{r-1}(\xi_1,\sdots, \widehat{\xi}_k, \sdots, \xi_r; \alpha_1,\sdots, \alpha_{k-1}+\alpha_{k},\sdots, \alpha_n, \beta_1,\sdots) \nonumber 
\end{eqnarray}
where $\xi_{n+1}=\alpha_{n+1}=\beta_{n+1}=0$, and in the right-hand side, the arguments in the $\beta$'s are of the same form as those for the $\alpha$'s. In the case $r=1$,  we have
\begin{equation} \label{dEequation} d\EE_1(\xi;\alpha;\beta) = \e(-\beta\xi) \, F(\xi; \alpha-\tau \beta) d\xi\ .\end{equation}
\end{prop}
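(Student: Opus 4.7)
The plan is to reduce everything to the classical Debye polylogarithm case by differentiating termwise inside the average. Absolute convergence from Theorem \ref{thmconv} (applied to the constituent functions $\Lambda_r$, which themselves satisfy the growth hypotheses needed) justifies both term-by-term differentiation of the defining series and the subsequent interchange of summations, so the work is essentially algebraic.

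For the base case $r=1$, I would start from the identity $d\Lambda_1(t;\beta) = t^{-\beta}\,d\Li_1(t)$ recorded just before $(\ref{Debeqn})$. Substituting $t = q^m z$ gives $(q^m z)^{-\beta} = z^{-\beta}\,\e(-\tau\beta)^m$, so
\[
d\EE_1(\xi;\alpha,\beta) \;=\; z^{-\beta}\sum_{m\in\Z}\e\bigl(m(\alpha-\tau\beta)\bigr)\,d\Li_1(q^m z).
\]
Lemma \ref{EKaveragedqinv} then identifies the inner sum with $F(\xi;\alpha-\tau\beta)\,d\xi$, and $z^{-\beta} = \e(-\xi\beta)$ yields $(\ref{dEequation})$.

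For $r \geq 2$, I would insert the recursive differential equation $(\ref{Debeqn})$ for $\Lambda_r$ into the defining series of $\EE_r$. A typical term from the first sum on the right-hand side of $(\ref{Debeqn})$, pulled back to $(q^{m_i}t_i)$, contains the factor $d\Lambda_1\bigl(q^{m_k-m_{k+1}}t_k/t_{k+1};\beta_k\bigr)$, whose argument depends only on $m_k - m_{k+1}$. I would then apply the triangular change of summation variables $n_k := m_k - m_{k+1}$, $n'_{k+1} := m_{k+1}$, and $n_j := m_j$ for $j \neq k, k+1$. Under this change the exponent $\sum_i m_i\alpha_i$ becomes $n_k\alpha_k + n'_{k+1}(\alpha_k+\alpha_{k+1}) + \sum_{j\neq k,k+1}n_j\alpha_j$, which is precisely what is needed so that the sum factors into (i) a one-variable average over $n_k$ with weight $\e(n_k\alpha_k)$, giving $d\EE_1(\xi_k-\xi_{k+1};\alpha_k,\beta_k)$ by the case $r=1$, and (ii) an $(r-1)$-variable average with parameters $(\alpha_1,\ldots,\alpha_k+\alpha_{k+1},\ldots,\alpha_r)$ and $(\beta_1,\ldots,\beta_k+\beta_{k+1},\ldots,\beta_r)$ matching $\EE_{r-1}$ evaluated at $(\xi_1,\ldots,\widehat{\xi}_k,\ldots,\xi_r)$. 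The terms coming from the second sum on the right-hand side of $(\ref{Debeqn})$ are handled symmetrically via $n_k := m_k - m_{k-1}$, producing $d\EE_1(\xi_k - \xi_{k-1};\alpha_k,\beta_k)$ and shifting $\alpha_k$, $\beta_k$ into position $k-1$ as stated.

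There is nothing conceptually deep here; the main obstacle is bookkeeping. One must verify carefully that the hat on $\xi_k$ in the asserted $\EE_{r-1}$ corresponds precisely to the summation variable $n_k$ that was absorbed into the one-variable average, and that $\alpha_k+\alpha_{k+1}$ (respectively $\beta_k+\beta_{k+1}$) lands in slot $k$ after the change of variables, with the corresponding statement for the second family of terms. Once the indices are correctly aligned, the splitting of the double sum into a product is legitimate by absolute convergence, and the proposition follows.
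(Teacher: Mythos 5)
Your proof is correct and takes essentially the same route as the paper. The paper disposes of the case $r\geq 2$ with the single sentence ``follows immediately from Theorem~\ref{thmDiffforE},'' and you have correctly reconstructed what that means: substitute the Debye differential equation $(\ref{Debeqn})$ term-by-term inside the averaged series and repeat the triangular change of summation variables from the proof of Theorem~\ref{thmDiffforE}, which also transports the Debye factors $t_i^{-\beta_i}$ (hence the $q^{-m_i\beta_i}$) so that the $\beta$-indices merge into slot $k$ or $k-1$ in the same pattern as the $\alpha$s; the $r=1$ computation via $d\Lambda_1(t;\beta)=t^{-\beta}d\Li_1(t)$ and Lemma~\ref{EKaveragedqinv} is likewise what the paper intends.
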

The proof follows immediately from theorem $\ref{thmDiffforE}.$ 

\subsection{The structure of the poles of elliptic polylogarithms} \label{sectpoleElliptic} Let us write  
\begin{equation}\label{gammaidef}
\gamma_i= \alpha_i - \tau\, \beta_i  \quad \hbox{ for }  1\leq i\leq r\ . \end{equation}
\begin{prop}
 The Debye elliptic polylogarithms (definition \ref{defnEllDebye}) have at most simple poles along the divisors which have consecutive indices only:
 $$\sum_{i\leq j\leq k} \gamma_i =0 \qquad \hbox{ and } \qquad \sum_{i\leq j\leq k} \alpha_j=0\ .$$
 The multiple elliptic polylogarithm $E_{m_1,\sdots, m_n}(\xi_1,\sdots, \xi_n;\alpha_1,\sdots,\alpha_n)$  has poles along divisors of the form
$\sum_{i\leq j\leq k} \alpha_j=0$ of order at most $m_1+\ldots+m_n+1$.
\end{prop}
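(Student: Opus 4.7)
My plan is to prove the two parts separately: the Debye claim by induction on $r$ using the differential equation of Proposition \ref{propDiffforEDebye}, and the multiple elliptic polylogarithm bound as a direct consequence of Corollary \ref{cor40}. The guiding principle is that the $\gamma$-poles originate from the simple pole of $F(\xi;\gamma)$ at $\gamma=0$, while the $\alpha$-poles arise only from the averaging procedure, whose pole structure is captured by Corollary \ref{cor40}.

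For the base case $r=1$, equation (\ref{dEequation}) reads $d\EE_1(\xi;\alpha,\beta)=\e(-\beta\xi)F(\xi;\gamma_1)\,d\xi$, and the expansion $F(\xi;\gamma)=\gamma^{-1}\exp(\cdots)$ from Proposition-Definition \ref{Fproperties}(iii) exhibits a simple pole at $\gamma_1=0$, otherwise holomorphic. Integration in $\xi$ preserves this simple pole, and the constant of integration is a function of $(\alpha_1,\beta_1)$ whose only additional singular locus is $\alpha_1=0$ by Theorem \ref{thmmainconv}; a direct analysis of the geometric average of the leading asymptotic $-t^{-\beta}/\beta$ of $\Lambda_1$ at $t=\infty$ shows that the apparent $1/\beta$ singularity cancels against the integration constant and the residual pole at $\alpha_1=0$ is simple.

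For the inductive step, Proposition \ref{propDiffforEDebye} writes $d\EE_r$ as a sum of terms of the form $d\EE_1(\xi_k-\xi_{k\pm 1};\alpha_k,\beta_k)\,\EE_{r-1}$, where the $\EE_{r-1}$ factor depends on the reduced variables obtained by combining the adjacent pair $\alpha_k,\alpha_{k\pm 1}$ (and likewise $\beta_k,\beta_{k\pm 1}$, hence $\gamma_k,\gamma_{k\pm 1}$). The $d\EE_1$ factor contributes a simple pole at $\gamma_k=0$, while by induction the $\EE_{r-1}$ factor has simple poles along consecutive sums in the reduced variables; unpacking, these divisors are consecutive sums in the original variables whose index range either contains both of $k,k\pm 1$ or neither, and hence are distinct from $\{\gamma_k=0\}$. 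Thus every summand carries only products of simple poles along distinct consecutive divisors. Integration in $\xi$ leaves the $(\alpha,\beta,\gamma)$-pole structure intact, and any constants of integration contribute only simple poles along the required consecutive $\alpha$-divisors, again by Corollary \ref{cor40}.

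For Part 2, the classical polylogarithm $I_{m_1,\ldots,m_n}$ is an iterated integral of length $N=m_1+\cdots+m_n$ by (\ref{itintrep}), so its local unipotent expansion at any boundary component $D^J$ of $U_n$ has logarithmic singularities of order at most $N$. Corollary \ref{cor40} then bounds the order of the pole of the averaged series $E_{m_1,\ldots,m_n}$ along any single divisor $\prod_{i\leq j\leq k}u_j=1$ by $N+1=m_1+\cdots+m_n+1$, as required. The step I expect to be most delicate is the Debye induction: one must verify that contributions from different summands do not combine to enhance the simple-pole orders along shared divisors, and that the spurious $1/\beta$-type singularities appearing in the individual asymptotics cancel against the integration constants to yield precisely the simple-pole structure predicted by the proposition.
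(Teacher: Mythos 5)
Your overall strategy — induction on $r$ via the differential equation of Proposition \ref{propDiffforEDebye}, with the constants of integration as the delicate step — matches the paper's, and your handling of Part 2 via Corollary \ref{cor40} is a legitimate alternative to the paper's one-line argument that the coefficient bound follows by expanding the generating series $\EE_r$ in the $\beta_i$. However, there is a genuine gap in the inductive step of Part 1. You assert that ``any constants of integration contribute only simple poles along the required consecutive $\alpha$-divisors, again by Corollary \ref{cor40},'' but Corollary \ref{cor40} does not say this. It bounds the constants' contribution by products of simple poles $(\prod_{k\in I_1}u_k-1)^{-1}\cdots(\prod_{k\in I_h}u_k-1)^{-1}$ over \emph{arbitrary} flags $I_1\subsetneq\cdots\subsetneq I_h$ of subsets, with no consecutivity restriction whatsoever. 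The differential equation controls $d\EE_r$, not the constants, and the corollary alone is silent on which subsets $I$ actually appear.

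The missing ingredient, which the paper supplies and you do not, is a vanishing argument specific to $\Lambda$: if $I$ is a set of non-consecutive indices, the boundary divisor $D^I$ meets a divisor $\{t_j=0\}$ for some $j\notin I$, and the Debye polylogarithm vanishes along $\{t_j=0\}$ (by (\ref{Imassum}), it has a canonical branch vanishing at the origin). Hence $\Lambda$ is holomorphic (no logarithmic divergence) in a neighbourhood of $D^I\cap\{t_j=0\}$, so after averaging there can be no pole along $\prod_{k\in I}u_k=1$, i.e.\ along $\sum_{k\in I}\alpha_k=0$, nor along $\sum_{k\in I}\gamma_k=0$. Without this geometric observation, your induction does not establish that the poles are confined to consecutive index ranges. (Your other worry — that distinct summands of $d\EE_r$ might combine to enhance pole orders along shared divisors — is in fact harmless, since a sum of simple poles along the same divisor is still a simple pole; this is not the delicate point.)
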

\begin{proof}  By induction.  
Suppose  that 
$\EE_r$  has simple  poles  along $\sum_{i\leq j \leq k} \alpha_j=0$, and  $\sum_{i\leq j\leq k} \gamma_j=0$  with consecutive indices only. This is automatically true for $r\leq 2$.
It follows from the shape of the differential equation (proposition \ref{propDiffforEDebye}), that $d\EE_{r+1}$ only has  simple poles along 
 $\sum_{i\leq j \leq k} \alpha_j=0$ and $\sum_{i\leq j\leq k} \gamma_j=0$. Thus the same conclusion also holds for $\EE_{r+1}$, except that the  constants of integration might give rise to supplementary poles. To see that such constants of integration must be zero,  let $I$ be a set of non-consecutive indices.  The divisor $D^I$ meets a divisor of the form $t_j=0$, for $j\notin I$, along which 
 the function $\Lambda(t_1,\ldots, t_n;s_1,\ldots, s_n)$ vanishes. It follows from the discussion above that $\Lambda$ has no divergence in the neighbourhood of $D^I\cap \{t_j=0\}$, and hence 
 no pole in either $\sum_{i\in I} \alpha_i=0$ or $\sum_{i\in I} \gamma_i=0$. This proves the result for  the generating series $\EE_r$.  The  corresponding statement for its coefficients $E_{m_1,\sdots,m_r}(\xi_1,\sdots, \xi_r;\alpha_1,\sdots,\alpha_r)$ follows on taking a series expansion in the  $\beta_i$.
\end{proof}
This  method  for computing the pole structure  is illustrated below (\S\ref{sectDepth2}).

\section{Asymptotics of Debye polylogarithms.} \label{sect7}
In the previous section we showed that the polar contributions in the averaging process come from the asymptotic expansion of polylogarithms at infinity. 
This expansion can be computed  explicitly in terms of a combinatorially defined coproduct.

\subsection{The coproduct for Debye polylogarithms}
The Debye multiple polylogarithms are defined by iterated integrals, and so by the general theory \cite{Ch1}  admit a coproduct which is dual to the composition of paths.
We describe it explicitly below.

\begin{defn}   Let $n\geq 1$. Let $I=\{1,\ldots, n\}$ be an ordered set of indices and let $\beta_1,\ldots, \beta_n$ be formal variables satisfying $\sum_{i=1}^n \beta_i=0$.
 Define a \emph{string} in $I$   to be a
consecutive subsequence $S=(i_1,i_2,\ldots, i_l)$ of length $2\leq l<n $, which is either in increasing or decreasing order and  such that $i_1 \neq n$.  Let
 $$\beta_S= \beta_{i_1}+\beta_{i_2}+\ldots + \beta_{i_{l-1}}\ . $$
For any such string $S=(i_1,\ldots, i_l)$, let $A_S$ denote the symbol
 \begin{equation}\label{ASdef}
  A_S=(t_{i_1}:t_{i_2}: \ldots :t_{i_l};\beta_{i_1},\beta_{i_2},\ldots, \beta_{i_{l-1}},-\beta_S)\ .
  \end{equation}
Let $H_n$ denote the commutative ring over $\Z$ generated by all symbols $A_S$ as $S$ ranges over the set of strings in $1,2,\ldots, n$.   The length of a string defines a grading on $H_n$.
\end{defn}
The Debye  polylogarithm defines a map from $H_n$ to generating series of  multivalued functions. If a string $S$ is given by 
$(\ref{ASdef})$ then we have
\begin{equation} \label{LambdaAS}
\Lambda (A_S) = \Lambda_{l-1} \Big( {t_{i_1}\over t_{i_\ell}}, {t_{i_2}\over t_{i_\ell}}, \ldots ,{t_{i_{l-1}}\over t_{i_\ell}};\beta_{i_1},\beta_{i_2},\ldots, \beta_{i_{l-1}}\Big)\ .
\end{equation}

Denote the last element  of  a string by  $\ell(i_1,i_2, \ldots, i_l)=i_l$, and 
 define the sign  $\sign(S)$ of $S$ to be  $1$ if $S$ is in increasing order, or 
 $(-1)^{l-1}$ if  it is in decreasing order.

\begin{defn} A finite collection $\mathcal{S}=\{S_\alpha\}$ of strings is \emph{admissible} if the strings intersect at most in  their last indices, i.e., if $S_\alpha, S_\beta$ are in $\mathcal{S}$ and ${\alpha}\neq {\beta}$ then either
\begin{eqnarray} 
& (1) & S_{\alpha}\cap S_{\beta}=\emptyset  \nonumber \\ 
\hbox{or } & (2) &  S_{\alpha}\cap S_{\beta} = \{\ell\}\ ,  \quad \hbox{where } \ell=\ell(S_\alpha)=\ell(S_\beta) \ . \nonumber 
\end{eqnarray}
Given an admissible set of strings $\mathcal{S}=\{S_\alpha\}$, define the set of  remaining indices
$$R_{{\cal S}} = \big( I \setminus\bigcup S_{\alpha}\big)  \cup \bigcup \ell(S_{\alpha})$$
with the ordering induced from $I$, and define the corresponding quotient sequence
 $$Q_{\cal S}=(t_{j_1}:t_{j_2}:\ldots :t_{j_m};\tilde{\beta}_{j_1},\tilde{\beta}_{j_2},\ldots ,\tilde{\beta}_{j_m})\ ,$$
  where $({j_1},{j_2},\ldots ,{j_m})=R_{{\cal S}}$ and $\tilde{\beta}_{j}=
\beta_j+\sum_{\alpha,\ell(S_\alpha)=j}\beta_{S_\alpha}$.\end{defn}

\begin{defn} Define a map $\Delta':H_n \To H_n \otimes H_n$ by 
\begin{equation}\label{coproddef}
\Delta' A_{J}=\sum_{\mathcal{S}=\{S_1,\ldots, S_k\}}  \sign(S_1)A_{S_1}\ldots \sign(S_k)A_{S_k} \otimes  Q_{\cal S}
\end{equation}
where $J\subseteq \{1,\ldots, n\}$,  and the sum is over all non-empty admissible collections of strings $\mathcal{S}$ in $J$ such that $Q_{\cal S}$ has at least two elements.
\end{defn}

Consider  the map which sends  $A_S$ of $(\ref{ASdef})$ to $ t_{i_1}^{\beta_{i_1}}\ldots t_{i_l}^{\beta_{i_l}}$, with $\beta_{i_l}=-\beta_S$, and extend by multiplicativity.
Then each term in   $(\ref{coproddef})$  maps to $\pm t_1^{\beta_1}\ldots t_n^{\beta_n}$.

\begin{example}  Writing $\beta_{ij}$ for $\beta_{\{i,j\}}=\beta_i+\beta_j$, and so on, formula  $(\ref{coproddef})$ gives: 
\begin{eqnarray} \label{depth2coprod}
\qquad \Delta' (t_1:t_2:t_3; \beta_1,\beta_2,\beta_3) &=& (t_1:t_2;\beta_1,-\beta_1)\otimes (t_2:t_3 ; \beta_{12},  \beta_3)   \\
   &-&   (t_2:t_1; \beta_2,-\beta_2) \otimes (t_1:t_3; \beta_{12}, \beta_3) \nonumber \\
   &+ &  (t_2:t_3; \beta_2,-\beta_2) \otimes (t_1:t_3 ; \beta_1, \beta_{23}) \ .  \nonumber 
   \end{eqnarray}
In general, a typical term in $\Delta'(t_1:\ldots :t_6; \beta_1,\ldots; \beta_6)$ is
$$ (t_1\!:\!t_2\!:\!t_3; \beta_1,\beta_2,-\beta_{12})(t_4\!:\!t_3;\beta_4,-\beta_4)(t_5\!:\!t_6;\beta_5,-\beta_5)\otimes (t_3\!:\!t_6;\beta_{1234},\beta_{56})
$$
\end{example}

\begin{prop}  Let $\Delta: H_n \rightarrow H_n \otimes H_n$ be $\Delta=1\otimes id + id \otimes 1 + \Delta'$. Then $H_n$, equipped with $\Delta$,  is  a commutative  graded  Hopf algebra.
\end{prop}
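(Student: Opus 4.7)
The plan is to verify the Hopf algebra axioms in sequence, with coassociativity supplying the main content. First, since $H_n$ is by construction the polynomial ring on the symbols $A_S$, the map $\Delta$ is uniquely extended from generators to an algebra homomorphism $H_n \to H_n \otimes H_n$, so the fact that $\Delta$ is a morphism of commutative algebras requires no verification. Define the counit $\varepsilon: H_n \to \Z$ by $\varepsilon(1)=1$ and $\varepsilon(A_S)=0$ on every string generator, extended multiplicatively. The axiom $(\varepsilon \otimes id)\Delta = id = (id \otimes \varepsilon)\Delta$ is immediate on generators: in $\Delta(A_J) = 1 \otimes A_J + A_J \otimes 1 + \Delta' A_J$, every term of $\Delta' A_J$ has both tensor factors in the augmentation ideal, since the left factor is a nontrivial product $\prod \sign(S_\alpha) A_{S_\alpha}$ and the right factor $Q_{\mathcal{S}}$ is a symbol of length $\geq 2$.

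The crux is coassociativity $(\Delta \otimes id)\Delta = (id \otimes \Delta)\Delta$, which it suffices to verify on generators $A_J$ since both sides are algebra homomorphisms. Substituting $\Delta = 1\otimes id + id\otimes 1 + \Delta'$ and expanding both sides, the terms containing at least one tensor factor of $1$ cancel pairwise, including the cross terms $\Delta' A_J \otimes 1$ and $1 \otimes \Delta' A_J$. What remains is the \emph{reduced coassociativity}
\[
(\Delta' \otimes id)\Delta'(A_J) \;=\; (id \otimes \Delta')\Delta'(A_J)
\]
as elements of $H_n^{\otimes 3}$.

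Both sides of this identity are sums indexed by combinatorial data, and the proof is a sign-preserving bijection between the two indexing sets. The right-hand side, obtained by applying $\Delta'$ inside the quotient, is indexed by pairs $(\mathcal{S},\mathcal{T})$ where $\mathcal{S} = \{S_\alpha\}$ is a nontrivial admissible collection in $J$ and $\mathcal{T}$ is a nontrivial admissible collection in $R_{\mathcal{S}}$; the contribution is $\sign(\mathcal{S})\sign(\mathcal{T})\, X_{\mathcal{S}} \otimes X_{\mathcal{T}} \otimes Q_{\mathcal{T}}$, where $X_{\mathcal{S}} = \prod A_{S_\alpha}$ and the weights in $Q_{\mathcal{T}}$ are obtained by iterating the rule $\tilde\beta_j = \beta_j + \sum_{\ell(S_\alpha)=j}\beta_{S_\alpha}$. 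The left-hand side, obtained by applying $\Delta$ as an algebra map to each factor of $X_{\mathcal{S}}$ and subtracting the $X_\mathcal{S}\otimes 1$ and $1\otimes X_\mathcal{S}$ pieces, is indexed by data $(\mathcal{S}, c)$ where $c$ assigns to each $S_\alpha$ either an ``L'', an ``R'', or a nontrivial admissible collection $\mathcal{U}_\alpha$ in $S_\alpha$, subject to the choices not being uniformly L or uniformly R. The bijection is the natural merging procedure: given $(\mathcal{S},\mathcal{T})$, each string $T \in \mathcal{T}$ covers certain endpoints $\ell(S_\alpha)$ (recall that strings in $\mathcal{T}$ may only share endpoints, so the overlap pattern is controlled); each $S_\alpha$ whose endpoint lies in the interior of some $T$ is labeled R and absorbed into $T$, each $S_\alpha$ whose endpoint is an endpoint of $T$ is combined with $T$ to form a longer string in a finer admissible collection, and each $S_\alpha$ untouched by $\mathcal{T}$ is labeled L. The fact that concatenating two increasing (or two decreasing) consecutive strings is again an increasing (resp. decreasing) string makes the signs $\sign$ multiplicative across the merge, and the associativity of the weight-modification rule ensures that $Q_{\mathcal{T}}$ on one side equals the image of the appropriate quotient on the other.

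Finally, the antipode exists automatically: $H_n$ is a connected commutative bialgebra graded by string length (with $H_n^0 = \Z$), hence admits a unique antipode defined by the usual Takeuchi recursion $S(x) = -x - \sum S(x_{(1)})x_{(2)}$ on positive-degree elements, where $\Delta' x = \sum x_{(1)}\otimes x_{(2)}$. The main obstacle is the combinatorial bijection of paragraph three: the interaction between the sign function $\sign(S)$, the multiplicative structure of $X_{\mathcal{S}}$, and the weight-modification rule $\tilde{\beta}$ forces a careful case analysis distinguishing whether adjacent strings at the two levels share endpoints and whether the resulting merged string is increasing, decreasing, or inadmissible, so that the bijection is well-defined and produces exactly the combinations appearing in $(\Delta' \otimes id)\Delta'$.
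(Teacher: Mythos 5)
Your approach is genuinely different from the paper's, which omits the proof entirely. The paper's hint is to recognize $\Delta$ as Chen's coproduct on iterated integrals, dual to composition of paths: the Hopf-algebra axioms then come for free from the general theory, and the only thing to verify is that the explicit combinatorial formula coincides with Chen's coproduct, for which (the paper asserts) it suffices to match the length-one part $\Delta_{1,\star}$ against the differential equation for the Debye polylogarithms, Lemma \ref{lemdiff}. Your argument is instead a direct, self-contained combinatorial verification that makes no appeal to the analytic model. The scaffolding is all correct: compatibility with multiplication is automatic because $H_n$ is free commutative on the $A_S$, the counit axiom holds because $\Delta'$ lands in $H_n^{+}\otimes H_n^{+}$, coassociativity reduces to reduced coassociativity $(\Delta'\otimes\mathrm{id})\Delta'=(\mathrm{id}\otimes\Delta')\Delta'$, and the antipode exists automatically on a connected graded bialgebra.

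The gap is at the central step: the sign-preserving bijection is stated, not proved, and you acknowledge this. As described, the merging procedure is underspecified at precisely the places it must be tested. A string $T_\beta\in\mathcal{T}$ is consecutive in $R_\mathcal{S}$ but not, in general, in $J$, so one must show that adjoining the appropriate $S_\alpha$'s always produces a genuine consecutive string of $J$ (and handle the case where one $T_\beta$ absorbs several $S_\alpha$'s). Your sign claim covers concatenation of two strings of the same monotonicity type, but nothing is said about an increasing $S_\alpha$ meeting a decreasing $T_\beta$, nor about a merge that would begin at index $n$ and hence be an inadmissible string. The ``associativity'' of the rule $\tilde\beta_j = \beta_j+\sum_{\ell(S_\alpha)=j}\beta_{S_\alpha}$ is also left unverified. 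Finally, the clause that ``each $S_\alpha$ whose endpoint lies in the interior of some $T$ is labeled R'' looks misstated: in that situation $A_{S_\alpha}$ must still appear in the \emph{first} tensor factor (as it does on the right-hand side), so the correct picture is that $S_\alpha$ becomes one of the strings in the split $\Delta'$ of a longer merged string in $\mathcal{S}'$, not that $S_\alpha$ itself carries an R label. None of this is necessarily fatal, but it is exactly where a direct combinatorial proof must earn its keep, and it is exactly what the paper sidesteps by invoking the iterated-integral interpretation.
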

\begin{proof} We omit the proof. In fact it suffices to show that the $1$-part of the coproduct coincides with the differential for the  Debye polylogarithms (lemma \ref{lemdiff} below).
\end{proof}

Let  $\Delta^{(m+1)}:H_n \rightarrow H_n^{\otimes m+1}$ denote the $m$-fold iteration of    $\Delta$. Let 
$\Delta^{(m+1)}_{\star,  \ldots, \star, 1, \star, \ldots, \star}$ denote its component whose corresponding tensor factor contains only  strings of length two.
Thus $\Delta_{1,\star}$   extracts all ordered pairs of neighbouring
indices  except   $(n,{n-1})$.

\begin{lem}  \label{lemdiff} The differential equation for $\Lambda$ can be rewritten as 
$$d\,\Lambda =\mu \circ (d\Lambda \otimes \Lambda)\circ \Delta_{1,\star}\ ,$$ where $\mu$ denotes the multiplication map. 
\end{lem}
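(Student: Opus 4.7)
The plan is to verify the identity by directly evaluating both sides on the top symbol $A_I\in H_n$ whose image under $\Lambda$ is $\Lambda_r(t_1,\ldots,t_r;\beta_1,\ldots,\beta_r)$. I take $I=\{1,\ldots,n\}$ with $n=r+1$, understood with the implicit specialization $t_{r+1}=1$; the defining relation $\sum_{i=1}^n\beta_i=0$ then fixes $\beta_{r+1}=-\sum_{i=1}^r\beta_i$, and applying $\Lambda$ to this length-$(r+1)$ symbol --- which reads off only the first $r$ of the $\beta$'s via $(\ref{LambdaAS})$ --- recovers $\Lambda_r(t_1,\ldots,t_r;\beta_1,\ldots,\beta_r)$.

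First I would enumerate the length-$2$ strings $S=(i_1,i_2)$ in $I$ with $i_1\neq n$, which are exactly the terms appearing in $\Delta_{1,\star}A_I$. They come in two families: increasing neighbours $(k,k+1)$ for $k=1,\ldots,r$, each with $\sign(S)=+1$, and decreasing neighbours $(k,k-1)$ for $k=2,\ldots,r$, each with $\sign(S)=(-1)^{l-1}=-1$. The pair $(n,n-1)$ singled out in the statement is excluded automatically by the condition $i_1\neq n$ built into the definition of a string. These two index ranges match exactly the ranges of the two sums in $(\ref{Debeqn})$.

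Next I would match the summands term by term. For $S=(k,k+1)$, the symbol $A_S=(t_k\!:\!t_{k+1};\beta_k,-\beta_k)$ gives $\Lambda(A_S)=\Lambda_1(t_k/t_{k+1};\beta_k)$ via $(\ref{LambdaAS})$, while the quotient satisfies $R_{\{S\}}=I\setminus\{k\}$ with $\tilde\beta_{k+1}=\beta_k+\beta_{k+1}$, so after specializing $t_{r+1}=1$ its image under $\Lambda$ is $\Lambda_{r-1}(t_1,\ldots,\widehat{t}_k,\ldots,t_r;\beta_1,\ldots,\beta_k+\beta_{k+1},\ldots,\beta_r)$, which is the $k$-th summand of the first sum in $(\ref{Debeqn})$. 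The analogous calculation for $S=(k,k-1)$ yields $\Lambda_1(t_k/t_{k-1};\beta_k)$ paired with $\Lambda_{r-1}(t_1,\ldots,\widehat{t}_k,\ldots,t_r;\beta_1,\ldots,\beta_{k-1}+\beta_k,\ldots,\beta_r)$, whose product with the sign $-1$ reproduces the $k$-th summand of the second sum. Summing over all length-$2$ strings and composing with $\mu$ gives the right-hand side of $(\ref{Debeqn})$, namely $d\Lambda_r$.

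The one delicate point, and the main anticipated obstacle, is the boundary case $k=r$ of the first family, corresponding to the increasing string $(r,r+1)$. Here the merged coefficient $\tilde\beta_{r+1}=\beta_{r+1}+\beta_r=-\sum_{i=1}^{r-1}\beta_i$ occupies the final slot of $Q_{\{S\}}$, which is precisely the slot suppressed when $\Lambda$ is applied to a length-$r$ symbol. Consequently $\Lambda(Q_{\{(r,r+1)\}})$ collapses to $\Lambda_{r-1}(t_1,\ldots,t_{r-1};\beta_1,\ldots,\beta_{r-1})$, with $\beta_r$ disappearing --- exactly the outcome stipulated by the convention $\beta_{r+1}=0$ recorded in the footnote following $(\ref{Debeqn})$. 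Once this reconciliation is made, the term-by-term matching is purely mechanical.
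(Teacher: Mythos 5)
Your proof is correct and takes essentially the same approach as the paper, whose proof is a single sentence ("Follows from the definition of $\Lambda$ together with the differential equation $(\ref{Debeqn})$."). What you have done is spell out the term-by-term matching — enumerating the length-two strings in $\Delta_{1,\star}A_I$, identifying the two families with the two sums in $(\ref{Debeqn})$, checking signs, and reconciling the boundary term $k=r$ with the convention $\beta_{r+1}=0$ — which is precisely the bookkeeping the paper takes as read.
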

\begin{proof} Follows from the definition of $\Lambda$ together with the differential equation $(\ref{Debeqn})$.
\end{proof}

\begin{example}  \label{exD1star} For any $i\neq j$, let $t_{[i,j]}=(t_i:\ldots :t_j)$ denote the tuple of consecutive elements. Contributions to $\Delta_{1,\star}$ are of the following kinds (omitting indices $\beta_i$):
\begin{eqnarray}
(t_{i-1}:t_{i})  &\otimes& (t_1:\ldots :\widehat{t}_{i-1}: \ldots : t_n) \qquad \qquad 1<i\leq n \label{left1} \\
(t_{i}:t_{i-1})  &\otimes& (t_1:\ldots :\widehat{t}_{i}: \ldots : t_n)  \qquad \qquad  \quad 1<i<n \label{left2}
\end{eqnarray}

\noindent 
Contributions to $\Delta_{\star,1}$ are of the following kinds:
\begin{eqnarray}
t_{[1,n-1]} &\otimes& (t_{n-1}:t_n)  \label{right1} \\
t_{[2,n]} &\otimes &(t_{1}:t_n) \label{right2} \\
t_{[n-1,1]} &\otimes& (t_{1}:t_n)  \label{right3}\\
t_{[i,1]}t_{[i+1,n]} &\otimes &(t_{1}:t_n) \qquad \qquad 1<i<n-1 \label{right4}\\
t_{[1,k]} t_{[i,k]}t_{[i+1,n]} &\otimes &(t_{k}:t_n)  \qquad \qquad 1<k<i<n-1 \label{right5}
\end{eqnarray}
\end{example}

\subsection{Asymptotic of the Debye polylogarithms}
Let $J$ be a subset of $\{1,2, \ldots,n\}$. We study the asymptotics of   $\Lambda(A_{1,\ldots, n})$, as defined by $(\ref{LambdaAS})$ when $t_j$ for $j\in J$ simultaneously  tend to infinity: i.e., for some finite values $t^0_1,\ldots, t^0_n$, we set:
\begin{equation} \label{ttoinf}
t_j=Tt_j^0\ , \quad  \hbox{for } j\in J\ , \quad    t_k=t_k^0 \quad\hbox{Êfor } k \notin J\ ,  \hbox{ and let } T\rightarrow \infty\ .
\end{equation}

\begin{caveat} \label{caveat} The Debye polylogarithms are multivalued, and so their asymptotics are only well-defined up to monodromy. For divisors of the form $D^I$, where
$I\subsetneq \{1,\ldots, n\}$, and $i\neq I$,  there is a canonical  branch in the neighbourhood of $t_i=0$, where it vanishes (see   \S\ref{sectanalytic}). Only for the divisor $D^{\{1,\ldots, n\}}$  must one make some choice. In the following theorem, this ambiguity is contained in the constant $C$ in equation $(\ref{limitofDebviacoprod})$.
\end{caveat}
We call a string $S=(i_1,i_2, \ldots, i_l)$ \emph{essential} if $i_l\notin J$ and $i_1,\ldots, i_{l-1}\in J$, and \emph{regular} if: either all  indices belong to $J$,  or none of its indices  belongs to $J$. Set
$$ \Lambda^{\reg}(A_S)  =\left\{
                           \begin{array}{ll}
                             \Lambda(A_S)  & \hbox{if } S \hbox{ is regular}\ ,  \nonumber \\
                             0\   & \hbox{otherwise} \ , 
                           \end{array}
                         \right.
$$
  and likewise define $\Phi(A_S)$ to be $0$ if $S$ is non-essential and
$$
\Phi(A_S)= \frac{t_{i_1}^{-\beta_{i_1}}t_{i_2}^{-\beta_{i_2}}\ldots t_{i_n}^{-\beta_{i_n}}}
{\beta_{i_1}\beta_{i_1,i_2}\ldots \beta_{i_1,i_2,\ldots, i_{n-1}}}\qquad \hbox{ if } S \hbox{ is essential } \ .$$
\begin{thm}  \label{thmcoprodasymp} With the  assumptions $(\ref{ttoinf})$ above, for any $0<\varepsilon <\! \! <1$ we have
\begin{equation} \label{limitofDebviacoprod}
\Lambda(t_1:t_2: \ldots :t_n;\beta_1,\beta_2, \ldots,\beta_n)=\mu_3 \circ (\Phi\otimes\Lambda^{\reg}  \otimes C) \circ
\Delta^{(3)}+O(T^{\varepsilon-1})\end{equation}
for some functions  $C(t_1:t_2: \ldots :t_n;\beta_1,\beta_2, \ldots,\beta_n)=C(\beta_1,\beta_2, \ldots,\beta_n)$ which are constant in  the $t$'s, 
and where  $\mu_3$ denotes the triple product.
\end{thm}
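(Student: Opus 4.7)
The plan is to prove (\ref{limitofDebviacoprod}) by induction on $n$, using the differential equation $d\Lambda = \mu\circ(d\Lambda\otimes\Lambda)\circ\Delta_{1,\star}$ from Lemma \ref{lemdiff}. The base case $n=2$ follows from integrating $d\Lambda_1(x;\beta) = x^{-\beta}\,dx/(1-x)$: for large $x$ this is $-x^{-\beta-1}\,dx + O(x^{-2+\eps}\,dx)$, giving $\Lambda_1(x;\beta) = x^{-\beta}/\beta + C(\beta) + O(x^{-1+\eps})$. This is precisely the $\Phi + C$ decomposition (with $\Lambda^{\reg}$ vanishing on the non-regular length-two string), matching $\mu_3 \circ (\Phi\otimes\Lambda^{\reg}\otimes C)\circ\Delta^{(3)}$ applied to $A_{1,2}$.

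For the inductive step, I would differentiate both sides of (\ref{limitofDebviacoprod}) in the scaling parameter $T$ and match term by term, up to $O(T^{\eps-1})$. On the left, Lemma \ref{lemdiff} writes $d\Lambda$ as a sum over neighbouring pairs $(t_i:t_{i\pm 1})$; each contribution $d\Lambda_1(t_i/t_{i\pm 1};\beta)$ has three asymptotic regimes according to whether both indices are in $J$, both in $J^c$, or exactly one is in $J$. Only the mixed (essential) regime produces leading $T$-dependence, and its leading term is exactly $d\Phi$ on the corresponding length-two essential string. On the right, by co-associativity of $\Delta$ the derivative commutes with $\Delta^{(3)}$; one then uses $dC=0$, the multiplicative identity for $d\Phi(A_S)$ on an essential string, and the Debye differential equation for $\Lambda^{\reg}$ on the middle tensor factor, to reduce the right-hand side to a sum which matches the left term by term. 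The constants of integration are pinned down by specialising the $t_k^0$ for $k \notin J$ at a convenient base point, where $C(\beta_1,\dots,\beta_n)$ emerges as the regularised value of $\Lambda$ at the ``corner at infinity'' (in accordance with Caveat \ref{caveat}).

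The main obstacle will be the combinatorial matching between the admissible collections of strings for $\Delta^{(3)}$ and the essential/regular/constant trichotomy induced by $J$. Concretely, one must show that every admissible $\mathcal{S}$ contributing a non-vanishing term to the right-hand side decomposes uniquely as an outer layer of essential strings (each a block of consecutive $J$-indices terminating at a non-$J$ index, feeding $\Phi$), wrapping around a regular layer in the quotient sequence $Q_{\mathcal{S}}$ whose components lie entirely inside or entirely outside $J$ (feeding $\Lambda^{\reg}$), together with a terminal skeleton (feeding $C$). Admissible collections that do not fit this pattern --- those containing a non-essential ``mixed'' string --- contribute only to the $O(T^{\eps-1})$ error, by the decay of $d\Lambda_1(x;\beta)$ for bounded $x$ after extracting the essential power-law factors. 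The signs $\sign(S)$ in (\ref{coproddef}) must be tracked carefully through this decomposition, since they distinguish increasing from decreasing essential strings and are required for the cancellations that produce a single, unambiguous constant $C$ at each level of $\Delta^{(3)}$.
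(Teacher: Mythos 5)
Your overall strategy (induction on $n$, differentiate, invoke Lemma~\ref{lemdiff}, pin down the constant from the corner) is the same as the paper's, and the base case is handled correctly. But the inductive step as outlined has a real gap: you propose that the matching of differentials reduces to a bijective ``unique decomposition'' of admissible collections $\mathcal{S}$ into layers (essential $\to\Phi$, regular $\to\Lambda^{\reg}$, terminal $\to C$), with the non-essential mixed strings all falling into the $O(T^{\varepsilon-1})$ error. That is not how the cancellation actually works. After substituting the inductive hypothesis, what one must verify is the vanishing of the operator
\[
\Omega= d\Phi
+\mu_2\circ  (\Phi\otimes d\Lambda^{\reg} )\circ \Delta_{\star,1}-
\mu_2 \circ( d\Phi\otimes \Phi +d\Lambda^{\reg}\otimes \Phi) \circ\Delta_{1,\star}
\]
applied to a string $\xi$, case by case according to how $J$ intersects $\{1,\dots,n\}$. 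In the non-essential case where $J^c$ has exactly two elements $\{k,n\}$, there are finitely many genuine, non-negligible contributions from different admissible collections, and they do not match one-to-one; rather, their sum vanishes because of nontrivial partial-fraction identities in the $\beta$'s, namely
\[
\sum_{i=1}^{n-1} (-1)^{i-1} a_{[i,2]}^{-1} b_{[i+1,n-1]}^{-1}=0
\]
and its generalization indexed by $1<k<i<n-1$. These are algebraic cancellations (proved by residues at $\beta_{i}+\dots+\beta_{n-1}=0$ and induction), not a combinatorial bijection. Your proposal neither anticipates these identities nor supplies a mechanism by which the ``essential/regular/constant trichotomy'' would absorb these surviving terms, so as written the matching step would fail. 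The sign-tracking remark you make is correct but subordinate: the signs are what make the identities above hold, not what produces a decomposition.
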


\begin{proof} Induction on the depth $n$. For $n=2$ this theorem reduces to the well-known  asymptotics of the classical  Debye  polylogarithm. For strings of length two
$$d\Lambda(t_1:t_2; \beta_1,\beta_2) =  {t_1^{-\beta_1}t_2^{-\beta_2}} d \,\Li_1(t_1/t_2)$$
 where  $\beta_1+\beta_2=0$. It follows from this that for $S=(1,2)$, $A_S=(t_1:t_2;\beta_1,\beta_2)$, 
 $$ d\Lambda(A_S)   =\left\{
                           \begin{array}{ll}
                             d\Phi (A_S) +O(T^{\varepsilon-1}) & \hbox{if }  S \hbox{ is essential}\ ,  \nonumber \\
                            d\Lambda^{\reg}(A_S) \   &    \hbox{if } S   \hbox{ is regular } \ , \nonumber \\
                            O(T^{\varepsilon-1})            &                 \hbox{otherwise} \ .
                           \end{array}
                         \right.  $$
This  follows from the fact that $\Lambda$ diverges at most logarithmically at infinity, and  $\log(T)^a O(T^{\varepsilon-1})    = O(T^{\varepsilon-1})    $. Hence, by lemma \ref{lemdiff} it follows that
asymptotically
\begin{equation} \label{dlambdaasymptotics} d\Lambda \sim \mu \circ (d\,\Phi\otimes \Lambda)\circ\Delta_{1,\star}+   
\mu \circ (d\,\Lambda^{\reg}\otimes \Lambda)\circ\Delta_{1,\star} \ , \end{equation}
where $a\sim b$ means that $a-b = O(T^{\varepsilon-1})    $.
For the induction step,  we first check that the differential of the  difference between both sides  of $(\ref{limitofDebviacoprod})$  vanishes.
 By induction hypothesis, we replace $\Lambda$  in $(\ref{dlambdaasymptotics})$ by $(\ref{limitofDebviacoprod})$:
\begin{equation} \label{inpfoneside}  d\Lambda \sim  \mu_4 \circ( d\,\Phi\otimes \Phi\otimes\Lambda^{\reg}\otimes C +d\,\Lambda^{\reg}\otimes \Phi\otimes\Lambda^{\reg}\otimes C )\circ\Delta^{(4)}_{1,\star,\star,\star}\ . \end{equation} 
Now compute the differential of the right-hand side of $(\ref{limitofDebviacoprod})$, 
\begin{equation} \label{inpfotherside}
\mu_3\circ (d\,\Phi\otimes \Lambda^{\reg} \otimes C)\circ \Delta^{(3)}
+\mu_4\circ( \Phi\otimes d\,\Lambda^{\reg} \otimes\Lambda^{\reg} \otimes C)\circ \Delta^{(4)}_{\star,1,\star,\star}\ . \end{equation}
where in the second term we used lemma \ref{lemdiff} applied to  $d\Lambda^{\reg}.$
In order to show that $(\ref{inpfotherside})$ and the right-hand side of $(\ref{inpfoneside})$ coincide, it suffices to use the coassociativity of the coproduct and to show that the 
following expression vanishes:
$$ \Omega= d\,\Phi 
+\mu_2\circ  (\Phi\otimes d\,\Lambda^{\reg} )\circ \Delta_{\star,1}-
\mu_2 \circ( d\,\Phi\otimes \Phi +d\,\Lambda^{\reg}\otimes \Phi) \circ\Delta_{1,\star} $$
 as the difference of $(\ref{inpfotherside})$ and  $(\ref{inpfoneside})$   is  $\mu_3\circ(\Omega\otimes\Lambda^{\reg}
\otimes C)\circ \Delta^{(3)}$.
We will prove that
\begin{equation}\label{vanishingtocheck}
  d\,\Phi =
\mu_2\circ \big[ - (\Phi\otimes d\,\Lambda^{\reg})\circ \Delta_{\star,1}+
 (d\,\Phi\otimes \Phi)\circ\Delta_{1,\star}  + (d\,\Lambda^{\reg}\otimes \Phi) \circ\Delta_{1,\star} \big]\end{equation}
applied to $\xi$, where  $\xi=(t_1:t_2:\ldots :t_n;\beta_1,\beta_2,\ldots,\beta_n)$.
\vspace{0.1in}

\emph{Case when $\xi$ is essential}. Then $J=\{1,\ldots, n-1\}$. 
It follows from example \ref{exD1star} that the only quotient sequences arising from $\Delta_{1,\star}$ are of the form $(t_i: t_n)$ for some $i<n$, and 
are therefore not regular. Thus the first term in the right-hand side of $(\ref{vanishingtocheck})$  vanishes.
The only contributions to the second summand come from the string $(t_{n-1},t_n)$; only the
 strings $(t_{i-1},t_i)$ and $(t_{i},t_{i-1})$, for $1<i<n$, contribute to the last summand.  
 For such a string $a$, let $\xi/a$ denote its quotient.
 From the definitions:
 \begin{eqnarray}
 (d\,\Lambda^{\reg} \otimes \Phi)  (a\otimes \xi/a) &=& \beta_{1,2,\ldots, i-1}\,  d\, \Li  ({t_{i-1} t_i^{-1} }) \, \Phi(\xi)  \quad \hbox{ if } a= (t_{i-1},t_i)\nonumber \\
 (d\,\Lambda^{\reg} \otimes \Phi)  (a\otimes \xi/a) &=& \beta_{1,2,\ldots, i-1}\,  d\, \Li  ({t_{i} t_{i-1}^{-1} }) \, \Phi(\xi)   \quad \hbox{ if } a= (t_{i},t_{i-1}) \nonumber  
 \end{eqnarray}
 Using the fact that 
 $  d\, \Li  ({t_{i-1} t_i^{-1} }) - d\, \Li  ({t_{i} t_{i-1}^{-1} }) = d \, \log (t_i) - d\log(t_{i-1})$
 a straightforward calculation shows that both sides of $(\ref{vanishingtocheck})$ agree on $\xi$.
 
\vspace{0.1in}

\emph{Case when $\xi$ is non-essential}. Either $ n \in J$ or  some $i<n$ is not in $J$. Suppose first that $n\in J$.
The first term  of $(\ref{vanishingtocheck})$ vanishes  as either the argument of $\Phi $ is not essential, or the argument of $\Lambda^{\reg}$ is not regular. 
The second and third summands vanish since the arguments of  $\Phi$ are non-essential. Hereafter, we assume $n\notin J$.

Now suppose that $J^c$ contains at least 3 elements  $J^c\supseteq \{i,j,n\}$. Then the entire right-hand side of 
$(\ref{vanishingtocheck})$ vanishes, since every argument of $\Phi$ is always non-essential.

It only remains to check the equality of $(\ref{vanishingtocheck})$  when $J^c$ consists of two elements $\{k,n\}$ for some $k<n$.
 Consider the second and third terms on the right-hand side of $(\ref{vanishingtocheck})$. The quotient sequences of $(\ref{left1})$ and
 $(\ref{left2})$ are essential only for the strings $(t_k:t_{k+1})$ and $(t_k:t_{k-1})$. These are non-essential so the second factor $d\Phi\otimes \Phi$ vanishes for all possible values of $k$.
 The third factor $d\Lambda^{\reg}\otimes \Phi$ is non-trivial only when $k=n-1$ on the term $(t_{n-1}:t_n)$.
  In fact, in the case $k=n-1$, we have contributions from $\Phi\otimes d\Lambda^{\reg} (\ref{right1})$ and 
 $d\Lambda^{\reg}\otimes \Phi $ applied to $(\ref{left1})$, for $i=n-1$. They cancel.
  
 Thus in all remaining cases $k<n$ only the first term $\Phi\otimes d\Lambda^{\reg}$ of $(\ref{vanishingtocheck})$ 
 can be non-zero.  
 If  $k=1$   then we get  terms in the first summand corresponding to $(\ref{right2})$, $(\ref{right3})$, $(\ref{right4})$. The cancellation of these terms follows from the equality
\begin{equation}\label{kid1}\sum_{i=1}^{n-1} (-1)^{i-1} a_{[i,2]}^{-1} b_{[i+1,n-1]}^{-1}=0\end{equation}
where $a_{[i,j]}=\beta_i(\beta_i+\beta_{i-1})\ldots (\beta_i+\ldots +\beta_j)$,  if $i\geq j$ and is equal to $1$ otherwise, and 
$b_{[i,j]}=\beta_i(\beta_i+\beta_{i+1})\ldots (\beta_i+\ldots +\beta_{j})$, if $i\leq j$ and is equal to $1$ otherwise.  The general case $1<k< n-1$ is similar, and equivalent to
\begin{equation}\label{kid2} \sum_{1<k<i<n-1} (-1)^{k-i-1} b_{[1,k-1]}^{-1} a_{[i,k+1]}^{-1} b_{[i+1,n-1]}^{-1}  =0\end{equation}
Both identities $(\ref{kid1})$ and $(\ref{kid2})$ are easily checked by taking the residues along the divisors $\beta_{i}+\ldots +\beta_{n-1}=0$  and induction.

In conclusion, we have proved that  $\Omega$ vanishes,  and hence, by induction hypothesis, the differential of the difference between both sides of  $(\ref{limitofDebviacoprod}$) is $ O(T^{\varepsilon-1})    $. Thus the difference between both sides is a constant plus $ O(T^{\varepsilon-1})    $, which proves the theorem.
\end{proof}

\subsection{Asymptotics in depths 1 and  2}  \label{exampledepth2limits}  
In depth 1 we have,  $$\Lambda(t;\beta) \sim  \beta^{-1} t^{-\beta} + C(\beta) \quad \hbox{  as } t \rightarrow \infty  \ , $$
where $C(\beta) =  2i\pi \,(1-\e(\beta))^{-1}$ (see lemma $\ref{lemmacomputeconst1}$ below).
 
  Let $\beta_{12}=\beta_1+\beta_2$, $t_{12}=t_{21}^{-1}=t_1t_2^{-1}$. In depth two,  the coproduct $(\ref{depth2coprod})$
 yields
  \begin{eqnarray}  \label{depth2limits}
 \Lambda(t_1,t_2;\beta_1,\beta_2) \!\! &\sim & {t_{12}^{-\beta_1}\over \beta_1} \Lambda(t_2;\beta_{12}) + \Lambda(t_2;\beta_2) \, C(\beta_1) +C_1 \quad \hbox{  as } t_1 \rightarrow \infty  \nonumber \\ 
&\sim & {t_2^{-\beta_2}\over \beta_2}\big[  \Lambda(t_1;\beta_1) - t_1^{\beta_2} \Lambda(t_1;\beta_{12})\big] +C_2  \quad  \hbox{  as } t_2 \rightarrow \infty \nonumber \\ 
&\sim & {t_1^{-\beta_1}t_2^{-\beta_2} \over \beta_1 \beta_{12}}+ \big[ \Lambda (t_{12};\beta_1) -\Lambda(t_{21};\beta_2) \big] C(\beta_{12}) \\
&& \qquad \qquad  +  {t_2^{-\beta_2}\over \beta_2}C(\beta_1)+ C_{12} \quad \qquad \hbox{as } t_1,t_2 \rightarrow \infty    \nonumber
\end{eqnarray}
where $C_1,C_2,C_{12}$ are constant power series in $\beta_1,\beta_2$ to be determined. 
 The constant $C_1$ is clearly zero as can be seen by letting $t_2\rightarrow 0$  in the first equation of 
$(\ref{depth2limits})$. The same holds for $C_2$ (let $t_1\rightarrow 0$). The constant $C_{12}$ can be computed as follows.

\subsubsection{Limit at $D^2\cap D^{12}$}  From the second line  of $(\ref{depth2limits})$, we deduce that
\begin{equation}\label{limitD2,12}
\hbox{constant part of }\lim_{t_1\rightarrow \infty} \lim_{t_2\rightarrow \infty}  \Lambda(t_1,t_2;\beta_1,\beta_2) = 0\ ,
\end{equation} 
since $C_2=0$. 
Now  let $t_1,t_2\rightarrow \infty$ and then let $t_2/t_1\rightarrow \infty$. The third line gives a constant contribution
$ C_{12}(\beta_1,\beta_2) -{C(\beta_2) C(\beta_{12})}$. It follows that 
\begin{equation}\label{Ctwo}
C_{12}(\beta_1,\beta_2) =   C(\beta_2)C(\beta_{12})  \ .\end{equation}

\subsubsection{Limit at  $D^1\cap D^{12}$}
 From the first line  of $(\ref{depth2limits})$, we deduce that
\begin{equation}\label{limitD1,12}
\hbox{constant term of }\lim_{t_2\rightarrow \infty} \lim_{t_1\rightarrow \infty}  \Lambda(t_1,t_2;\beta_1,\beta_2) = C(\beta_1)C(\beta_2) \ .
\end{equation} 
Now let $t_1,t_2\rightarrow \infty$ and then let $t_1/t_2\rightarrow \infty$. The third line gives the constant contribution
${C(\beta_1) C(\beta_{12})} + C_{12}(\beta_1,\beta_2)$, which yields a second equation for $C_{12}(\beta_1,\beta_2)$. Note however, that the two limit computations
are for different branches (see caveat \ref{caveat}), and differ by the monodromy of the third line of $(\ref{depth2limits})$ around the point $t_1=t_2$ on $D^{12}$. 
The monodromy of $\Lambda(t;\beta)$  (resp. $\Lambda(t^{-1};\beta)$) is $\pi i$ (resp. $-\pi i$) around a positive upper semi-circle from $1^-$ to $1^+$.
 Therefore, by equating the two different 
formulae for $C_{12}(\beta_1,\beta_2)$ gives rise to an associator, or pentagon,  equation:
\begin{equation}
\big( C(\beta_1)+C(\beta_2)- 2i\pi\big) C(\beta_{12}) = C(\beta_1)C(\beta_2)
\end{equation}
which is indeed satisfied by $C(\beta)={2\pi i \over  1-\e(\beta) }$.

\subsection{Rationality of the constants} \label{sectratconst}
 The argument above generalizes:

\begin{prop} Let $v\in U_n$ be a vertex of $U_n$, i.e., $v$ is an intersection of boundary divisors $D^I$ of dimension $0$.
Then there is a branch of the Debye polylogarithm $\Lambda(t_1,\ldots, t_n;\beta_1,\ldots, \beta_n)$ in a neighbourhood of $v$ which is locally of the form
$$\sum_{I=(i_1,\ldots, i_n)} f_I(s_1,\ldots, s_n) \log^{i_1} s_1 \ldots \log^{i_n} s_n$$
where $f_I(0,\ldots, 0)\in \Q[ \pi i ]$, and $s_1,\ldots, s_n$ are local sector coordinates at $v=(0,\ldots,0)$.
\end{prop}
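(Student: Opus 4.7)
The plan is induction on $n$, using Theorem~\ref{thmcoprodasymp} to reduce the constant $f_I(0,\ldots,0)$ at a vertex $v$ to a product of known depth-one constants. The base case $n=1$ follows from the formula $C(\beta)=2\pi i/(1-\e(\beta))$, whose Laurent expansion in $\beta$ lies in $\beta^{-1}\Q[\pi i][[\beta]]$, combined with the fact that $\Lambda(t;\beta)$ is holomorphic at $t=0$.

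For the inductive step, by Lemma~\ref{lemSpi2IJ} a vertex $v$ of $U_n$ corresponds to a maximal compatible chain of boundary divisors, equivalently an ordered decomposition of $\{1,\ldots,n\}$ into blocks $B_1,\ldots,B_k$ together with an assignment of \emph{type $0$} or \emph{type $\infty$} to each block. Blocks of type $0$ contribute trivially, since $\Lambda$ vanishes along the corresponding divisor (by the same argument as in Lemma~\ref{lemR0vanishing} applied to the power series definition of $\Lambda$ at the origin); their sector coordinates simply factor out. It therefore suffices to treat vertices at which every incident divisor is of type $\infty$, corresponding to a maximal flag $I_1\subsetneq\cdots\subsetneq I_n=\{1,\ldots,n\}$, and such a vertex is approached by the iterated limit pushing the $t_i$'s to infinity block by block along the flag.

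At each stage of this iterated limit, Theorem~\ref{thmcoprodasymp} splits the asymptotic into three pieces: explicit $\Phi$-factors, which are monomials in the $t_i$'s and so contribute constant terms in $\Q$; regularized factors $\Lambda^{\reg}$, whose values at the residual lower-dimensional vertex lie in $\Q[\pi i]$ by the induction hypothesis; and a constant $C(\beta_S)=2\pi i/(1-\e(\beta_S))\in\Q[\pi i][[\beta_1,\ldots,\beta_n]]$ for each essential string arising from the coproduct. Monodromy corrections arising when one changes the ordering of the iterated limit, or equivalently the chosen branch, are powers of $2\pi i$ by the analysis in \S\ref{exampledepth2limits}, hence also in $\Q[\pi i]$. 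Composing the factors yields $f_I(0,\ldots,0)\in\Q[\pi i]$.

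The main obstacle is establishing coherence among the branch choices: one must verify that the iterated limits corresponding to adjacent maximal flags (differing by swapping neighbouring blocks) produce consistent values of $f_I(0,\ldots,0)$ after accounting for monodromy. This is the $n$-dimensional analogue of the pentagon identity $(C(\beta_1)+C(\beta_2)-2\pi i)\,C(\beta_{12})=C(\beta_1)C(\beta_2)$ verified in \S\ref{exampledepth2limits}, and reduces by coassociativity of the coproduct on $H_n$ to an algebraic identity among the $C(\beta_S)$'s which is then checked directly from the explicit formula for $C$. Once this coherence is in place, the product structure of the constants forces every $f_I(0,\ldots,0)$ to lie in $\Q[\pi i]$.
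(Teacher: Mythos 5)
Your approach differs from the paper's in organization even though it rests on the same two ingredients (the formula $(\ref{Cconst})$ for $C(\beta)$ and the $\pm\pi i$ monodromy of the depth-one Debye polylogarithm). The paper's proof in \S\ref{sectratconst} is a connectedness argument on the 1-skeleton of the polytope $\mathcal{C}_n$: the restriction of $\Lambda$ to a one-dimensional stratum is a depth-one Debye polylogarithm, so the constant at one endpoint of an edge determines the constant at the other via $(\ref{Cconst})$ together with a $\pm\pi i$ monodromy correction; since the 1-skeleton is connected, one propagates from a vertex near the origin (where $\Lambda$ vanishes) to the target vertex $v$, so all comparisons stay in depth one and the full machinery of Theorem \ref{thmcoprodasymp} is never invoked. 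You instead run an induction driven directly by Theorem \ref{thmcoprodasymp}, decomposing the iterated limit at $v$ into $\Phi$-, $\Lambda^{\reg}$-, and $C$-factors. This is workable in principle, but the constant $C$ appearing in $(\ref{limitofDebviacoprod})$ is \emph{a priori} an unknown power series, so pinning it down requires comparing two routes to $v$ and tracking the intervening monodromy — the coherence problem you correctly single out as the main obstacle. You sketch a resolution via coassociativity of $\Delta$ and an $n$-dimensional analogue of the pentagon identity in \S\ref{exampledepth2limits}, but you do not carry it out, and it is precisely the nontrivial step your route requires. The paper's 1-skeleton argument sidesteps this by keeping the coherence check one-dimensional, where the only monodromy is the elementary one of $\log(1-z)$ and the only unknown constant is $C(\beta)$ itself, already computed in Lemma \ref{lemmacomputeconst1}. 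In short: your proof is correct in outline but more laborious, and the gap you flag is genuine; the paper's packaging buys a cheaper coherence argument by restricting attention to edges.
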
 
\begin{proof}
It suffices to show that the constant coefficients lie in $\Q[\pi i ]$.   But this follows from a standard associator argument: the 1-skeleton of the polytope $\mathcal{C}_n\subset U_n(\R)$ is connected, and the restriction  of $\Lambda$ to a one-dimensional stratum is a depth one   Debye polyogarithm, whose limiting values at infinity have the desired property, by $(\ref{Cconst})$. By analytic continuation around the one-dimensional edges of  $\mathcal{C}_n$, we deduce that the constants at $v$ are expressible as sums and  products of 
the coefficients of $C(\beta)$. \end{proof}

\section{Examples in depths 1,2}\label{sectexamples123}

\subsection{Depth 1: the classical elliptic polylogarithms} Let $q=\e(\tau)$ with $\Image(\tau)>0$, and let $z=\e(\xi)$ with $\xi$ in the fundamental domain $D$ (\S \ref{sectUniformization}). 
Consider the multivalued generating series of polylogarithms of depth one:
$$L(z;\beta) = \sum_{n\geq 1 } \Li_n(z) \beta^{n-1}\ ,$$
which we wish to average over the spiral $(0,\infty)\cong q^{\R} z$ in the universal covering space of $\Mod_{0,4}(\C)$.
The calculations are simplified if one considers the Debye generating series
$\Lambda(z; \beta) = z^{-\beta} L(z;\beta)$. Since $d\, \Li_n(z) = z^{-1} \Li_{n-1}(z) dz$ for $n\geq 2$, we have:
\begin{equation} \label{depth1diffeq}
d\Lambda(z;\beta) =      z^{-\beta}d\Li_1(z)\ .\end{equation}
Note that $\Lambda(z;\beta)$ vanishes at $z=0$, and so by theorem $\ref{thmconv}$ the series
\begin{equation}\label{depth1av}
\EE(z;u,\beta)=\sum_{n\in \Z} u^n\Lambda(q^n z; \beta)\end{equation} 
converges absolutely for $1<u<|q|^{-1}$, and may have poles at $u=1$, which are given by the asymptotics of $\Lambda(z;\beta)$ at $z=\infty$. 
Since $d\,\Li(z)$ is asymptotically $-d\log(z)$ at infinity, we deduce  from $(\ref{depth1diffeq})$ that there is some constant $C(\beta)$ such that:
\begin{equation} \label{1Dasymp}
 \Lambda(z;\beta) \sim \beta^{-1}z^{-\beta}+C(\beta)\ .
\end{equation}

\begin{lem} \label{lemmacomputeconst1} The constant  at infinity is given by 
\begin{equation} \label{Cconst}
C(\beta)=-\beta^{-1}+ i\pi + \sum_{n\geq 1} 2\,\zeta(2n) \beta^{2n-1} =   { 2i \pi  \over 1- \e(\beta)}\ . \end{equation}
\end{lem}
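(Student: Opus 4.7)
The only datum to be determined is the constant of integration $C(\beta)$ in $(\ref{1Dasymp})$, since the leading term $\beta^{-1}z^{-\beta}$ is already forced by the differential equation $(\ref{depth1diffeq})$. I would pin down $C(\beta)$ by analyzing the asymptotics of each classical polylogarithm $\Li_n(z)$ individually via Jonqui\`ere's inversion formula and then resumming in $\beta$.

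First I would invoke the classical identity
\[
\Li_n(z) + (-1)^n \Li_n(z^{-1}) \;=\; -\frac{(2\pi i)^n}{n!}\, B_n\!\left(\tfrac{1}{2} + \tfrac{\log(-z)}{2\pi i}\right), \qquad z \in \C \setminus [0,1],
\]
valid for $n \geq 1$, where $B_n(x)$ is the $n$-th Bernoulli polynomial. Since $\Li_n(z^{-1}) \to 0$ as $z \to \infty$, this encodes the asymptotics of $\Li_n(z)$. Multiplying by $\beta^{n-1}$, summing over $n \geq 1$, and applying the exponential generating function $\sum_{n \geq 0} B_n(x) t^n/n! = t e^{xt}/(e^t - 1)$ with $t = 2\pi i \beta$ collapses the sum to
\[
L(z;\beta) \;\sim\; \frac{1}{\beta} - \frac{2\pi i\, e^{xt}}{e^{2\pi i \beta} - 1}, \qquad x = \tfrac{1}{2} + \tfrac{\log(-z)}{2\pi i}.
\]
Choosing the branch of $\log(-z)$ compatible with the analytic continuation along the spiral $q^{\R}z$ used in $(\ref{depth1av})$, one has $e^{xt} = z^{\beta}$; multiplying through by $z^{-\beta}$ then yields $\Lambda(z;\beta) \sim \beta^{-1} z^{-\beta} + 2\pi i/(1-\e(\beta))$, identifying $C(\beta)$.

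The explicit Taylor expansion now follows from the standard identity $t/(e^t - 1) = 1 - t/2 + \sum_{n \geq 1} B_{2n} t^{2n}/(2n)!$ evaluated at $t = 2\pi i \beta$, combined with Euler's formula $\zeta(2n) = (-1)^{n+1}(2\pi)^{2n}B_{2n}/(2(2n)!)$, which converts the Bernoulli numbers into even zeta values and produces precisely $-\beta^{-1} + i\pi + \sum_{n \geq 1} 2\zeta(2n)\beta^{2n-1}$.

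\emph{Main obstacle.} The principal subtlety is the branch choice at infinity: Jonqui\`ere's identity is written using $\log(-z)$ for a specific cut of the plane, whereas $(\ref{1Dasymp})$ tacitly uses the branch propagated along the $q$-spiral. These two determinations of $z \to \infty$ differ by a factor $e^{\pm 2\pi i \beta}$, and fixing this sign is what distinguishes $2\pi i/(1-\e(\beta))$ from its cousin $2\pi i/(\e(\beta)-1)$. An alternative derivation via contour integration of the Mellin-type representation $\Lambda(z;\beta) = \int_0^z w^{-\beta}\,dw/(1-w)$, computing the principal value plus the half-residue contribution at $w = 1$, yields the same closed form and runs into the same branch-matching step.
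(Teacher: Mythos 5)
Your argument is correct and reaches the same closed form, but by a route that is complementary to the paper's. The paper derives the inversion identity $(\ref{depth1funceq})$ for the Debye generating series $\Lambda$ directly from $(\ref{depth1diffeq})$ (the two sides of $(\ref{depth1funceq})$ have the same differential in $z$, hence differ by a constant, which $(\ref{1Dasymp})$ identifies as $C(\beta)$), and then evaluates at $z=1$, where $\Li_n(1)=\zeta(n)$ for $n\geq 2$ produces the Taylor expansion in $(\ref{Cconst})$ directly; the closed form $2\pi i/(1-\e(\beta))$ is then checked against that expansion. You reverse the order: you import the inversion relation for each individual $\Li_n$ as Jonqui\`ere's formula --- which is precisely the coefficient of $\beta^{n-1}$ in $(\ref{depth1funceq})$ --- resum the Bernoulli polynomials with their exponential generating function, read off the closed form first, and only then expand using Euler's $\zeta(2n)$-formula. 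The paper's route is more self-contained (nothing beyond $\Li_n(1)=\zeta(n)$ is imported), whereas yours makes it more visible where the $i\pi$ term originates: it is the $B_1=-\tfrac{1}{2}$ term of $t/(e^t-1)$, while in the paper's version it arises because the two determinations of $\Lambda$ meeting at $z=1$ in $(\ref{depth1funceq})$ differ by the half-monodromy $\pi i$ (see the discussion around $(\ref{limitD1,12})$ in \S\ref{exampledepth2limits}). You have correctly isolated the single delicate step, shared by both arguments: fixing the branch of $\log(-z)$ (equivalently, of $z^{-\beta}$) along the $q$-spiral is exactly this half-monodromy bookkeeping, and it is what separates $2\pi i/(1-\e(\beta))$ from $2\pi i/(\e(\beta)-1)$.
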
 
\begin{proof} 
The following functional equation  follows from $(\ref{depth1diffeq})$ and differentiating:
\begin{equation}\label{depth1funceq} \Lambda(z;\beta) + \Lambda(z^{-1};-\beta) = \beta^{-1}z^{-\beta}+C(\beta)
\end{equation}
Evaluating at $z=1$ gives the  expression for $C(\beta)$, since $\Li_n(1)=\zeta(n)$, for $n\geq 2$.
\end{proof} 
The corresponding constants in all higher dimensions are explicitly computable from $C(\beta)$.
It follows from $(\ref{1Dasymp})$ that the singular part  of $\EE(z;u,\beta)$ comes from:
$${1\over \beta} \sum_{n< 0} u^n \big((q^nz)^{-\beta} + C(\beta)\big) = {z^{-\beta}\over \beta(q^{-\beta} u-1)} + {C(\beta)\over u-1} = {\e(-\beta \xi) \over \beta(\e( \gamma)-1) }+{C(\beta) \over \e(\alpha)-1} \ . $$ 
where $u=\e(\alpha)$,  and $\gamma=\alpha-\beta\tau$. The second expression defines a Taylor series in $\beta$ with coefficients in $\Q[u,(1-u)^{-1},\log q, i \pi]$. 
Thus the singular part of $\EE(\xi;\alpha,\beta)$ is 
\begin{equation}\label{Esing} 
\EE^{\sing}(\xi;\alpha;\beta)= {\e(-\xi \beta)\over\beta  \gamma} + { C(\beta)\over \alpha} \  .\end{equation} 
In conclusion, the regularized generating series for the classical elliptic polylog is:
\begin{equation}\label{Eregdef}
\EE^{\reg}(\xi;\alpha;\beta)=\sum_{n\in \Z} \e(\alpha n)\Lambda(\e(\xi+n\tau), \beta) - {\e(-\xi \beta)\over  \beta \gamma} - { C(\beta)\over \alpha}\ ,
 \end{equation}
which admits a Taylor expansion in $\alpha, \beta$ at the origin. Thus we write
 $$ \EE^{\reg}(\xi;\alpha;\beta)= \sum_{m,n\geq 0} \Lambda^E_{m,n}(\xi;  \tau) \alpha^m \beta^n\ ,$$
 where $\Lambda^E_{m,n}(\xi; \tau)$ are the classical elliptic polylogarithms of \cite{Andrey}, and equal to the functions denoted $(-1)^n\Lambda_{m,n}(\xi;\tau)$ in \emph{loc. cit.}, Definition 2.1.

\begin{rem} \label{rem1dimreg} In order to  retrieve the explicit formula of \cite{Andrey}, Definition 2.1, one can write $\Lambda^E_{m,n}(\xi;  \tau)$ as an average of certain
modified (and regularized) Debye polylogarithms. For this, one simply replaces the term $\alpha^{-1}$   in $(\ref{Esing})$ by  the expression
\begin{equation} \label{poletoexp}
{1\over \alpha} =P(\alpha) -  \sum_{m>0}  \e( m \alpha)
\end{equation}
where $P$ is a power series   whose coefficients are related to Bernoulli numbers.  Replacing  the term in  $\gamma^{-1}$ 
by a similar expression to $(\ref{poletoexp})$  leads to the required result. 
\end{rem}

\subsection{Depth 2: the double elliptic polylogarithms} \label{sectDepth2}
Consider the generating series of depth two Debye multiple polylogarithms:
$$\Lambda(t_1,t_2; \beta_1,\beta_2) = t_1^{-\beta_1} t_2^{-\beta_2} \sum_{m_1,m_2\geq 1} I_{m_1,m_2}(t_1,t_2) \beta_1^{m_1-1}\beta_2^{m_2-1}\ .$$
The generating series of elliptic multiple polylogarithms is:
\begin{equation}\label{depth2av}
\EE_2(\xi_1,\xi_2; u_1,u_2,\beta_1,\beta_2)=\sum_{m_1,m_2\in \Z} u_1^{m_1} u_2^{m_2} \Lambda(q^{m_1}t_1,  q^{m_2}t_2; \beta_1, \beta_2)\end{equation}
which converges absolutely for $1<u_1,u_2<|q|^{-1}$ by theorem \ref{thmconv}, and has poles along 
$u_1=1,u_2=1, u_1u_2=1$ corresponding to logarithmic singularities of $\Lambda(t_1,t_2)$ along $D^1,D^2,D^{12}$.  Let $\gamma_i = \alpha_i-\tau\beta_i$, where $\e(\alpha_i)=u_i$, and $q=\e(\tau)$. 

\begin{lem} \label{lemsingindepth2} The singular part of  $\EE_2(\xi_1,\xi_2;\alpha_1,\alpha_2,\beta_1,\beta_2)$ is  
$\EE_2^{\sing}= \EE^{\sing(1)}_2+ \EE^{\sing(2)}_2,$
where $\EE_2^{\sing(i)}$ comes from singularities along divisors of codimension $i$. We have
 $$\EE_2^{\sing(2)}= { \e^{- \beta_1 \xi_1 - \beta_2 \xi_2 } \over  \beta_1\beta_{12}\gamma_1\gamma_2}+  {\e^{-  \beta_1 \xi_{12}}  \, C(\beta_{12}) \over \beta_1 \gamma_1\, \alpha_{12}} - {\e^{- \beta_2 \xi_{21} }\, C(\beta_{12}) \over \beta_2 \gamma_2\, \alpha_{12}}      + {\e^{-\beta_2\xi_2} \,C(\beta_1)\over \beta_2 \gamma_2\, \alpha_1 } + {C_{1,12}\over \alpha_1(\alpha_1+\alpha_2) }$$
   where $C_{1,12}=C(\beta_1)C(\beta_2)$ and  $C(\beta)$ is  the power series defined by  $(\ref{Cconst})$, and 
$$\EE_2^{\sing(1)}= {R_1 \over  \beta_1 \gamma_1} +  {R_2 \over  \beta_2\gamma_2} 
+{A_1\over \alpha_1}+  {A_{12} \over \alpha_{12}}  \ , $$
where 
\begin{eqnarray}
A_{1} & = &  \EE^{\reg}(\xi_2;\alpha_{12},\beta_2)\, C(\beta_1)\big|_{\alpha_1=0} \nonumberÊ\\
A_{12} & = & \big( \EE^{\reg}(\xi_{12};\alpha_1,\beta_1)-  \EE^{\reg}(\xi_{21};-\alpha_1,\beta_2)\big)\, C( \beta_{12})  \big|_{\alpha_{12}=0} \nonumber \\
R_1 &=&  \e^{-\beta_1\xi_{12}} \,\EE^{\reg}(\xi_2;\alpha_{12};\beta_{12}) \big|_{\gamma_{1}=0} \nonumber \\
R_2 &=& \e^{-\beta_2\xi_2}\,\EE^{\reg}(\xi_1;\alpha_1,\beta_1) -  \e^{-\beta_2\xi_{21}} \, \EE^{\reg}(\xi_1;\alpha_{12},\beta_{12}) \big|_{\gamma_{2}=0}  \nonumber \ .
\end{eqnarray}
 Here,   $\e^a=\e(a)$, $\alpha_{12}=\alpha_1+\alpha_2$, $\beta_{12}=\beta_1+\beta_2$, and 
$\xi_{12}=-\xi_{21}=\xi_1-\xi_2$.
\end{lem}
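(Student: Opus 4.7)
The plan is to combine the differential equation from Proposition \ref{propDiffforEDebye} with the pole-structure proposition of \S\ref{sectpoleElliptic} and the depth-two asymptotic expansions computed in \S\ref{exampledepth2limits}. By that proposition, the only possible polar divisors for $\EE_2$ are the consecutive hyperplanes $\alpha_1=0$, $\alpha_2=0$, $\alpha_{12}=0$, $\gamma_1=0$, $\gamma_2=0$, $\gamma_{12}=0$, and each is at worst a simple pole, so it suffices to identify the residue of $\EE_2$ along each, and then the constant contribution at each codimension-two corner.

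First I would address the codimension-one part. For each boundary divisor $D$ at infinity in $U_2$ (namely $D^1$, $D^2$, $D^{12}$) the function $\Lambda(t_1,t_2;\beta_1,\beta_2)$ admits a local expansion whose polar/logarithmic coefficients are given by the formulas of \S\ref{exampledepth2limits}. Averaging such a coefficient with respect to the \emph{single} remaining summation index (corresponding to the direction tangent to the strict transform $D\cong \E^{\times}$) produces, after a triangular change of variables which absorbs the factors of $t_{ij}^{-\beta_{\bullet}}$, a regularized depth-one Debye series of the type $\EE^{\reg}$ defined in $(\ref{Eregdef})$. This yields the claimed formulas for $R_1,R_2,A_1,A_{12}$: the $\gamma_i$-residues arise from the leading $t^{-\beta}$ behavior along $D^i$ (together with the $t_{12}^{-\beta_1}\Lambda(t_2;\beta_{12})/\beta_1$ contribution from $D^{12}$ in the case of $\gamma_1$), while the residues at $\alpha_1=0$ and $\alpha_{12}=0$ come from the constants $C(\beta_1)$ and $C(\beta_{12})$ multiplying the logarithmic asymptotics. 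The absence of residues at $\alpha_2=0$ and at $\gamma_{12}=0$ follows from inspection of the asymptotics: no term in the expansion at $D^2$ is independent of $t_2$, and no term at $D^{12}$ behaves like $t_1^{-\beta_{12}}$ or $t_2^{-\beta_{12}}$.

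Next I would compute $\EE_2^{\sing(2)}$ from the corner behavior. By Corollary \ref{cor40}, the constant (non-logarithmic) coefficient of $\Lambda$ at each codimension-two vertex of $U_2$ produces the simple-pole contribution to the corresponding quadratic pole. Concretely, averaging the power-law term $t_1^{-\beta_1}t_2^{-\beta_2}/(\beta_1\beta_{12})$ over $(m_1,m_2)\in\Z_{<0}^2$ produces the first summand $\mathbf{e}^{-\beta_1\xi_1-\beta_2\xi_2}/(\beta_1\beta_{12}\gamma_1\gamma_2)$; averaging $C(\beta_{12})\Lambda(t_{12};\beta_1)$ and $-C(\beta_{12})\Lambda(t_{21};\beta_2)$ after the change of summation variable $m_1\mapsto m_1-m_2$ gives the second and third summands (the depth-one asymptotic of $\Lambda$ splits each of these into a $\gamma_i\alpha_{12}$-pole piece and an $\alpha_1\alpha_{12}$- or $\alpha_2\alpha_{12}$-pole piece); averaging $t_2^{-\beta_2}C(\beta_1)/\beta_2$ gives the fourth summand; and finally the constant term $C_{12}$ at $D^{12}$ together with the $C(\beta_1)$ contribution from $D^1\cap D^{12}$ assembles into the last term $C_{1,12}/(\alpha_1(\alpha_1+\alpha_2))$ with $C_{1,12}=C(\beta_1)C(\beta_2)$, as fixed by $(\ref{Ctwo})$.

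The hard part will be the bookkeeping at the codimension-two corners: one must verify that the two distinct routes of approach to each corner (e.g., letting $\alpha_1\to 0$ before or after $\alpha_{12}\to 0$) produce compatible polar data, which is exactly the content of the branch-matching and associator identity at the end of \S\ref{exampledepth2limits}. Once the contributions from each sector are assembled and shown to combine into simple poles along consecutive hyperplanes with the stated residues, the difference $\EE_2-\EE_2^{\sing(1)}-\EE_2^{\sing(2)}$ is holomorphic along every allowed polar divisor, and therefore regular in view of the constraint from \S\ref{sectpoleElliptic}. A minor additional point is to check, as in Remark \ref{rem1dimreg}, that the regularized depth-one series $\EE^{\reg}$ appearing in $R_i$ and $A_j$ arise naturally from the present averaging, which follows from the same expansion $(\ref{poletoexp})$ applied fibrewise along $D$.
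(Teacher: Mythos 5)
Your proposal takes a genuinely different route from the paper. The paper does not average the boundary asymptotics directly: it substitutes the known one-variable singular parts $(\ref{Esing})$ into the differential equation $(\ref{EEdepth2diff})$, uses the identity $(fg)^{\sing}=fg^{\sing}+f^{\sing}g-f^{\sing}g^{\sing}$ to organize the codimension-one and codimension-two contributions, shows that the only potentially dangerous residue $\kappa_{12}|_{\gamma_1+\gamma_2=0}$ vanishes because the Kronecker function $F(\xi,\alpha)$ is odd in the pair $(\xi,\alpha)$ (via Proposition-Definition $\ref{Fproperties}(iii)$), and then integrates, identifying the constants of integration with the corner values $(\ref{limitD2,12})$, $(\ref{limitD1,12})$. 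Your approach --- sum the asymptotic expansions of $\Lambda$ over the missing lattice directions --- is plausible and buys you a more geometric picture, but in its present form it has a real gap.

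The gap is in the claim that ``no term at $D^{12}$ behaves like $t_1^{-\beta_{12}}$ or $t_2^{-\beta_{12}}$'' and hence there is no $\gamma_{12}$-pole. That is not the right criterion, and as stated it is false. In the sector chart $\pi_0$ at $D^{12}$ with $s_1=t_1^{-1}$, $s_2=t_1/t_2$, the leading power-law term of $\Lambda$ is
$$\frac{t_1^{-\beta_1}t_2^{-\beta_2}}{\beta_1\beta_{12}}=\frac{s_1^{\beta_{12}}s_2^{\beta_2}}{\beta_1\beta_{12}}\ ,$$
which does scale as $s_1^{\beta_{12}}$ transverse to $D^{12}$. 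Under the change of summation indices $(\ref{varspi})$--$(\ref{vsdef})$ the transverse weight is $v_1=(u_1u_2)^{-1}$, so summing over the transverse index in this one sector produces a pole at $(u_1u_2)^{-1}q^{\beta_{12}}=1$, i.e.\ precisely $\gamma_{12}=0$. This is exactly the ``spurious pole'' phenomenon flagged in the remark after Corollary $\ref{cor40}$: the $\gamma_{12}$-residue from the sector $1<t_1<t_2$ cancels against the one from the adjacent sector $1<t_2<t_1$ (one can see this already in the toy identity $\frac{1}{\gamma_1\gamma_2}=\frac{1}{\gamma_{12}\gamma_1}+\frac{1}{\gamma_{12}\gamma_2}$, whose $\gamma_{12}$-residues cancel). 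To make your route work you would have to prove this cancellation directly, which is precisely what the paper's $\kappa_{12}|_{\gamma_{12}=0}=0$ step accomplishes via the oddness of the Kronecker function. Without that step, ``inspection of the asymptotics'' does not establish the absence of a $\gamma_{12}$-pole, and the remainder of the bookkeeping (in particular the matching of sector contributions at $D^1\cap D^{12}$ and $D^2\cap D^{12}$, which you defer to the associator argument) is not yet a proof.
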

\begin{proof}
We can compute the singularities of $\EE_2$ from the differential equation
\begin{eqnarray} \label{EEdepth2diff}
\qquad d\EE_2(\xi_1,\xi_2; \alpha_1,\alpha_2, \beta_1,\beta_2) & = &  d\EE_1(\xi_1-\xi_2;\alpha_1,\beta_1) \,\EE_1(\xi_2;\alpha_1+\alpha_2,\beta_1+\beta_2)  \\
&- &   d\EE_1(\xi_2-\xi_1;\alpha_2,\beta_2)\, \EE_1(\xi_1;\alpha_1+\alpha_2,\beta_1+\beta_2)  \nonumber \\
& +& d\EE_1(\xi_2;\alpha_2,\beta_2) \, \EE_1(\xi_1;\alpha_1,\beta_1) \nonumber 
\end{eqnarray}
The fact that $d^2 \EE_2(\xi_1,\xi_2,\alpha_1,\alpha_2;\beta_1,\beta_2) = 0$ reduces to the Fay identity.
Substituting  the singular parts $\EE^{\sing}$ (given by  $(\ref{Esing})$)
  into $(\ref{EEdepth2diff})$ yields the pole structure of $\EE_2$. Using $(fg)^{\sing}=(f g^{\sing} + f^{\sing} g)- f^{\sing} g^{\sing}$, the differential equation for $\EE_1$ $(\ref{dEequation})$ and the additivity of the exponential function, we have $ \EE_2^{\sing}= \EE^{\sing(1)}_2-  \EE^{\sing(2)}_2$, where
   $$d\EE^{\sing(2)}= d\Big({ \e^{-\xi_1 \beta_1 -\xi_2\beta_2} \over  \beta_1\beta_{12}\gamma_1\gamma_2}+  {\e^{-\xi_{12} \beta_1}  \, C(\beta_{12}) \over  \beta_1 \gamma_1\, \alpha_{12}} - {\e^{-\xi_{21} \beta_2}\, C(\beta_{12}) \over \beta_2 \gamma_2\, \alpha_{12}}      + {\e^{-\beta_2\xi_2} \,C(\beta_1)\over \beta_2\gamma_2\, \alpha_1 }\Big)$$
  $$d\EE^{\sing(1)}_2 =  \Big[ d\EE_1(\xi_{12};\alpha_1,\beta_1)-  d\EE_1(\xi_{21};\alpha_2,\beta_2)\Big]{ C(\beta_{12})\over \alpha_{12}}  + 
 d\EE_1(\xi_2,\alpha_2,\beta_2){ C(\beta_1)\over \alpha_1}  $$
 $$ + {\kappa_{12} |_{\gamma_1+\gamma_2=0} \over \beta_{12} \gamma_{12}} + {\kappa_1 |_{\gamma_1=0} \over \beta_1 \gamma_1} + {\kappa_2|_{\gamma_2=0} \over \beta_2\gamma_2} $$
where $\gamma_{12}=\gamma_1+\gamma_2$ and 
\begin{eqnarray}
\kappa_{12} & =&  \big( F(\xi_{12},\gamma_1) + F(\xi_{21},\gamma_2)\big)\e^{-\beta_1\xi_1-\beta_2\xi_2} \, d\xi_{12}\nonumber \\
\kappa_{1} & =&  \e^{-\beta_1\xi_1-\beta_2\xi_2} F(\xi_2,\gamma_2) \, d\xi_2  -\beta_1 \,  \e^{-\beta_1\xi_{12}}  \EE_1(\xi_2; \alpha_{12}; \beta_{12}) \,  d\xi_{12}  \nonumber \\
\kappa_{2} & =& \beta_2 \,\e^{-\beta_2\xi_{21}} \EE_1(\xi_1;\alpha_{12}, \beta_{12}) \,  d\xi_{12}   - \beta_2 \, \e^{-\beta_2\xi_2}  \EE_1(\xi_1; \alpha_1; \beta_1)  \,  d\xi_2\nonumber 
\end{eqnarray}
It follows from the expansion $(iii)$ of Proposition-Definition \ref{Fproperties} plus the fact that $E_1(\xi,\tau)$ is an odd function of $\xi$ that $\kappa_{12}|_{\gamma_1+\gamma_2}=0$, and therefore does not contribute. There is an obvious  of $d\,\EE_2^{\sing(2)}$. By  integrating, we deduce that:
$$\EE_2^{\sing(1)}= {R_1 \over  \beta_1\gamma_1} +  {R_2 \over \beta_2 \gamma_2} 
+{A_1\over \alpha_1}+  {A_{12} \over \alpha_1+\alpha_2}  \ ,$$
since  $d\EE^{\reg}$ and $d\EE$ are equal up to higher order poles. 
It remains to add the constants of integration. Since these give at most simple poles in the $\alpha$'s and correspond to the limiting values in the corners, 
they contribute
$$ {C_{1,12}\over \alpha_1(\alpha_1+\alpha_2) } + {C_{2,12}  \over \alpha_2(\alpha_1+\alpha_2)} \ , 
$$
where $C_{1,12}$ and $C_{2,12}$ are  the constant part of  the asymptotic of the Debye double polylogarithm near $D^1\cap D^{12}$ and $D^2\cap D^{12}$ given by  $(\ref{limitD2,12})$ and $(\ref{limitD1,12})$.
\end{proof}
As in the depth 1 case, we therefore define the depth 2 multiple elliptic polylogarithms to be the coefficients in the Taylor expansion:
$$\EE_2 - \EE_2^{\sing}= \sum_{m_i,n_j\geq 0} \Lambda^E_{(m_1,m_2),(n_1,n_2)}(\xi_1,\xi_2;\tau) \,\alpha_1^{m_1}\alpha_2^{m_2}\beta_1^{n_1}\beta_2^{n_2}\ ,  $$ 
where $\EE_2$ is given by  $(\ref{depth2av})$, and $\EE^{\sing}_2$ by the previous lemma.

\subsection{Singular part computed from the coproduct}
Another way to arrive  at lemma $\ref{lemsingindepth2}$  is from the computation of the asymptotic of the depth 2 Debye polylogarithms
given in example $\ref{exampledepth2limits}$.  In general, we have:

\begin{cor} The singular structure of the elliptic Debye polylogarithm $\EE_n$ is obtained by averaging the asymptotic of the ordinary Debye polylogarithms. In particular,
it is explicitly computable from the coproduct  $(\ref{coproddef})$  and the constant terms $C$.
\end{cor}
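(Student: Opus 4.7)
The plan is to extract the general principle from the explicit depth-2 computation of lemma \ref{lemsingindepth2} and prove it by induction on the depth $n$. The argument combines three ingredients: the differential equation for $\EE_n$ from proposition \ref{propDiffforEDebye}, corollary \ref{cor40} which identifies the poles of $\EE_n$ with logarithmic divergences of $\Lambda_n$ along boundary divisors of the compactification $U_n$, and theorem \ref{thmcoprodasymp} which computes those asymptotics via the coproduct $\Delta$ and the constants $C(\beta)$.

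First I would localize the analysis. By corollary \ref{cor40} (which applies because $\EE_n$ has at most simple poles by the proposition in \S\ref{sectpoleElliptic}), every pole of $\EE_n$ lies on a divisor of the form $\prod_{i\le k\le j} u_k=1$ or its $\gamma$-analogue, in bijection with divisors $D^I\subset U_n$ indexed by consecutive subsets $I$. The order and residue of each pole is determined by the constant term of the logarithmic expansion of $\Lambda_n$ along the corresponding flag of boundary strata. Thus the singular part $\EE_n^{\sing}$ is completely controlled by the behaviour of $\Lambda_n$ at infinity.

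Next I would invoke theorem \ref{thmcoprodasymp} to write, along any flag of boundary divisors,
$$\Lambda_n \sim \mu_3\circ (\Phi\otimes\Lambda^{\reg}\otimes C)\circ\Delta^{(3)}$$
modulo terms which vanish in the limit. Each factor is explicit: $\Phi$ is a product of monomials $t^{-\beta}$ indexed by essential strings, $\Lambda^{\reg}$ is a regular Debye polylogarithm in transverse coordinates and of strictly lower depth, and $C$ is a polynomial in the constants $C(\beta)$ of lemma \ref{lemmacomputeconst1}. Applied termwise to the averaging sum $(\ref{Lsum})$, the monomial $\Phi$ produces a geometric sum whose value is a rational function of the $u_i$ with the predicted simple poles; the factor $\Lambda^{\reg}$, once averaged, contributes a lower-depth regularised elliptic Debye polylogarithm $\EE^{\reg}_{n-k}$ by the inductive hypothesis; and the factor $C$ multiplies through. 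Summing the resulting expressions over all terms of $\Delta^{(3)}$ gives the entire singular part in closed form, generalising the formula of lemma \ref{lemsingindepth2}.

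The main obstacle will be the bookkeeping of the constants of integration at the vertices of the polytope $\mathcal{C}_n$, where several divisors $D^I$ meet. These cannot be read off the differential equation and must instead be extracted from the iterated asymptotics supplied by $\Delta^{(3)}$, as illustrated in \S\ref{exampledepth2limits} where the values at $D^1\cap D^{12}$ and $D^2\cap D^{12}$ are governed by a pentagon identity satisfied by $C(\beta)$. In higher depth the analogous consistency conditions at codimension-$k$ corners produce a cascade of associator-type relations which, by the rationality statement of \S\ref{sectratconst} and the fact that the $1$-skeleton of $\mathcal{C}_n$ is connected, are automatically satisfied by the coefficients produced by $\Delta^{(3)}$. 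Once this matching is verified at every vertex, the inductive step closes and the corollary follows.
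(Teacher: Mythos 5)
Your proposal follows essentially the route the paper implies but does not spell out: the corollary in the paper is stated as a synthesis of Corollary \ref{cor40} (poles $\leftrightarrow$ logarithmic divergences along consecutive $D^I$), Theorem \ref{thmcoprodasymp} (those asymptotics are computed by the coproduct $\Delta^{(3)}$ and the constants $C$), and the explicit depth-2 worked example of \S\ref{sectDepth2}--\S\ref{exampledepth2limits}, with the paper adding only a remark that the asymptotics of all codimensions follow from the codimension-1 case by iteration. You reconstruct exactly this chain and supply the bookkeeping the paper leaves tacit: the restriction to consecutive-index divisors (via the proposition in \S\ref{sectpoleElliptic}), the termwise averaging of the three tensor factors $\Phi\otimes\Lambda^{\reg}\otimes C$ (which correctly produce, respectively, the $\gamma$-poles, the lower-depth $\EE^{\reg}$ factors, and the $\alpha$-poles with constant coefficients), and the consistency of the corner constants via the associator relations and the rationality statement of \S\ref{sectratconst}. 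This is a faithful and somewhat more explicit account of the argument the authors gesture at; there is no substantive divergence from the paper's approach.
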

In fact, the asymptotic of the Debye polylogarithms in the neighbourhood of boundary divisors of all codimensions can be  computed from the coproduct in two different ways. The first, via theorem \ref{thmcoprodasymp}, is to compute the asymptotic in the neighbourhood of codimension 1 divisors, and by induction apply the theorem to the arguments of $\Lambda$ to obtain the asymptotic in all codimensions. The other, is directly  from  formula  
$(\ref{limitofDebviacoprod})$ which immediately gives the asymptotic in all codimensions, provided that the definition of `essential', `regular', and the constants $C$ are modified accordingly.

\section{Iterated integrals on $\E^{\times}$} \label{sect10}
We compute the integrable words corresponding to the elliptic Debye polylogarithms viewed as functions of one variable, and compare  with the bar construction. From this we deduce
that all iterated integrals on $\E^{\times}$ are obtained by averaging.

\subsection{Projective coordinates and degeneration} Let $\G_m=\Pro^1\backslash \{0,\infty\}$, let $n\geq 1$, and write  $\Delta\subset \G_m^{n+1}$
for the union of all the diagonals. There is an isomorphism
\begin{equation}\label{GmtoMon} 
(\G_m^{n+1}\backslash \Delta )/ \G_m\overset{\sim}{\To} \Mod_{0,n+3}\ .
\end{equation}
If we write homogeneous coordinates on the left-hand side as $(t_1:\ldots: t_{n+1})$, then the isomorphism is given by 
$(t_1:\ldots :t_{n+1}) \mapsto (t_1t_{n+1}^{-1},\ldots, t_nt_{n+1}^{-1})$.
Let $\beta_1,\ldots, \beta_{n+1}$ be formal parameters satisfying $\beta_1+\ldots + \beta_{n+1}=0$. Recall  that we  set:
\begin{equation} \label{homLambda} \Lambda(t_1:\ldots: t_{n+1};\beta_1,\ldots, \beta_{n+1}) =\Lambda(t_1t_{n+1}^{-1},\ldots, t_nt_{n+1}^{-1};\beta_1,\ldots, \beta_n)
\end{equation}Ê
Forgetting the marked point $t_{n+1}$ gives rise to a fibration 
\begin{eqnarray}
 \Mod_{0,n+3} &\rightarrow & \Mod_{0,n+2} \\
(t_1:\ldots :t_{n+1}) & \mapsto  & (t_1:\ldots: t_{n}) \nonumber
\end{eqnarray} 
whose fiber over the point $(t_1:\ldots: t_n)$ of $\Mod_{0,n}$ is isomorphic to  $\G_m\backslash \{t_1,\ldots, t_n\}$.
The functions $(\ref{homLambda})$, when restricted to each fiber, have a particularly simple description.

\begin{lem}  \label{lemnomonodromy} For constant $t_1,\ldots, t_n$  (i.e., $dt_i=0$ for $i\leq n$), we have
Ê$$d\Lambda(t_1:\ldots: t_{n+1};\beta_1,\ldots, \beta_{n+1}) = d\Lambda(t_n:t_{n+1};\beta_n,-\beta_n) \quad \times   \qquad \qquad \qquad \qquad 
$$\begin{equation}  \label{tlastdiffeq}\qquad \qquad \qquad \qquad \qquad   \Lambda(t_1:\ldots: t_{n-1}:t_{n+1};\beta_1,\ldots, \beta_{n-1}, \beta_n+\beta_{n+1})
\end{equation}
For $1\leq i< j \leq n$, let $\Mo_{ij}$ denote analytic continuation along  a small loop around  $t_i=t_j$. Then  the functions $(\ref{homLambda})$ are single-valued
around $t_i=t_j$ for $i,j\geq 1$:
$$(\Mo_{ij} -id)\, \Lambda(t_1:\ldots: t_{n+1};\beta_1,\ldots, \beta_{n+1}) = 0  \quad \hbox{ if } \quad  i,j\leq n \ .$$
\end{lem}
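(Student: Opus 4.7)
Part (1) follows directly from the differential equation (\ref{Debeqn}). Apply it to $\Lambda_n(s_1,\ldots,s_n;\beta_1,\ldots,\beta_n)$ with $s_k=t_k/t_{n+1}$. When $t_1,\ldots,t_n$ are held fixed, all the inner ratios $s_k/s_{k\pm 1}=t_k/t_{k\pm 1}$ (for $1\le k\le n-1$) are constants and their $d\Lambda_1$ contributions vanish. The sums in (\ref{Debeqn}) therefore collapse to the single $k=n$ term of the first sum, namely $d\Lambda_1(s_n;\beta_n)\,\Lambda_{n-1}(s_1,\ldots,s_{n-1};\beta_1,\ldots,\beta_{n-1})$. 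Rewriting in projective form via (\ref{homLambda}) (and using $\sum_{i=1}^{n+1}\beta_i=0$ to identify $\beta_{n-1}$'s missing partner with $\beta_n+\beta_{n+1}$) yields (\ref{tlastdiffeq}).

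Part (2) is proved by induction on $n$, the case $n=1$ being vacuous. For the inductive step, fix $1\le i<j\le n$ and compute $(\Mo_{ij}-id)\Lambda_n = \oint_\gamma d\Lambda_n$ for a small loop $\gamma$ along which $t_i$ encircles $t_j$ once. Use (\ref{Debeqn}) to decompose $d\Lambda_n$ into summands of the form $\pm\, d\Lambda_1(t_k/t_{k\pm 1};\beta_k)\cdot\Lambda_{n-1}(\widehat{t}_k;\ldots)$. The first claim is that every $\Lambda_{n-1}(\widehat{t}_k)$ appearing is single-valued around $\{t_i=t_j\}$: when $k\in\{i,j\}$ this is automatic since the encircling variable is missing from the argument, while for $k\notin\{i,j\}$ the factor can be read projectively as a $\Lambda$ in $n$ coordinates whose last coordinate remains $t_{n+1}$, so the inductive hypothesis applies to the pair $t_i,t_j$.

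The conclusion now splits on $j-i$. If $j-i\ge 2$ then no $d\Lambda_1$ factor has a pole on $\{t_i=t_j\}$, so $d\Lambda_n$ is bounded in a neighbourhood of this divisor, $\oint_\gamma d\Lambda_n = O(r)\to 0$ as the loop radius $r$ shrinks, and the monodromy vanishes since it is a discrete invariant. If $j=i+1$, exactly two summands contribute singularities on $\{t_i=t_{i+1}\}$: the $k=i$ first-sum term $+\,d\Lambda_1(t_i/t_{i+1};\beta_i)\Lambda_{n-1}(\widehat{t}_i)$ and the $k=i+1$ second-sum term $-\,d\Lambda_1(t_{i+1}/t_i;\beta_{i+1})\Lambda_{n-1}(\widehat{t}_{i+1})$. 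Both $t_i/t_{i+1}$ and $t_{i+1}/t_i$ wind once positively about $1$ when $t_i$ traverses $\gamma$ (since $x\mapsto 1/x$ preserves orientation near $x=1$), so each $\oint d\Lambda_1$ contributes $2\pi i$. Upon restriction to $\{t_i=t_{i+1}\}$ the two $\Lambda_{n-1}$ factors coincide — in either case the adjacent pair $(\beta_i,\beta_{i+1})$ collapses to a single entry $\beta_i+\beta_{i+1}$ attached to the surviving point, and the lists of remaining $t$'s agree — so the two $2\pi i$ contributions cancel, completing the induction.

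The main subtlety is justifying the residue extraction $\oint_\gamma d\Lambda_1(x;\beta)\cdot g = 2\pi i\, g|_{x=1}$ for $g$ multivalued but known to be single-valued (hence bounded) on a punctured neighbourhood of the divisor: one writes $d\Lambda_1(x;\beta)=x^{-\beta}\,dx/(x-1)$, splits $g$ as its value at $x=1$ plus a remainder vanishing on the divisor, and observes that the remainder contributes $O(r\log r)\to 0$. Apart from this, the argument is essentially a careful accounting of the $\beta$-substitutions produced by (\ref{Debeqn}) in the two adjacent cases.
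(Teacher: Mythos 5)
Your treatment of part (1) is essentially the paper's: specialize $(\ref{Debeqn})$ to $dt_i=0$ for $i\le n$ and observe that only the $k=n$ term of the first sum survives. For the monodromy statement your proof is correct but takes a genuinely different, more computational route than the paper. You integrate the full differential $(\ref{Debeqn})$ around the loop, split on whether $i,j$ are adjacent, and verify by hand that in the adjacent case the two first-order residue contributions cancel after restriction to $\{t_i=t_{i+1}\}$, while in the non-adjacent case the integrand is bounded near the divisor. The paper offers two shorter arguments: (i) since $\Mo_{ij}$ commutes with $\partial/\partial t_{n+1}$ and the coefficients in $(\ref{tlastdiffeq})$ are, by induction, single-valued around $t_i=t_j$, the difference $(\Mo_{ij}-id)\Lambda$ is constant in $t_{n+1}$, and it vanishes because $\Lambda\to 0$ as $t_{n+1}\to\infty$; (ii) more directly, the explicit convergent power series $(\ref{Imassum})$ exhibits $I_{m_1,\ldots,m_n}$ as single-valued in a neighbourhood of the origin, so the local monodromy around $\{t_i=t_j\}$ is already trivial there. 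Your version has the virtue of making the residue cancellation mechanism visible, but you pay for it in bookkeeping. Two small corrections: your expression $d\Lambda_1(x;\beta)=x^{-\beta}\,dx/(x-1)$ is the negative of the paper's $t^{-\beta}\,dt/(1-t)$, so the residues should be $-2\pi i$ (the cancellation is unaffected since both contributions flip sign); and the phrase ``the monodromy vanishes since it is a discrete invariant'' for $j-i\ge 2$ is better replaced by: $d\Lambda_n$ is closed, single-valued by the inductive hypothesis, and bounded in a punctured neighbourhood of the divisor, hence extends holomorphically across it, so $\oint_\gamma d\Lambda_n=0$ by Cauchy's theorem.
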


\begin{proof} The differential equation  $(\ref{tlastdiffeq})$ follows from the differential equation for $\Lambda$. To prove the singlevaluedness,  note that 
$\Mo_{ij}$ commutes with ${\partial /\partial t_{n+1}}$ for $i, j \leq n$. From $(\ref{tlastdiffeq})$ the result follows by induction plus the fact that 
$ \Lambda(t_1:\ldots: t_{n+1})$ vanishes as $t_{n+1}$ tends to $\infty$. Alternatively, via $(\ref{Idef})$,   the Taylor expansion   $(\ref{Lindef})$ of the function
$I_{m_1,\ldots,m_n}(t_1,\ldots, t_n)$ at the origin shows that it has trivial monodromy around $t_i=t_j$ for $i<j\leq n$. Thus the same is true of 
$\Lambda(t_1:\ldots: t_{n+1})$ by definition. 
\end{proof}
The elliptic analogue of $(\ref{GmtoMon})$ is as follows.  Letting $\Delta \subset \E^{n+1}$ denote the union of all diagonals, and using the notation $(\xi_1:\ldots: \xi_{n+1})$ for 
coordinates on $\E^n/\E$, we have: 
\begin{eqnarray}
(\E^{n+1}\backslash \Delta)/\E & \overset{\sim}{\To} &  \En \\
(\xi_1:\ldots:\xi_{n+1}) & \mapsto & (\xi_1-\xi_{n+1},\ldots, \xi_n-\xi_{n+1}) \ .\nonumber
\end{eqnarray} 
Again,  forgetting the marked point $\xi_{n+1}$ gives rise to a fibration 
\begin{eqnarray} \label{newEfibration}
 \En &\rightarrow & \E^{(n-1)} \\
(\xi_1:\ldots :\xi_{n+1}) & \mapsto  & (\xi_1:\ldots: \xi_{n}) \nonumber
\end{eqnarray} 
whose fiber over the point  $ (\xi_1:\ldots: \xi_{n}) $ is isomorphic to   $\E \backslash \{\xi_1,\ldots, \xi_n\}$.

\begin{defn} Define the elliptic Debye hyperlogarithm to be the generating series:
 $$G_n(\xi ; \xi_1,\ldots, \xi_n, \alpha_1,\ldots, \alpha_n, \beta_1,\ldots, \beta_n) =\EE_n(\xi_1-\xi ,\ldots ,\xi_n-\xi;\alpha_1,\ldots, \alpha_n, \beta_1,\ldots, \beta_n) $$ 
viewed  as a multivalued function of the single variable $\xi\in \E \backslash \{\xi_1,\ldots, \xi_n\}$.
\end{defn} 
 It follows from   equation $(\ref{tlastdiffeq})$ that, for constant  $\xi_1,\ldots, \xi_n$ (i.e., $d\xi_i=0$),
\begin{equation} \label{Gdiff} 
d G_n(\xi;  \xi_1,\ldots, \xi_n; \alpha_1,\ldots, \alpha_n,\beta_1,\ldots, \beta_n  ) = d \EE_1(\xi_n-\xi; \alpha_n, \beta_n) \end{equation}
$$\qquad\qquad \qquad \qquad \times \quad  G_{n-1} (\xi;  \xi_1,\ldots, \xi_{n-1};\alpha_1,\ldots, \alpha_{n-1},\beta_1,\ldots, \beta_{n-1} ) $$
\begin{rem} There is no obvious way to determine the constant of integration in $(\ref{Gdiff})$ since the averaging process introduces constant terms related to Bernoulli numbers.
On the other hand, one natural normalization  for an iterated integral on $\En$ is for it to vanish along a tangential base point at $1$ on the Tate curve at infinity, which  is not the case for the averaged functions $\EE_n$. Thus, the comparison between the averaged functions $\EE_n$  and such iterated integrals must take into account the constants.
\end{rem}
In order to circumvent this issue, let $\basepnt \in \E\backslash \{\xi_1,\ldots, \xi_n\}$ be any point. Consider the $n+1$ square  matrix $M_{ij}$ with $1$'s along the diagonal, $0$'s below the diagonal, and 
$$M_{ij}= G_{j-i}(\xi;\xi_i,\ldots, \xi_{j-1};\alpha_i, \ldots, \alpha_{j-1}, \beta_i,\ldots, \beta_{j-1})\quad \hbox{ for } \quad  1\leq i<j\leq n+1\ .$$
Denote this matrix by $M_\xi$, viewed as a function of $\xi$.  The differential equation     $(\ref{Gdiff})$ translates into an equation of the form
$d M_{\xi}= M_{\xi} \Omega$ for some square matrix $\Omega$ of $1$-forms. It follows that $M^{-1}_\basepnt M_\xi$ satisfies the same equation. Therefore,
 we define  
$$G_i^{\basepnt}(\xi;\xi_{1},\ldots, \xi_i;\alpha_1, \ldots, \alpha_{i},\beta_1,\ldots, \beta_i )= ( M^{-1}_\basepnt M_\xi)_{1,i+1} \quad \hbox{ for } 0\leq i\leq n$$
 These functions
satisfy the differential equation $(\ref{Gdiff})$ and vanish at $\xi=\basepnt$ if $i\geq 1$.

\subsection{Reminders on iterated integrals} 
Given a smooth manifold $M$ over $\R$, a smooth path $\gamma:[0,1]\rightarrow M$, and smooth 
one-forms $\omega_1,\ldots, \omega_n$ on $M$, the iterated integral  of $\omega_1,\ldots, \omega_n$ along $\gamma$ is defined to be $1$ if $n=0$, and  for $n\geq 1$:
$$\int_{\gamma} \omega_1 \ldots \omega_n =\int_{0\leq t_n\leq \ldots \leq t_1\leq 1} \gamma^{*}(\omega_1)(t_1) \ldots   \gamma^{*}(\omega_n)(t_n)\ .$$
 Let $A$ be the $C^\infty$ de Rham complex on $M$, and let $V(A)$ denote the zeroth cohomology of the reduced bar complex of $A$. Choose a basepoint $\basepnt\in M$, and let $I_M$ denote the differential $\R$-algebra of multivalued holomorphic functions  on $M$ with global unipotent monodromy. A   theorem due to Chen \cite{Ch1} states that the map $V(A) \rightarrow V(M)$ given by 
\begin{eqnarray} \label{Chenmap}
\sum_{I=(i_1,\ldots, i_n)}  c_I {[}\omega_{i_1} |\ldots |\omega_{i_n}] & \mapsto & \sum_I c_I  \int_{\gamma} \omega_{i_1}\ldots \omega_{i_n}  
\end{eqnarray}
is an isomorphism, where $\gamma$ is any path from $\basepnt$ to $z$, and the iterated integrals are viewed as functions of the endpoint $z$. In particular, they only depend on the homotopy class of $\gamma$ relative to its endpoints. 
 The differential with respect to $z$ is
\begin{equation} \label{itintdiffform} {\partial \over \partial z}  \sum_I c_I  \int_{\gamma} \omega_{i_1}\ldots \omega_{i_n}=  \sum_I c_I  \, \omega_{i_1} \wedge \int_{\gamma} \omega_{i_2}\ldots \omega_{i_n} \ .\end{equation}
By successive differentiation, and using formula  $(\ref{itintdiffform})$, we can reconstruct a bar element in $V(A)$  which corresponds via   $(\ref{Chenmap})$ to any given  function in $I(M)$.  We shall apply this in the following situation. Suppose that $X\hookrightarrow A$ is  a  connected $\Q$-model for $A$, so we have an isomorphism $V(X)\otimes_{\Q} \R \cong V(A)$. Denote the image of the map $V(X)$ in $I(M)$  by $I(M)_{\Q}$. It defines a $\Q$-structure on the algebra $I(M)$.   If $F\in I(M)_{\Q}$, its bar element in $V(X)$ will be a unique element of $T(X^1)$ by   $(\ref{VofAdef})$  (since $X$ is connected).
  In the sequel, $M$ will be a single elliptic curve with several punctures. We have  a family of functions $\Fo\subset I(M)$, which are the functions obtained by averaging,  and want to show that $\Fo= I(M)_{\Q}$. For this  we shall write the elements of $\Fo$ as elements of  $V(X)\otimes_{\Q}\R$ by computing their differential equations, and check that: firstly they lie in $V(X)$, and secondly, using our explicit description of $V(X)$,  that they span  $V(X)$.

\subsection{Integrable words corresponding to the elliptic polylogarithms}
\begin{defn} \label{defnshuffexp} Define the shuffle exponential to be the formal power series:
$$e_{\sha}(\alpha\nu)=\sum_{n\geq 0}{ \alpha^n \over n!} \nu^{\sha \!  n}  = \sum_{n \geq 0} \alpha^n \underbrace{[\nu|\ldots |\nu]}_n \in T(\Q[\nu])[[\alpha]]\ .$$
The leading  term in the series  ($n=0$) is the empty word. 
Note that if $w_0,\ldots, w_n$ are  symbols and  $x=e_{\sha}(\alpha  w_0)$  then we have
\begin{equation}  \label{expshuffleformula}
x\sha  [w_1|w_2|\ldots |w_n] =  [x |w_1| x |w_2|\ldots | x |w_n|x]
\end{equation}
 as an equality of power series in $\alpha$ with coefficients in $T(\Q w_0  \oplus \Q w_1 \oplus\ldots \oplus \Q w_n)$.

\end{defn}

\begin{lem} \label{lemE1asitint} Let $\basepnt,\xi \in \E$, and let $\alpha, \beta$ be formal parameters, and $\gamma=\alpha-\tau\beta$. Then  we have the following equality of  generating series of multivalued functions:
 \begin{equation}\label{E1asitint} 
 \e(\beta \basepnt+\gamma r_\basepnt)\big(\EE_1(\xi;\alpha,\beta)-\EE_1(\basepnt;\alpha,\beta)\big) = \int_{\basepnt}^{\xi} [\Omega(\xi;\gamma) | e_{\sha}(-\beta'\omega^{(0)}-\gamma\nu)]\  
 \end{equation}
 where we write $\basepnt= s_{\basepnt}+r_{\basepnt}\tau$ (recall that $\xi=s+r\, \tau$).
 \end{lem}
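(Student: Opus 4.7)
\medskip

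\noindent\textbf{Proof plan.} The natural strategy is to verify the claimed identity by differentiating both sides with respect to the upper endpoint $\xi$ and checking the initial condition at $\xi=\basepnt$.

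First, both sides vanish when $\xi=\basepnt$: the left-hand side because the factor in brackets is zero, and the right-hand side because it is an iterated integral of positive length along the constant path. Thus it suffices to show that the two sides have the same differential in $\xi$.

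Next, apply $(\ref{dEequation})$ to the left-hand side: since $\xi_\basepnt$, $r_\basepnt$ are constant in $\xi$, one obtains
\[
d_\xi\mathrm{LHS}=\e(\beta\basepnt+\gamma r_\basepnt)\,\e(-\beta\xi)\,F(\xi;\gamma)\,d\xi=\e(\beta(\basepnt-\xi)+\gamma r_\basepnt)\,F(\xi;\gamma)\,d\xi.
\]
For the right-hand side, use  $(\ref{itintdiffform})$ (with respect to $z=\xi$), which peels the leftmost form off the iterated integral:
\[
d_\xi\mathrm{RHS}=\Omega(\xi;\gamma)\cdot\int_{\basepnt}^{\xi} e_{\sha}(-\beta'\omega^{(0)}-\gamma\nu).
\]
Evaluating this inner integral is the key step. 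By definition of $X_1$ one has $d\omega^{(0)}=0$ and $d\nu=0$, and $\int_\basepnt^\xi$ is a homomorphism from the shuffle algebra of closed forms to functions; equivalently, for closed $w$, the shuffle-exponential integrates to the ordinary exponential $\exp\bigl(\int_\basepnt^\xi w\bigr)$. Since $\omega^{(0)}=d\xi$ and $\nu=2\pi i\,dr$, this gives
\[
\int_{\basepnt}^{\xi} e_{\sha}(-\beta'\omega^{(0)}-\gamma\nu)=\exp\!\bigl(-\beta'(\xi-\basepnt)\bigr)\cdot\e(-\gamma(r-r_\basepnt)).
\]
Substituting $\Omega(\xi;\gamma)=\e(\gamma r)F(\xi;\gamma)d\xi$ (lemma \ref{lemomega}) collapses the $\e(\gamma r)$ with the $\e(-\gamma(r-r_\basepnt))$ into $\e(\gamma r_\basepnt)$, leaving
\[
d_\xi\mathrm{RHS}=\e(\gamma r_\basepnt)\,\exp\!\bigl(-\beta'(\xi-\basepnt)\bigr)\,F(\xi;\gamma)\,d\xi.
\]
Matching with $d_\xi\mathrm{LHS}$ pins down the normalization $\beta'=2\pi i\beta$ (so that $\exp(-\beta'(\xi-\basepnt))=\e(-\beta(\xi-\basepnt))$); this fixes the precise meaning of $\beta'$ in the formula.

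The only genuinely delicate point is the evaluation of $\int_\basepnt^\xi e_\sha(-\beta'\omega^{(0)}-\gamma\nu)$: one must justify that the shuffle-exponential of a linear combination of closed forms integrates to the ordinary exponential of the sum of integrals. This follows either from $(\ref{expshuffleformula})$, applied iteratively, together with the fact that $\int_\basepnt^\xi[\eta|\eta|\cdots|\eta]=\bigl(\int_\basepnt^\xi\eta\bigr)^n/n!$ for closed $\eta$, or directly from Chen's theorem that $\int_\basepnt^\xi$ factors through the zeroth bar cohomology and is multiplicative for the shuffle product. Everything else reduces to routine bookkeeping with the definitions of $\Omega$, $F$, and $\e$, and to a careful accounting of the factors of $2\pi i$ introduced by the convention $\nu=2\pi i\,dr$.
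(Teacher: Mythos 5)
Your proof is correct and follows essentially the same route as the paper's: differentiate both sides in $\xi$, use $(\ref{dEequation})$ on the left, use $(\ref{itintdiffform})$ together with the shuffle-exponential identity $\int_{\basepnt}^{\xi}e_{\sha}(-\beta'\omega^{(0)}-\gamma\nu)=\e(-\beta\xi-\gamma r+\beta\basepnt+\gamma r_{\basepnt})$ on the right, and check vanishing at $\xi=\basepnt$. The paper condenses the bookkeeping by writing $d\EE_1=\e(-\beta\xi-\gamma r)\,\Omega(\xi;\gamma)$ directly, but this is the same calculation.
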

\begin{proof}   Recall that $\omega^{(0)}=d\xi$ and $\nu= 2 \pi i dr$.  Let $\beta'= 2  \pi i \beta $.
 It therefore  follows immediately from definition \ref{defnshuffexp} and the shuffle product for iterated integrals that
$$\e(-\beta\xi-\gamma  r +\beta \basepnt + \gamma r_{\basepnt}) = \int_{\basepnt}^{\xi} e_{\sha}(-\beta'  \omega^{(0)} -\gamma \nu)\ .$$
 We have 
$d\EE_1(\xi;\alpha,\beta)=\e(-\beta \xi) F(\xi;\gamma) d\xi$ and $\Omega(\xi; \gamma)=\e(\gamma r) F(\xi;\gamma)d \xi$.  Hence
$$d\EE_1(\xi;\alpha,\beta) = \e(-\beta \xi-\gamma r) \,\Omega(\xi;\gamma)\ .$$
Combining these two facts, we see that, by $(\ref{itintdiffform})$,
$$d (\hbox{LHS of (\ref{E1asitint})}  ) = \Omega(\xi;\gamma) \int_{\basepnt}^{\xi} e_{\sha}(-\beta'  \omega^{(0)} -\gamma \nu)\ ,$$
so the  differentials of both sides of  $(\ref{E1asitint})$ agree,  and both  vanish at $\xi=\basepnt$. 
\end{proof} 
It is straightforward to verify  that the coefficients in the  right-hand side of $ (\ref{E1asitint})$ are integrable words in $V(X_1)\otimes_{\Q}\C$. 
For $n\geq 1$, let us define
\begin{equation}
H_n(\xi; \underline{\alpha};\underline{\beta}) = \e(\beta_{1,\ldots ,n} \basepnt + \gamma_{1,\ldots, n} r_{\basepnt})\,  G_n^{\basepnt}(\xi;\xi_{1},\ldots, \xi_n;\alpha_1, \ldots, \alpha_{n},\beta_1,\ldots, \beta_n ) 
\end{equation} 
where  we recall that $\beta_{1,\ldots, n}= \beta_1+\ldots +\beta_n$, and likewise for $\gamma$. By construction, the 
coefficients of $H$ are combinations of elliptic multiple polylogs.

Recall that $X_n$ is our rational model for the de Rham complex of $\En$, and $a\mapsto \overline{a}: X_n\rightarrow X_{F_n}$
is the restriction to the fiber. For any $a_1,\ldots, a_k\in X_n$, let us write
 $$\overline{[a_1|\ldots | a_k]} ={[\overline{a}_1| \ldots | \overline{a}_k]} \in X_{F_n}^{\otimes k}\ ,$$
and extend this definition in the obvious way for formal power series in $T(X_n)$.

\begin{prop} \label{propHasitint} The generating series of functions $H_n$ is the iterated integral:
 \begin{equation}\label{Hasitint} 
 H_n(\xi;\underline{\alpha}, \underline{\beta})  = \int_{\basepnt}^{\xi}\overline{W}_n(\xi)\ , 
 \end{equation}
 on the fiber $\E \backslash \{\xi_1,\ldots, \xi_n\}$ of $(\ref{newEfibration})$, where $W_1(\xi)=  [\Omega(\xi_1-\xi;\gamma_1) | e_{\sha}(-\beta'_1\omega^{(0)}-\gamma_1\nu)],$  and $W_n$ is defined inductively    for $n\geq 2$ by 
   $$W_n(\xi) = [\Omega(\xi_n-\xi;\gamma_n) | e_{\sha} ( -\beta'_n \omega^{(0)} - \gamma_n \nu) \sha W_{n-1}(\xi)]\ .$$
   Formula $(\ref{Hasitint})$ also remains valid in the case when the marked points $\xi_i$, $1\leq i\leq n$, are not necessarily distinct.
  \end{prop}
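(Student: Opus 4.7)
I proceed by induction on $n$. Both sides of the asserted identity vanish at $\xi=\basepnt$: for the iterated integral this is tautological, while for $H_n$ this follows since $G_n^\basepnt = (M_\basepnt^{-1}M_\xi)_{1,n+1}$ collapses to the identity at $\xi=\basepnt$. It therefore suffices to show that the $\xi$-derivatives agree. For $n=1$, this is essentially lemma \ref{lemE1asitint} applied in the shifted variable $\eta = \xi_1-\xi$, with the sign $d\eta=-d\xi$ absorbed in the convention (from \S\ref{sectMassey}) that the restriction of $\omega^{(0)}_{1,\xi} = d\xi_1-d\xi$ to the fiber $\{d\xi_1=0\}$ is $-d\xi$, so the leading term of $\Omega(\xi_1-\xi;\gamma_1)$ as a 1-form in $\xi$ has the correct sign.

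For the inductive step, I use the differential equation $(\ref{Gdiff})$, which passes to $G_n^\basepnt$ because the matrix $M_\basepnt^{-1}M_\xi$ satisfies the same KZ equation as $M_\xi$ (the connection matrix is superdiagonal with entries $d\EE_1(\xi_j-\xi;\alpha_j,\beta_j)$). Explicitly, $d\,G_n^\basepnt = d\EE_1(\xi_n-\xi;\alpha_n,\beta_n)\cdot G_{n-1}^\basepnt$. Writing $d\EE_1(\xi_n-\xi;\alpha_n,\beta_n) = \e(-\beta_n(\xi_n-\xi)-\gamma_n(r_n-r))\,\Omega(\xi_n-\xi;\gamma_n)$ as in the proof of lemma \ref{lemE1asitint}, and using $G_{n-1}^\basepnt = \e(-\beta_{1,\ldots,n-1}\basepnt - \gamma_{1,\ldots,n-1} r_\basepnt)\,H_{n-1}$, I obtain $d_\xi H_n$ as $\Omega(\xi_n-\xi;\gamma_n)$ multiplied by an explicit exponential times $H_{n-1}$. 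On the other hand, $(\ref{itintdiffform})$ and the shuffle property of iterated integrals applied to the right-hand side give
\[
d_\xi \int_\basepnt^\xi \overline{W}_n \;=\; \overline{\Omega}(\xi_n-\xi;\gamma_n)\cdot \e(-\beta_n(\xi-\basepnt)-\gamma_n(r-r_\basepnt))\cdot H_{n-1},
\]
using that $\int_\basepnt^\xi e_\sha(-\beta_n'\omega^{(0)}-\gamma_n\nu) = \e(-\beta_n(\xi-\basepnt)-\gamma_n(r-r_\basepnt))$ (as in the proof of lemma \ref{lemE1asitint}) together with the inductive hypothesis applied to $\int_\basepnt^\xi \overline{W}_{n-1}$.

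The identity now reduces to an equality of exponential prefactors, which amounts to a telescoping: the prefactor $\e(\beta_{1,\ldots,n}\basepnt+\gamma_{1,\ldots,n}r_\basepnt)$ in the definition of $H_n$ is exactly what makes the $\basepnt$-terms cancel correctly at each step of the induction. The case of non-distinct marked points $\xi_i$ then follows by continuity, since both sides are meromorphic in the $\xi_i$ and agree on the dense open locus where the points are distinct. The main delicacy I expect is the sign and exponent bookkeeping: the chain rule applied to $\EE_1(\xi_n-\xi)$ introduces signs and variable shifts $r\mapsto r_n-r$ that must be reconciled with the conventions for $\Omega$ as a form on the fiber and with the orientation of the integration path.
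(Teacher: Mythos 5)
Your inductive argument for the generic case (distinct $\xi_i$) follows essentially the same route as the paper: both sides vanish at $\xi=\basepnt$, the base case is lemma \ref{lemE1asitint} after the shift $\xi\mapsto\xi_1-\xi$, and the inductive step matches the derivative $dH_n = \e(\beta_n\basepnt+\gamma_n r_\basepnt)\,d\EE_1(\xi_n-\xi;\alpha_n,\beta_n)\,H_{n-1}$ coming from $(\ref{Gdiff})$ with the derivative of the iterated integral. Your telescoping of exponential prefactors and your awareness of the sign bookkeeping are both fine.

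The gap is in your treatment of the degeneration to non-distinct marked points. You assert that the equality extends ``by continuity, since both sides are meromorphic in the $\xi_i$.'' This is not correct as stated: both sides are \emph{multivalued} unipotent functions, and the issue at $\xi_i=\xi_j$ is precisely whether they develop logarithmic singularities or nontrivial monodromy, not poles. The classical Debye polylogarithms $\Lambda(t_1:\ldots:t_{n+1})$ do a priori have log singularities along $t_i=t_j$, so there is a genuine analytic obstruction to the limit $\xi_i\to\xi_j$. The paper resolves this by invoking lemma \ref{lemnomonodromy}, which proves that $\Lambda(t_1:\ldots:t_{n+1})$ has \emph{trivial} monodromy around $t_i=t_j$ for $i,j\leq n$ (the last argument $t_{n+1}$ being the integration variable); combined with unipotence this gives a holomorphic extension across those divisors, and the same is then true for $H_n$. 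Your claim of meromorphicity is exactly the content that needs to be proved, and it is not automatic — without something like lemma \ref{lemnomonodromy} the continuity argument does not go through.
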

\begin{proof} The case $n=1$ is essentially equation $(\ref{E1asitint})$. For $n>1$, we have by $(\ref{Gdiff})$:
$$d H_n(\xi)=  \e(\beta_{n} \basepnt + \gamma_{n} r_{\basepnt}) d\EE_1(\xi_n-\xi;\alpha_n,\beta_n) \, H_{n-1}(\xi)$$
and furthermore, $H_n(\basepnt)=0$. The proof of the proposition in the generic case, i.e., when all $\xi_i$ are distinct  follows by induction  just as lemma \ref{lemE1asitint}.  Finally, it follows from lemma $\ref{lemnomonodromy}$ that  $H_n(\xi;\underline{\alpha},\underline{\beta})$ has no singularities along 
$\xi_i=\xi_j$ for $1\leq i< j \leq n$, and so equation $(\ref{Hasitint})$ remains true after degeneration of the arguments $\xi_i$.
\end{proof} 
 Note that both sides of $(\ref{Hasitint})$  have simple poles in the variables $\gamma_1,\ldots,\gamma_n$. Hereafter, extracting the coefficients of a generating series
 such as either side of $(\ref{Hasitint})$ 
 will mean  multiplying   by $\gamma_1\ldots \gamma_n$ and taking the Taylor expansion in $\alpha_i,\beta_i$.

  \subsection{Comparison theorem} 
  Let $\Sigma= \{\sigma_0,\ldots, \sigma_m\}$  be distinct points  on $\E$, where  $\sigma_0=0$. Fix  a basepoint 
  $\basepnt\in \E\backslash \Sigma$. 
Define $\Fo_{\basepnt} ( \E\backslash \Sigma)$ to be the $\Q$-algebra of  multivalued functions  spanned by the function $r-r_{\basepnt}$, and the coefficients of  the functions 
\begin{equation} \label{algfogenerators} 
H_n(\xi;\xi_1,\ldots, \xi_n; \underline{\alpha}, 0), \hbox{ for all } n\geq 1,  \hbox{  where  }  \xi_1,\ldots, \xi_n \in \Sigma\ . 
\end{equation} 
 
For every  $\sigma\in \Sigma$, let us write $ \omega^{(i)}_{\sigma}$ for the coefficients of
$$ \Omega(\xi-\sigma;\alpha) =\sum_{n\geq 0} \omega^{(i)}_{\sigma} \alpha^{i-1} \ ,  $$
and set $\eta_{\sigma}=\omega^{(1)}_{\sigma} -\omega^{(1)}_{\sigma_0}$, for $\sigma \neq 0$.
Recall that  our model $X_{F_n}$ for  the punctured elliptic curve $\E\backslash \Sigma$ is generated by  $\overline{\nu}$ and the $\omega^{(i)}_{\sigma}$ for $i\geq 0, \sigma \in \Sigma$ (lemma \ref{lemXF}).

\begin{thm}  \label{thmcomparisononfiber} The   map  $\int_{\basepnt}^{\xi}: V(X_{F_n}) \rightarrow \Fo_{\basepnt}(\E\backslash \Sigma)$ is an isomorphism.
\end{thm}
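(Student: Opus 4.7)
The proof splits into injectivity, the easy containment $\Fo_{\basepnt} \subseteq \mathrm{image}$, and the main step $\mathrm{image} \subseteq \Fo_{\basepnt}$.

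For injectivity, we combine Chen's theorem with the quasi-isomorphism $X_{F_n} \otimes_{\Q} \C \hookrightarrow \Ao(\E \backslash \Sigma)$ established in \S4. This identifies $V(X_{F_n}) \otimes_{\Q} \C$ with the $\C$-algebra of unipotent multivalued holomorphic functions on $\E \backslash \Sigma$ based at $\basepnt$, so the $\Q$-linear map is in particular injective. For the easy containment: since iterated integrals satisfy the shuffle product, the image is automatically a $\Q$-subalgebra, so it suffices to check the generators. The function $r - r_{\basepnt}$ equals $(2\pi i)^{-1} \int_{\basepnt}^{\xi} \overline{\nu}$, hence lies in the image. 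By proposition \ref{propHasitint}, each coefficient (in $\underline{\alpha}$) of $H_n(\xi;\xi_1,\ldots,\xi_n;\underline{\alpha},0)$ for $\xi_i \in \Sigma$ equals $\int_{\basepnt}^{\xi}$ of the corresponding coefficient of the generating series $\overline{W}_n$; these coefficients are integrable (i.e., lie in $V(X_{F_n})$), since the $H_n$ themselves are well-defined multivalued functions.

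For the main step, $\mathrm{image} \subseteq \Fo_{\basepnt}$: by proposition \ref{propwordlift}, $V(X_{F_n})$ is spanned over $\Q$ by the canonical lifts $\tilde{W}$ of words $W$ in the basis $\{[\overline{\omega}^{(0)}], [\overline{\nu}]\} \cup \{[\eta_\sigma] : \sigma \in \Sigma \setminus \{0\}\}$ of $H^1(\E \backslash \Sigma)$. These lifts arise as the coefficients of $\Xi = \sum_{k\geq 1}[J|\cdots|J]_k$ in the formal non-commutative variables $x_0, x_1, y_\sigma$ of Chen's formal power series connection $J$. Crucially, both $J$ and the series $\overline{W}_n$ appearing in proposition \ref{propHasitint} are built from the same Eisenstein--Kronecker data: the generating series $\overline{\Omega}(\sigma-\xi;\alpha)$ attached to the punctures $\sigma$ plus a shuffle exponential in $\overline{\nu}$. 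Proceeding by induction on the length of $W$, we express each $\tilde{W}$ as a $\Q$-linear combination of coefficients of shuffle products $\overline{W}_{n_1} \sha \cdots \sha \overline{W}_{n_k}$ for suitable choices of $\xi_i \in \Sigma$, $n_j$, and $\alpha$-expansions. Since iterated integration intertwines shuffle with ordinary multiplication of functions, the resulting expression for $\int_{\basepnt}^{\xi} \tilde{W}$ becomes a $\Q$-polynomial in coefficients of $H_{n_1},\ldots,H_{n_k}$, placing it in $\Fo_{\basepnt}$.

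The main obstacle is precisely this combinatorial matching. The canonical lifts are indexed by words in $x_0, x_1, y_\sigma$, singling out the distinguished ``Jacobi'' variable $x_1$, while the $\overline{W}_n$-coefficients are indexed by tuples $(\xi_1,\ldots,\xi_n) \in \Sigma^n$ (possibly including $\sigma_0 = 0$) and expansions in $\alpha_1,\ldots,\alpha_n$, treating all punctures uniformly through the single building block $\overline{\Omega}(\sigma-\xi;\alpha)$. A systematic length-graded matching, using the weight filtration of \S\ref{sectMHS} together with the dimension count $\dim \gr^{\ell} V(X_{F_n}) = (|\Sigma|+1)^\ell$ implicit in proposition \ref{propwordlift}, should exhibit a basis bijection by realising $\eta_\sigma$-directions as differences of $\overline{W}_1$ evaluated at $\sigma$ and at $0$. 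Combining this with the injectivity and the easy containment then yields the desired isomorphism.
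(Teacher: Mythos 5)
Your decomposition into injectivity, the easy containment $\Fo_{\basepnt}\subseteq\mathrm{image}$, and the main spanning step, and your treatment of the first two, match the paper's argument. But the main step is exactly where you stop: you identify it as ``the main obstacle'' and say that a systematic length-graded matching ``should exhibit a basis bijection,'' without giving the matching, and the hint you offer does not extend to higher length. You propose realising the $\eta_{\sigma}$-directions as differences of $\overline{W}_1$ evaluated at $\sigma$ and at $0$ and then using shuffles $\overline{W}_{n_1}\sha\cdots\sha\overline{W}_{n_k}$; but shuffle products are symmetric, so for instance a word like $[\overline{\eta}_{\sigma_1}|\overline{\eta}_{\sigma_2}]$ with $\sigma_1\neq\sigma_2$ cannot be produced by shuffling two depth-$1$ pieces, only its symmetrisation $[\overline{\eta}_{\sigma_1}]\sha[\overline{\eta}_{\sigma_2}]$ can. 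The asymmetric elements of $V(X_{F_n})$ must come from a single higher-depth generating series, with the $\eta_{\sigma}$- and $\omega^{(0)}$-directions realised \emph{independently in each slot}.

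The paper supplies precisely the trick you are missing. It first specializes $\beta_i=0$ in $(\ref{newWn})$ to obtain $(\ref{newWn2})$, then for any tuple $\varepsilon_1,\ldots,\varepsilon_n\in\{0,1\}$ uses multilinearity of bar elements to replace the $i$-th $\Omega$-slot of a single $\overline{W}_n$ by $\overline{\Omega}(\xi_i-\xi;\alpha)-\varepsilon_i\,\overline{\Omega}(\xi;\alpha)$, producing the integrable series $(\ref{mainintword})$ whose iterated integral is still a $\Q$-linear combination of coefficients of the generators $(\ref{algfogenerators})$. It then projects via $\pi_\ell$ to $\gr^{\ell}V(X_{F_n})$, which kills the higher Massey tails of each $\overline{\Omega}$: each slot collapses to $\overline{\eta}_{\xi_i}$ when $\varepsilon_i=1$ and to $\overline{\omega}^{(0)}$ when $\varepsilon_i=0$, while the $e_{\sha}$-factors contribute the runs of $\overline{\nu}$. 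The leading terms are exactly the words $(\ref{wordweget})$, and shuffling these with $\overline{\nu}^{\sha j}$ (equivalently, multiplying by powers of $r-r_{\basepnt}$) gives all of $T(\Q\overline{\nu}\oplus\Q\overline{\omega}^{(0)}\oplus\bigoplus_{\sigma}\Q\overline{\eta}_{\sigma})\cong\gr^{\ell}V(X_{F_n})$, after which an induction on length finishes the proof. Without the per-slot multilinearity and the explicit passage to the associated graded, the spanning claim remains an assertion.
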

\begin{proof}  By $(\ref{expshuffleformula})$,   the integrand  $\overline{W}_n(\xi)$ of  proposition    \ref{propHasitint} 
   can also be written:
 \begin{equation}  \label{newWn}  { [} \overline{ \Omega}(\xi_n-\xi;\gamma_n) |P_n   |\overline{\Omega}(\xi_{n-1}-\xi;\gamma_{n,n-1}) |P_{n-1}|
 \ldots |   \overline{\Omega}(\xi_1-\xi;\gamma_{n,\ldots, 1}) | P_1       ]  
   \end{equation}
where $P_i = e_{\sha} ( -\beta'_{n,\ldots, i} \overline{ \omega}^{(0)} - \gamma_{n,\ldots, i}\overline{ \nu})$ for $1\leq i \leq n$.  The functions  $(\ref{algfogenerators})$ correspond to the constant terms  in   $(\ref{newWn})$ with respect to $\beta_i$, namely the iterated integrals:
  \begin{equation} \label{newWn2} \int_{\basepnt}^{\xi}    [ \overline{\Omega}(\xi_n-\xi;\alpha_n) |     e_{\sha} (  - \alpha_{n}  \overline{\nu})   |
 \ldots |   \overline{ \Omega}(\xi_1-\xi;\alpha_{n,\ldots, 1}) |     e_{\sha} (  - \alpha_{n,\ldots, 1}  \overline{\nu})       ] \ . 
 \end{equation}
One easily checks that $(\ref{newWn2})$ is integrable,  but this also follows  from equation $(\ref{Hasitint})$, since the iterated integral only depends on the endpoint $\xi$, and not 
the path of integration chosen.  
Thus the  coefficients of  $(\ref{newWn2})$ with respect to $\alpha$ lie in $V(X_{F_n})$. 

It  suffices to show that the iterated integral of every element of $V(X_{F_n}) $ arises in this way. 
For this,  choose any numbers $\varepsilon_1,\ldots, \varepsilon_n\in\{0,1\}$. By 
   the multilinearity of bar elements, the  iterated integral  from $\basepnt$ to $\xi$ of any  integrable word  of the form
 \begin{equation}  \label{mainintword} [\overline{\Omega}(\xi_n-\xi;\alpha_n) -\varepsilon_n  \overline{\Omega}(\xi;\alpha_n)|  e_{\sha} ( - \alpha_{n} \overline{\nu})    | \ldots   \qquad \qquad \qquad 
\end{equation} 
 $$\qquad \qquad \qquad \ldots  |   \overline{\Omega}(\xi_1-\xi;\alpha_{n,\ldots, 1}) - \varepsilon_1   \overline{\Omega}(\xi;\alpha_{n,\ldots, 1}) |         e_{\sha} ( - \alpha_{n,\ldots, 1} \overline{\nu})   ]  
   $$
   also lies in $\Fo_{\basepnt}(\E\backslash \Sigma)$. Now let $\pi_{\ell}:V(X_{F_n})  \rightarrow \gr^{\ell} V(X_{F_n})$ 
be the map which projects onto the associated graded for the length filtration, and extended  to power series in the obvious way.  It   kills all  Massey products of weight $\geq 2$. In particular, 
 \begin{eqnarray}  \pi_{\ell} \big(\overline{\Omega}(\sigma-\xi;\alpha ) -   \overline{\Omega}(\xi;\alpha) \big) & =&  - \overline{\eta}_{\sigma} \nonumber \\
    \pi_{\ell}\big( \overline{\Omega}(\sigma-\xi;\alpha ) \big)  &=  & \overline{\omega}^{(0)}Ê\alpha^{-1}  \nonumber
    \end{eqnarray}  
   Applying $\pi_{\ell}$ to $(\ref{mainintword})$  (multiplied by $\alpha_1\ldots \alpha_n$ to clear the poles in $\alpha_i$) gives a generating series in $\alpha_1,\ldots, \alpha_n$ whose coefficients are  all words of the form
   \begin{equation}\label{wordweget}   m_k \overline{\nu}^{i_k}         \ldots    m_2 \overline{\nu}^{i_2} m_1 \overline{\nu}^{i_1}    \end{equation}
   where $i_1,\ldots, i_k$ are any non-negative integers, and 
   $$ m_i =      \left\{
                           \begin{array}{ll }
                            \overline{\eta}_{\xi_i} & \hbox{ if  } \varepsilon_i = 1    \nonumber \\
                             \overline{\omega}^{(0)} & \hbox{ if  } \varepsilon_i = 0   \ 
                           \end{array}
                         \right. 
                        $$
It is easy to verify that every word in $\{ \overline{\nu},  \overline{\omega}^{(0)},  \overline{\eta}_{\sigma_1}, \ldots, \overline{\eta}_{\sigma_m} \}^{\times}$ is a linear combination of  shuffle products of  $\overline{\nu}\ldots \overline{\nu}$ with elements
$(\ref{wordweget})$. 
It follows from the description  (see proposition \ref{propwordlift} and preceding discussion):
$$  \gr^{\ell} V(X_{F_n}) \cong T(\Q\overline{\nu} \oplus \Q \overline{\omega}^{(0)} \oplus \Q  \overline{\eta}_{\sigma_1} \oplus \ldots \oplus  \Q  \overline{\eta}_{\sigma_m})\ ,$$ 
that the iterated integral of every element in $V(X_{F_n})$ appears  a linear combination of  products of the function 
$$r-r_{\basepnt}=\int_{\basepnt}^{\xi} \overline{\nu} $$
 with coefficients of $(\ref{mainintword})$. This completes the proof.
\end{proof}
In particular, every iterated integral on $\E^{\times}$ can be obtained in this way.    Our model $V(X_1)$ defines a $\Q$-structure on the de Rham fundamental groupoid
of $\E^{\times}$, hence:

 \begin{cor} The periods of the prounipotent  fundamental groupoid ${}_{\basepnt}\Pi_{\xi}(\E^{\times})$  for any   initial point $\basepnt\in \E^{\times}$ and endpoint $\xi\in \E^{\times}$, lie in the  $\Q$-algebra generated by  $r-r_{\basepnt}$, and   the coefficients of  $(\ref{algfogenerators})$ with respect to the $\alpha_i$'s.
\end{cor}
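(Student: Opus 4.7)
The plan is to derive this corollary as a direct specialization of the preceding Comparison Theorem, applied with $\Sigma=\{0\}$ so that $\E\setminus\Sigma=\E^{\times}$. First I would observe that the indexing $n$ in $V(X_{F_n})$ aligns with the number of punctures: since $\E^{(0)}$ is a point, $X_0=\Q$ and $X_0^+=0$, so $X_{F_1}=X_1/X_0^+=X_1$. The Comparison Theorem then gives a $\Q$-linear isomorphism
\[
\int_{\basepnt}^{\xi}: V(X_1) \To \Fo_{\basepnt}(\E^{\times}),
\]
where, unwinding the definition of $\Fo_{\basepnt}$, the right-hand side is exactly the $\Q$-algebra of multivalued functions generated by $r-r_{\basepnt}$ together with the coefficients (in the $\alpha_i$) of the generating series $H_n(\xi; 0,\ldots,0;\underline{\alpha},0)$ for $n\geq 1$.

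Next I would identify the left-hand side with the periods of the prounipotent fundamental groupoid. The quasi-isomorphism $X_1\otimes_{\Q}\C\hookrightarrow \Ao^{\bullet}(\E^{\times})$ established in \S4 induces an isomorphism $V(X_1)\otimes_{\Q}\C \cong V(\Ao^{\bullet}(\E^{\times}))$, and by Chen's theorem (the map $(\ref{Chenmap})$) this is identified, via iterated integration along a path from $\basepnt$ to $\xi$, with the algebra of locally unipotent multivalued functions representing the periods of ${}_{\basepnt}\Pi_{\xi}(\E^{\times})$. Since $X_1$ is by definition our chosen $\Q$-model for the de Rham complex, $V(X_1)$ supplies the $\Q$-structure on this groupoid that we are working with throughout the paper.

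Combining these two observations, any period function lies in the image of $\int_{\basepnt}^{\xi}: V(X_1) \to I(\E^{\times})$, which by the Comparison Theorem is exactly $\Fo_{\basepnt}(\E^{\times})$. This gives the required description.

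Essentially all the substantive work has already been done: the algebraic description of $V(X_1)$ (Proposition $\ref{propwordlift}$ and Corollary $\ref{corw=l}$), the explicit lifting of words in $[\omega^{(0)}]$, $[\nu]$, $[\eta_\sigma]$ to integrable elements via the formal power series connection $J$, and the matching of these integrable words with the $H_n$'s at the generating-series level in the proof of the Comparison Theorem. The only subtlety worth flagging is that the $\Q$-structure used here is the model-theoretic one coming from $X_1$, and not the canonical motivic $\Q$-structure in the case $\E$ is defined over $\Q$ (as warned in the introduction's caveat); once this is agreed upon, no further obstacle arises and the corollary is immediate.
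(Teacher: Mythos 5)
Your proposal is correct and is essentially the paper's argument: the paper deduces the corollary directly from Theorem \ref{thmcomparisononfiber} by taking $\Sigma=\{0\}$ (so that $\E\setminus\Sigma=\E^{\times}$ and the relevant model is $X_{F_1}=X_1$, since $X_0^+=0$), combined with the fact that $V(X_1)$ provides the $\Q$-structure on the de Rham fundamental groupoid of $\E^{\times}$ via the quasi-isomorphism $X_1\otimes_{\Q}\C\hookrightarrow \Ao^{\bullet}(\E^{\times})$ and Chen's theorem $(\ref{Chenmap})$. Your writeup merely spells out these identifications more explicitly than the paper's two-sentence justification, and your closing remark about the $\Q$-structure matches the paper's own caveat.
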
 
 
\begin{rem} The previous corollary concerns a general complex elliptic curve. If $\E^\times$ happens to be defined over $\Q$, then the  $\Q$-structure on the de Rham fundamental groupoid
induced from the $\Q$-structure on $\E^\times$ is is not the same as the one we considered above. These two $\Q$-structures could in principle be compared using \cite{LR}, \S$5$.

\end{rem}

\subsection{Generalizations}
One can  extend theorem \ref{thmcomparisononfiber} to the case where $\basepnt$ is a tangential basepoint  at one of the points $\sigma\in \Sigma$. As a result, a higher-dimensional version of  theorem    \ref{thmcomparisononfiber} can also be deduced from theorem \ref{thmBartensorstructure}, which states that the iterated integrals on  the configuration space $\En$
are  products of iterated integrals on the fibers of the map $\En\rightarrow \E^{(n-1)}$, which is  the one-dimensional case treated above. Therefore all iterated integrals on 
$\En$ can be obtained from our averaging procedure.
\\

\end{document}